\def\section{\@startsection{section}{1}%
\z@{.5\linespacing\@plus\linespacing}{.3\linespacing}%
{\normalfont\scshape\centering}}
\def\subsection{\@startsection{subsection}{2}%
\z@{.3\linespacing\@plus.5\linespacing}{-.3em}%
{\normalfont\bfseries}}
\def\subsubsection{\@startsection{subsubsection}{3}%
\z@{.3\linespacing\@plus.5\linespacing}{-.3em}%
{\normalfont\itshape}}
\newcommand{\showcomments}{yes}
\newsavebox{\commentbox}
\newtheorem{thm}{Theorem}[section]
\newtheorem{lem}[thm]{Lemma}
\newtheorem{cor}[thm]{Corollary}
\newtheorem{prop}[thm]{Proposition}
\newtheorem*{thmA}{Theorem~A}
\newtheorem*{thmB}{Theorem~B}
\newtheorem*{thmC}{Theorem~C}
\newtheorem*{corD}{Corollary~D}
\newtheorem*{thmE}{Theorem~E}
\theoremstyle{definition}
\newtheorem{defn}[thm]{Definition}
\newtheorem{rem}[thm]{Remark}
\newtheorem{exmp}[thm]{Example}
\newtheorem{prob}[thm]{Problem}
\newtheorem{question}[thm]{Question}
\newtheorem{claim*}{Claim}
\DeclareMathOperator{\dimension}{dim}
\DeclareMathOperator{\image}{im}
\DeclareMathOperator{\Aut}{Aut}
\DeclareMathOperator{\stabilizer}{Stab}
\DeclareMathOperator{\diam}{diam}
\DeclareMathOperator{\gproj}{Proj_{_{\contact X}}}
\DeclareMathOperator{\psprod}{\boxtimes}
\newcommand{\coll}{\;\;\makebox[0pt]{$\bot$}\makebox[0pt]{$\smile$}\;\;}
\newcommand{\field}[1]{\mathbb{#1}}
\newcommand{\integers}{\ensuremath{\field{Z}}}
\newcommand{\naturals}{\ensuremath{\field{N}}}
\newcommand{\reals}{\ensuremath{\field{R}}}
\newcommand{\interior} [1] {{\ensuremath \text{\rm Int}(#1) }}
\newcommand{\Rmnum}[1]{\mathbf{{\expandafter\@slowromancap\romannumeral #1@}}}
\newcommand{\simp}{\ensuremath{\partial_{_{\triangle}}}}
\newcommand{\contact}[1]{\ensuremath{\Gamma\mathbf{#1}}}
\newcommand{\crossing}[1]{\ensuremath{\Delta\mathbf{#1}}}
\newcommand{\dive}[2]{\ensuremath{{\mathbf{div}}({#1},{#2})}}
\newcommand{\diver}[4]{\ensuremath{\overline{\mathbf{div}}({#1},{#2},{#3};{#4})}}
\newcommand{\divers}[2]{\ensuremath{\widetilde{\mathbf{div}}({#1},{#2})}}
\let\oldmarginpar\marginpar
\renewcommand\marginpar[1]{\-\oldmarginpar[\raggedleft\footnotesize #1]%
{\raggedright\footnotesize #1}}
\begin{document}
\title[The simplicial boundary]{The simplicial boundary of a CAT(0) cube complex}
\author[Mark~F.~Hagen]{Mark F. Hagen}
           \address{Dept. of Math. \& Stat.\\
                    McGill University \\
                    Montreal, Quebec, Canada H3A 2K6 }
           \email{markfhagen@gmail.com}

\keywords{CAT(0) cube complex, contact graph, divergence, rank-one isometry}
\date{\today}
\maketitle

\begin{abstract}
For a CAT(0) cube complex $\mathbf X$, we define a simplicial flag complex $\simp\mathbf X$, called the \emph{simplicial boundary}, which is a natural setting for studying non-hyperbolic behavior of $\mathbf X$.  We compare $\simp\mathbf X$ to the Roller, visual, and Tits boundaries of $\mathbf X$, give conditions under which the natural CAT(1) metric on $\simp\mathbf X$ makes it isometric to the Tits boundary, and prove a more general statement relating the simplicial and Tits boundaries.  $\simp\mathbf X$ allows us to interpolate between studying geodesic rays in $\mathbf X$ and the geometry of its \emph{contact graph} $\contact X$, which is known to be quasi-isometric to a tree, and we characterize essential cube complexes for which the contact graph is bounded.  Using related techniques, we study divergence of combinatorial geodesics in $\mathbf X$ using $\simp\mathbf X$.  Finally, we rephrase the rank-rigidity theorem of Caprace-Sageev in terms of group actions on $\contact X$ and $\simp\mathbf X$ and state characterizations of cubulated groups with linear divergence in terms of $\contact X$ and $\simp\mathbf X$.
\end{abstract}
\vspace{-5pt}
\small
\tableofcontents
\normalsize
\vspace{-15pt}
\section*{Introduction}
Since their introduction as a class of examples in~\cite{Gromov87}, CAT(0) cube complexes have become increasingly ubiquitous in group theory, and the class of groups acting on cube complexes is vast.  The original examples come from Bass-Serre theory, since trees are 1-dimensional cube complexes, but, in the sense of Gromov's density model, many more groups act on cube complexes than split (compare the results of~\cite{OllivierWiseDensity} to those of~\cite{DahmaniGuirardelPrzytycki}).  Sageev provided a general means of obtaining actions on cube complexes by constructing, in~\cite{Sageev95}, a $G$-cube complex from a \emph{semi-splitting} of the group $G$, which notion arose in work of Houghton and Scott~\cite{Houghton,Scott}.  Many groups have been cubulated using Sageev's construction and related techniques, including Coxeter groups~\cite{NibloReeves03}, Artin groups of type FC~\cite{CharneyDavis95b}, diagram groups, including Thompson's group $V$~\cite{Farley2003,Farley2005}, small-cancellation groups~\cite{WiseSmallCanCube04}, random groups at sufficiently low density in Gromov's model~\cite{OllivierWiseDensity}, and groups with quasiconvex hierarchies, including one-relator groups with torsion, limit groups, and fundamental groups of Haken hyperbolic 3-manifolds~\cite{WiseIsraelHierarchy}.  Right-angled Artin groups were shown to act on cube complexes by more direct means: the Salvetti complex of a right-angled Artin group was shown to be a nonpositively curved cube complex by Charney and Davis~\cite{CharneyDavis94}.

The structure of the cube complex $\mathbf X$ is governed by the \emph{hyperplanes}, which are convex subspaces, described in~\cite{Sageev95,Chepoi2000}, that separate $\mathbf X$ into exactly two complementary components.  In this sense, CAT(0) cube complexes are ``generalized trees''.  Another salient property of a tree is that every geodesic triangle is a geodesic tripod.  Generalizing this leads to the notion of a \emph{median graph}, and in fact cube complexes and median graphs are essentially equivalent generalizations of trees: the 1-skeleton of $\mathbf X$ is a median graph, and any median graph is the 1-skeleton of a uniquely-determined cube complex.

It is natural to focus on how the hyperplanes interact.  The \emph{contact graph} $\contact X$ of $\mathbf X$, introduced in~\cite{HagenQuasiArb}, is the intersection graph of the family of hyperplane carriers, and was shown in that paper to be quasi-isometric to a tree.  $\contact X$ naturally generalizes the \emph{crossing graph} $\crossing X$ of $\mathbf X$, which is the intersection graph of the hyperplanes themselves, and is a spanning subgraph of $\contact X$.  Now, whenever $\mathbf X$ decomposes as a nontrivial product, $\contact X$ decomposes as a join and is thus a quasi-tree simply by virtue of being bounded.  However, there are numerous infinite examples for which $\contact X$ is bounded and $\mathbf X$ does not have a cubical product structure; a natural example is the smallest subcomplex of the standard tiling of $\reals^2$ by 2-cubes that contains $\{(x_1,x_2):x_2<x_1\}$, whose contact graph has diameter 3.  The motivating question in this paper is: \emph{Under what geometric conditions on $\mathbf X$ is $\contact X$ (or $\crossing X$) bounded, and what relevance does this have for groups acting on cube complexes?}

Studying this question led to the introduction of the \emph{simplicial boundary} $\simp\mathbf X$ of $\mathbf X$, a simplicial complex ``at infinity'' encoding much information about the non-hyperbolic behavior of $\simp\mathbf X$.  Our goal in this paper is to understand the simplicial boundary and to indicate some of its uses in cubical geometry.

We now summarize our results, which are related to this question, and briefly discuss our methods.  Since the key objects in this paper are defined independently of the CAT(0) geometry, we mostly work in the 1-skeleton, taking advantage of the fact that it is a median graph (see the discussions in~\cite{BandeltChepoi_survey,Chepoi2000,EppsteinFalmagneOvchinnikov,ImKl,Isbell,NicaCubulating04,Roller98,vandeVel_book}). A large part of our use of the higher-dimensional cubes is in the application of disc diagram techniques, introduced by Casson and developed by Sageev in his thesis and by Wise in~\cite{WiseIsraelHierarchy}.

\vspace{2mm}
\textbf{The simplicial boundary.}  In view of Theorem~B below, it is natural to attempt to detect unboundedness of $\contact X$ by finding a geodesic ray in $\mathbf X$ that contains points far from each hyperplane and does not lie in a cubical ``flat sector''.  The \emph{simplicial boundary} of a CAT(0) cube complex $\mathbf X$, whose construction is this paper's main innovation, is designed to keep track of infinite bounded subgraphs of $\contact X$ corresponding to unbounded flat subcomplexes.

The set $\mathcal W(\gamma)$ of hyperplanes that cross the geodesic ray $\gamma$ has several salient properties which are abstracted in the definition of a \emph{unidirectional boundary set (UBS)} of hyperplanes in Section~\ref{sec:ubs}.  The simplices of $\simp\mathbf X$ are defined to be equivalence classes of UBSs, where two UBSs are equivalent if their symmetric difference is finite.  The definition of $\simp\mathbf X$ is enabled by Theorem~\ref{thm:structureofboundarysets}, which says that each UBS decomposes as the disjoint union of \emph{minimal} UBSs in an essentially unique way.  Crucially, the subgraph of $\crossing X$ generated by a UBS corresponding to an $n$-simplex of $\simp\mathbf X$ is a spanning subgraph of an infinite complete $(n+1)$-partite graph.  When $n\geq 1$, this subgraph is sufficiently connected to ensure that the corresponding subgraph of $\contact X$ is bounded.

Theorem~\ref{thm:boundaryproperties} establishes basic properties of $\simp\mathbf X$: it is a flag complex, each simplex is contained in a finite maximal simplex, and the relationship between UBSs and $\contact X$ described above, together with the proof of~\cite[Theorem~7.6]{HagenQuasiArb}, implies that $\simp\mathbf X$ is totally disconnected when $\mathbf X$ is unbounded and hyperbolic.

The definition of a UBS is motivated by sets of hyperplanes of the form $\mathcal W(\gamma)$, but the \emph{eighth-flat} shown in Figure~\ref{fig:raybound} shows that not all UBSs are sets of hyperplanes dual to some ray.  However, Theorem~\ref{thm:visiblesimplex} says that maximal simplices of $\simp\mathbf X$ are \emph{visible}, i.e. represented by UBSs determined by geodesic rays.  The property of being \emph{fully visible} -- every simplex of $\simp\mathbf X$ is visible -- enables additional conclusions.  In particular, by Theorem~\ref{thm:visiblefracflat}, if $\mathbf X$ is fully visible, then each $n$-simplex of $\simp\mathbf X$ records the existence of an isometrically embedded copy of the standard tiling of $[0,\infty)^{n+1}$ by $(n+1)$-dimensional Euclidean unit cubes, and thus corresponds to a genuine infinite $(n+1)$-partite subgraph of $\crossing X$.

This is one of several analogues between $\simp\mathbf X$ -- which is an invariant of the median graph $\mathbf X^{(1)}$ -- and the Tits boundary of $\mathbf X$.  Another similarity is given by Theorem~\ref{thm:productsandjoins}, which equates the existence of product decompositions of $\mathbf X$ with simplicial join decompositions of $\simp\mathbf X$.  While this is very similar to the corresponding theorem about spherical join decompositions of the Tits boundary of a CAT(0) space (Theorem~II.9.24 of~\cite{BridsonHaefliger}), there are important differences in the hypotheses under which these results hold, reflecting the fact that $\mathbf X$ generally contains many more combinatorial geodesics than CAT(0) geodesics.

However, $\simp\mathbf X$ can be endowed with a CAT(1) metric by declaring each simplex to be a right-angled spherical simplex, and, if $\mathbf X$ is fully visible and satisfies an additional technical condition, then this CAT(1) realization of $\simp\mathbf X$ is isometric to the Tits boundary $\partial_T\mathbf X$, i.e. $\simp\mathbf X$ is a triangulation of $\partial_T\mathbf X$.  In Section~\ref{sec:titscompare}, we give conditions ensuring that $\simp\mathbf X$ is an isometric triangulation of the Tits boundary, explain the relationship between the two boundries in general, and give examples showing that the two boundaries are in general slightly different.

\vspace{2mm}
\textbf{Boundedness of $\contact X$.}  The motivating question is addressed in Section~\ref{sec:boundedcontact}, where we prove:

\begin{thmA}[Corollary~\ref{cor:boundedcontactsimple}, Theorem~\ref{thm:boundedcontact}]\label{thm:thmD}
Let $\mathbf X$ be a strongly locally finite, essential, one-ended CAT(0) cube complex.  Then
\[\diam(\contact X)\leq\diam(\crossing X)\leq\diam((\simp\mathbf X)^1)\leq 2(\diam(\crossing X)-1),\]
so that $\crossing X$ has finite diameter if and only if the 1-skeleton of $\simp\mathbf X$ is bounded, and, if $\simp\mathbf X$ is bounded, then $\contact X$ is bounded.  If, in addition, $\mathbf X$ has finite degree, then $\contact X$ is bounded if and only if $\simp\mathbf X$ has bounded 1-skeleton.  Finally, if $\mathbf X$ is a strongly locally finite CAT(0) cube complex and $\contact X$ is bounded, then for each isolated 0-simplex $v$ of $\simp\mathbf X$, $v$ lies in the image of $\simp H$ for some hyperplane $H$.
\end{thmA}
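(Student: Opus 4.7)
The leftmost inequality $\diam(\contact X)\leq\diam(\crossing X)$ is immediate, since $\crossing X$ is a spanning subgraph of $\contact X$: every $\crossing X$-path is a $\contact X$-path of the same length.

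For the middle inequality $\diam(\crossing X)\leq\diam((\simp\mathbf X)^1)$, the plan is to associate to each hyperplane $H$ a $0$-simplex $v_H$ of $\simp\mathbf X$. Since $\mathbf X$ is essential, one-ended, and strongly locally finite, each hyperplane $H$ inherits these properties as a lower-dimensional CAT(0) cube complex, so $\simp H$ is nonempty and there is a natural map $\simp H\to\simp\mathbf X$; let $v_H$ be the image of any $0$-simplex of $\simp H$, so that a representative ray for $v_H$ lies inside $H$. The key step: if $v_H$ and $v_{H'}$ are joined by a $1$-simplex $\sigma$ of $\simp\mathbf X$, then the structure theorem for UBSs shows that $\sigma$ decomposes into two minimal sub-UBSs whose elements pairwise cross, yielding $K$ in the UBS representing $v_H$ and $K'$ in that of $v_{H'}$ with $K$ crossing $K'$; since $K$ also crosses $H$ and $K'$ crosses $H'$, we obtain a $\crossing X$-path $H,K,K',H'$. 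Iterating this along a $(\simp\mathbf X)^1$-geodesic between $v_H$ and $v_{H'}$ and then choosing $H,H'$ to realize $\diam(\crossing X)$ yields the inequality.

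For $\diam((\simp\mathbf X)^1)\leq 2(\diam(\crossing X)-1)$, I reverse the construction. Given $0$-simplices $v,v'$ represented by rays $\gamma,\gamma'$, fix hyperplanes $H$ in the UBS of $\gamma$ and $H'$ in that of $\gamma'$, and a $\crossing X$-geodesic $H=H_0,\ldots,H_n=H'$ of length $n\leq\diam(\crossing X)$. At each consecutive crossing $H_i\cap H_{i+1}$, a quarter-flat sector in $N(H_i)\cap N(H_{i+1})$ produced by full visibility of maximal simplices yields an intermediate $0$-simplex $w_{i,i+1}$ adjacent in $(\simp\mathbf X)^1$ to $v_{H_i}$ and $v_{H_{i+1}}$; splicing these and identifying the endpoints $v_{H_0},v_{H_n}$ with $v,v'$ (using that the defining UBSs contain $H,H'$) produces a $(\simp\mathbf X)^1$-path of length at most $2(n-1)$.

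For the final claim, let $v=[\gamma]$ be an isolated $0$-simplex and suppose for contradiction that $v$ lies in the image of no $\simp H$. Isolation of $v$ combined with the structure theorem forbids the UBS $\mathcal W(\gamma)$ from containing an infinite transverse sub-UBS, for this would extend $v$ to a $1$-simplex. However, boundedness of $\contact X$ forces all elements of $\mathcal W(\gamma)$ to lie in a fixed-radius $\contact X$-ball around any representative, and strong local finiteness together with a pigeonhole argument over the infinite set $\mathcal W(\gamma)$ produces a single hyperplane $H$ through which infinitely many elements of $\mathcal W(\gamma)$ pass (in the sense of crossing, or contacting $H$ in a controlled way); restricting to $H$ yields a UBS of $H$ whose image under $\simp H\to\simp\mathbf X$ is equivalent to $v$, placing $v$ in the image of $\simp H$ and contradicting the hypothesis. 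The main obstacle will be achieving the sharp constant $2$ in the right inequality: the intermediate $0$-simplices $w_{i,i+1}$ must be selected coherently along the $\crossing X$-geodesic so that consecutive quarter-flats abut at a common vertex of $(\simp\mathbf X)^1$, which requires a simultaneous choice of sectors rather than independent ones at each step.
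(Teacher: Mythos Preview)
Your proposal has genuine gaps in three of the four parts.

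\textbf{Middle inequality.}  Your map $H\mapsto v_H$ (a $0$-simplex in the image of $\simp H$) yields, for a $(\simp\mathbf X)^1$-path $v_H=w_0,\ldots,w_d=v_{H'}$, a $\crossing X$-path $H,K_0,\ldots,K_d,H'$ of length $d+2$, not $d$: you pick $K_i$ in the UBS for $w_i$ with $K_{i-1}\bot K_i$, and then prepend/append $H,H'$ since $K_0\bot H$ and $K_d\bot H'$.  This gives only $\diam(\crossing X)\leq\diam((\simp\mathbf X)^1)+2$.  The paper instead uses essentiality to choose, for the given $H,H'$, minimal UBSs $\mathcal U,\mathcal V$ in which $H,H'$ are \emph{initial}: every other element of $\mathcal U$ is separated from $H'$ by $H$, and symmetrically.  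A $(\simp\mathbf X)^1$-path $u,s_1,\ldots,s_k,v$ yields a $\crossing X$-path $U\bot S_1\bot\cdots\bot S_k\bot V$ with $U\in\mathcal U,V\in\mathcal V$; since $H$ separates $U$ from $V$, some $S_t$ crosses $H$, and similarly some $S_r$ crosses $H'$, giving $H\bot S_t\bot\cdots\bot S_r\bot H'$ of length $\leq k+1$.  This separation idea is what you are missing.

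\textbf{Right inequality.}  Your approach is essentially orthogonal to the paper's and, as you suspect, does not deliver the constant.  Two concrete obstructions: first, for intermediate crossings $H_i\bot H_{i+1}$ there is no reason that $N(H_i)\cap N(H_{i+1})$ supports a ray at all (it could be compact), so your ``quarter-flat sector'' $w_{i,i+1}$ need not exist; second, your endpoints $v_{H_0},v_{H_n}$ are not the given $v,v'$, and ``the defining UBSs contain $H,H'$'' does not identify them or even make them adjacent.  The paper's argument is structurally different: it grades hyperplanes by $\crossing X$-distance from a fixed $H_0$, shows $\mathbf X$ decomposes as an iterated pseudoproduct, and inducts on the top grade $R$, with each step costing $2$ in $\diam(\simp\mathbf X)$.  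The base case $R=2$ and the inductive step both rely on a separation lemma: compact indecomposability forces every finite subgraph of $\crossing X$ to have connected complement, which is what produces the required infinite families of commonly-crossing hyperplanes.

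\textbf{Final claim.}  Your pigeonhole argument does not go through: strong local finiteness bounds only cliques in $\crossing X$, not degrees in $\contact X$, so a bounded-diameter $\contact X$-ball can be infinite and there is no finite set over which to pigeonhole.  The paper's route is via the projection trichotomy (Theorem~B): an isolated $0$-simplex is visible and represented by a rank-one ray $\gamma$; since $\contact X$ is bounded, $\gamma$ cannot project to an unbounded path, and since $\gamma$ is rank-one it cannot bound an eighth-flat; the only remaining option is that $\gamma$ lies in a uniform neighbourhood of some hyperplane $H$, whence all but finitely many elements of $\mathcal W(\gamma)$ cross $H$ and $v$ lies in $\simp H$.
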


If $\crossing X$ is bounded, $\mathbf X$ decomposes as an \emph{iterated pseudoproduct}, a notion generalizing that of a product and providing a nexus between $\crossing X$ and $\simp\mathbf X$.  If $\gamma$ is a combinatorial geodesic ray in $\mathbf X$, then the \emph{projection} $\gproj\gamma$ of $\gamma$ to the contact graph is the embedded ray in $\contact X$ that traverses the vertices corresponding to hyperplanes crossing $\gamma$, in the order that they cross $\gamma$.  Projecting rays to $\contact X$ allows one to detect unboundedness of $\contact X$:

\begin{thmB}[Theorem~\ref{thm:trichotomy1}, Theorem~\ref{thm:trichotomy2}]\label{thm:thmB}
Let $\gamma$ be a combinatorial geodesic ray, let $\gproj\gamma\subseteq\contact X$ be its projection to the contact graph, and let $\Lambda(\gamma)$ be the full subgraph of $\contact X$ generated by $\gproj\gamma$.  Suppose there exists $B<\infty$ such that, if $K_{p,p}\subseteq\Lambda(\gamma)$, then $p\leq B$.  Suppose, moreover, that for all $R\geq 0$, there exists $T$ such that for all hyperplanes $H$, any subpath of $\gamma$ that lies in $N_R(N(H))$ has length at most $T$.  Then the inclusion $\gproj\gamma\hookrightarrow\contact X$ is a quasi-isometric embedding.  Moreover, if $\diam_{_{\contact X}}(\gproj\gamma)<\infty$, then either $\gamma$ lies in an isometrically embedded \emph{eighth-flat} or $\gamma$ lies in a uniform neighborhood of some hyperplane.
\end{thmB}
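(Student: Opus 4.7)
The inclusion $\gproj\gamma\hookrightarrow\contact X$ is $1$-Lipschitz since it sends adjacent vertices of the ray $\gproj\gamma$ to adjacent vertices of $\contact X$. The content of the quasi-isometric embedding assertion (Theorem~\ref{thm:trichotomy1}) is the reverse inequality: a constant $K$ with $|b-a|\le K\,d_{_{\contact X}}(W_a,W_b)+K$ for all hyperplanes $W_a,W_b$ crossing $\gamma$, indexed so that $W_a$ is crossed before $W_b$ along $\gamma$. I would argue by contradiction. If no such $K$ exists, then there is a sequence of pairs $(a_k,b_k)$ with $b_k-a_k\to\infty$ while $n_k:=d_{_{\contact X}}(W_{a_k},W_{b_k})$ grows arbitrarily slowly. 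For each such pair, fix a $\contact X$-geodesic $W_{a_k}=V^k_0,V^k_1,\ldots,V^k_{n_k}=W_{b_k}$, realize it as an edge-path $p_k$ in $\mathbf X^{(1)}$ by stepping through carriers (consecutive contact lets one pass from $N(V^k_j)$ to $N(V^k_{j+1})$ crossing at most one hyperplane), and close $p_k$ with the sub-ray $\gamma[W_{a_k},W_{b_k}]$. Fill the resulting loop with a minimal disc diagram $D_k$ in the sense of Sageev and Wise.

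Each hyperplane $W_i$ crossed by $\gamma$ with $a_k<i<b_k$ extends to a dual curve in $D_k$ that must exit through $p_k$. Since $b_k-a_k\gg n_k$, pigeonhole yields some fixed $j$ for which arbitrarily many such dual curves exit $D_k$ near $V^k_j$. The crucial step is to argue that this clustering must produce one of two obstructions. In the first, many of these clustered $W_i$'s have their $\gamma$-crossings within a uniform $R$ of $N(V^k_j)$, giving an arbitrarily long sub-arc of $\gamma$ in $N_R(N(V^k_j))$ and contradicting hypothesis~(2). In the second, after a further pigeonhole on how the clustered dual curves interleave in $D_k$, two large subfamilies among them pairwise cross, producing $K_{p,p}\subseteq\Lambda(\gamma)$ with $p>B$ and contradicting hypothesis~(1). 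I expect the main obstacle to be the dichotomy between these two alternatives: ruling out intermediate patterns requires combining the linear order of $\gamma$-crossings with the combinatorics of dual curves in $D_k$.

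For the trichotomy (Theorem~\ref{thm:trichotomy2}), assume $\diam_{_{\contact X}}(\gproj\gamma)=D<\infty$. If $\gamma$ lies in $N_R(N(H))$ for some hyperplane $H$ and some $R$, the second alternative of the theorem holds. Otherwise, the first (fellow-traveling) alternative of the previous disc diagram analysis is unavailable, so the $K_{p,p}$ alternative must occur for every $p$, producing arbitrarily large complete bipartite subgraphs of $\Lambda(\gamma)$. Appealing to Theorem~\ref{thm:structureofboundarysets}, I decompose $\mathcal W(\gamma)$ into minimal UBSs; the unbounded $K_{p,p}$ inside $\Lambda(\gamma)$ forces at least two inequivalent minimal UBSs $\mathcal U_1,\mathcal U_2$ whose members pairwise cross outside a finite set. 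A final disc diagram argument then realizes the mutual crossing pattern between $\mathcal U_1$ and $\mathcal U_2$ as an isometric embedding of the standard half-quadrant cube complex --- the \emph{eighth-flat} --- through which $\gamma$ passes. The delicate point here is upgrading the bipartite crossing combinatorics to a genuine isometric embedding, which requires careful tracking of the relative $\gamma$-positions of the crossings in $\mathcal U_1$ versus $\mathcal U_2$.
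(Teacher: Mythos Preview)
Your outline for Theorem~\ref{thm:trichotomy1} is essentially the paper's argument: realize a $\contact X$-geodesic as a carrier path in $\mathbf X$, fill against $\gamma$ with a minimal disc diagram, show dual curves run from the carrier side to $\gamma$, and pigeonhole onto a single $A_m$. The paper packages the ``dichotomy'' you flag as delicate into a separate lemma (Lemma~\ref{lem:crossingbound}), which shows that if neither~(2) nor~(3) holds then there is a uniform bound $B$ on $|i-j|$ whenever $H_i\coll H_j$; this bound is what lets one control the lengths of $L_1,L_M$ in the sub-diagram and conclude the $2B$-fellow-travelling. Your sketch is on the right track but would need this intermediate statement made precise.

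For Theorem~\ref{thm:trichotomy2} there is a genuine gap. You argue: if $\gamma$ is not in a uniform neighborhood of any single hyperplane, then the fellow-travelling alternative of the trichotomy fails, forcing $K_{p,p}\subset\Lambda(\gamma)$ for all $p$. But alternative~(3) of Theorem~\ref{thm:trichotomy1} only asserts arbitrarily long subpaths of $\gamma$ near \emph{varying} hyperplanes $H=H(t)$; it does not say $\gamma\subset N_R(N(H))$ for a fixed $H$. So ruling out the conclusion of Theorem~\ref{thm:trichotomy2} does not rule out~(3), and you cannot conclude that~(2) holds. Even granting arbitrarily large $K_{p,p}$, the step to ``at least two inequivalent minimal UBSs in $\mathcal W(\gamma)$'' via Theorem~\ref{thm:structureofboundarysets} is not automatic: finite bipartite subgraphs need not assemble into the infinite ``each element of $\mathcal U_2$ crosses all but finitely many of $\mathcal U_1$'' structure that the UBS decomposition requires.

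The paper's route for Theorem~\ref{thm:trichotomy2} avoids both Theorem~\ref{thm:trichotomy1} and the UBS machinery entirely. It first extracts a nested chain $\{H_{n_i}\}\subset\mathcal W(\gamma)$ (using the no-infinite-clique hypothesis), then uses boundedness of $\gproj\gamma$ directly: any $\contact X$-path from $H_{n_0}$ to $H_{n_j}$ must contain a hyperplane crossing every intermediate $H_{n_i}$, so there exist $V_k$ crossing $H_{n_i}$ for all $i\ge k$. Let $\mathcal V$ be the set of hyperplanes crossing all but finitely many $H_{n_i}$. If $\mathcal V\cap\mathcal W(\gamma)$ is finite, a short separation count shows $\gamma\subset N_R(N(V))$ for some $V\in\mathcal V$. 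If it is infinite, pick $H_0\in\mathcal V\cap\mathcal W(\gamma)$ and build the eighth-flat concretely via gates: let $g_n$ be the gate of $\gamma(n)$ in $N(H_0)$, so the $g_n$ trace a ray $\sigma$ in $N(H_0)$, and the strips $S_n\cong\omega_n\times[-\tfrac12,\tfrac12]$ between $\gamma(n)$ and $g_n$ assemble into an isometrically embedded eighth-flat with top bounding ray $\gamma$ and bottom bounding ray $\sigma$. This gate construction is the missing idea in your approach.
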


Theorem~B is used in conjunction with essentiality and local finiteness to show that rank-one combinatorial geodesic rays correspond to 0-simplices of the boundary, and is of interest in its own right.  The remainder of Theorem~A uses the duality between cube complexes and wallspaces, disc diagram arguments, and basic facts about $\simp\mathbf X$.

\vspace{2mm}
\textbf{Combinatorial divergence.}  Recently, Behrstock and Charney studied divergence in CAT(0) cube complexes associated to right-angled Artin groups.  Motivated by their results, we examine, in Section~\ref{sec:divergence}, combinatorial divergence and divergence of geodesics in the strongly locally finite CAT(0) cube complex $\mathbf X$, without reference to any group action.  For example, we prove:

\begin{thmC}[Theorem~\ref{thm:lineardivergence}]\label{thm:thmC}
Let $\mathbf X$ be strongly locally finite.  Then $\simp\mathbf X^{(1)}$ is bounded if and only if $\mathbf X$ has weakly uniformly linear divergence.
\end{thmC}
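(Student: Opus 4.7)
The plan is to derive both implications of the equivalence by combining Theorem~A with the analysis of UBSs and visibility of $0$-simplices.

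For the forward direction, assume $\simp\mathbf X^{(1)}$ is bounded. First reduce to the one-ended essential case by peeling off ends and inessential hyperplanes, noting that rays into distinct ends diverge at most linearly thanks to strong local finiteness. Then Theorem~A yields that $\contact X$ is bounded, and hence $\mathbf X$ decomposes as an iterated pseudoproduct. Given combinatorial geodesic rays $\alpha,\beta$ based at $x_0$ and a radius $r$, I would construct a path from $\alpha(r)$ to $\beta(r)$ staying outside $B(x_0,r/2)$ as follows: use the pseudoproduct to cross into a ``transverse'' factor, translate parallel to the deep portions of the rays, then cross back. The length of this detour is linear in $r$, with a coefficient depending only on $\diam(\simp\mathbf X^{(1)})$ and the structural parameters of the iterated pseudoproduct, yielding weakly uniformly linear divergence.

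For the converse, suppose $\simp\mathbf X^{(1)}$ is unbounded. Pick $0$-simplices $u_n,v_n$ with $d_{\simp\mathbf X^{(1)}}(u_n,v_n)\to\infty$. By Theorem~\ref{thm:visiblesimplex}, each is represented by a combinatorial geodesic ray $\gamma_n$, $\delta_n$ from a common vertex $x_0$. Large distance between $u_n,v_n$ in $\simp\mathbf X^{(1)}$ means, via the structure theorem for UBSs (Theorem~\ref{thm:structureofboundarysets}), that the UBSs they represent are separated by a long chain of pairwise-noncrossing UBS classes. I would argue that any path from $\gamma_n(r)$ to $\delta_n(r)$ avoiding $B(x_0,r/2)$ must cross hyperplanes representing each link of this chain at depth comparable to $r$; strong local finiteness then forces the length of such a detour to grow at rate at least $d_{\simp\mathbf X^{(1)}}(u_n,v_n)\cdot r$, which is superlinear for $n$ large, ruling out weakly uniform linear divergence.

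The main obstacle is the converse direction: extracting, from a coarse distance bound in $\simp\mathbf X^{(1)}$, genuine hyperplanes that obstruct short detours between deep points on $\gamma_n$ and $\delta_n$. The UBS structure theorem supplies the raw material, namely the minimal UBS pieces and the noncrossing relations among them, but one still must rule out paths that efficiently ``slide around'' these hyperplanes near the basepoint; this is where disc diagram arguments in the spirit of those used elsewhere in the paper come in. A secondary subtlety is making the forward constant truly uniform: it must depend only on invariants of $\mathbf X$ (such as $\diam(\simp\mathbf X^{(1)})$ and the combinatorics of the pseudoproduct decomposition), rather than on the particular pair of rays, so the bookkeeping in the iterated pseudoproduct detour construction must extract structural data that is bounded in terms of $\simp\mathbf X^{(1)}$ alone.
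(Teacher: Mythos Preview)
Your approach differs substantially from the paper's, and both directions have genuine gaps.

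The paper's proof is entirely self-contained and does not use Theorem~A, the contact graph, or pseudoproducts. It proceeds via the \emph{simplicial Tits distance} $\eta(v,v')$ (the length of a shortest simplex-path between $v$ and $v'$). For the forward direction, Lemma~\ref{lem:divergeupper1} builds an explicit $r$-avoiding geodesic between $\gamma(r)$ and $\gamma'(r)$ when $v\subseteq v'$ or $v,v'$ share a simplex, by partitioning the separating hyperplanes into sets $\mathcal A_r,\mathcal B_r,\mathcal C_r$ and flipping them in an order that never decreases the distance to $x_0$; Lemma~\ref{lem:divergeupper2} then induces along a simplex-path to get $\dive{\gamma}{\gamma'}(r)\leq 2(\eta(v,v')+1)r+B$. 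For the converse, Lemma~\ref{lem:divergelower1} subdivides a shortest $r$-avoiding path $S_r$ of length $\leq 2Nr+M$ into $N$ pieces of length $\approx 2r$, uses K\"onig's lemma to produce intermediate rays through the subdivision points, and shows (via Lemma~\ref{lem:divergelower2}) that consecutive rays represent simplices at $\eta$-distance $\leq 1$; this yields a simplex-path of length $\leq N$.

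Your forward direction routes through Theorem~A, which requires essentiality and one-endedness---hypotheses not present in Theorem~C. The promised reduction (``peeling off ends and inessential hyperplanes'') is not standard and needs a real argument; in particular, you must show that divergence and the diameter of $\simp\mathbf X$ are both controlled under this passage. More seriously, the pseudoproduct detour ``cross into a transverse factor, translate, cross back'' is not a construction in $\mathbf X$: the pseudoproduct only gives an isometric embedding $e:\mathbf X\hookrightarrow\mathbf Q_1\times\mathbf Q_2$, and paths in the product need not lie in $e(\mathbf X)$. You would end up having to build $r$-avoiding paths directly in $\mathbf X$, which is exactly what the paper's hyperplane-flipping argument accomplishes without the detour through Theorem~A.

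Your converse direction is, as you acknowledge, only a heuristic. The assertion that any $r$-avoiding path ``must cross hyperplanes representing each link of this chain at depth comparable to $r$'' is precisely what needs proof, and the UBS structure theorem does not supply it. The paper's subdivision-plus-K\"onig argument is the missing mechanism: it converts a linear upper bound on $|S_r|$ directly into a bounded-length simplex-path, with no disc diagrams required.
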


Theorem~C means that there exists a constant $A=A(\mathbf X)$ such that for all combinatorial geodesic rays $\gamma,\gamma'$ with common initial 0-cube, and for all $r\geq 0$, there exists a path $P$ joining $\gamma(r),\gamma'(r)$ that avoids the ball of radius $r-1$ about $\gamma(0)$ and has length bounded by $Ar+B$, where only $B$ depends on $\gamma,\gamma'$.  The situation is similar for divergence of combinatorial geodesics:

\begin{corD}[Corollary~\ref{cor:superlinear1}]\label{cor:corD}
Let $\alpha:\reals\rightarrow\mathbf X$ be a bi-infinite combinatorial geodesic representing the simplices $v^{\pm}$ of $\simp\mathbf X$.  Then $v^-$ and $v^+$ lie in different components of $\simp\mathbf X$ if and only if the divergence of the geodesic $\alpha$ is super-linear.
\end{corD}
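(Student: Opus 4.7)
Write $\alpha^+ = \alpha|_{[0,\infty)}$ and $\alpha^- = \alpha|_{(-\infty,0]}$, so that $v^\pm$ is the equivalence class of the UBS $\mathcal W(\alpha^\pm)$. The strategy is to convert edge-paths in $\simp\mathbf X^{(1)}$ joining $v^+$ to $v^-$ into linear-length detours around $\alpha(0)$ in $\mathbf X$, and conversely to extract such edge-paths from uniformly linear families of detours, using the visibility of maximal simplices (Theorem~\ref{thm:visiblesimplex}) together with the structural decomposition of UBSs (Theorem~\ref{thm:structureofboundarysets}). In spirit this specializes the proof of Theorem~\ref{thm:lineardivergence} to the single pair $(\alpha^+, \alpha^-)$.

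\textbf{Same component implies at most linear divergence.} Suppose $v^+ = w_0, w_1, \ldots, w_k = v^-$ is an edge-path in $\simp\mathbf X^{(1)}$. Each $1$-simplex $[w_i, w_{i+1}]$ is a face of a maximal simplex, which by Theorem~\ref{thm:visiblesimplex} is visible and hence yields an isometrically embedded combinatorial flat sector $Q_i \cong [0,\infty)^2$ based at some vertex $x_i$, with rays representing $w_i$ and $w_{i+1}$ running along its two edges. Inside $Q_i$, the L-shaped corner path between the points at combinatorial radius $r$ on the two edges has length $2r$ and avoids the ball of radius $r$ about $x_i$. Concatenating these detours, and paying a uniformly bounded cost to interpolate between successive basepoints $x_i, x_{i+1}$ along the shared ray representing $w_i$, produces a path from $\alpha(r)$ to $\alpha(-r)$ of length at most $(2k)r + C$ avoiding the ball of radius $r - 1$ about $\alpha(0)$. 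Hence the divergence of $\alpha$ is at most linear.

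\textbf{At most linear divergence implies same component.} Conversely, suppose there exist constants $A, B$ such that for each $r$ there is a path $P_r$ from $\alpha(r)$ to $\alpha(-r)$ of length at most $Ar + B$ avoiding the open ball of radius $r - 1$ about $\alpha(0)$. Subdivide $P_r$ into $N := \lceil A \rceil + 1$ subpaths, producing points $x_r^0 = \alpha(r), x_r^1, \ldots, x_r^N = \alpha(-r)$ with consecutive pairs at combinatorial distance at most $r + O(1)$ and each $x_r^j$ at distance comparable to $r$ from $\alpha(0)$. Strong local finiteness supplies, via a diagonal Arzel\`a--Ascoli argument, combinatorial geodesic rays $\gamma^0 = \alpha^+, \gamma^1, \ldots, \gamma^N = \alpha^-$ issuing from $\alpha(0)$ obtained as subsequential limits of the segments $[\alpha(0), x_r^j]$. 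For each $j$ the task is to show that the hyperplanes crossing both $\gamma^j$ and $\gamma^{j+1}$ form an infinite UBS $\mathcal U_j$: since $d(\alpha(0), x_r^j) \sim r$ while $d(x_r^j, x_r^{j+1}) \leq r + O(1)$, a positive fraction of the hyperplanes dual to the initial length-$r$ segment of $\gamma^j$ must also be dual to the corresponding segment of $\gamma^{j+1}$, and passing to the limit yields the desired infinite common set. Applying Theorem~\ref{thm:structureofboundarysets} to $\mathcal U_j$ produces a simplex of $\simp\mathbf X$ containing both $[\gamma^j]$ and $[\gamma^{j+1}]$ as faces, so these $0$-simplices lie in the same component of $\simp\mathbf X^{(1)}$; chaining over $j$ places $v^+$ and $v^-$ in the same component.

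\textbf{Main obstacle.} The delicate step is the converse: verifying that the common set $\mathcal U_j$ is genuinely infinite \emph{and} is a UBS, rather than a finite or non-unidirectional intersection of hyperplane sets dual to $\gamma^j$ and $\gamma^{j+1}$. This requires counting hyperplanes separating $\alpha(0)$ from $x_r^j$ and $x_r^{j+1}$, checking that the shared hyperplanes accumulate at infinity along both rays, and invoking the unidirectionality clauses in the definition of a UBS. This is the combinatorial heart of the argument distinguishing linear from super-linear divergence; by comparison, the forward direction relies only on the product geometry of a flat quadrant and is relatively routine.
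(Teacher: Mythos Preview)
Your strategy matches the paper's: translate between simplex-path distance in $\simp\mathbf X$ and growth of $r$-avoiding detour lengths, then specialise Theorem~\ref{thm:lineardivergence} to the single pair of rays.  The converse direction is essentially the paper's Lemma~\ref{lem:divergelower1}: subdivide $P_r$, take K\"onig limits to obtain intermediate rays, and argue that adjacent rays determine overlapping simplices.  The paper runs a dichotomy there---if the number $t_i(r)$ of common hyperplanes is unbounded, conclude directly; if bounded, fall back on Lemma~\ref{lem:divergelower2}---whereas your choice $N=\lceil A\rceil+1$ forces $t_i(r)\geq r/2-O(1)$ and lands you always in the first case.  You are right that the passage from ``a positive fraction of common hyperplanes at each stage'' to ``the limit rays share an infinite UBS'' is the delicate step; the paper is also brief at exactly this point.

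There is, however, a genuine gap in your forward direction.  You pass from ``the $1$-simplex $[w_i,w_{i+1}]$ lies in a visible maximal simplex'' to ``there is a quarter-flat $Q_i\cong[0,\infty)^2$ with boundary rays representing $w_i$ and $w_{i+1}$''.  Theorem~\ref{thm:visiblesimplex} gives only a single ray representing the maximal simplex, not an orthant; the orthant comes from Theorem~\ref{thm:visiblefracflat}, which requires each of $w_i,w_{i+1}$ to be \emph{individually} visible.  Example~\ref{exmp:invisible} shows this can fail: in an eighth-flat one of the two $0$-simplices is invisible, so no ray represents it alone and no quarter-flat has it as a boundary direction.  The paper sidesteps this by building $r$-avoiding paths directly from hyperplane combinatorics (Lemma~\ref{lem:divergeupper1}): given rays whose simplices share a face or lie in a common simplex, one partitions the hyperplanes into the sets $\mathcal A_r,\mathcal B_r,\mathcal C_r$ and flips them in a controlled order to produce a geodesic detour, with no appeal to flat sectors or to visibility of individual $0$-simplices.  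A secondary issue: even where your quarter-flats exist, $Q_{i-1}$ and $Q_i$ share only a boundary \emph{simplex}, so their $w_i$-rays in $\mathbf X$ are merely almost-equivalent, and the ``uniformly bounded cost to interpolate'' between them still needs justification.
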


Simplicial boundary considerations can apparently be used to make more refined statements about divergence functions of cube complexes in terms of the simplicial structure of $\simp\mathbf X$.

\vspace{2mm}
\textbf{Rank-one isometries and divergence.}  The slightly restricted notion of divergence used in stating the above results aligns with the standard definition (i.e. that introduced in~\cite{GerstenDivergence,GerstenDivergence2}) when $\mathbf X$ is combinatorially geodesically complete and admits a geometric group action~\cite[Lemma~7.3.5]{HagenPhD}.  In Section~\ref{sec:groupdiverge}, we observe the following:

\begin{thmE}\label{thm:thmE}
Let $G$ act properly, cocompactly, and essentially on the combinatorially geodesically complete CAT(0) cube complex $\mathbf X$.  If $\contact X$ is bounded, then $G$ has linear divergence if and only if $\contact X$ decomposes as the join of two infinite proper subgraphs (i.e. $\simp\mathbf X$ decomposes as the simplicial join of two proper subcomplexes).  Otherwise, $G$ has at least quadratic divergence.
\end{thmE}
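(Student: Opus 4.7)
The plan is to combine the Caprace--Sageev rank-rigidity theorem with the results of this paper that match product decompositions of $\mathbf X$ to join decompositions of $\simp\mathbf X$ (Theorem~\ref{thm:productsandjoins}) and that link the contact graph to divergence (Theorem~A and Theorem~B).

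First, I would apply rank rigidity to the action of $G$ on $\mathbf X$: under the hypotheses, either $\mathbf X$ admits a $G$-invariant nontrivial product decomposition $\mathbf X=\mathbf X_1\times\mathbf X_2$ with both factors unbounded, or $G$ contains an element acting as a rank-one isometry. These two alternatives will correspond, respectively, to the two sides of the stated dichotomy.

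In the product case, Theorem~\ref{thm:productsandjoins} gives $\simp\mathbf X=\simp\mathbf X_1*\simp\mathbf X_2$, from which it follows that $\contact X=\contact{X_1}*\contact{X_2}$ is the join of two infinite subgraphs and has diameter at most $2$. For linear divergence, I would carry out the standard ``around the corner'' construction in a product: two combinatorial geodesic rays $\gamma,\gamma'$ from a common basepoint can be joined at level $r$ by a path that moves in only one factor at a time, using the unboundedness of each factor (together with combinatorial geodesic completeness) to route outside the ball of radius $r-1$, with total length linear in $r$.

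In the rank-one case, let $\alpha$ be the axis of a rank-one element. Since $\alpha$ cannot be contained in any eighth-flat and no hyperplane stays within bounded neighborhood of $\alpha$, the hypotheses of Theorem~B are satisfied (any large $K_{p,p}$ subgraph of $\Lambda(\alpha)$ would force an eighth-flat containing $\alpha$), so $\gproj\alpha$ is quasi-isometrically embedded in $\contact X$ and in particular $\contact X$ is unbounded. The two ends of $\alpha$ then represent isolated $0$-simplices $v^{\pm}$ in distinct components of $\simp\mathbf X$, so Corollary~D yields super-linear divergence of $\alpha$. Using cocompactness of the $G$-action and passing to large powers $g^n$ of the rank-one element, the usual pumping argument promotes this to a quadratic lower bound on the divergence function of $G$.

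The main obstacle, I expect, is the last step: upgrading super-linear divergence of a single axis to a quadratic lower bound on group divergence within the purely combinatorial framework of this paper. The usual CAT(0) arguments (e.g.\ Ballmann-style pumping for rank-one elements) must be re-executed with disc-diagram estimates in place of CAT(0) comparison, and cocompactness must be used uniformly so that the bound applies to arbitrary pairs of combinatorial geodesic rays, not only to conjugates of the chosen axis. A secondary obstacle is extracting a genuine $G$-invariant nontrivial product of unbounded factors from Caprace--Sageev in this combinatorial setup, which relies on the passage between the Tits and simplicial boundaries developed in Section~\ref{sec:titscompare}.
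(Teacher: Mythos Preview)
Your overall strategy matches the paper's: invoke Caprace--Sageev rank-rigidity to obtain the dichotomy (product versus rank-one element), then handle each branch separately. The paper's own proof of Theorem~\ref{thm:divergenceofgroup} is a one-line citation: it says the statement is ``an immediate consequence of rank-rigidity and~\cite[Proposition~3.3]{KapovichLeeb}, and the fact that nontrivial product decompositions of $\mathbf X$ correspond to nontrivial join decompositions of $\simp\mathbf X$ and $\contact X$.''

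So the difference is concentrated in the rank-one branch. Where the paper simply cites Kapovich--Leeb for the fact that a rank-one isometry in a proper cocompact CAT(0) space forces at least quadratic divergence, you attempt to derive this internally: Corollary~D gives super-linear divergence of the axis, and then you want to pump using periodicity and cocompactness. This is a legitimate route --- it is essentially what the Kapovich--Leeb argument does --- but, as you correctly identify, carrying it out in the combinatorial setting requires work the paper does not do. The paper sidesteps this by outsourcing to the CAT(0) literature, using that $(\mathbf X,d_{\mathbf X})$ and $(\mathbf X,\mathfrak d_{\mathbf X})$ are quasi-isometric in finite dimension.

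Your ``secondary obstacle'' is not one: the product decomposition is supplied directly by Caprace--Sageev (this is exactly Theorem~\ref{thm:RRcontact} as stated in the paper), and no passage through Section~\ref{sec:titscompare} is needed. Likewise, you do not need Theorem~B to show $\contact X$ is unbounded in the rank-one case; Theorem~\ref{thm:RRcontact} already packages the dichotomy as ``$\contact X$ unbounded $\Leftrightarrow$ rank-one element exists'' under these hypotheses.
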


Corollary~5.4 of~\cite{BehrstockCharney} characterizes right-angled Artin groups with linear divergence: they are exactly those for which the defining graph decomposes as a nontrivial join.  It is evident that the defining graph $\Theta$ of the right-angled Artin group $G_{\Theta}$ decomposes as a join if and only if the contact graph (equivalently, simplicial boundary) of the universal cover of the Salvetti complex of $G_{\Theta}$ decomposes as a join.  In this sense, either the contact graph or the simplicial boundary is the analogue of the defining graph of a right-angled Artin group needed to extend their characterization to cubulated groups that are not RAAGs.

A key ingredient in the proof of Theorem~\ref{thm:thmE} is the \emph{rank-rigidity theorem} of Caprace-Sageev~\cite[Theorem~A,Theorem~B]{CapraceSageev}, and in Section~5, we translate this result into the language of the contact graph and the simplicial boundary.  Roughly speaking, the dichotomy established by rank-rigidity -- if $\mathbf X$ is not a product, then a group acting sufficiently nicely has a rank-one element -- corresponds to the fact that $\contact X$ is either bounded or unbounded.  We also note, in Theorem~\ref{thm:rankonecontact}, that whether an isometry is rank-one is detected by the contact graph.\\

\vspace{-2mm}
\textbf{Acknowledgements.}  I am grateful to Jason Behrstock, Ruth Charney, Victor Chepoi, Michah Sageev, and  
Dani Wise for helpful discussions about cube complexes, boundaries, and divergence.  I enthusiastically thank 
the anonymous referee for making valuable comments and encouraging me to investigate in greater detail the 
material discussed in Section~\ref{sec:titscompare}.  Finally, I thank Mike Carr for extremely helpful comments 
on that section, and Abdul Zalloum for a correction to Lemma 3.18.

\section{Preliminaries}\label{sec:prelim}
We refer the reader to~\cite{BehrstockCharney,CapraceSageev,ChatterjiNiblo04,Chepoi2000,HagenPhD,HaglundSemisimple,Roller98,Sageev95,WiseIsraelHierarchy} for more comprehensive discussions of the background given in this section.
\vspace{-0.3em}
\subsection{CAT(0) cube complexes and contact graphs}\label{sec:cubeprelim}
Throughout this paper, $\mathbf X$ is a CAT(0) cube complex, i.e. a simply connected CW-complex whose cells are Euclidean cubes of the form $[-\frac{1}{2},\frac{1}{2}]^d$ with $0\leq d<\infty$, attached so that any two distinct cubes either have empty intersection or intersect in a common face.  Also, the link of each 0-cube is a flag complex.  The \emph{dimension} of $\mathbf X$ is the supremum of the set of dimensions of cubes of $\mathbf X$.  The \emph{degree} of $x\in\mathbf X^0$ is the number of vertices in its link, and the \emph{degree} of $\mathbf X$ is the supremum of the set of degrees of its 0-cubes.  Note that the degree of $\mathbf X$ is at least the dimension.

\subsubsection{Hyperplanes}
For $d\geq 1$, a \emph{midcube} of the cube $[-\frac{1}{2},\frac{1}{2}]^d$ is a subspace obtained by restricting exactly one coordinate to 0.  A \emph{hyperplane} $H$ of $\mathbf X$ is a connected subspace such that, for each cube $c$, either $H\cap c=\emptyset$ or $H\cap c$ is a midcube of $c$.  The \emph{carrier} $N(H)$ of $H$ is the union of all closed cubes $c$ such that $H\cap c\neq\emptyset$.  The ambient CAT(0) cube complex is assumed to be countable and to contain at least two distinct 0-cubes, and hence at least one hyperplane.

In~\cite[Theorems~4.10,4.11]{Sageev95}, Sageev proved the following for each hyperplane $H$: $H$ is a CAT(0) cube complex with $\dimension(H)<\dimension(\mathbf X)$; $H$ is \emph{2-sided} in the sense that $N(H)\cong H\times[-\frac{1}{2},\frac{1}{2}]$; the complement $\mathbf X-H$ has exactly two nonempty components, $H^+$ and $H^-$.  These are the \emph{halfspaces} associated to $H$.  These facts were established independently, from a slightly different viewpoint, by Chepoi in~\cite{Chepoi2000}.

Subspaces $A,B\subset X$ lying in distinct halfspaces associated to $H$ are \emph{separated} by $H$.  In particular, a 1-cube $c$ whose endpoints are separated by $H$ is \emph{dual to $H$} and $H$ is \emph{dual to $c$}.  The relation ``dual to the same hyperplane'' on the set of 1-cubes coincides with the \emph{Djokovi\'{c}-Winkler relation}, i.e. the transitive closure of the relation which contains $(c,c')$ if the 1-cubes $c$ and $c'$ lie on opposite sides of a 4-cycle in $\mathbf X^1$.

The distinct hyperplanes $H$ and $H'$ \emph{contact} if no third hyperplane $H''$ separates $H$ from $H'$.  $H$ and $H'$ contact if and only if $N(H)\cap N(H')\neq\emptyset$, and this can happen in one of two ways.  First, $H$ and $H'$ can \emph{cross}, i.e. $N(H)\cap N(H')$ contains a 2-cube $s$ whose boundary 4-cycle $c_1c_2c_1'c_2'$ has the property that $c_1$ and $c'_1$ are dual to $H$ and $c_2$ and $c'_2$ to $H'$.  Equivalently, $H$ and $H'$ cross if the four \emph{quarter-spaces} $H^+\cap(H')^+,H^+\cap(H')^-,H^-\cap(H')^+,H^-\cap(H')^-$ are all nonempty.  If $H,H'$ contact and do not cross, then they \emph{osculate}: there are 1-cubes $c,c'$, dual to $H,H'$ respectively, such that $c$ and $c'$ have a common 0-cube and the path $cc'$ does not lie on the boundary path of any 2-cube.  We use the notation $H\bot H'$ to mean that $H$ and $H'$ cross and $H\coll H'$ to mean that they contact.

If $H_1,\ldots,H_n$ are distinct pairwise-crossing hyperplanes, then there is an $n$-cube $c\subset\bigcap_{i=1}^nN(H_i)$ containing a 1-cube dual to each $H_i$.  We say that $c$ is \emph{dual} to the collection $\{H_i\}_{i=1}^n$.  If $\{H_i\}$ is a maximal family of pairwise-crossing hyperplanes, then $c$ is unique.  More generally, if $H_1,\ldots,H_n$ pairwise-contact, then there is at least one 0-cube $c$ with an incident 1-cube dual to each $H_i$.  Thus $\dimension\mathbf X$ is the cardinality of a largest family of pairwise-crossing hyperplanes, and the degree of $\mathbf X$ is the cardinality of a largest family of pairwise-contacting hyperplanes.

\subsubsection{Strong local finiteness}
The locally finite CAT(0) cube complex $\mathbf X$ is \emph{strongly locally finite} if every family of pairwise-crossing hyperplanes is finite.    Note that $\mathbf X$ can be strongly locally finite with infinite dimension and degree.  There are CAT(0) cube complexes that are locally finite but not strongly locally finite: these contain infinite families of pairwise-crossing hyperplanes corresponding to infinite cubes ``at infinity''.  The cube complex dual to the wallspace shown in~\cite[Figure~5]{HruskaWiseAxioms} is of this type.  We often assume that $\mathbf X$ contains no infinite family of pairwise-crossing hyperplanes without hypothesizing strong local finiteness.

\subsubsection{Metrics, geodesics, and hyperplane-equivalence}
Gromov showed that, if $\dimension\mathbf X<\infty$, then by treating each cube as a Euclidean unit cube, one obtains a piecewise-Euclidean geodesic metric $\mathfrak d_{_{\mathbf X}}:\mathbf X\times\mathbf X\rightarrow[0,\infty)^2$ such that $(\mathbf X,\mathfrak d_{_{\mathbf X}})$ is a CAT(0) space~\cite{Gromov87}; more general results of this type were proved by Bridson~\cite{BridsonThesis}.  Recently, Leary showed that this CAT(0) metric also exists when $\mathbf X$ is infinite-dimensional~\cite[Theorem~41]{LearyInfiniteCubes}.

We almost always use a different metric.  A \emph{combinatorial path} in $\mathbf X$ is a continuous map $P:I\rightarrow\mathbf X^1$ of an interval $I$ such that $P(i)\in\mathbf X^0$ for each $i\in I\cap\integers$ and $P$ maps each $[n,n+1]$ homeomorphically to a 1-cube.  The \emph{path-metric} $d_{_{\mathbf X}}:\mathbf X^1\times\mathbf X^1\rightarrow[0,\infty)$ is defined by letting $d_{_{\mathbf X}}(x,y)$ be the length of a shortest combinatorial path joining $x$ to $y$.  If $x,y\in\mathbf X^0$ are 0-cubes, then $d_{_{\mathbf X}}(x,y)$ counts the hyperplanes separating $x$ from $y$.

Haglund showed that $d_{_{\mathbf X}}$ extends to all of $\mathbf X$ in such a way that $d_{_{\mathbf X}}$ restricts to the $\ell^1$ metric on each cube and to the path-metric on $\mathbf X^1$.  For $x,y\in\mathbf X$, $d_{_{\mathbf X}}(x,y)$ is the infimum of the lengths of paths joining $x$ to $y$ that are parallel to the 1-skeleton~\cite{HaglundSemisimple}.  We usually work in the 1-skeleton and consider only combinatorial paths, and unless stated otherwise, we use this ``cubical $\ell_1$'' metric $d_{_{\mathbf X}}$, restricted to the 1-skeleton.  It follows from~\cite[Lemma 2.2]{CapraceSageev} that, if $\dimension\mathbf X<\infty$, then $(\mathbf X,\mathfrak d_{_{\mathbf X}})$ is quasi-isometric to $(\mathbf X,d_{_{\mathbf X}})$.  This fact is sometimes useful for applying facts about the CAT(0) metric in combinatorial contexts and vice versa.

We denote by $N_R(A)$ the smallest subcomplex containing the $R$-neighborhood of $A^0$ in $\mathbf X^0$, where $A$ is a subcomplex.  The subcomplex $Y\subset\mathbf X$ is isometrically embedded if and only if $Y^1\hookrightarrow\mathbf X^1$ is an isometric embedding.  Given any connected subcomplex $Y$, the hyperplane $H$ \emph{crosses} $Y$ if and only if $Y\cap H^+$ and $Y\cap H^-$ are nonempty, and $Y$ is isometrically embedded if and only if $Y\cap H$ is connected for each hyperplane $H$ that crosses $Y$.  Thus $P:I\rightarrow\mathbf X^1$ is a geodesic if and only if $P(I)$ contains at most one 1-cube dual to each hyperplane.  In this case, we also denote the isometric subcomplex $P(I)$ by $P$.  If $I$ is finite, then $P$ is a \emph{(combinatorial) geodesic segment}.  If $I=\reals$, then $P$ is a (bi-infinite) \emph{geodesic}.  Otherwise, $P$ is a \emph{geodesic ray}.

The connected full subcomplex $Y$ is \emph{convex} if for each pair of hyperplanes $H,H'$ that cross $Y$, if $H\coll H'$ then $N(H)\cap N(H')\cap Y\neq\emptyset$ and, if in addition $H\bot H'$, then $H\cap Y$ and $H'\cap Y$ cross.  ($Y$ is \emph{full} if it contains every cube $c$ of $\mathbf X$ whose 1-skeleton appears in $Y$.  \emph{Convex subcomplex} in this paper is understood to denote a full subcomplex with the preceding property.  Convex subcomplexes are therefore CAT(0).)  This can be proved using the fact that $H$ and $N(H)$ are convex in $\mathbf X$, which was proved by Sageev~\cite{Sageev95}, and which is true for either metric since the connected subcomplex $Y$ is CAT(0)-convex if and only if it is combinatorially convex~\cite{HaglundSemisimple}.  From this characterization of convexity, or from the median graph viewpoint, it follows that $\mathbf X$ has the \emph{Helly property}: if $Y_1,\ldots,Y_n$ are pairwise-intersecting convex subcomplexes of $\mathbf X$, then $\cap_{i=1}^nY_n\neq\emptyset$.

Henceforth, $\mathcal W$ will denote the set of all hyperplanes in $\mathbf X$, and $\mathcal W^{\pm}$ the set of all halfspaces.  For any isometrically embedded subcomplex $Y$, the set of hyperplanes that cross $Y$ is $\mathcal W(Y)$.  Occasionally, we shall use the notation $\mathcal W(\alpha)$, where $\alpha$ is a CAT(0) geodesic path in $\mathbf X$, to denote the set of hyperplanes crossing $\alpha$.

\begin{defn}[Hyperplane-equivalent, consuming, almost-equivalent]\label{defn:almostequivalent}
The isometric subcomplex $Y_1$ \emph{consumes} the isometric subcomplex $Y_2$ if $\mathcal W(Y_2)\subseteq\mathcal W(Y_1)$.  If $Y_1$ consumes $Y_2$ and $Y_2$ consumes $Y_1$, then $Y_1$ and $Y_2$ are \emph{hyperplane-equivalent}.  If $|\mathcal W(Y_1)\triangle\mathcal W(Y_2)|<\infty$, then $Y_1$ and $Y_2$ are \emph{almost-equivalent}.
\end{defn}

Almost-equivalence is an equivalence relation, and we shall mostly be concerned with almost-equivalence of combinatorial geodesic rays.

\subsubsection{Medians}
Chepoi proved that the class of graphs that are 1-skeleta of CAT(0) cube complexes is precisely the class of \emph{median graphs}~\cite{Chepoi2000}.  By viewing hyperplanes as Djokovi\'{c}-Winkler classes of 1-cubes, he also established the properties of hyperplanes described above.  A synopsis of the correspondence between these viewpoints appears in~\cite[Section~3]{ChepoiHagen}.

\begin{defn}[Median graph]\label{defn:mediangraph}
The graph $M$ is a \emph{median graph} if for all triples $x,y,z\in M$ of distinct vertices, there exists a unique vertex $m=m(x,y,z)$, the \emph{median} of $x,y,z$, such that $d(x,y)=d(x,m)+d(m,y),\,d(x,z)=d(x,m)+d(m,z),$ and $d(y,z)=d(y,m)+d(m,y).$
\end{defn}

A detailed discussion of the relationship between median algebras, median graphs, and cube complexes appears in~\cite{Roller98}, and the construction of a median graph from a \emph{wallspace} is lucidly discussed in~\cite{NicaCubulating04}.  It is crucial to observe that the invariants of $\mathbf X$ discussed in this paper -- the crossing graph, the contact graph, and the simplicial boundary -- are invariants of $\mathcal W$ and hence of the median graph $\mathbf X^1$ only.

\subsubsection{Essentiality and compact indecomposability}\label{sec:essentialprelim}
Following~\cite{Sageev95}, we define the hyperplane $H$ to be \emph{essential} if each of $H^+$ and $H^-$ contains points arbitrarily far from $H$.  If every hyperplane of $\mathbf X$ is essential, then $\mathbf X$ is \emph{essential}.  $\mathbf X$ is \emph{compactly decomposable} if there exists a compact, convex subcomplex $K$ such that $\mathbf X-K$ is disconnected.  Otherwise, $\mathbf X$ is \emph{compactly indecomposable}.  If $\mathbf X$ is essential and locally finite, then $\mathbf X$ is compactly indecomposable if and only if $\mathbf X^1$ is one-ended.

\subsubsection{Cubulating wallspaces}
Sageev showed that a $G$-cube complex can be constructed from an appropriately chosen family of \emph{codimension-1} subgroups of a group $G$~\cite[Theorem~3.1]{Sageev95}.  In the more general context of a \emph{wallspace} provided by Haglund and Paulin~\cite{HaglundPaulin98}, one can construct a CAT(0) cube complex in essentially the same way. We summarize the approach taken in~\cite{ChatterjiNiblo04}.

A \emph{wallspace} consists of an \emph{underlying set} $\mathcal S$ equipped with a set $\mathbb W$ of \emph{walls}.  A \emph{wall} $W\in\mathbb W$ is a pair $(\mathfrak h(W),\mathfrak h^*(W))$ of disjoint nonempty subsets of $\mathcal S$ whose union is all of $\mathcal S$.  The wall $W$ \emph{separates} $s,s'\in\mathcal S$ if $s\in\mathfrak h(W)$ and $s'\in\mathfrak h^*(W)$.  The walls $W,W'$ \emph{cross} if each of the four possible intersections of their halfspaces is nonempty.  We require that any two elements of $\mathcal S$ be separated by only finitely many walls.  For $s,s'\in\mathcal S$, denote by $\#(s,s')$ the number of walls separating $s$ and $s'$.

Let $\mathbb H$ be the set of halfspaces, and let $\pi:\mathbb H\rightarrow\mathbb W$ be the map that sends $\mathfrak h(W)$ and $\mathfrak h^*(W)$ to $W$ for each $W\in\mathbb W$.  The \emph{dual cube complex} $\mathbf X(\mathcal S,\mathbb W)=\mathbf X$ is formed as follows.  A \emph{consistent orientation} is a section $x:\mathbb W\rightarrow\mathbb H$ of $\pi$, i.e. a choice of exactly one halfspace for each wall, such that $x(W)\cap x'(W)\neq\emptyset$ for all walls $W,W'$.  If $W,W'$ cross, any section $x$ of $\pi$ satisfies $x(W)\cap x(W')\neq\emptyset$.  The consistent orientation $x$ is \emph{canonical} if there exists $s\in\mathcal S$ such that $s\in x(W)$ for all $W\in\mathbb W$.  Each $s\in\mathcal S$ yields a canonical consistent orientation $x_s$, by letting $x_s(W)$ be whichever of $\mathfrak h(W),\mathfrak h^*(W)$ contains $s$.  Clearly, $x_{s}$ and $x_{s'}$ differ exactly on the finite set of walls $W$ that separate $s$ from $s'$.

The 0-skeleton $\mathbf X^0$ consists of all consistent orientations $x$ such that $x$ differs from some (any) $x_s$ on finitely many walls.  The 0-cubes $x,y$ are joined by a 1-cube if and only if $x(W)\neq y(W)$ for a single wall $W$.  The resulting graph $\mathbf X^1$ is median and thus the 1-skeleton of a CAT(0) cube complex.  There is a bijection $\beta:\mathbb W\rightarrow\mathcal W$, and the hyperplanes $\beta(W),\beta(W')$ cross if and only if the walls $W,W'$ cross.  Similarly, $\beta(W)$ and $\beta(W')$ are separated by a hyperplane $\beta(W'')$ if and only if $W''$ separates $W$ from $W'$, and $x_s,x_{s'}$ are separated by $\beta(W)$ if and only if $s,s'$ are separated by $W$.

Consider the wallspace in which $\mathcal S$ is the 0-skeleton of a CAT(0) cube complex and $\mathbb W$ is the set of bipartitions of the 0-skeleton induced by the hyperplanes.  We view a 0-cube $x\in\mathbf X^0$ as consistent, canonical orientation of all walls.  A section $x$ of $\pi$ that is consistent but not canonical is a \emph{0-cube at infinity}.

A corollary of this construction is the existence of \emph{restriction quotients} (see e.g.~\cite{CapraceSageev}).  Let $\mathcal V\subseteq\mathcal W$ be a set of hyperplanes of $\mathbf X$.  Then there exists a cubical quotient map $\rho:\mathbf X\rightarrow\mathbf X(\mathcal V)$, where $\mathcal X(\mathcal V)$ is the cube complex dual to the wallspace whose underlying set is $\mathbf X^0$ and whose walls are induced by the set $\mathcal V$ of hyperplanes.  $\mathbf X(\mathcal V)$ is the \emph{restriction quotient} associated to $\mathcal V$.

If $H,H'\in\mathcal V$ contact in $\mathbf X$, then $\rho(H)\coll\rho(H')$ in $\mathbf X(\mathcal V)$.  The converse holds for crossings but not for osculations.  More precisely, if $\rho(H)\bot\rho(H')$, then $H\bot H'$.  However, if $H,H'\in\mathcal V$, and every hyperplane $U$ separating $H$ from $H'$ in $\mathbf X$ belongs to $\mathcal W-\mathcal V$, then $\rho(H)\coll\rho(H')$.

\subsubsection{Contact graphs and crossing graphs}\label{sec:contactgraphprelim}
The \emph{crossing graph} $\crossing X$ of $\mathbf X$ is a graph whose vertex set is the set $\mathcal W$ of hyperplanes of $\mathbf X$, and we use the same notation for a vertex as for the corresponding hyperplane.  The vertices $H$ and $H'$ are adjacent if and only if $H\bot H'$.  Equivalently, $\crossing X$ is the intersection graph of the family of hyperplanes.  $\crossing X$ appears as the \emph{transversality} graph in work of Niblo and of Roller, and an equivalent construction appears in work of Bandelt, Dress, van de Vel, and others.

The \emph{contact graph} $\contact X$ of $\mathbf X$, introduced in~\cite{HagenQuasiArb}, has vertex set $\mathcal W$, but now $H,H'$ are adjacent if and only if $H\coll H'$.  In particular, $\crossing X$ is a spanning subgraph of $\contact X$.  Equivalently, $\contact X$ is the intersection graph of the family of hyperplane-carriers in $\mathbf X$.  We denote by $H\coll H'$ the closed edge of $\Gamma$ corresponding to the contacting of $H$ and $H'$.

Observe that for each $n$-clique in $\crossing X$, there is an $n$-cube in $\mathbf X$ and for each $n$-clique in $\contact X$, there is a 0-cube in $\mathbf X$ of degree at least $n$.  Hence the dimension of $\mathbf X$ is the clique number of $\crossing X$, and the degree of $\mathbf X$ is the clique number of $\contact X$.

Let $\gamma:I\rightarrow\mathbf X$ be a combinatorial geodesic path, and for each $i$, let $H_i$ be the hyperplane dual to the 1-cube $\gamma([i,i+1])$.  Then $H_i\coll H_{i+1}$ for each $i$, and we obtain a path $\gproj(\gamma)$ in $\contact X$, called the \emph{projection of $\gamma$ to $\contact X$}, that is the concatenation of these edges.  Since $\gamma$ passes through each hyperplane at most once, $\gproj(\gamma)$ is an embedded path in $\contact X$, and spans the subgraph $\Lambda(\gamma)$ generated by $\mathcal W(\gamma)$.  This projection shows that, unlike the crossing graph, $\contact X$ is necessarily connected.

Detailed discussions of the geometric and combinatorial properties of $\contact X$ can be found in~\cite{HagenQuasiArb,HagenPhD} and~\cite{ChepoiHagen}.  Some variants and uses of contact graphs are discussed in~\cite{ChepoiNiceLabeling}.  The main geometric property of contact graphs is Theorem~4.1 of~\cite{HagenQuasiArb}, which says that there exist constants $M\geq 1,C\geq 0$, independent of $\mathbf X$, and a tree $\mathcal T=\mathcal T(\mathbf X)$, such that $\contact X$ is $(M,C)$-quasi-isometric to $\mathcal T$.

We need the following two propositions, which can easily be proved by cubulating wallspaces.  The first is essentially given in Section~2 of~\cite{CapraceSageev}, albeit not in contact graph terms, and characterizes cubical products.  For graphs $A_1,A_2$, we denote their join by $A_1\star A_2$.

\begin{prop}\label{prop:productchar}
The following are equivalent for a CAT(0) cube complex $\mathbf X$:
\begin{compactenum}
\item There exist nonempty convex subcomplexes $\mathbf X_1,\mathbf X_2\subset\mathbf X$ such that $\mathbf X\cong\mathbf X_1\times\mathbf X_2$.
\item There exist nonempty disjoint subgraphs $A_1,A_2\subset\crossing X$ such that $\crossing X\cong A_1\star A_2$.
\item There exist nonempty disjoint subgraphs $A'_1,A'_2\subset\contact X$ such that $\contact X\cong A'_1\star A'_2$, and for all $H_1\in A'_1,H_2\in A'_2$, the edge $H_1\coll H_2$ is a crossing-edge.
\end{compactenum}
\end{prop}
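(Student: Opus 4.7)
The plan is to prove the cycle $(1) \Rightarrow (2) \Rightarrow (3) \Rightarrow (1)$. For $(1) \Rightarrow (2)$, each hyperplane of a product $\mathbf X_1 \times \mathbf X_2$ has the form $H \times \mathbf X_2$ or $\mathbf X_1 \times H'$; partition $\mathcal W$ into sets $A_1, A_2$ accordingly. Any cross-pair $(H \times \mathbf X_2,\,\mathbf X_1 \times H')$ crosses because the product of a 1-cube dual to $H$ with a 1-cube dual to $H'$ is a 2-cube witnessing the crossing. Taking induced subgraphs on each $A_i$ then yields $\crossing X = A_1 \star A_2$. For $(2) \Rightarrow (3)$, let $A'_i$ be the induced subgraph of $\contact X$ on $A_i$. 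Every cross-pair already crosses by (2), hence contacts, so in $\contact X$ every cross-pair edge is a crossing edge, giving $\contact X = A'_1 \star A'_2$ with the required property.

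For $(3) \Rightarrow (1)$, I use the wallspace/cubulation correspondence recalled above. Set $\mathcal V_i = A'_i \subset \mathcal W$ and let $\mathbf Y_i = \mathbf X(\mathcal V_i)$ be the associated restriction quotients. The key point is that, since every hyperplane in $A'_1$ crosses every hyperplane in $A'_2$, the four quarter-spaces are all nonempty for any such cross-pair, so any section $x: \mathcal W \to \mathcal W^{\pm}$ is consistent if and only if its restrictions $x|_{A'_1}$ and $x|_{A'_2}$ are each consistent. The restriction map $x \mapsto (x|_{A'_1}, x|_{A'_2})$ then identifies consistent orientations of $\mathcal W$ with pairs of consistent orientations on the $A'_i$, sends each canonical orientation $x_s$ to the pair of canonical orientations it induces, and respects the finite-symmetric-difference condition coordinate-wise. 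Two orientations of $\mathcal W$ differ on a single wall if and only if the corresponding pair differs in exactly one coordinate, so this 0-skeleton bijection extends to a graph isomorphism $\mathbf X^1 \cong \mathbf Y_1^1 \times \mathbf Y_2^1$ of median graphs; since a CAT(0) cube complex is determined by its 1-skeleton, $\mathbf X \cong \mathbf Y_1 \times \mathbf Y_2$. To produce convex subcomplexes as required in (1), fix any 0-cube $(y_1^0, y_2^0)$ of the product and take $\mathbf X_1 = \mathbf Y_1 \times \{y_2^0\}$ and $\mathbf X_2 = \{y_1^0\} \times \mathbf Y_2$, both of which are convex slices.

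The principal obstacle is the consistency bookkeeping in $(3) \Rightarrow (1)$: the decomposition of consistent orientations relies essentially on the hypothesis that every cross-pair \emph{crosses}, not merely contacts. If a cross-pair were allowed to osculate, then two halfspaces from different factors could be disjoint, a consistent orientation would fail to decompose factor-wise, and the would-be product structure would collapse. This is exactly why the extra crossing-edge condition appears in (3) but not in (2), and why no analogous hypothesis is needed in the crossing-graph characterization.
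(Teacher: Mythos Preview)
Your argument is correct and follows precisely the route the paper indicates: the paper does not spell out a proof but says the proposition ``can easily be proved by cubulating wallspaces'' and cites Caprace--Sageev, and your $(3)\Rightarrow(1)$ step does exactly this, exploiting that cross-pair crossing makes consistency of orientations decompose factor-wise so that $\mathbf X^0\cong\mathbf Y_1^0\times\mathbf Y_2^0$ as wallspace duals. Your closing remark correctly isolates why the crossing hypothesis in~(3) is essential and cannot be weakened to mere contact.
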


The second proposition requires the following notion, which also features in Section~\ref{sec:boundary}.

\begin{defn}[Inseparable set]\label{defn:inseparable}
The set $\mathcal U\subseteq\mathcal W$ is \emph{inseparable} if for all $U,U'\in\mathcal U$, every $H\in\mathcal W$ that separates $U$ from $U'$ belongs to $\mathcal U$.
\end{defn}

Given a set $\mathcal U$ of hyperplanes, let $\Lambda(\mathcal U)$ denote the full subgraph of $\contact X$ induced by $\mathcal U$.  Given an isometrically embedded subcomplex $Y\subseteq\mathbf X$, let $\Lambda(Y)=\Lambda(\mathcal W(Y))$.  Note that $\Lambda(Y)$ is the contact graph of the convex hull of $Y$, and $\Lambda(Y)\cap\crossing X$ is its crossing graph.  Note also that the subcomplex $Y$ is compact if and only if $\mathcal W(Y)$ is finite, and that the convex hull of a compact subcomplex is again compact.  The following is proved in~\cite{HagenPhD} by constructing the cube complex dual to the wallspace $\left(\mathbf X^0,\mathcal U\right)$ and showing that this complex embeds convexly in $\mathbf X$.

\begin{prop}\label{prop:recovery}
Let $\mathcal U$ be a finite inseparable set of hyperplanes in the locally finite CAT(0) cube complex $\mathbf X$.  Then there exists a compact, convex subcomplex $Y\subset\mathbf X$ such that $\mathcal W(Y)=\mathcal U$.
\end{prop}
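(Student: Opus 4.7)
The plan is to produce $Y$ as the image of a convex embedding of the CAT(0) cube complex $\mathbf Y := \mathbf X(\mathbf X^0, \mathcal U)$ dual to the wallspace whose underlying set is $\mathbf X^0$ and whose walls are the bipartitions induced by $\mathcal U$. Since $\mathcal U$ is finite, $\mathbf Y$ has at most $2^{|\mathcal U|}$ many 0-cubes (the consistent orientations of $\mathcal U$), and is therefore a finite, hence compact, CAT(0) cube complex.

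First I would use the Helly property for convex subcomplexes of $\mathbf X$ to show that every consistent orientation $\sigma:\mathcal U\to\mathcal W^{\pm}$ is realized by a 0-cube of $\mathbf X$: the halfspaces $\{\sigma(U)\}_{U\in\mathcal U}$ form a finite family of convex subcomplexes which pairwise intersect (pairwise intersection is exactly the consistency of $\sigma$), so their common intersection $T_\sigma:=\bigcap_{U\in\mathcal U}\sigma(U)$ is a nonempty convex subcomplex. Consequently the restriction quotient $\rho:\mathbf X\twoheadrightarrow\mathbf Y$ is surjective on 0-cubes and $\mathbf Y^0$ is in bijection with the set of consistent orientations of $\mathcal U$.

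The heart of the proof is to pick a representative $v_\sigma\in T_\sigma$ for each $\sigma\in\mathbf Y^0$ coherently, so that for every hyperplane $H\in\mathcal W\setminus\mathcal U$ all the chosen $v_\sigma$'s lie on a common side of $H$; this is where inseparability is crucial. For such $H$, inseparability of $\mathcal U$ forces every $U\in\mathcal U$ not crossing $H$ to lie in a single halfspace $H^{\epsilon(H)}$. A second application of Helly to $\{\sigma(U)\}_{U\in\mathcal U}\cup\{H^{\epsilon(H)}\}$ then gives $T_\sigma\cap H^{\epsilon(H)}\ne\emptyset$: pairwise intersections $\sigma(U)\cap H^{\epsilon(H)}$ are nonempty either because $U\bot H$ (so all four quarter-spaces are nonempty) or because $N(U)\subseteq H^{\epsilon(H)}$ meets every halfspace of $U$. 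Since only finitely many hyperplanes can cross the convex hull of a finite set of 0-cubes in a locally finite cube complex, one may tighten the selection of $v_\sigma$'s one hyperplane at a time, each refinement preserving nonemptiness by a fresh Helly invocation, until all side conditions are secured simultaneously.

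Finally, set $Y:=\mathrm{conv}_{\mathbf X}\bigl(\{v_\sigma:\sigma\in\mathbf Y^0\}\bigr)$. Then $Y$ is compact (finite set in a locally finite cube complex) and convex by construction. For $\mathcal U\subseteq\mathcal W(Y)$: each $U\in\mathcal U$ has 0-cubes of $\mathbf X$ on both sides, yielding consistent orientations $\sigma,\sigma'$ with $\sigma(U)\ne\sigma'(U)$, whence $v_\sigma,v_{\sigma'}\in Y$ are separated by $U$. For $\mathcal W(Y)\subseteq\mathcal U$: any $H\notin\mathcal U$ has all $v_\sigma$ on a common side by the coherent choice, so $H$ cannot separate two 0-cubes of $Y$. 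The main obstacle is the coherent selection in the middle step: inseparability is precisely the combinatorial hypothesis that prevents outside hyperplanes from separating elements of $\mathcal U$ and thereby forcing representatives to opposite sides, and combining the resulting constraints across different outside hyperplanes requires a careful Helly-type simultaneous selection argument.
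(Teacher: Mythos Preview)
Your overall strategy---build $\mathbf Y=\mathbf X(\mathbf X^0,\mathcal U)$ and embed it convexly in $\mathbf X$---is exactly the approach the paper indicates (citing the author's thesis for details), and your use of Helly to see that every consistent orientation of $\mathcal U$ is realised by a $0$--cube is correct.  The difficulty is entirely in the ``coherent selection'' step, and as written that step has a genuine gap.

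Your iterative tightening does not terminate as described.  After an initial choice of $v_\sigma\in T_\sigma$, the bad set $\mathcal B=\mathcal W(Y_0)\setminus\mathcal U$ is indeed finite, and Helly (once you also verify the pairwise intersections $H^{\epsilon(H)}\cap (H')^{\epsilon(H')}\neq\emptyset$, which you omit) lets you re--choose $v_\sigma\in T_\sigma\cap\bigcap_{H\in\mathcal B}H^{\epsilon(H)}$.  But nothing forces these new points to stay inside $Y_0$, so hyperplanes that did \emph{not} cross $Y_0$ can now separate the new $v_\sigma$'s, producing fresh bad hyperplanes; there is no reason the process stabilises.  A separate small issue: when every $U\in\mathcal U$ crosses $H$, your $\epsilon(H)$ is undefined, and an arbitrary choice may be inconsistent with $\epsilon(H')$ for another such $H'$.

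Both problems disappear if you replace the iteration by a direct definition of the section.  Fix $x_0\in\mathbf X^0$ and, for each $\sigma$, define $a(\sigma)\in\mathbf X^0$ by $a(\sigma)(U)=\sigma(U)$ for $U\in\mathcal U$, while for $H\notin\mathcal U$ set $a(\sigma)(H)$ to be the halfspace containing every $U\in\mathcal U$ with $U\not\bot H$ (well-defined by inseparability), or $x_0(H)$ if every $U$ crosses $H$.  Consistency is a short case check (each case uses either that $U\bot H$ or that $N(U)\subseteq a(\sigma)(H)$), and canonicality holds because $a(\sigma)$ differs from $x_0$ only on $\mathcal U$ together with the finitely many $H$ separating $x_0$ from some $N(U)$.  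Since $a(\sigma)(H)$ is independent of $\sigma$ for every $H\notin\mathcal U$, taking $Y$ to be the convex hull of $\{a(\sigma)\}$ gives $\mathcal W(Y)=\mathcal U$ immediately.  (Equivalently: in your scheme, include $Y_0$ itself as one more convex set in the Helly collection, so that the refined $v_\sigma$'s lie in $Y_0$; then $Y_1\subseteq Y_0$ and no new bad hyperplanes can appear, so one pass suffices.)
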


There are other versions of Proposition~\ref{prop:recovery} in which, under additional assumptions, $\mathcal U$ is allowed to be infinite, but it is in general much more problematic to obtain a subcomplex of $\mathbf X$ from a subgraph of $\contact X$ than it is to obtain a quotient.

\subsection{Actions on CAT(0) cube complexes}\label{sec:actionprelim}
If $\mathbf X$ is locally finite, then $\mathbf X^1$ is a proper metric space.  We say that the group $G$ \emph{acts on $\mathbf X$} to mean that $G$ acts by cubical automorphisms and thus by isometries (with respect to both $d_{_{\mathbf X}}$ and $\mathfrak d_{_{\mathbf X}}$).  We shall always assume that $G$ acts \emph{nontrivially}, i.e. without a global fixed point.  $G$ acts \emph{properly} if, for any $x\in\mathbf X^0$ and any infinite sequence $(g_n\in G)_{n\geq 0}$, we have that $d_{_{\mathbf X}}(x,g_nx)\rightarrow\infty$ as $n\rightarrow\infty$.  Equivalently, $G$ acts properly if the stabilizer $G_c$ of any cube $c$ is finite, provided $\mathbf X$ is locally finite.

Let $G$ act on $\mathbf X$.  For each $W\in\mathcal W$, let $G_W$ denote the stabilizer of $W$ and note that $G_W=\stabilizer_{\mathbf X}(N(W))$.  For all $g\in G$, and for all $W\in\mathcal W$, it is evident that $gW$ is again a hyperplane, and $gW\coll gW'$ (respectively, $gW\bot gW'$) if and only if $W\coll W'$ (respectively, $W\bot W'$).  Hence $G$ acts by isometries on $\contact X$, and the stabilizer of a vertex coincides with the stabilizer of the corresponding hyperplane.

\subsubsection{Essential actions}
If $G$ acts on $\mathbf X$, then the hyperplane $H$ is \emph{$G$-essential} if for some $x\in\mathbf X^0$, each halfspace $H^{\pm}$ contains points of the orbit $Gx$ arbitrarily far from $H$.  
Note that, if $H$ is $G$-essential, then $H$ is essential.  If each hyperplane is $G$-essential, then $G$ acts \emph{essentially} on $\mathbf X$.  If $G$ acts essentially on $\mathbf X$, then $\mathbf X$ is essential.  The \emph{$G$-essential core} $\mathbf Y\subseteq\mathbf X$ is a (possibly empty) convex $G$-invariant subcomplex on which $G$ acts essentially.  The \emph{essential core theorem}~\cite[Proposition~3.5]{CapraceSageev} shows that if $G$ acts properly on the locally finite cube complex $\mathbf X$, either with finitely many orbits of hyperplanes or without a fixed point on the visual boundary, then the $G$-essential core is unbounded.  

\subsubsection{Semisimplicity and rank-one isometries}
Let $G$ act on $\mathbf X$.  The element $g\in G$ is \emph{elliptic} if $g$ stabilizes a cube $c$ and \emph{combinatorially hyperbolic} if there exists a combinatorial geodesic $\gamma:\reals\rightarrow\mathbf X^1$ and some $\tau>0$ such that $g\gamma(t)=\gamma(t+\tau)$ for all $t\in\reals$.  The path $\gamma$ is a \emph{combinatorial axis} for $g$, and $\tau$ is the \emph{translation length} of $g$.  The following result of Haglund~\cite[Theorem~1.4]{HaglundSemisimple}, is fundamental:

\begin{thm}[Semisimplicity]\label{thm:semisimpleHAGLUND}
Let $G$ act on the CAT(0) cube complex $\mathbf X$.  Then each $g\in G$ is either combinatorially hyperbolic, stabilizes a cube, or stabilizes a hyperplane $H$ and exchanges the halfspaces associated to $H$.
\end{thm}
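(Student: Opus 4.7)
The plan is to stratify $g$ by its combinatorial translation length $\tau(g) := \inf\{d_{_{\mathbf X}}(x,gx) : x \in \mathbf X^0\}$. Because $d_{_{\mathbf X}}$ takes nonnegative integer values on pairs of 0-cubes, $\tau(g)$ is a nonnegative integer and is attained at some $x_0 \in \mathbf X^0$. If $\tau(g) = 0$ then $gx_0 = x_0$, so $g$ stabilizes the 0-cube $x_0$, yielding the second alternative. Henceforth assume $\tau(g) \geq 1$, fix such $x_0$, and let $\gamma \colon [0,\tau(g)] \to \mathbf X^1$ be a combinatorial geodesic from $x_0$ to $gx_0$.

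The key step is the following dichotomy: either the concatenation $\gamma \cdot g\gamma$ is a combinatorial geodesic from $x_0$ to $g^2 x_0$, or $g$ stabilizes some hyperplane and exchanges its two halfspaces. Suppose $\gamma \cdot g\gamma$ is \emph{not} geodesic. By the hyperplane-crossing characterization of geodesics recalled in Section~\ref{sec:cubeprelim}, there is a hyperplane $H$ dual to 1-cubes of both $\gamma$ and $g\gamma$. Thus $H$ separates $x_0$ from $gx_0$ and also $gx_0$ from $g^2 x_0$; applying $g^{-1}$ to the second separation, $g^{-1}H$ separates $x_0$ from $gx_0$ as well. If $g^{-1}H = H$ then $g$ stabilizes $H$, and since $x_0$ and $gx_0$ lie on opposite sides of $H$, $g$ must exchange $H^+$ and $H^-$, producing the third alternative.

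The technical heart is eliminating the remaining subcase $g^{-1}H \neq H$ by contradicting the minimality of $\tau(g)$. The strategy is a combinatorial projection argument: using convexity of hyperplane carriers and the Helly property from Section~\ref{sec:cubeprelim}, one constructs a 0-cube $x_0'$ near $x_0$ --- obtained by projecting $x_0$ across a 1-cube of $\gamma$ dual to $H$ --- whose displacement satisfies $d_{_{\mathbf X}}(x_0',gx_0') < \tau(g)$. The delicate bookkeeping is to verify that the removal of $H$ from the set of hyperplanes separating $x_0'$ from $gx_0'$ is not offset by newly introduced separating hyperplanes; this requires careful exploitation of the hypothesis $g^{-1}H \neq H$ and of the interaction between the orbit $\{g^n H\}_{n \in \integers}$ and the rest of $\mathcal W(\gamma)$, and is the main obstacle in the proof.

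With the dichotomy in hand, an analogous case analysis applied to the pairs $(g^n x_0, g^{n+1} x_0)$ shows that $g^n\gamma \cdot g^{n+1}\gamma$ is geodesic for every $n \in \integers$ and, more generally, that the sets $\mathcal W(g^n\gamma)$ are pairwise disjoint. Hence the bi-infinite concatenation $\widetilde\gamma := \bigcup_{n \in \integers} g^n\gamma$ is a combinatorial geodesic $\reals \to \mathbf X^1$ on which $g$ acts by translation of length $\tau(g)$, so $g$ is combinatorially hyperbolic with axis $\widetilde\gamma$, completing the trichotomy.
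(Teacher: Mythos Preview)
The paper does not prove this theorem; it is quoted from Haglund~\cite{HaglundSemisimple} and used as a black box, so there is no in-paper proof to compare against. Your outline does follow Haglund's strategy (minimise combinatorial displacement, analyse the concatenation $\gamma\cdot g\gamma$), but the step you yourself call ``the main obstacle'' is exactly where the content of the proof lies, and your proposed move does not work as stated. You suggest crossing a 1-cube of $\gamma$ dual to $H$, but flipping $x_0$ across $H$ reduces $d_{_{\mathbf X}}(x_0',gx_0')$ only when $gH$ also lies in $\mathcal W(\gamma)$, which you have not established --- what you actually know is that $g^{-1}H\in\mathcal W(\gamma)$. Crossing $K:=g^{-1}H$ is the right idea (then both $K$ and $gK=H$ lie in $\mathcal W(\gamma)$, so the displacement would drop to $\tau(g)-2$), but $x_0$ need not lie in $N(K)$, so there may be no 0-cube differing from $x_0$ on $K$ alone. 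Haglund resolves this with a median/gate argument that your sketch does not supply; without it the proposal is a plan, not a proof.

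There is a second gap in the final paragraph. Once $\gamma\cdot g\gamma$ is geodesic, equivariance gives only that $g^n\gamma\cdot g^{n+1}\gamma$ is geodesic, i.e.\ $\mathcal W(g^n\gamma)\cap\mathcal W(g^{n+1}\gamma)=\emptyset$ for all $n$. This does not by itself yield pairwise disjointness of the sets $\mathcal W(g^n\gamma)$: one must separately rule out $\mathcal W(\gamma)\cap\mathcal W(g^n\gamma)\neq\emptyset$ for each $n\geq 2$, and that is a further (similar but not identical) minimality argument, not an ``analogous case analysis''.
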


By passing to the first cubical subdivision of $\mathbf X$, Haglund shows that one can guarantee that $g\in G$ is either elliptic or combinatorially hyperbolic.  If $\mathbf X$ is locally finite and finite-dimensional, and $g\in\Aut(\mathbf X)$ has a combinatorial geodesic axis $\alpha:\reals\rightarrow\mathbf X$, then $g$ has a CAT(0) geodesic axis $\beta:\reals\rightarrow\mathbf X$ that fellow-travels with $\alpha$ (with respect to either metric).  The converse is not quite true, but it is easy to deduce the following from Haglund's classification:

\begin{prop}\label{prop:combinatoriallyelliptichyperbolic}
Let $g\in\Aut(\mathbf X)$.  If $g$ is CAT(0) hyperbolic and $\mathbf X$ contains no infinite family of pairwise-crossing hyperplanes, then $g^k$ is combinatorially hyperbolic for some $k\in\integers$.
\end{prop}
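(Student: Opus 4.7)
The plan is to combine Haglund's trichotomy (Theorem~\ref{thm:semisimpleHAGLUND}) with an induction on $\dim\mathbf X$, which is finite under our hypothesis since the dimension equals the cardinality of a largest pairwise-crossing family of hyperplanes. It is convenient to prove the slightly stronger inductive statement: for any CAT(0) cube complex $\mathbf Y$ with no infinite pairwise-crossing family and any $g\in\Aut(\mathbf Y)$ with no bounded orbit on $\mathbf Y$, some power $g^k$ is combinatorially hyperbolic. The proposition follows, because a CAT(0) hyperbolic $g$ satisfies $d_{\mathbf X}(x,g^n x)\geq |n|\tau$ for the positive translation length $\tau$, and so has no bounded orbit.

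Applying Theorem~\ref{thm:semisimpleHAGLUND} to $g$ gives three cases. If $g$ is combinatorially hyperbolic, take $k=1$. If $g$ stabilizes a cube $c$, then every $g$-orbit meeting $c$ is contained in $c$ and hence bounded, contradicting the inductive hypothesis on $g$. Otherwise $g$ stabilizes a hyperplane $H$ and exchanges its halfspaces, so $g^2$ stabilizes $H$ and preserves each halfspace; thus $g^2$ restricts to an automorphism of $H$, which by Sageev's theorem is a CAT(0) cube complex, is convex in $\mathbf Y$, has dimension $\dim\mathbf Y-1$, and inherits the pairwise-crossing hypothesis because its hyperplanes are in bijection with those of $\mathbf Y$ that cross $H$.

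To apply the inductive hypothesis to $g^2|_H$ we must verify that $g^2$ has no bounded orbit on $H$. If it did, convexity of $H$ in $\mathbf Y$ would give a bounded $g^2$-orbit in $\mathbf Y$, and the CAT(0) circumcenter of this orbit would be a point $y$ fixed by $g^2$; then $\{y,gy\}$ is a bounded $g$-orbit in $\mathbf Y$, contradicting our assumption on $g$. Induction yields $m$ such that $g^{2m}=(g^2)^m$ is combinatorially hyperbolic on $H$, and since $H$ is convex in $\mathbf Y$, a combinatorial axis for $g^{2m}$ in $H$ is also a combinatorial geodesic in $\mathbf Y$, so $g^{2m}$ is combinatorially hyperbolic on $\mathbf Y$. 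The base case $\dim\mathbf Y=0$ is vacuous: a connected $0$-dimensional CAT(0) cube complex is a single $0$-cube and has only the trivial automorphism.

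The main obstacle, and the reason for strengthening the statement, is the bookkeeping in the hyperplane-inverting case: passing from $g$ to $g^2|_H$ must preserve the ``no bounded orbit'' condition so that Haglund's elliptic alternative is again excluded at the next level of the induction. Both facts needed here are standard — convex subcomplexes inherit the ambient combinatorial and CAT(0) metrics, and CAT(0) circumcenters turn bounded orbits into fixed points — and together they let the induction pass cleanly from $\mathbf Y$ down to the lower-dimensional hyperplane $H$.
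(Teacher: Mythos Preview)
The paper does not actually give a proof of this proposition; it only remarks that it ``is easy to deduce the following from Haglund's classification.'' Your approach---applying Theorem~\ref{thm:semisimpleHAGLUND} and, in the hyperplane-inverting case, passing to $g^2$ acting on the invariant hyperplane---is exactly the intended one, and the bookkeeping you do (strengthening to ``no bounded orbit,'' using circumcenters to rule out the elliptic case for $g^2|_H$, and pulling a combinatorial axis in $H$ back to a combinatorial axis in $\mathbf Y$ via convexity of $N(H)$) is correct.

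There is one genuine gap. You assert that the hypothesis ``no infinite family of pairwise-crossing hyperplanes'' implies $\dim\mathbf X<\infty$, and then induct on dimension. This implication is false: the hypothesis only says that each pairwise-crossing family is finite, not that there is a uniform bound on their sizes, so $\dim\mathbf X$ (which is the supremum of such sizes) may well be infinite. The paper is careful to distinguish these conditions; see the discussion of strong local finiteness in Section~\ref{sec:cubeprelim}.

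The fix is to replace the induction on $\dim\mathbf X$ by a direct termination argument. If the process of passing to the invariant hyperplane never terminates, you obtain a chain $H_1\supset H_2\supset\cdots$ where $H_{i+1}$ is a hyperplane of the cube complex $H_i$. Each $H_{i+1}$ has the form $H_i\cap V_{i+1}$ for some hyperplane $V_{i+1}$ of $\mathbf X$ crossing every $V_j$ with $j\leq i$ (taking $V_1=H_1$). Hence $\{V_i\}_{i\geq 1}$ is an infinite pairwise-crossing family in $\mathbf X$, contradicting the hypothesis. With this correction, your argument goes through.
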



The hyperbolic isometry $g$ of $\mathbf X$ is \emph{rank-one} if some (and hence any) CAT(0) geodesic axis for $g$ does not bound an isometrically embedded Euclidean half-plane in $\mathbf X$.

\subsection{Divergence and divergence of geodesics}\label{sec:divergeprelim}
The notion of the divergence function of a geodesic space was introduced by Gersten~\cite{GerstenDivergence,GerstenDivergence2}.  The present exposition roughly follows that in~\cite{BehrstockCharney}.  Let $(M,d)$ be a geodesic space, and let $\rho(k)=Ak-B$ with $A\in(0,1),\,B\geq 0$.  Given points $a,b,c\in M$, let $k=d(c,\{a,b\})$ and let $\diver{a}{b}{c}{\rho}=\inf_P|P|,$ where $P$ varies over all paths in $M-B_{\rho(k)}(c)$ that join $a$ to $b$, where $B_{\rho(k)}(c)$ is the open ball of radius $\rho(k)$ centered at $c$.  By convention, $\diver{a}{b}{c}{\rho}$ takes the value $+\infty$ if no such path exists.  The \emph{divergence of $M$} (with respect to $\rho$) is the function:
\[\divers{M}{\rho}(r)=\sup\left\{\diver{a}{b}{c}{\rho}\,\mid\,a,b,c\in M,\,d(a,b)\leq r\right\}.\]
Given a function $f:[0,\infty)\rightarrow[0,\infty]$, we say that $M$ has \emph{divergence at most $f$} if there exists $\rho$ such that $\divers{M}{\rho}(r)\leq f(r)$ for all $r\geq 0$.  If $\gamma:\reals\rightarrow\mathbf X$ is a bi-infinite geodesic, then the \emph{divergence of $\gamma$} is $\diver{\gamma(r)}{\gamma(-r)}{\gamma(0)}{\rho}$ for each $r\geq 0$.  If $M$ has divergence at most $f$, then each geodesic has divergence at most $f$, and if some geodesic has super-$f$ divergence, then $M$ has super-$f$ divergence.

\section{Projections of rays to the contact graph}\label{sec:contactproj}
We study subgraphs of $\contact X$ induced by projections of geodesic rays.  See~\cite{WiseIsraelHierarchy} and~\cite{HagenQuasiArb} for a discussion of disc diagrams as they are used in the proof of Theorem~\ref{thm:trichotomy1}.

\subsection{Combinatorial fractional flats and rank-one rays}\label{sec:fracflat}
A \emph{(combinatorial) flat} is a cube complex isomorphic to the standard Euclidean tiling of $\reals^2$ by 2-cubes.  A \emph{non-diagonal half-flat} is a cube complex isomorphic to the standard tiling of $\reals\times[0,\infty)$ by 2-cubes.  A \emph{non-diagonal quarter-flat} is a cube complex isomorphic to the standard tiling of $[0,\infty)^2$ by 2-cubes.

Let $f:[0,\infty)\rightarrow[0,\infty)$ be an unbounded, nondecreasing function.  The \emph{$f$-sector} is the subspace $\{(x,y)\in[0,\infty)^2\mid 0\leq y\leq f(x)\}$.  An \emph{eighth-flat} is a CAT(0) cube complex isomorphic to the smallest subcomplex of the standard tiling of $[0,\infty)^2$ containing a given $f$-sector.  The set of hyperplanes of an eighth-flat $\mathbf E$ can be partitioned into two inseparable sets, $\mathcal H$ and $\mathcal V$, such that each $H\in\mathcal H$ crosses all but finitely many $V\in\mathcal V$.  The unique combinatorial geodesic ray $\gamma$ in $\mathbf E$ that crosses every hyperplane is the \emph{top bounding ray}, and the unique ray $\gamma'$ corresponding to the $x$-axis in $[0,\infty)^2$ is the \emph{bottom bounding ray}. See Figure~\ref{fig:raybound}.  A \emph{diagonal quarter-flat} is a CAT(0) cube complex obtained from a pair $\mathbf E_1,\mathbf E_2$ of eighth-flats by identifying the bottom bounding ray of $\mathbf E_1$ with an infinite sub-ray of the bottom bounding ray of $\mathbf E_2$, using a cubical isometry.  Let $\sigma$ be a bi-infinite combinatorial geodesic in the standard tiling of $\reals^2$ that crosses each hyperplane.  The closure of a component of $\reals^2-\sigma$ is a CAT(0) cube complex called a \emph{diagonal half-flat}.  A cube complex $\mathbf F$ of one of the above types is a \emph{fractional flat}.

\begin{rem}
A non-diagonal fractional flat $\mathbf F$ is the product of two 1-dimensional complexes and thus $\diam(\contact F)=\diam(\crossing F)=2$.  For $\mathbf F$ a diagonal half-flat, this value is 3.  If $\mathbf F$ is an eighth-flat or diagonal quarter-flat, then $\contact F$ has finite diameter, and $\mathbf F$ has a co-finite subcomplex whose crossing graph has diameter 3.
\end{rem}

The combinatorial geodesic ray $\gamma:[0,\infty)\rightarrow\mathbf X$ \emph{bounds a fractional flat} if there exists a cubical isometric embedding $\mathbf E\rightarrow\mathbf X$ of an eighth-flat whose image contains $\gamma$.  This includes any case in which $\gamma$ lies in an isometric fractional flat.  Otherwise, $\gamma$ is (combinatorially) \emph{rank-one}.

\begin{figure}[h]
  \includegraphics[width=3in]{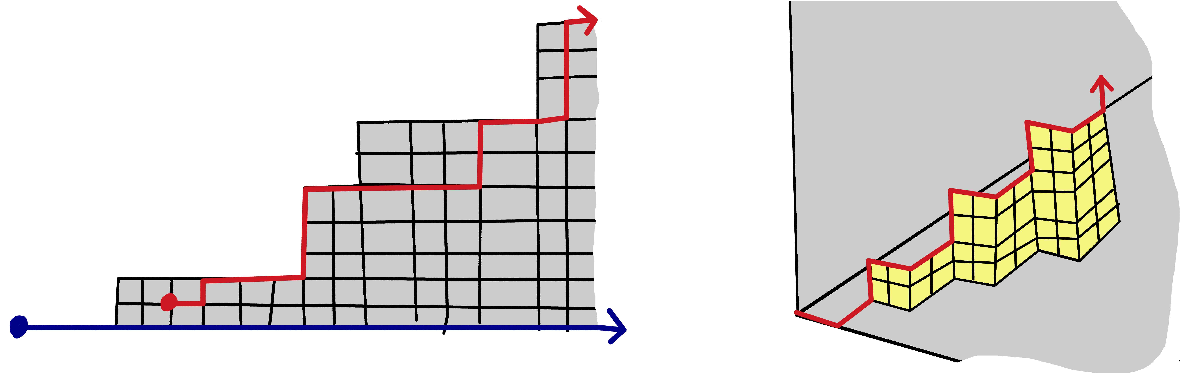}\\
  \caption{The arrowed rays at left bound an eighth-flat contained in the larger eighth-flat.  The arrowed ray at right is the top bounding ray of an eighth-flat in the standard tiling of $\reals^3$.}\label{fig:raybound}
\end{figure}

\subsection{Rays of bounded projection}\label{sec:boundedprojection}
Theorem~\ref{thm:trichotomy1} gives the only possible obstructions to a combinatorial geodesic ray $\gamma$ projecting to a quasi-geodesic ray in the contact graph.  By instead asking only that $\gamma$ project to an unbounded ray, we reach the conclusion that either $\gamma$ bounds an eighth-flat or $\gamma$ lies uniformly close to a single hyperplane, which is Theorem~\ref{thm:trichotomy2}.

\begin{thm}[Projection trichotomy I]\label{thm:trichotomy1}
Let $\mathbf X$ be a CAT(0) cube complex, and let $\gamma:\reals_+\rightarrow\mathbf X$ be a combinatorial geodesic ray.  Let $\gproj(\gamma)$ be the projection of $\gamma$ to the contact graph, and let $\Lambda(\gamma)$ be the full subgraph of $\contact X$ generated by $\gproj(\gamma)$.  Then one of the following holds:
\begin{compactenum}
\item The inclusions $\gproj(\gamma),\Lambda(\gamma)\hookrightarrow\contact X$ are quasi-isometric embeddings.
\item For all $p\geq 0$, we have $K_{p,p}\subset\Lambda(\gamma)$.
\item There exists $R\geq 0$ such that, for all $t\geq 0$, there exists a hyperplane $H$ such that $\gamma\cap N_R(H)$ contains a connected subpath of length at least $t$.
\end{compactenum}
\end{thm}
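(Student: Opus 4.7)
The plan is to prove the contrapositive: assume (2) and (3) both fail, and deduce (1). Failure of (2) furnishes $B$ with $K_{B+1,B+1}\not\subset\Lambda(\gamma)$, and failure of (3) furnishes, for every $R\ge 0$, a constant $T_R$ such that for every hyperplane $H$, each connected subpath of $\gamma\cap N_R(H)$ has length at most $T_R$. Since $\gamma$ is a geodesic, it crosses each hyperplane at most once, so $\gproj\gamma$ is an embedded path in $\contact X$ and $d_{\gproj\gamma}(H_i,H_j)=|i-j|$, while $d_{\Lambda(\gamma)}(H_i,H_j)\le|i-j|$. Both inclusions will be quasi-isometric embeddings once we produce constants $C,D$ (depending only on $B$ and on $R\mapsto T_R$) with
\[
|i-j|\le C\cdot d_{_{\contact X}}(H_i,H_j)+D
\]
for all $H_i,H_j\in\mathcal W(\gamma)$.

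First I would carry out a disc diagram reduction. Fix $H_i,H_j$ with $k=d_{_{\contact X}}(H_i,H_j)$, choose a geodesic $H_i=U_0\coll U_1\coll\cdots\coll U_k=H_j$ in $\contact X$, and, using that each $N(U_\ell)$ is convex with $N(U_\ell)\cap N(U_{\ell+1})\neq\emptyset$, build a combinatorial path $P=P_0P_1\cdots P_k$ from $\gamma(i)$ to $\gamma(j+1)$, where each $P_\ell\subset N(U_\ell)$ is a geodesic running between selected 0-cubes of consecutive intersections. Take a reduced disc diagram $D\to\mathbf X$ with boundary $\gamma[i,j+1]\cdot P^{-1}$. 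Because $\gamma$ crosses no hyperplane twice, any dual curve of $D$ with an endpoint on $\gamma[i,j+1]$ has its other endpoint on some 1-cube $c'$ of some $P_\ell\subset N(U_\ell)$; since $N(U_\ell)\cong U_\ell\times[-\tfrac{1}{2},\tfrac{1}{2}]$, the hyperplane dual to $c'$ is either $U_\ell$ or a hyperplane crossing $U_\ell$. Thus every $H_\alpha$ with $i\le\alpha\le j$ either lies on the contact geodesic or crosses some $U_\ell$, so after discarding the $\le k+1$ exceptions a pigeonhole argument produces $U=U_{\ell_0}$ and a set $\mathcal V\subseteq\mathcal W(\gamma)\cap\{H_i,\ldots,H_j\}$ of size at least $(|j-i|-k)/(k+1)$, each element of which crosses $U$.

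Next I would apply Ramsey's theorem to the crossing relation on $\mathcal V$: for $|\mathcal V|\ge R(2B+2,q)$, either there is a pairwise-crossing subfamily of size $2B+2$, or there are pairwise-disjoint $W_1,\ldots,W_q\in\mathcal V$. A pairwise-crossing family of size $2B+2$ splits into two blocks of $B+1$ hyperplanes of $\mathcal W(\gamma)$ that pairwise cross, giving $K_{B+1,B+1}\subset\Lambda(\gamma)$ and contradicting the failure of (2). So we have pairwise-disjoint $W_1,\ldots,W_q\in\mathcal V$, all crossing $U$, with $\gamma$ crossing them at positions $\beta_1<\cdots<\beta_q$ inside $[i,j]$.

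The step I expect to be the main obstacle is the geometric claim that the subpath $\gamma[\beta_1,\beta_q+1]$ lies in $N_{R(k)}(U)$ for some $R(k)$ depending only on $k$. I would attempt this with a second reduced disc diagram argument, applied to a loop built from $\gamma[\beta_1,\beta_q+1]$ together with an auxiliary path in $N(U)$ that ``shadows'' $\gamma$ through the midcubes $W_s\cap U$; the pairwise-disjointness of the $W_s$ should force the secondary diagram to be a thin strip hugging $U$, thereby bounding the transverse displacement of $\gamma$ from $U$ in terms of $k$. Granting this claim, the failure of (3) gives $q\le T_{R(k)}$, and back-substituting through the Ramsey and pigeonhole estimates yields $|j-i|\le(k+1)\cdot R(2B+2,T_{R(k)}+1)+k$, the desired linear bound establishing (1).
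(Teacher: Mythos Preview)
Your overall architecture---contrapositive, disc diagram over a contact-graph geodesic, pigeonhole to find a heavily-loaded $U_\ell$---matches the paper's. But there is a genuine gap at the end, and it is not the ``main obstacle'' you flagged; it is the sentence after it.

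Even granting your geometric claim that $\gamma[\beta_1,\beta_q+1]\subset N_{R(k)}(U)$ with $R(k)$ depending on $k$, the bound you write down,
\[
|j-i|\ \le\ (k+1)\cdot R\bigl(2B+2,\ T_{R(k)}+1\bigr)+k,
\]
is \emph{not} linear in $k$: the Ramsey number on the right involves $T_{R(k)}$, and $R(k)$ itself varies with $k$. A quasi-isometric embedding requires constants $\lambda,C$ with $|i-j|\le\lambda\,d_{_{\contact X}}(H_i,H_j)+C$; your inequality only gives $|i-j|\le f(k)$ for some (potentially super-exponential) $f$. So even if every step worked, you would have shown only that the projection is metrically proper, not that it is a quasi-isometric embedding. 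The argument would be rescued if $R(k)$ could be taken \emph{independent} of $k$, but nothing in your secondary-diagram sketch produces such a uniform $R$: the side-paths of that diagram have lengths $d(\gamma(\beta_1),N(U))$ and $d(\gamma(\beta_q+1),N(U))$, which are not a priori bounded, and the pairwise-disjointness of the $W_s$ does not by itself force ``thinness''.

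The paper's proof sidesteps both the Ramsey detour and the $k$-dependence. After building the disc diagram $D$, it uses minimality to show that \emph{no two dual curves emanating from the same segment $A_m$ cross}; this immediately forces $|A|=|\gamma'|$ and hence some $A_m$ has length $M\ge|\gamma'|/N$. Restricting to the subdiagram $E$ between the first and last dual curves $L_1,L_M$ from $A_m$, every dual curve crossing $L_1$ (or $L_M$) also crosses $\gamma''$ and hence corresponds to some $H_p\bot H_q$ with both in $\mathcal W(\gamma)$. A separate lemma (the paper's Lemma~\ref{lem:crossingbound}), proved by a median argument using both the failure of~(2) and of~(3), gives a \emph{fixed} constant $B$ such that $H_p\coll H_q$ forces $|p-q|\le B$; this bounds $|L_1|,|L_M|\le B$ and places $\gamma''$ in $N_{2B}(N(U_m))$. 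Since $R=2B$ is independent of $N$, the failure of~(3) now bounds $M$, and hence $N$, from below linearly in $|\gamma'|$. The point is that the uniform neighbourhood radius comes from the auxiliary contact-bound lemma, not from the size of the contact-graph geodesic.
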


\begin{proof}
For $i\geq 0$, let $H_i$ be the hyperplane dual to $\gamma([i,i+1])$.  Note that, for all $i,j$, we have $d_{\gproj(\gamma)}(H_i,H_j)=|i-j|$.  Moreover, for all $i$, the hyperplanes $H_i$ and $H_{i+1}$ contact, so that $d_{_{\contact X}}(H_i,H_j)\leq|i-j|$.  Suppose that for all $0<\lambda^{-1}\leq 1$, there exist $i,j\geq 0$ such that
\[N=d_{_{\contact X}}(H_i,H_j)<\lambda^{-1}|i-j|-\lambda^{-1}.\]
Let $H_i=U_0\coll U_1\coll\ldots U_N=H_j$ be a shortest path in $\contact X$ joining $H_i$ to $H_j$.  We shall verify that, for some $R\leq 2B$, where $B$ is the constant from Lemma~\ref{lem:crossingbound} below, there exists a hyperplane $H$ and a subpath $\gamma''\subset\gamma$ such that $\gamma''\subset N_R(N(H))$ and $|\gamma''|\geq\lambda^{-1}$.  Since $B$ is independent of $\lambda$, and $\lambda$ can be chosen arbitrarily large, this implies that $(3)$ holds.  To this end, let $\gamma'$ be the subpath of $\gamma$ between, but not including, the 1-cubes dual to $H_i$ and $H_j$.  Note that $|\gamma'|=|i-j|-1$.

\textbf{The disc diagram $D$:}  For $0\leq m\leq N$, let $A_m\rightarrow N(U_m)$ be a combinatorial geodesic path, and let these be chosen in such a way that $A_m$ ends on the initial 0-cube of $A_{m+1}$ for each $0\leq m\leq N-1$, and the concatenation $A=A_0A_1\ldots A_N$ has the same endpoints as $\gamma'$.  Let $D\rightarrow\mathbf X$ be a disc diagram bounded by $A(\gamma')^{-1}$.  We choose $D$ subject to the following minimality constraints.  First, $D$ has minimal area among all disc diagrams with boundary path $A(\gamma')^{-1}$.  Second, the paths $A_m$ are chosen, subject to the constraint that each $A_m\rightarrow N(U_m)$, in such a way that the area of $D$ is as small as possible and, among such minimal-area $D$, the length of $A$ is as short as possible.  Finally, the path $U_0\coll\ldots\coll U_N$ is chosen among all $\contact X$-geodesics joining $H_i$ to $H_j$ so that the resulting $D$ has area as small as possible.

Let $K$ be a dual curve in $D$.  As illustrated in Figure~\ref{fig:diagramD}, $K$ must travel from $A$ to $\gamma'$.  Indeed, $K$ cannot have two ends on $\gamma'$, since $\gamma'$ is a geodesic segment.  Likewise, if $K$ travels from $A_m$ to $A_m$, for some $m$, then $A_m$ is not a geodesic, a contradiction.  If $K$ travels from $A_m$ to $A_{m+1}$, then we can employ hexagon moves (see e.g.~\cite{WiseIsraelHierarchy}), and remove spurs, to reduce the area of $D$, and then the length of $A$, without affecting our path in $\contact X$.  See the discussion of diagrams with \emph{fixed carriers} in Section~2 of~\cite{HagenQuasiArb}.  Hence, if $K$ has two ends on $A$, then $K$ travels from $A_m$ to $A_k$, with $|m-k|\geq2$.  Let $U$ be the hyperplane to which $K$ maps.  If $|m-k|>2$, then
\[H_i=U_0\coll U_1\coll\ldots\coll U_m\bot U\bot U_k\coll\ldots\coll U_N=H_j\]
is a path in $\contact X$ of length less than $N$, a contradiction.  If $m=k+2$, then replacing $U_{k+1}$ by $U$ results in path from $H_i$ to $H_j$ whose minimal-area diagram is a proper subdiagram of $D$.  Indeed, as in Figure~\ref{fig:diagramD}, we can replace the part of $A$ between and including the 1-cubes dual to $K$ by a path in the carrier of $K$ in $D$.

\begin{figure}[h]
  \includegraphics[width=4in]{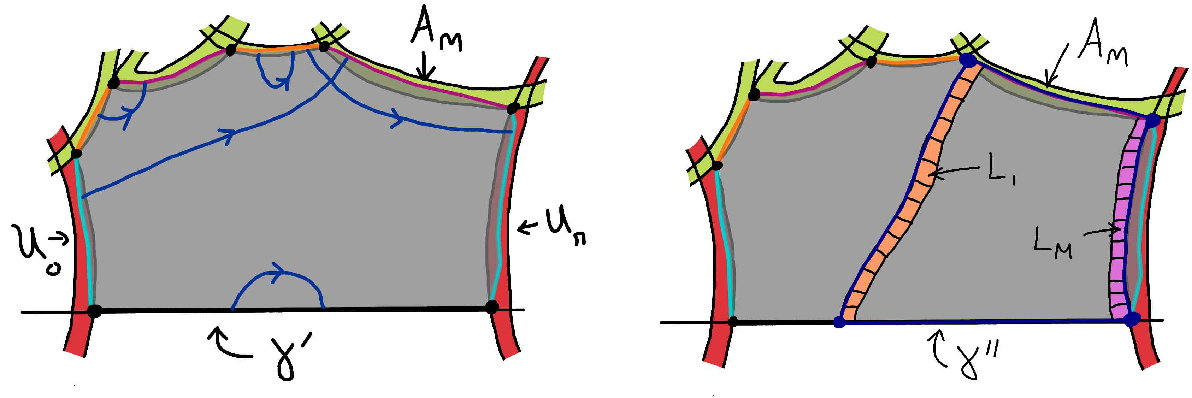}\\
  \caption{A heuristic picture of $D$, showing the hyperplane-carriers to which its boundary path maps. At left are impossible dual curves: the single-arrowed dual curves are illegal, and the crossing pair is illegal.  At right are the dual curves $L_1$ and $L_M$ and their carriers.}\label{fig:diagramD}
\end{figure}

Next, note that if $K,K'$ are dual curves, both of which emanate from the path $A_m$, then $K$ and $K'$ do not cross, for otherwise they intersect in a 2-cube $s$ of $D$ mapping to a 2-cube $\bar s$ of $N(U_m)$, and we can employ hexagon moves to pass to a lower-area diagram.

Thus $|A|=|i-j|-1$, and hence there exists $m$ with
\[M=|A_m|\geq\frac{|i-j|-1}{N}>\lambda^{-1}.\]
Let $E$ be the subdiagram of $D$ bounded by $A_m$, the carriers of the dual curves $L_1,L_M$ dual to the initial and terminal 1-cubes of $A_m$, and the subpath $\gamma''$ of $\gamma'$ between and including the 1-cubes dual to $L_1$ and $L_M$.  Then every dual curve in $E$ travels from $A_m$ to $\gamma''$ -- there are $M$ of these -- from $L_1$ to $L_M$, or from $\gamma''$ to $L_1$ to $L_M$.  See Figure~\ref{fig:diagramE}.  Indeed, no dual curve $K$ can travel from $A_m$ and cross $L_M$ (or $L_1$), for otherwise $K$ and $L_M$ ($L_1$) would cross in $D$, which is impossible since both emanate from $A_m$.

\begin{figure}[h]
  \includegraphics[width=1.75in]{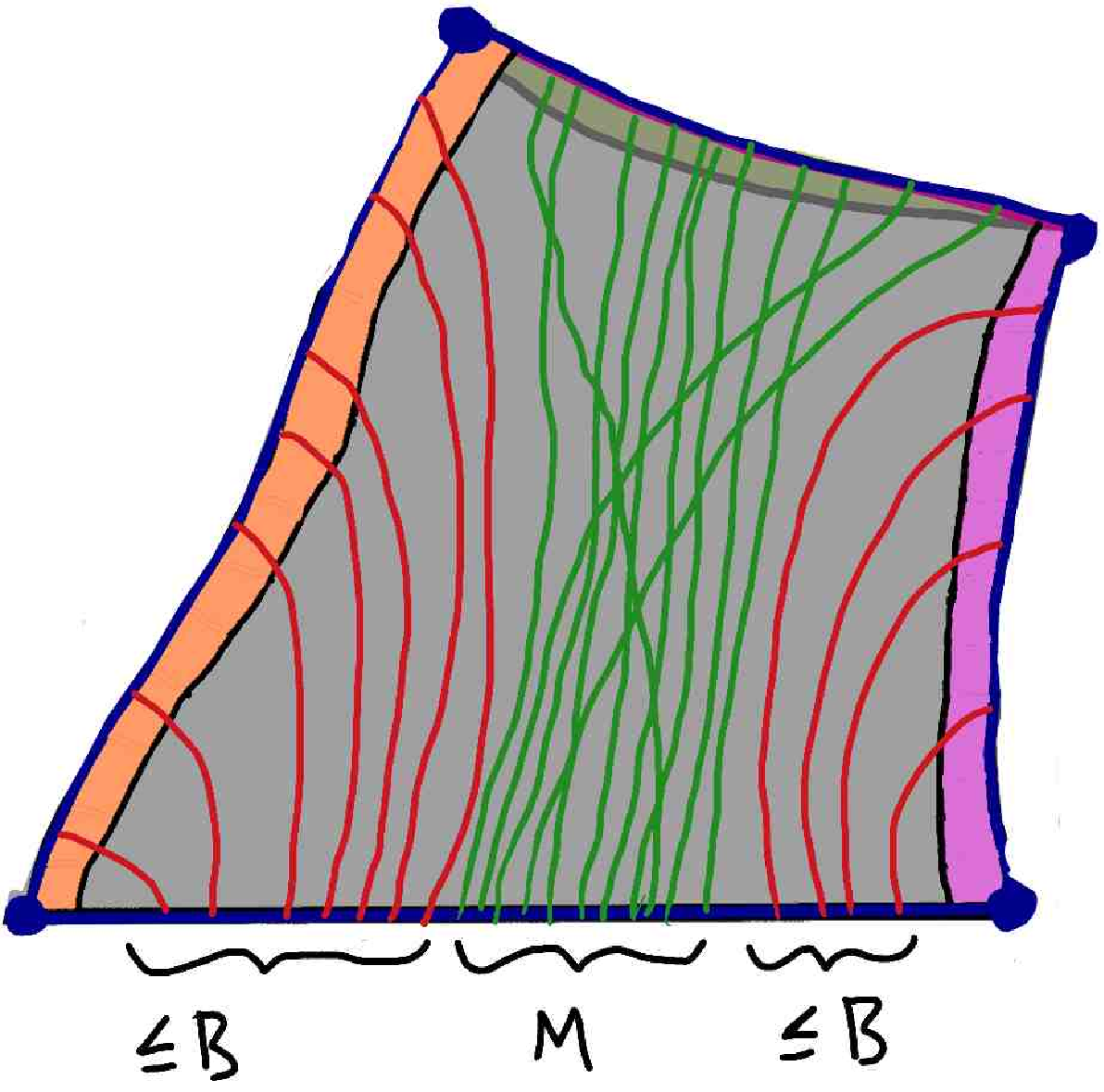}\\
  \caption{The diagram $E$ has three types of dual curve.}\label{fig:diagramE}
\end{figure}

Suppose that there exists a constant $B<\infty$ such that, for all $i',j'\geq 0$, the hyperplanes $H_{i'}$ and $H_{j'}$ cannot cross if $|i'-j'|>B$.  Hence $M\leq|\gamma''|\leq M+2B$: the first inequality follows from the fact that each dual curve emanating from $A_m$ ends on $\gamma''$, while the second follows from the fact that at most $2B$ dual curves emanating from $\gamma''$ can cross $L_1$ or $L_M$.

Let $y\in\gamma''$ be a 0-cube, and let $K$ be a dual curve in $E$ emanating from one of the 1-cubes of $\gamma''$ containing $y$.  If $K$ ends on $A_m$, then there is a path in $E$ of length at most $|K|$ joining $y$ to a 0-cube of $A_m$.  But every dual curve $L$ in $E$ that crosses $K$ has no end on $A_m$, since distinct dual curves emanating from $A_m$ do not cross.  Thus $L$ crosses $L_1$ or $L_M$, so that $|K|\leq |L_1|+|L_M|$. Therefore, $d_{_{\mathbf X}}(y,U_m)\leq |L_1|+|L_M|$.  If $K$ ends on $L_1$ (or $L_M$), then $y$ is at distance at most $B$ from $L_1$ (say), and thus $d_{_{\mathbf X}}(y,U_m)\leq B+|L_1|$.

But every dual curve crossing $L_1$ crosses $\gamma''$, and there are at most $B$ of these, since $L_1$ maps to a hyperplane crossing $\gamma''\subset\gamma$.  Hence $|L_1|+|L_M|\leq 2B$.  Therefore, $\gamma''$ is a path of length at least $M>\lambda^{-1}$ lying in the $2B$-neighborhood of $N(U_m)$.  Since $B$ is independent of $\lambda$, we have reached conclusion~$(3)$, or $B$ does not exist and, by Lemma~\ref{lem:crossingbound}, either~$(2)$ or~$(3)$ holds.  We conclude that if~$(2)$ and~$(3)$ fail, then there exists $\lambda\in[1,\infty)$ such that
\[|i-j|\geq d_{_{\contact X}}(H_i,H_j)\geq\lambda^{-1}\left(|i-j|-1\right)=\lambda^{-1}d_{\gproj(\gamma)}(H_i,H_j)-\lambda^{-1}\]
for all $i,j\geq 0$, and hence $\gproj(\gamma)\hookrightarrow\contact X$ is a quasi-isometric embedding.  Since $\Lambda(\gamma)$ is a subgraph of $\contact X$ spanned by $\gproj(\gamma)$, the inclusion $\Lambda(\gamma)\hookrightarrow\contact X$ is also a q.i.e.
\end{proof}

\begin{lem}\label{lem:crossingbound}
Let $\mathbf X$ and $\gamma$ be as in Theorem~\ref{thm:trichotomy1}.  Suppose that conclusions $(2)$ and $(3)$ of Theorem~\ref{thm:trichotomy1} do not hold.  Then there exists $B<\infty$ such that, if $H_i\coll H_j$, then $|i-j|\leq B$.
\end{lem}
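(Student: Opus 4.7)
The plan is to combine one structural claim about contacting pairs with a counting argument driven by the failures of (2) and (3). Fix $p_0 \geq 1$ certifying $K_{p_0, p_0} \not\subset \Lambda(\gamma)$, and set $T = T_{p_0}$, the subpath-length bound from the failure of (3) applied with parameter $R_0 = p_0$. I claim $B = 2T+2$ works. The engine is the following structural claim: if $H \coll H'$ is a contact pair whose dual 1-cubes occur at positions $i < j$ along $\gamma$, then for every $k$ with $i < k < j$, either $H_k \bot H$ or $H_k \bot H'$. Since $H \coll H'$, no hyperplane distinct from $H, H'$ separates them; but if $H_k$ crossed neither, then $H_k$ is disjoint from $N(H)$ and from $N(H')$, so $\gamma(i+1) \in N(H) \cap H_k^-$ forces $N(H) \subset H_k^-$ and hence $H \subset H_k^-$, while $\gamma(j) \in N(H') \cap H_k^+$ symmetrically forces $H' \subset H_k^+$, making $H_k$ separate $H$ from $H'$, a contradiction. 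The argument is identical for crossings and osculations.

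Suppose for contradiction that $H_i \coll H_j$ with $j - i > 2T + 2$. Since $d(\gamma(k), H_i) = \tfrac{1}{2} + f_i(k)$ where $f_i(k) = |\{l : i < l < k,\ H_l \not\bot H_i\}|$, the failure of (3) applied to $H_i$ forces $f_i(k) \geq p_0$ for some $k \leq i + T + 1$, producing indices $k_1 < \cdots < k_{p_0} \leq i + T$ with $H_{k_a} \not\bot H_i$. Since $k_a < j$, the structural claim applied to $H_i \coll H_j$ gives $H_{k_a} \bot H_j$ for each $a$. Symmetrically, there exist indices $l_1 < \cdots < l_{p_0}$ with $j - T \leq l_b < j$ and $H_{l_b} \not\bot H_j$, hence $H_{l_b} \bot H_i$.

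Because $j - i > 2T + 2$, we have $k_a \leq i + T < j - T \leq l_b < j$ for every $a, b$, so a second application of the structural claim to the contact pair $H_{k_a} \coll H_j$ at index $l_b$, together with $H_{l_b} \not\bot H_j$, yields $H_{l_b} \bot H_{k_a}$. Thus $\{H_{k_a}\}_{a=1}^{p_0} \cup \{H_{l_b}\}_{b=1}^{p_0}$, comprising $2p_0$ distinct hyperplanes since they occupy distinct positions along $\gamma$, realizes a $K_{p_0, p_0}$ in $\Lambda(\gamma)$, contradicting the choice of $p_0$.

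The main obstacle is the structural claim, where one must track carefully which halfspace of $H_k$ contains each of $\gamma(i+1), \gamma(j), N(H)$, and $N(H')$, and verify that the conclusion holds for both crossings and osculations. Once this is in place, the second invocation of the claim (on the new contact pair $H_{k_a} \coll H_j$) is automatic, and only the routine translation between the $N_{p_0}(H_i)$-subpath length bound and the counting function $f_i$ remains.
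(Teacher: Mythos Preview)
Your proof is correct and takes a genuinely different route from the paper's argument.

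The paper argues geometrically via medians and gates: it picks a closest point $x\in N(H_i)\cap N(H_j)$ to the subpath $\gamma'$, shows (via the median of $a_i,a_j,x$) that $x$ lies on a geodesic from $a_i$ to $a_j$, then splits $\gamma'$ at its midpoint $e$ and counts, on each half, the hyperplanes separating $e$ from both the relevant endpoint and $x$. Those two families must pairwise cross, producing a $K_{h_i,h_j}$ and hence $\min(h_i,h_j)\le B'$; the half with the small count then lies in a thin neighborhood of a carrier, so its length is bounded by $B''(B')$, yielding $|i-j|\le\max(2B''(B')+1,\,B''(0))$.

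Your argument replaces all of this with the single combinatorial observation that any $H_k$ with $i<k<j$ must cross $H_i$ or $H_j$ whenever $H_i\coll H_j$, and then bootstraps: collect $p_0$ non-crossers of $H_i$ near $i$ and $p_0$ non-crossers of $H_j$ near $j$ using the failure of~(3), note each set crosses the far endpoint, and apply the structural claim a second time to the pair $H_{k_a}\coll H_j$ to force $H_{l_b}\bot H_{k_a}$. This is cleaner and avoids the median/gate machinery entirely; the price is only that your bound $B=2T+2$ with $T=T(p_0)$ is marginally less sharp in form than the paper's. There are harmless off-by-one imprecisions in your write-up (e.g.\ exiting $N_{p_0}(N(H_i))$ gives $f_i(k)>p_0$, not merely $\ge p_0$, and symmetrically $l_b\ge j-T-1$ rather than $j-T$), but the inequality $j-i>2T+2$ absorbs these and the argument goes through unchanged.
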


\begin{proof}
Since Theorem~\ref{thm:trichotomy1}.(2) does not hold, there exists $B'<\infty$ such that, if the subgraph $\Lambda(\gamma)$ of $\contact X$ generated by $\mathcal W(\gamma)$ contains $K_{p,p}$, then $p\leq B'$.  Since statement $(3)$ does not hold, for each $r\geq 0$, there exists $B''(r)\in[r,\infty)$ such that, if $\gamma'\subset\gamma$ is a connected subpath lying in $N_r(N(H))$, for any hyperplane $H$, then $|\gamma'|\leq B''(r)$.

Suppose that $H_i\coll H_j$ (hence $|i-j|\geq 1$).  Let $\gamma'\subset\gamma$ be the subpath of length $|i-j|-1$ lying between the 1-cube dual to $H_i$ and that dual to $H_j$, and joining $a_i\in N(H_i)\cap\gamma$ to $a_j\in N(H_j)\cap\gamma$.  Let $x$ be a closest 0-cube of $N(H_i)\cap N(H_j)$ to $\gamma'$.  If $a_i=a_j$, there is nothing to prove.  If $x=a_i$, then by convexity of $N(H_i)$, the path $\gamma'$ lies in $N(H_i)$ and thus $N(H_i)$ contains a subpath of $\gamma$ of length at least $|i-j|-1$.  Thus $|i-j|-1\leq B''(0)$.

Hence suppose that $x,a_i,a_j$ are all distinct and let $m$ be their median.  Since the interval between $a_i$ and $x$ lies in $N(H_i)$, by convexity, $m\in N(H_i)$.  Similarly $m\in N(H_j)$.  Any hyperplane separating $m$ from $\gamma'$ separates $m$ from $a_i$ and $a_j$ and hence separates $x$ from $a_i$ and $a_j$.  Thus $m\in N(H_i)\cap N(H_j)$ is at least as close to $\gamma'$ as is $x$, and therefore $m=x$, so that $x$ lies on a geodesic segment joining $a_i$ to $a_j$.  It follows that every hyperplane $H_k$, with $i<k<j$ (say), separates exactly one of $a_i$ and $a_j$ from $x$.  Moreover, every hyperplane separating $x$ from $a_i$ or $a_j$ separates $a_i$ from $a_j$, since $x$ lies on a geodesic from $a_i$ to $a_j$.  Let $A_i\rightarrow N(H_i)$ be a geodesic segment joining $a_i$ to $x$, and let $A_j\rightarrow N(H_j)$ be a geodesic segment joining $x$ to $a_j$, chosen so that $A_iA_j$ is a geodesic segment.

Let $e$ be the midpoint of $\gamma'$.  Let $\gamma_i$ be the subpath of $\gamma'$ joining $a_i$ to $e$, and let $\gamma_j$ be the subpath joining $e$ to $a_j$.  Let $h_i$ be the number of hyperplanes crossing $\gamma_i$ and separating $a_i$ and $x$ from $e$ and let $h_j$ be the number of hyperplanes crossing $\gamma_j$ and separating $a_j$ and $x$ from $e$.  If $H$ separates $e$ from $a_j$ and $x$, then $H$ separates $a_j$ and $x$ from $a_i$.  Similarly, if $H'$ separates $e$ from $a_i$ and $x$, then $H'$ separates $x$ and $a_i$ from $a_j$.  Hence $H\bot H'$ for all such $H,H'$.  Thus $\Lambda(\gamma)$ contains $K_{h_i,h_j}$.  Thus $\min(h_i,h_j)\leq B'$.  As shown in Figure~\ref{fig:aiaj}, $\gamma_i$ lies in the $h_i$-neighborhood of $N(H_i)$, and hence $|\gamma_i|\leq B''(h_i)$.  For the same reason, $|\gamma_j|\leq B''(h_j)$.  But $|\gamma_i|=|\gamma_j|$, since $e$ is the midpoint of $\gamma'$.  Thus $|i-j|=|\gamma|+1\leq \max(2B''(B')+1,B''(0))=B$.
\end{proof}

\begin{figure}[h]
  \includegraphics[width=1.75in]{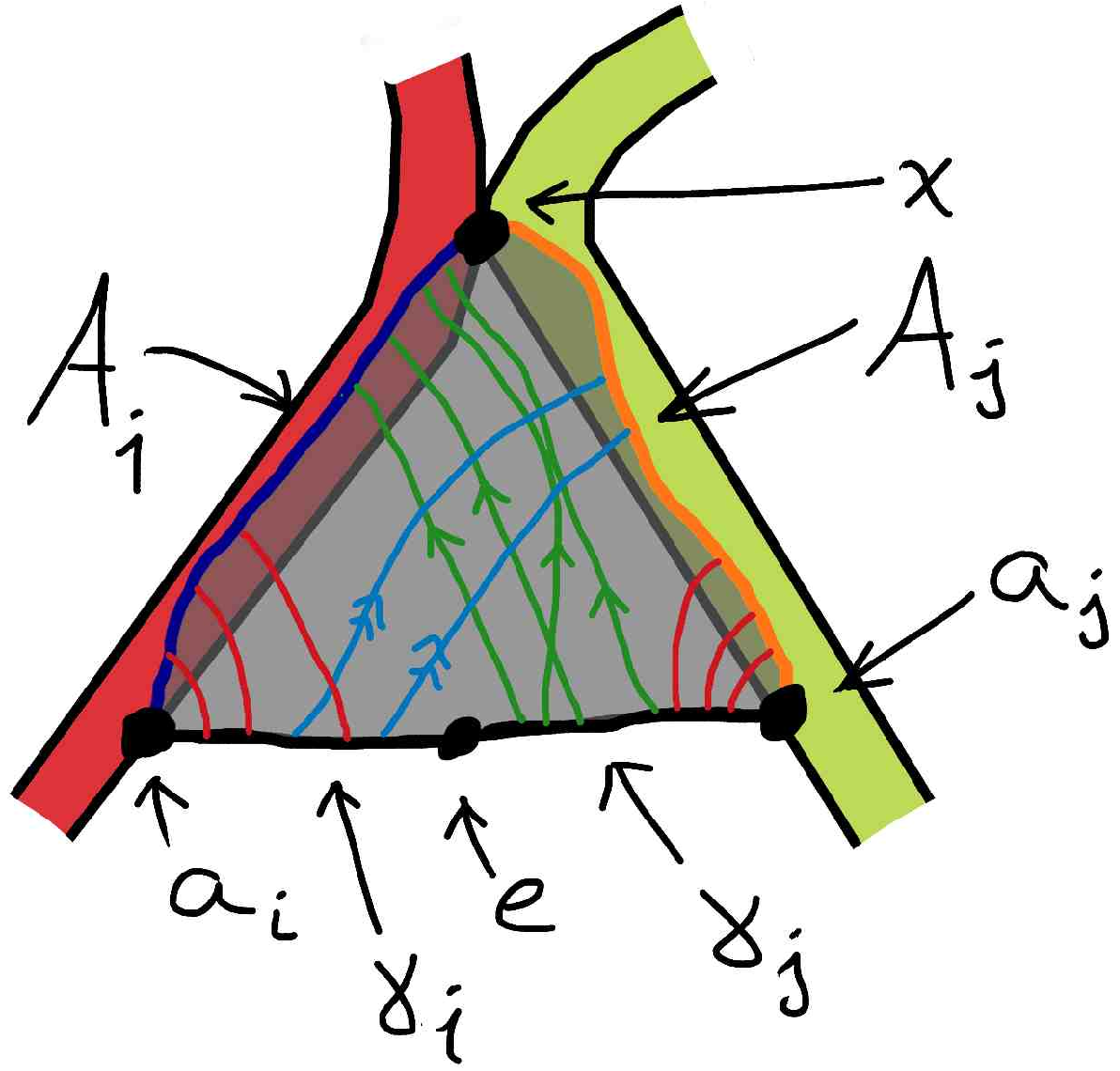}\\
  \caption{Either $\gamma'$ contains a long path lying in the carrier of a hyperplane or a family of dual curves corresponding to a large biclique in $\crossing X$.}\label{fig:aiaj}
\end{figure}

\begin{thm}[Projection trichotomy II]\label{thm:trichotomy2}
Let $\mathbf X$ and $\gamma$ be as in Theorem~\ref{thm:trichotomy1}, and assume that there is no infinite collection of pairwise-crossing hyperplanes.  Suppose that $\gproj(\gamma)$ is bounded in $\contact X$.  Then either $\gamma\subset N_R(H)$ for some $R\geq 0$ and some hyperplane $H$, or $\gamma$ bounds an eighth-flat in $\mathbf X$.
\end{thm}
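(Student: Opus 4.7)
The plan is to invoke Theorem~\ref{thm:trichotomy1}. Since $\gamma$ is a combinatorial geodesic ray, $\gproj(\gamma)$ is an infinite path in $\contact X$ with $d_{\gproj(\gamma)}(H_i,H_j)=|i-j|$, so boundedness of $\gproj(\gamma)$ in $\contact X$ rules out case~(1) and forces case~(2) or case~(3).

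In case~(3), for each hyperplane $H$, the subcomplex $N_R(N(H))$ is convex in $\mathbf X$ (a combinatorial neighborhood of the convex subcomplex $N(H)$), so its intersection with the geodesic $\gamma$ is a connected subpath. If some single hyperplane $H$ witnesses case~(3) for arbitrarily large $t$, then $\gamma\cap N_R(H)$ is an infinite terminal sub-ray of $\gamma$; absorbing the finite initial segment of $\gamma$ outside $N_R(H)$ into a larger radius $R'$ yields $\gamma\subset N_{R'}(H)$, giving the first alternative. Otherwise, we may pass to a subsequence of distinct witnesses $H_{t_k}$ whose subpaths $\gamma_{t_k}\subset\gamma$ march off to infinity along $\gamma$; then I would run the disc-diagram argument from the proof of Theorem~\ref{thm:trichotomy1}, using the bounded contact-distance $D:=\diam(\gproj(\gamma))$ between the hyperplanes dual to the endpoints, to produce bicliques $K_{p,p}\subset\Lambda(\gamma)\cap\crossing X$ with $p$ arbitrary, reducing to case~(2).

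In case~(2), the hypothesis that $\mathbf X$ contains no infinite family of pairwise-crossing hyperplanes, combined with a Ramsey argument applied as $p\to\infty$, lets us pass to bicliques $K_{p,p}\subset\Lambda(\gamma)\cap\crossing X$ in which each of the two sides $\mathcal H_p,\mathcal V_p\subset\mathcal W(\gamma)$ is pairwise non-crossing. Then $\mathcal H_p\cup\mathcal V_p$ is a finite inseparable set whose cubulation, recovered as a convex subcomplex $G_p\subset\mathbf X$ by Proposition~\ref{prop:recovery}, is a $p\times p$ Euclidean grid traversed by $\gamma$ as a monotone staircase from corner to opposite corner, bounding a sub-eighth-flat $E_p\subset G_p$. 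Coordinating the $\mathcal H_p,\mathcal V_p$ to nest across $p$ (for instance, by always extracting the initial $p$ hyperplanes of each side in the order inherited from $\gamma$) and taking $\bigcup_pE_p$, a K\"onig-style diagonalization yields the desired isometrically embedded eighth-flat in $\mathbf X$ with $\gamma$ as a bounding ray.

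The main obstacle is the eighth-flat construction in case~(2): the bicliques for different $p$ need not be compatible, so producing one infinite eighth-flat requires coherent choices followed by a compactness argument. The ``no infinite family of pairwise-crossing hyperplanes'' hypothesis is essential both in the Ramsey step (ensuring two-dimensionality of each $G_p$) and in taking the limit.
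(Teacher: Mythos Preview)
Your approach diverges substantially from the paper's, and the eighth-flat construction has genuine gaps.

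The paper does not route through Theorem~\ref{thm:trichotomy1} at all. It works directly from boundedness: first it extracts an infinite nested family $\{H_{n_i}\}\subset\mathcal W(\gamma)$ (this is where the no-infinite-clique hypothesis enters), then observes that since any $\contact X$-path from $H_{n_0}$ to $H_{n_j}$ must contain a hyperplane crossing each intermediate $H_{n_i}$, boundedness forces the set $\mathcal V$ of hyperplanes crossing all but finitely many $H_{n_i}$ to be nonempty. The dichotomy is then whether $\mathcal V\cap\mathcal W(\gamma)$ is finite or infinite. If finite, a short argument shows $\gamma$ lies in a uniform neighborhood of a single $V\in\mathcal V$. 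If infinite, the paper fixes one such $H_0\in\mathcal V\cap\mathcal W(\gamma)$ and builds the eighth-flat \emph{explicitly}: for each $n$ it takes the gate $g_n$ of $\gamma(n)$ in $N(H_0)$, and the eighth-flat is the union of the strips between $\gamma$ and its gate-projection $\sigma(n)=g_n$ onto $N(H_0)$. No Ramsey step, no limiting argument.

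Your case-(2) construction has several holes. First, Proposition~\ref{prop:recovery} requires $\mathbf X$ to be locally finite, which is not among the hypotheses here. Second, $\mathcal H_p\cup\mathcal V_p$ is not obviously inseparable: a hyperplane of $\mathcal W(\gamma)$ separating two members of $\mathcal H_p$ must cross every $V\in\mathcal V_p$, but nothing forces it to lie in $\mathcal H_p$; taking the inseparable closure destroys the clean $p\times p$ grid. Third, and most seriously, even granting a grid $G_p$, there is no reason $\gamma$ lies in $G_p$: the ray crosses the hyperplanes of $G_p$, but Proposition~\ref{prop:recovery} produces \emph{some} convex subcomplex with that hyperplane set, not one containing prescribed $0$-cubes. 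Your assertion that ``$\gamma$ traverses $G_p$ as a monotone staircase from corner to opposite corner'' is precisely the content that needs proving. The K\"onig limit you flag as ``the main obstacle'' is indeed where all the work would live, and it is not done.

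Your case-(3) handling is also incomplete. Case~(3) of Theorem~\ref{thm:trichotomy1} gives, for each $t$, \emph{some} hyperplane with a length-$t$ subpath nearby; the target conclusion needs \emph{one} hyperplane for the whole ray. Your reduction of the ``many distinct witnesses'' subcase to case~(2) via a disc-diagram argument is a gesture, not an argument: long subpaths near varying hyperplanes say nothing directly about bicliques inside $\Lambda(\gamma)$. The paper's gate construction sidesteps this entirely.
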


\begin{proof}
Let $H_i$ be the hyperplane dual to $\gamma([i,i+1])$.  Assume that there exists an infinite increasing sequence $0\leq n_0<n_1<\ldots$ so that for all $j\geq 1$, the hyperplane $H_{n_j}$ separates $H_{n_{j-1}}$ from $H_{n_{j+1}}$.  Since $H_{n_i}$ separates $H_{n_0}$ from $H_{n_j}$ for $0<i<j$, any path in $\contact X$ from $H_0$ to $H_{n_j}$ contains a hyperplane $V$ that crosses $H_{n_i}$.  Moreover, if $U$ crosses $H_{n_0}$ and $H_{n_j}$, then $U$ crosses $H_{n_i}$.  Hence, if $(1)$ does not hold, it follows that there exists $K\geq 0$ such that for all $k>K$, some hyperplane $V_k$ crosses $H_{n_i}$ for all $i\geq k$.  Let $\mathcal H=\{H_{n_i}\}$ and let $\mathcal V$ be the set of all hyperplanes $V$ that cross all but finitely many $H_{n_i}$.

\textbf{Verifying $(3)$:}  Suppose that $|\mathcal V\cap\mathcal W(\gamma)|<\infty$.  By deleting a finite initial segment from $\gamma$, we may assume that no hyperplane in $\mathcal V$ crosses $\gamma$, and some $V\in\mathcal V$ crosses $H_n$ for all $n\geq 0$.  Indeed, if $V$ crosses every $H_{n_i}$ and does not cross some $H_n$, then $H_n\in\mathcal V\cap\mathcal W(\gamma)$, and we can truncate the initial part of $\gamma$ to remove the 1-cube dual to $H_n$.

For each $n\geq 0$, if a hyperplane $U$ separates $\gamma(n)$ from $N(V)$, then $U$ either separates all of $\gamma$ from $N(V)$ or $U$ crosses $\gamma$.  If $U$ crosses $\gamma([0,n])$, then $U$ cannot cross $\gamma([n,\infty))$, and hence $U$ crosses $H_m$ for $m\geq n$.  In particular, $U\in\mathcal V\cap\mathcal W(\gamma)$, a contradiction.  Thus $U$ crosses $\gamma([n,\infty))$, or $U$ separates $\gamma$ from $N(V)$.  Hence $U$ is one of the finitely many hyperplanes separating $N(V)$ from $\gamma(0)$, and thus $d_{_{\mathbf X}}(\gamma(n),N(V))\leq d_{_{\mathbf X}}(\gamma(0),N(V))$ for all $n\geq 0$, and~$(3)$ holds.

\textbf{Building an eighth-flat:}  If for some $n$ the ray $\gamma([n,\infty))$ bounds an eighth-flat, then so does $\gamma$.  Also, we have reduced to the case in which $\mathcal V\cap\mathcal W(\gamma)$ is infinite.  Hence, by truncating, assume that $H_0\in\mathcal V$.  For each $n\geq 0$, let $x_n=\gamma(n)$, and let $g_n$ be the \emph{gate} of $x_n$ in $N(H_0)$, i.e. the unique closest 0-cube of the convex set $N(H_0)^0$ to $x_n$.  Define a function $f$ by $f(n)=d_{_{\mathbf X}}(x_n,g_n)$.  Now, any hyperplane separating $x_n$ from $x_m$ either crosses $H_0$ and separates $g_m$ from $g_n$, or separates exactly one of $x_m$ or $x_n$ from $N(H_0)$, since $N(H_0)$ is convex.  Thus $f$ is a non-decreasing function.  If $f$ is bounded, then~$(3)$ holds.

For each $n\geq 0$, any hyperplane $H$ separating $g_n$ from $g_{n+1}$ crosses $H_0$, and hence does not separate $x_m$ or $x_n$ from $H_0$.  Thus $H$ separates $x_{n+1}$ from $x_n$, so there is a unique combinatorial geodesic ray $\sigma\rightarrow N(H_0)$ such that $\sigma(n)=g_n$ for each $n\geq 0$.  ($\sigma$ is the closest point projection of $\gamma$ to the convex subcomplex $N(H)$.  A discussion of gates in median graphs appears in~\cite{vandeVel_book}.)

For each $n\geq 0$, let $\omega_n$ be a combinatorial geodesic joining $x_n$ to $g_n$.  Observe that, if $g_{n+1}=g_n$, then $\omega_{n+1}=\gamma([n,n+1])\omega_n$.  If $g_n\neq g_{n+1}$, then $|\omega_n|=|\omega_{n+1}|$, and $\omega_n$ and $\omega_{n+1}$ lie on opposite sides of an isometrically embedded ``strip'' $S_n\cong\omega_n\times[-\frac{1}{2},\frac{1}{2}]$ in $\mathbf X$.  Indeed, the set of hyperplanes separating $x_n$ from $g_n$ coincides with that separating $x_{n+1}$ from $g_{n+1}$ in this case.  From the definition, we see that $\mathbf E=\bigcup_nS_n$ is an embedded eighth-flat in $\mathbf X$ whose bottom bounding ray is $\sigma$ and whose top bounding ray is $\gamma$.

For each $x\in\mathbf E^0$, choose a smallest $n$ so that $x$ lies on $\omega_n$.  Let $\alpha_1(x)=n$, and let $\alpha_2(x)\leq f(n)$ be the distance from $x$ to $g_n$ in $\mathbf E$.  Then for each $x,y\in\mathbf X^0$,
\[d_{\mathbf E}(x,y)=|\alpha_1(x)-\alpha_1(y)|+|\alpha_2(x)-\alpha_2(y)|\geq d_{_{\mathbf X}}(x,y).\]
Let $n=\alpha_1(x),m=\alpha_1(y)$.  Since $x$ lies on a geodesic from $x_n$ to $g_n$, $g_n$ is the gate of $x$ in $N(H_0)$.  Likewise, $g_m$ is the gate of $y$ in $N(H_0)$.  Hence the distance in $\mathbf X$ between the gates in $N(H_0)$ of $x$ and $y$ is $|m-n|$.  Since $S_m,S_n$ are isometrically embedded, the distance from $y$ to $g_m$ in $\mathbf X$ is $\alpha_2(y)=m$.  Thus the number of hyperplanes that separate exactly one of $x,y$ from $N(H_0)$ in $\mathbf X$ is $|\alpha_2(x)-\alpha_2(y)|$, so that $\mathbf E$ is isometrically embedded.

\textbf{Existence of $\{H_{n_i}\}$:}  To conclude, we must show that the claimed family $\{H_{n_i}\}_{i\geq 0}$ of pairwise non-crossing hyperplanes exists.  Were this not the case, then there would exist $N$ such that $H_m\bot H_n$ for all $m,n\geq N$, a contradiction.
\end{proof}

A combinatorially hyperbolic isometry of $\mathbf X$ is \emph{combinatorially rank-one} if some (and hence every) combinatorial geodesic axis does not lie in a combinatorial half-flat.  The following simple observation relates this definition to rank-one rays.

\begin{prop}\label{prop:combinatorialrankone}
Let $\mathbf X$ be locally finite and let $g\in\Aut(\mathbf X)$ be a combinatorially hyperbolic isometry.  Then $g$ is combinatorially rank-one if and only if for some (and hence for every) combinatorial geodesic axis $\gamma$ for $g$, and for all $m\in\integers$, the ray $\gamma([m,\infty))$ is rank-one.
\end{prop}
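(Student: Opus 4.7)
The plan is to prove each direction by contrapositive. For ($\Leftarrow$), suppose the axis $\gamma$ lies in an isometrically embedded combinatorial half-flat $\mathbf F$ (diagonal or non-diagonal). Then $\gamma$ must be monotone along the unique unbounded ``horizontal'' direction of $\mathbf F$, and for any $m$ the ray $\gamma([m,\infty))$ lies in a quarter-flat subcomplex $Q$ of $\mathbf F$ with $\gamma(m)$ at its corner. Choosing an unbounded nondecreasing $f$ large enough that the corresponding $f$-sector contains the trajectory of $\gamma([m,\infty))$ in $Q$ gives an eighth-flat subcomplex of $Q$ (hence of $\mathbf X$) isometrically embedded and containing the ray, so $\gamma([m,\infty))$ bounds an eighth-flat.

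For the hard direction ($\Rightarrow$), assume $\gamma([m,\infty)) \subset \mathbf E$ for an isometrically embedded eighth-flat $\mathbf E$; I want to construct a combinatorial half-flat containing $\gamma$. First I establish that $\gproj(\gamma)$ is bounded in $\contact X$: by the Remark preceding the proposition, $\diam(\contact E) < \infty$, and since $\mathbf E$ is isometrically embedded and contact in $\mathbf E$ (intersection of $\mathbf E$-carriers) implies contact in $\mathbf X$, a $\contact E$-path is a $\contact X$-path of the same length, so $\gproj(\gamma([m,\infty)))$ has $\contact X$-diameter at most $D:=\diam(\contact E)$. Since $g$ acts on $\contact X$ by isometries and each $g^{-n}(\mathbf E)$ is an isometric eighth-flat containing $\gamma([m-n\tau,\infty))$, the bound $D$ transfers to every backward translate, and taking $n$ large enough to capture any pair of hyperplanes dual to $\gamma$ yields $\diam(\gproj(\gamma)) \leq D$. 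Next I apply Theorem~\ref{thm:trichotomy2} to the backward ray $\gamma((-\infty,m])$, reparametrised as a forward combinatorial geodesic ray: its projection is bounded, so either it lies in $N_R(H')$ for some hyperplane $H'$, or it bounds an eighth-flat $\mathbf E'$. In the latter subcase, pass to sub-eighth-flats so that $\gamma([m,\infty))$ is the top bounding ray of $\mathbf E$ and the reverse of $\gamma((-\infty,m])$ is the top bounding ray of $\mathbf E'$; then $\mathbf E\cup\mathbf E'$, glued at the shared corner $\gamma(m)$, realises a diagonal half-flat whose boundary bi-infinite geodesic is $\gamma$. In the former subcase, $g$-iteration gives $\gamma((-\infty,m+n\tau]) \subset N_R(g^n(H'))$ for all $n$; combined with the eighth-flats $\{g^{-n}(\mathbf E)\}_{n\geq 0}$ all containing $\gamma(m)$, local finiteness of $\mathbf X$ lets me apply a pigeonhole argument on the finite link at $\gamma(m)$ and a diagonal argument on expanding balls to extract a subsequence $g^{-n_k}(\mathbf E)$ that stabilises on every finite subcomplex, converging to a limit subcomplex isomorphic to a non-diagonal half-flat containing $\gamma$.

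The main obstacle is this final construction: verifying that the glued $\mathbf E\cup\mathbf E'$ (respectively the limit subcomplex) is isometrically embedded in $\mathbf X$ and has the full cube-complex structure of a half-flat rather than some thinner or folded object. The rigidity of the eighth-flat combinatorics together with the $g$-equivariance of the configuration forces this, but one must carefully track hyperplane contacts at the shared corner and use local finiteness to exclude pathological gluings. The ``some and hence every'' hedge in the statement follows because any two combinatorial geodesic axes of $g$ fellow-travel at bounded distance, so a bounding half-flat for one axis can be transported to bound any other.
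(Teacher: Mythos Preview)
Your route through Theorem~\ref{thm:trichotomy2} is a significant detour.  The paper's proof of the nontrivial direction is one line: if $\gamma([m,\infty))$ lies in an isometric eighth-flat $\mathbf E$, then the translates $g^{-n}\mathbf E$ are isometric eighth-flats containing $\gamma([m-n\tau,\infty))$, and this family yields a half-flat containing $\gamma$.  No case analysis, no contact-graph bounds, no appeal to the trichotomy.

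Your gluing $\mathbf E\cup\mathbf E'$ at the single corner $\gamma(m)$ is a genuine gap, and not one that ``$g$-equivariance'' fixes: there is no $g$-equivariance relating $\mathbf E$ and $\mathbf E'$, since you obtained them from two independent applications of Theorem~\ref{thm:trichotomy2} to the forward and backward rays.  Nothing forces the two bottom bounding rays to concatenate to a bi-infinite geodesic in $\mathbf X$, and hyperplanes of $\mathbf E$ may cross hyperplanes of $\mathbf E'$ in $\mathbf X$ outside the union, so the glued object need not be isometrically embedded.  By contrast, the limiting argument you sketch in your second subcase is essentially the right idea and works \emph{directly} on the translates $g^{-n}\mathbf E$, with no case split: the point $\gamma(m)$ sits in $g^{-n}\mathbf E$ at the same combinatorial position as $\gamma(m+n\tau)$ sits in $\mathbf E$, and since the defining function $f$ is unbounded, arbitrarily large rectangular pieces of a flat surround $\gamma(m)$ inside $g^{-n}\mathbf E$ for large $n$; local finiteness and K\"onig's lemma then extract a half-flat containing $\gamma$.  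Drop the trichotomy and the case split; what remains is the paper's argument.
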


\begin{proof}
Some sub-ray of $\gamma$ bounds an eighth-flat.  Translating this eighth-flat by elements of $\langle g\rangle$ yields an increasing union of isometric eighth-flats, so that $\gamma$ lies in a half-flat.
\end{proof}

\section{The simplicial boundary}\label{sec:boundary}
We now turn to the definition and essential properties of the \emph{simplicial boundary} $\simp\mathbf X$ of $\mathbf X$.

\subsection{Unidirectional boundary sets}\label{sec:ubs}
We assume in this section that $\mathbf X$ contains no infinite family of pairwise-crossing hyperplanes, but not that $\mathbf X$ is finite-dimensional or locally finite.

The distinct hyperplanes $H_1,H_2,H_3$ form a \emph{facing triple} if for each $i\in\{1,2,3\}$, there exists a halfspace $\mathfrak h(H_i)$ that contains $H_j$ and $H_k$, for $j,k\in\{1,2,3\}-\{i\}$.  Equivalently, no set of three halfspaces, one associated to each of the $H_i$, is totally ordered by inclusion.

\begin{defn}[Nested, semi-nested]\label{defn:seminestet}
The set $\mathcal U$ of hyperplanes is \emph{nested} if there exists consistent orientation $\chi$ of $\mathcal W$, corresponding to a 0-cube or a 0-cube at infinity, such that $\{\chi(U)\mid U\in\mathcal U\}$ is totally ordered by inclusion.  Note that the hyperplanes in a nested set are pairwise non-crossing.  $\mathcal U$ is \emph{semi-nested} if there exists a consistent orientation $\chi$ of $\mathcal W$ such that $\{\chi(U)\mid U\in\mathcal U\}$ is partially ordered by inclusion, and, for all $U,U'\in\mathcal U$ such that $\chi(U)\not\subset\chi(U')$, either $\chi(U')\subseteq\chi(U)$ or $U\bot U'$.
\end{defn}

Given a consistent orientation $\chi$ of $\mathcal W$, define a partial ordering $\prec$ on $\mathcal U$ by $U\prec U'$ if and only if $\chi(U')\subseteq\chi(U)$.  Then $\mathcal U$ is nested if for some $\chi$, $\prec$ is a total ordering, and $\mathcal U$ is semi-nested if for some $\chi$, any two distinct hyperplanes in $\mathcal U$ cross or are $\prec$-comparable.  Nested sets are semi-nested, and a semi-nested set cannot contain a facing triple.

\begin{defn}[Unidirectional]\label{defn:unidirectional}
Let $\mathcal U$ be a set of hyperplanes.  Then $\mathcal U$ is \emph{unidirectional} if for each $U\in\mathcal U$, at least one halfspace associated to $U$ contains only finitely many elements of $\mathcal U$.
\end{defn}

\begin{lem}\label{lem:nesting}
A set $\mathcal U\subset\mathcal W$ that is unidirectional, infinite, inseparable, and without a facing triple is semi-nested.
\end{lem}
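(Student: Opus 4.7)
The plan is to construct an orientation $\chi$ of $\mathcal W$ that realizes $\mathcal U$ as semi-nested, combining the \emph{large-halfspace} rule (from unidirectionality) with the no-facing-triple hypothesis to rule out the overlap configuration. For each $U \in \mathcal U$, unidirectionality supplies a halfspace $\mathfrak s(U)$ with $|\mathcal U \cap \mathfrak s(U)| < \infty$; I define $\chi(U)$ to be the complementary halfspace (deferring the choice in the \emph{bi-finite} case, when both halfspaces of $U$ contain only finitely many $\mathcal U$-elements). Consistency $\chi(U) \cap \chi(U') \neq \emptyset$ is immediate when $U \bot U'$, and in the non-crossing case the only obstruction is the disjoint configuration, which would force $\chi(U) \subseteq \mathfrak s(U')$; then every $V \in \mathcal U$ contained in $\chi(U)$ would lie in the finite set $\mathfrak s(U') \cap \mathcal U$, contradicting our choice of $\chi(U)$ as the halfspace with infinitely many $\mathcal U$-elements.

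The heart of the argument is to rule out the overlap configuration: suppose for contradiction that $U, U' \in \mathcal U$ are non-crossing with $U' \subset \chi(U)$ and $U \subset \chi(U')$, but neither $\chi$-halfspace contained in the other. Any $V \in \mathcal U$ with $V \subset \chi(U) \cap \chi(U')$ crosses neither $U$ nor $U'$, and thus either separates them or places them in a common halfspace of $V$. In any CAT(0) cube complex the number of hyperplanes separating two non-crossing hyperplanes is finite (bounded by the combinatorial distance between their carriers), so if $\chi(U) \cap \chi(U')$ contains infinitely many $\mathcal U$-elements, some $V$ among them fails to separate $U$ from $U'$; then $\chi(U), \chi(U')$, and the halfspace of $V$ containing both $U, U'$ realize $(U, U', V)$ as a facing triple, contradicting the hypothesis. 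In the remaining subcase, where only finitely many $\mathcal U$-elements lie in $\chi(U) \cap \chi(U')$, the finiteness of $\mathfrak s(U) \cap \mathcal U$ and $\mathfrak s(U') \cap \mathcal U$ forces cofinitely many $V \in \mathcal U$ to cross $U$ or $U'$. Invoking the no-infinite-pairwise-crossing assumption on $\mathbf X$ together with the infinite Ramsey theorem extracts an infinite pairwise-non-crossing subfamily $\{V_n\} \subset \mathcal U$; analyzing its nesting structure relative to $U, U'$ and using inseparability — which forces any separator of two $V_n$'s to lie in $\mathcal U$ — produces a triple in $\mathcal U$ forming a facing triple, again contradicting the hypothesis.

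Once overlap is ruled out, the deferred choices of $\chi$ in the bi-finite case impose only finitely many mutually compatible nesting constraints (by the above analysis), which can be jointly satisfied. The resulting consistent partial orientation on $\mathcal U$ extends to a consistent orientation of all of $\mathcal W$ via wallspace duality. The main obstacle I anticipate is the subcase of step three where $\chi(U) \cap \chi(U')$ contains only finitely many $\mathcal U$-elements, so that most of $\mathcal U$ crosses $U$ or $U'$: the facing triple witnessing the contradiction must then come from hyperplanes transverse to the $U$–$U'$ axis rather than from the overlap region itself, and identifying the correct combinatorial configuration requires careful bookkeeping with the Ramsey-extracted subfamily together with the closure properties of $\mathcal U$ under inseparability.
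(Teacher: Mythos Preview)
Your large-halfspace orientation is a genuinely different route from the paper's. The paper fixes an infinite nested chain $\{U_i\}_{i \geq 0} \subset \mathcal U$ and defines $\chi$ on all of $\mathcal W$ relative to this spine (orienting hyperplanes toward higher-indexed $U_i$, or toward/away from $U_0$ according to position and membership in $\mathcal U$); a residual comparability failure --- for pairs $C, C' \in \mathcal U$ that cross every $U_i$ but not each other --- is then repaired by flipping orientations one at a time and passing to a limit. Your uniform rule is conceptually cleaner on the non-bi-finite part of $\mathcal U$, but you trade the paper's anchored construction for a harder case analysis at the overlap step.

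The gap is exactly in the subcase you flag: overlap with only finitely many $\mathcal U$-elements contained in $\chi(U) \cap \chi(U')$. Your outline --- extract a pairwise-non-crossing $\{V_n\}$ via Ramsey, then use inseparability to find a facing triple --- does not go through as stated. Each $V_n$ you extract crosses one of $U,U'$ (say $U'$), so no triple involving $U'$ can be facing; and the triples $(U, V_n, V_m)$ are nested rather than facing, since the $V_n$ are linearly ordered away from $U$ (no facing triple among $U,V_n,V_m$, none cross, and $U$ cannot separate two hyperplanes both lying in $\chi(U)$, so one of $V_n,V_m$ separates). One can push the analysis further --- extract a second family $\{W_m\}$ crossing $U$, and show every $V_n$ must cross every $W_m$ because otherwise $\chi(V_n)\cap\chi(W_m)=\emptyset$, contradicting your own consistency step --- but even then no facing triple is evident among $\{U,U',V_n,W_m\}$, and inseparability alone does not manufacture one. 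The sentence ``produces a triple in $\mathcal U$ forming a facing triple'' is doing work you have not actually done. Separately, your claim that the bi-finite case yields ``only finitely many'' constraints presupposes that only finitely many $U\in\mathcal U$ are bi-finite; this is true (an infinite bi-finite set would, by Ramsey, contain an infinite pairwise-non-crossing subfamily each of whose members is crossed by cofinitely many of the others), but you should say so.
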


\begin{proof}
$\mathcal U$ must contain an infinite set of pairwise non-crossing hyperplanes.  Since $\mathcal U$ is unidirectional, there exists a hyperplane $U_0\in\mathcal U'$ such that the halfspace $U_0^-$ does not contain any element of $\mathcal U$.  Since $\mathcal U$ does not contain a facing triple, $\mathcal U$ contains a set $\mathcal U'=\{\mathcal U_i\}_{i\geq 0}$ such that, for all $i\geq 1$, the hyperplanes $U_{i-1}$ and $U_{i+1}$ are separated by $U_i$.  For each $i\geq 0$, let $\chi(U_i)$ be the halfspace associated to $U_i$ that contains $U_{i+1}$.  If some $U\in\mathcal W$ separates some $U_i$ from some $U_j$, with $j>i$, let $\chi(U)$ be the halfspace containing $U_j$.  Since $\mathcal U$ is inseparable, such a $U$ belongs to $\mathcal U$.

More generally, let $\mathcal A$ be the set of all hyperplanes $H\in\mathcal W$ with $H\subset\mathbf X-\chi(U_0)$.  Let $\mathcal B$ be the set of all hyperplanes $H'\in\mathcal W$ with $H'\subset\chi(U_0)$.  Let $\mathcal C$ be the set of hyperplanes $H''\in\mathcal W$ that cross $U_0$. For each $H\in\mathcal A$, let $\chi(H)$ be the halfspace associated to $H$ that contains $U_0$, i.e. orient $H$ toward $U_0$.  If $H\in\mathcal U\cap\mathcal B$, let $\chi(H)$ be the halfspace that does not contain $U_0$.  If $H\in\mathcal B-\mathcal U$, then for all $U\in\mathcal U$, either $U\bot H$ or $U$ and $U_0$ lie in the same halfspace associated to $H$.  In either event, let $\chi(H)$ be the halfspace containing $U_0$.  In other words, for all hyperplanes $H$ that do not cross $U_0$, $\chi$ orients $H$ away from $U_0$ exactly when $H\not\in\mathcal U$.  If $H$ does not cross $U$, and $H'\neq U$ does not cross $U$, then $\chi(H)\cap\chi(H')\neq\emptyset$.

If $H\bot U$, then any orientation of $H$ is consistent with $\chi|_{\mathcal A}$, since $\chi$ orients each $A\in\mathcal A$ toward $H\cap U$.  Similarly, any orientation of $H$ is consistent with $\chi|_{\mathcal B-\mathcal U}$.  If $H\subset(\mathbf X-\chi(U_i))$ for some $i\geq 1$, let $\chi(H)$ be the halfspace containing $U_i$.  This is consistent with $\chi|_{\mathcal B\cap\mathcal U}$.  Moreover, if $H'$ is another hyperplane that crosses $H$ but fails to cross some $U_j$, than $\chi(H')\cap\chi(H)$ contains $U_{\max(i,j)}$, so $\chi$ is consistent across all such $H,H'$.  The remaining hyperplanes are those $H$ that cross $U_i$ for all $i\geq 0$.  Choose a fixed 0-cube $x_0\in N(U_0)$ and let $\chi$ orient all of these $H$ toward $x_0$.  If $H$ is such a hyperplane, and $H'\bot U$ and $H'\not\bot U_i$, then $N(H)\cap N(U_i)$ lies in $\chi(H')$ and contains a 0-cube in $\chi(H)$.  Thus $\chi$ is a consistent orientation of all hyperplanes of $\mathbf X$.  Since $\bigcap_{i\geq 0}\chi(U_i)=\emptyset$, $\chi$ is a 0-cube at infinity.  Finally, for $U,U'\in\mathcal U$, let $U\prec U'$ if and only if $\chi(U')\subseteq\chi(U)$.  This is clearly a partial ordering, and every $\prec$-chain in $\mathcal U$ has a minimum since $\mathcal U$ is unidirectional.

Let $\mathcal C$ be the set of hyperplanes $C$ such that $C\bot U_i$ for all $i\geq 0$.  Recall that $\chi(C)$ is the halfspace containing $x_0$, and that the orientation $\chi$ is consistent on all of $\mathcal W$ for arbitrarily chosen $x_0\in N(H_0)$.  Observe that since $\mathcal U$ is inseparable and contains no facing triple, $\prec$ fails to have the desired comparability property on $\mathcal U$ if and only if there exist $C,C'\in\mathcal U\cap\mathcal C$ that do not cross, with $x_0\in N(C)\cap N(C')$.  Hence the comparability requirement holds for $\chi$ except on the set $\mathcal U\cap\mathcal W(\chi(U_0)\cap\chi(C))$.  Replacing $\chi$ by the 0-cube $\chi_1$ at infinity that coincides with $\chi$ on all hyperplanes except $C$ preserves consistency, but $\chi_1$ yields a partial order satisfying the comparability requirement on a subset of $\mathcal U$ that properly contains $\mathcal U\cap\mathcal W(\chi(U_0)\cap\chi(C))$.  We can thus write $\mathcal U$ as an increasing union of subsets $\mathcal U^n$ such that, for each $n$, there is a 0-cube $\chi_n$ at infinity such that for all $U,U'\in\mathcal U^n$, either $U\bot U'$ or $\chi(U)\subseteq\chi(U')$ or $\chi(U')\subseteq\chi(U)$.  Furthermore, the restriction of $\chi_m$ to $\mathcal U^n$ coincides with $\chi_n$ whenever $m\geq n$.  We thus define a 0-cube $\eta$ at infinity by $\eta(H)=\chi_n(H)$ for some (and hence every) $n$ for which $H\in(\mathcal W-\mathcal U)\cup\mathcal U_n$.  By construction, $\eta$ is consistent and the associated partial order on $\mathcal U$ is as required.
\end{proof}

We now abstract the essential properties of the set $\mathcal U$ in Lemma~\ref{lem:nesting}.

\begin{defn}[Unidirectional boundary set, almost-equivalent]\label{defn:UBS}
A \emph{unidirectional boundary set (UBS)} $\mathcal U$ is an infinite, inseparable, unidirectional set of hyperplanes containing no facing triple.  The UBSs $\mathcal U'$ and $\mathcal U$ are \emph{almost-equivalent} if their symmetric difference is finite.  The UBS $\mathcal U$ is \emph{minimal} if any UBS $\mathcal U'\subseteq\mathcal U$ is almost-equivalent to $\mathcal U$.  Almost-equivalence of UBSs is an equivalence relation.
\end{defn}

\begin{exmp}\label{exmp:ray}
If $\gamma:[0,\infty)\rightarrow\mathbf X$ is a geodesic ray, then $\mathcal W(\gamma)$ is a UBS.  If $\gamma'$ is another ray, then $\gamma$ and $\gamma'$ are almost-equivalent if and only if $\mathcal W(\gamma)$ and $\mathcal W(\gamma')$ are almost equivalent UBSs.  An associated 0-cube at infinity orients every $H\in\mathcal W-\mathcal W(\gamma)$ toward $\gamma(0)$, and orients each $H\in\mathcal W(\gamma)$ away from $\gamma(0)$.
\end{exmp}

\begin{defn}[Inseparable closure]\label{defn:inseparableclosure}
Let $\mathcal U\subset\mathcal W$.  The \emph{inseparable closure} $\overline{\mathcal U}$ of $\mathcal U$ is the intersection of all inseparable sets $\mathcal V$ such that $\mathcal U\subseteq\mathcal V$.  Note that $\overline{\mathcal U}$ consists of $\mathcal U$, together with the set of hyperplanes $V$ for which there exist $U,U'\in\mathcal U$ such that $V$ separates $U$ and $U'$.
\end{defn}

\begin{lem}\label{lem:UBSesexist}
If $\mathbf X$ contains no infinite family of pairwise-crossing hyperplanes and $\mathcal W$ contains a UBS, then $\mathcal W$ contains a minimal UBS.  Moreover, every essential hyperplane is contained in a UBS.
\end{lem}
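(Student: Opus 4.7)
The plan is to address the two assertions in reverse order, since the chain-construction used to produce a UBS around an essential hyperplane also provides the template for extracting a minimal sub-UBS from an arbitrary UBS.

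\textbf{Part 2 (UBS containing $H$).} Since $H$ is essential, one halfspace $H^+$ contains $0$-cubes arbitrarily far from $H$. I inductively construct an infinite chain $H=V_0\prec V_1\prec V_2\prec\cdots$ with $V_{i+1}\subset V_i^+$ and $V_{i+1}^+$ still unbounded, as follows. Given $V_i$ with $V_i^+$ unbounded, the set of hyperplanes contained in $V_i^+$ is infinite, and antichains in the order ``$W$ separates $V_i$ from $W'$'' consist of pairwise-crossing hyperplanes, hence are finite by hypothesis. A Dilworth-type argument produces an infinite chain $W_1\prec W_2\prec\cdots$ in this set; setting $V_{i+1}:=W_1$ gives a hyperplane whose far halfspace contains the infinite tail of the chain and is therefore unbounded. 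I then take $\mathcal U:=\overline{\{V_i\}_{i\geq 0}}$, the inseparable closure in $\mathcal W$. It is infinite and inseparable by construction. Any two hyperplanes of $\mathbf X$ are separated by only finitely many others—since any separating hyperplane must cross every combinatorial geodesic between $0$-cubes chosen in their carriers, bounding the count by that distance—so the ``between'' sets added by the closure are all finite. Unidirectionality then follows because the near-$H$ halfspace of each element of $\mathcal U$ meets $\mathcal U$ in only finitely many chain elements together with finitely many between-hyperplanes. No facing triple arises because the chain's orientation extends consistently to the whole closure, making all non-crossing pairs in $\mathcal U$ comparable under the resulting order.

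\textbf{Part 1 (minimal sub-UBS).} Let $\mathcal U$ be any UBS. By Lemma~\ref{lem:nesting}, $\mathcal U$ is semi-nested with partial order $\prec$; unidirectionality translates to each element of $\mathcal U$ having only finitely many $\prec$-predecessors in $\mathcal U$, and antichains (pairwise-crossing hyperplanes) are finite by hypothesis. Choose $V_0\in\mathcal U$ to be $\prec$-minimal, which exists by finite descent. Inductively, given $V_i$, almost every element of $\mathcal U$ lies $\succ V_i$ (since the predecessors and the antichain-neighbors of $V_i$ are together finite but $\mathcal U$ is infinite), so some $V'\succ V_i$ exists. The interval $\{W\in\mathcal U\,:\,V_i\prec W\preceq V'\}$ sits inside the finitely many $\prec$-predecessors of $V'$, so it has a $\prec$-minimal element $V_{i+1}$, and by construction no $\mathcal U$-element lies strictly $\prec$-between $V_i$ and $V_{i+1}$. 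Since $\mathcal U$ is inseparable in $\mathcal W$, no hyperplane of $\mathcal W$ at all lies strictly between them. Hence $\mathcal V:=\{V_i\}_{i\geq 0}\subseteq\mathcal U$ is a UBS by the same four verifications as in Part~2, now with empty between-sets. For minimality, let $\mathcal V'\subseteq\mathcal V$ be any sub-UBS; $\mathcal V'$ is infinite, so contains $V_{i_1}$ for a smallest index $i_1$, and its inseparability forces $V_i\in\mathcal V'$ for every $i>i_1$ (otherwise the omitted $V_i$ would separate two elements of $\mathcal V'$ without belonging to $\mathcal V'$), so $\mathcal V\setminus\mathcal V'\subseteq\{V_0,\ldots,V_{i_1-1}\}$ is finite.

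\textbf{Main obstacle.} The most delicate step is verifying, in Part~2, that the inseparable closure $\overline{\{V_i\}_{i\geq 0}}$ really is a UBS: one must rule out facing triples arising from hyperplanes added by taking the closure, since $\mathbf X$ itself may harbor facing triples in general. I plan to handle this by showing that the chain orientation $\chi$ assigning to each $V_i$ the halfspace containing $V_{i+1}$ extends to a consistent orientation on the entire closure under which the closure is semi-nested with every non-crossing pair $\prec$-comparable—a condition that automatically forbids facing triples.
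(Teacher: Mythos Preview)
Your Part~1 strategy---building a tight chain $V_0\prec V_1\prec\cdots$ in $\mathcal U$ with no hyperplane of $\mathcal W$ strictly between consecutive terms, then checking it is a minimal UBS---is different from the paper's approach (which takes the inseparable closure of a chain, verifies by direct case analysis that it is a UBS, and then extracts a minimal sub-UBS via an iterative splitting argument). Once such a chain is built, your minimality verification is clean. The problem is in the inductive step ``almost every element of $\mathcal U$ lies $\succ V_i$, since the predecessors and antichain-neighbors of $V_i$ are together finite.'' The set of $\mathcal U$-elements crossing $V_i$ need not be finite: the hypothesis forbids infinite \emph{pairwise}-crossing families, but a single hyperplane can be crossed by infinitely many others that do not mutually cross. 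Concretely, in the non-diagonal quarter-flat $[0,\infty)^2$, let $\mathcal U=\{H_0\}\cup\{V_j:j\geq 0\}$ where $H_0$ is horizontal at height $\tfrac12$ and $V_j$ is vertical at $x=j+\tfrac12$. This is a UBS; $H_0$ is $\prec$-minimal but has no $\prec$-successor, since every other element crosses it, and your construction started at $H_0$ stalls immediately. A repair is to first fix an infinite $\prec$-chain $C_0\prec C_1\prec\cdots$ in $\mathcal U$ (which does exist), set $V_0=C_0$, and take $V_{i+1}$ to be $\prec$-minimal in the finite interval $(V_i,C_{i+1}]$; this guarantees a successor at every stage and preserves the ``nothing strictly between'' property.

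The same conflation appears in Part~2: two non-crossing hyperplanes in $V_i^+$ can be incomparable in your order---they may form a facing triple with $V_i$---so antichains there are not pairwise-crossing and your Dilworth step is unjustified. The paper sidesteps this entirely: it simply notes that a deep halfspace contains an infinite nested family (for instance, via a geodesic ray into that halfspace), and then handles the facing-triple verification for the inseparable closure by an elementary case analysis rather than by constructing a semi-nesting orientation.
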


\begin{rem}
The hypotheses are satisfied if $\mathbf X$ is unbounded and strongly locally finite, or, more generally, if $\mathbf X$ has no infinite family of pairwise-crossing hyperplanes and contains at least one deep halfspace.
\end{rem}

\begin{proof}[Proof of Lemma~\ref{lem:UBSesexist}]
By hypothesis, there exists an infinite set $\mathcal U'=\{U_i\}_{i\geq 0}$ of hyperplanes such that, for all $i\geq 1$, the hyperplane $U_i$ separates $U_{i-1}$ from $U_{i+1}$.  Let $\mathcal U$ be the inseparable closure of $\mathcal U'$.  Clearly $\mathcal U$ is infinite and inseparable.

\textbf{$\mathcal U$ contains no facing triple:}  Since $\mathcal U'$ does not contain a facing triple, any facing triple in $\mathcal U$ is of the form $U_i,U_j,V_1$ or $U_i,V_1,V_2$ or $V_1,V_2,V_3$, where $V_1,V_2,V_3\in\mathcal U-\mathcal U'$.  If $U_i,U_j,V_1$ is a facing triple, then $V_1$ separates some $U_k$ from $U_i$, and hence from $U_j$.  Thus $U_i,U_j,U_k$ is a facing triple, a contradiction.  If $U_i,V_1,V_2$ is a facing triple, then $V_1$ separates some $U_j$ from $U_i$, and thus $U_i,U_j,V_2$ form a facing triple, contradicting the previous statement.  By like reasoning, $V_1,V_2,V_3$ cannot form a facing triple.

\textbf{$\mathcal U$ is unidirectional:}  Let $U\in\mathcal U$ have the property that both associated halfspaces contain infinitely many elements of $\mathcal U$.  Since there exist $U_i,U_j$ separated by $U$, with $i<j$, and only finitely many hyperplanes separate $U_i$ from $U_j$, either the halfspace $U_i^-$ not containing $U$ contains infinitely many hyperplanes, or infinitely many elements of $\mathcal U$ cross $U_i$ and do not cross $U$.  Now, no element of $\mathcal U$ crosses $U_0$, by the definition of the inseparable closure.  Hence infinitely many hyperplanes $V\in\mathcal U$ cross $U_i$ but do not cross $U$ or $U_0$.  Either $U,U_0,V$ form a facing triple, or $V$ is among the finitely many hyperplanes separating $U$ from $U_0$.  This is a contradiction, and it follows that $U$ is unidirectional.

\textbf{$\mathcal U$ contains a minimal UBS:}  Let $\mathcal S\subseteq\mathcal U$ be a UBS.  If $\mathcal S$ does not contain $U_i$, then $\mathcal S$ cannot contain any $U\in\mathcal U$ that separates $U_i$ from $U_0$.  Thus there exists some minimal $I<\infty$ such that $\mathcal S$ contains $U_i$ for $i\geq I$.  If $U\in\mathcal U$ does not separate $U_I$ from $U_0$ and does not cross $U_i$, then inseparability of $\mathcal S$ requires that $U\in\mathcal S$.  Hence every element of $\mathcal U-\mathcal S$ either separates $U_0$ from $U_I$ or crosses $U_I$ and separates $U_0$ from $U_i$ for some $i\geq I$.

Indeed, if $U\in\mathcal U$, then $U$ cannot cross all but finitely many of the $U_i$, for otherwise $\mathcal U-\{U\}$ would be a smaller inseparable set containing $\mathcal U'$.  Let $\mathcal C_1$ be the set of hyperplanes in $\mathcal U$ that cross $U_i$.  If $\mathcal C_1$ is finite, then $\mathcal U-\mathcal S$ has cardinality $|\mathcal C_1|+d_{_{\mathbf X}}(N(U_0),N(U_i))$ and so $\mathcal S$ is almost-equivalent to $\mathcal U$.  If $\mathcal C_1$ is infinite, then let $\mathcal U_1=\mathcal U-\mathcal S$.  Note that $\mathcal C_1$ is unidirectional and contains no facing triple, since those properties are inherited by subsets.  If $C,C'\in\mathcal C_1$ are separated by some hyperplane $W$, then $W$ crosses $U_I$ and crosses each hyperplane $U_j$ crossed by both $C$ and $C'$.  Moreover, since $W$ does not cross $C$ or $C'$, $W$ does not cross $U_j$ for $j$ sufficiently large.  Hence $W\in\mathcal C_1$, which is therefore a UBS.  Likewise, $\mathcal U_1$ is infinite, unidirectional, and contains no facing triple.  By the definition of $\mathcal C_1$, $\mathcal U_1$ is also inseparable.  Hence $\mathcal U_1$ is a UBS.  Moreover, every element of $\mathcal U_1$ crosses all but finitely many elements of $\mathcal C_1$.

Hence, if $\mathcal U$ is not minimal, then $\mathcal U$ contains two disjoint UBSs, $\mathcal U_1$ and $\mathcal C_1$, such that every hyperplane in $\mathcal U_1$ crosses all but finitely many hyperplanes in $\mathcal C_1$.  If $\mathcal U_1$ is not minimal, then by the same reasoning, $\mathcal U_1$ is almost-equivalent to a disjoint union $\mathcal U_2\sqcup\mathcal C_2$ of UBSs such that each $U\in\mathcal U_2$ crosses all but finitely many $C\in\mathcal C_2$, and each of $U$ and $C$ crosses all but finitely many $C'\in\mathcal C_1$.  Apply the same argument to $\mathcal U_2$, and continue in this way. Since $\mathbf X$ does not contain an infinite set of pairwise-crossing hyperplanes, we must, after finitely many applications of this argument, find a minimal UBS contained in $\mathcal U$.

\textbf{Essential hyperplanes:}  If $U\in\mathcal W$ is essential, then by definition there exists an infinite nested set of hyperplanes contained in a halfspace associated to $U$, and we argue as above.
\end{proof}

\begin{rem}
The proof of Lemma~\ref{lem:UBSesexist} shows that if $\mathbf X$ contains no infinite collection of pairwise-crossing hyperplanes, and $\mathcal U\subset\mathcal W$ is a UBS, then $\mathcal U$ contains a minimal UBS.  Indeed, $\mathcal U$ must contain a unidirectional nested set of hyperplanes, whose inseparable closure is contained in $\mathcal U$.
\end{rem}

The following theorem allows us to define the simplicial boundary of $\mathbf X$.

\begin{thm}\label{thm:structureofboundarysets}
Suppose that every collection of pairwise-crossing hyperplanes in $\mathbf X$ is finite.  Let $v$ be an almost-equivalence class of UBSs.  Then $v$ has a representative of the form $\mathcal V=\bigsqcup_{i=1}^k\mathcal U_i,$ where $k<\infty$, each $\mathcal U_i$ is minimal, and for all $1\leq i<j\leq k$, if $H\in\mathcal U_j$, then $H$ crosses all but finitely many elements of $\mathcal U_i$.

Moreover, if $\mathcal V'=\bigsqcup_{i=1}^{k'}\mathcal U'_i$ is almost-equivalent to $\mathcal V$, and each $\mathcal U'_i$ is minimal, then $k=k'$ and, up to re-ordering, $\mathcal U_i$ and $\mathcal U'_i$ are almost equivalent for all $i$.
\end{thm}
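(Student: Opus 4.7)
The plan is to prove existence by iteratively extracting minimal UBSs, mimicking the structure already visible in the proof of Lemma~\ref{lem:UBSesexist}, and to prove uniqueness by identifying each minimal piece as an inseparable intersection. I start with a representative $\mathcal V$ of $v$ and apply Lemma~\ref{lem:UBSesexist} to extract a minimal UBS $\mathcal U_1 \subseteq \mathcal V$. The crucial first step is to verify that, after replacing $\mathcal V$ by an almost-equivalent UBS, one may write $\mathcal V = \mathcal U_1 \sqcup \mathcal V_1$ where either $\mathcal V_1$ is finite or $\mathcal V_1$ is itself a UBS, and moreover every $H \in \mathcal V_1$ crosses all but finitely many hyperplanes in $\mathcal U_1$. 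This follows by revisiting the construction in Lemma~\ref{lem:UBSesexist}: $\mathcal U_1$ is built as the inseparable closure of a cofinal nested chain in $\mathcal V$ with respect to the partial order associated to a consistent orientation, so any $H \in \mathcal V \setminus \mathcal U_1$ is incomparable to the bottom of the chain, forcing $H$ to cross all but finitely many of its elements; inseparability of $\mathcal U_1$ then transfers this property to $\mathcal U_1$ itself. Iterating on $\mathcal V_1$ produces minimal UBSs $\mathcal U_2, \mathcal U_3, \ldots$, and the required crossing condition for $i < j$ follows because $\mathcal U_j \subseteq \mathcal V_i$.

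The main obstacle is termination. Suppose for contradiction the iteration produces an infinite sequence $\mathcal U_1, \mathcal U_2, \ldots$ of minimal UBSs with the crossing property. For each fixed $n$, one builds a pairwise-crossing $n$-tuple $(H_1, \ldots, H_n)$ with $H_i \in \mathcal U_i$ by a top-down greedy procedure: pick $H_n \in \mathcal U_n$ arbitrarily, then inductively pick $H_i \in \mathcal U_i$ in the cofinite subset consisting of hyperplanes that cross every previously-chosen $H_l$ with $l > i$. Upgrading the existence of arbitrarily large finite pairwise-crossing families to an infinite one, as is required to contradict the hypothesis, is the subtle step and is where most of the work lies: I expect to exploit the fact that each $\mathcal U_i$ is inseparable and contains an infinite nested chain, so that a diagonal argument running simultaneously over the nested chains in the $\mathcal U_i$ and the bipartite cofinite-density structure between $\mathcal U_i$ and $\mathcal U_j$ produces a single infinite pairwise-crossing family.

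For uniqueness, suppose $\mathcal V \approx \bigsqcup_{i=1}^{k} \mathcal U_i \approx \bigsqcup_{j=1}^{k'} \mathcal U'_j$ are two decompositions of the stated form with all pieces minimal. Fix $i$; then $\mathcal U_i$ is almost-equivalent to $\bigsqcup_j (\mathcal U_i \cap \mathcal U'_j)$, so, since $\mathcal U_i$ is infinite and $k'$ is finite, some $S := \mathcal U_i \cap \mathcal U'_j$ is infinite. Because both $\mathcal U_i$ and $\mathcal U'_j$ are inseparable and contain $S$, the inseparable closure of $S$ in $\mathcal W$ is contained in $\mathcal U_i \cap \mathcal U'_j = S$, so $S$ is already inseparable; combined with the unidirectionality and absence of facing triples inherited from $\mathcal U_i$, this makes $S$ a sub-UBS of both $\mathcal U_i$ and $\mathcal U'_j$. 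By the minimality of $\mathcal U_i$ and of $\mathcal U'_j$, the set $S$ is almost-equivalent to each, hence $\mathcal U_i$ is almost-equivalent to $\mathcal U'_j$. Defining $\sigma(i) = j$, the map $\sigma$ is well-defined and injective (distinct $\mathcal U_i, \mathcal U_{i'}$ are disjoint and so cannot both be almost-equivalent to a single $\mathcal U'_j$), and by swapping the roles of the two decompositions it is surjective as well; this yields $k = k'$ and the desired matching.
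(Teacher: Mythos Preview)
Your plan matches the paper's: iteratively extract minimal UBSs, establish the crossing property, argue termination, then match pieces via infinite intersections for uniqueness. The uniqueness argument is essentially identical to the paper's.

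Two steps need tightening. Your argument for the crossing property does not work as stated: a nested chain in $\mathcal V$ need not be cofinal (if one were, its inseparable closure would be almost-equivalent to $\mathcal V$ and $\mathcal V$ would already be minimal), and lying outside the inseparable closure of a chain does not by itself force $H$ to cross cofinitely many chain elements. The paper instead lets $\mathcal V'_1\subset\mathcal V\setminus\mathcal U_1$ be the set of hyperplanes crossing only finitely many elements of $\mathcal U_1$ and shows it is finite: were it infinite, it would contain a nested chain which, spliced with a nested chain in $\mathcal U_1$, yields a chain in $\mathcal V$ with no $\prec$-minimum, contradicting unidirectionality. For termination, the paper does not attempt any upgrading from arbitrarily large finite cliques to an infinite one; it simply asserts the simultaneous choice of pairwise-crossing $U_k\in\mathcal U_k$ for all $k\in K$, giving a clique of size $|K|$, whence $|K|<\infty$. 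Your top-down greedy is exactly what justifies this choice for each finite subset of $K$, and you are right that the passage from there to $|K|<\infty$ under the bare hypothesis ``no infinite clique'' is not automatic; the paper is terse on this point (its parenthetical about $\dimension\mathbf X<\infty$ handles the case it has foremost in mind), and your diagonal plan is a reasonable route but remains only a sketch.
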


\begin{proof}
If $\mathcal V$ is a minimal UBS, then by definition any UBS $\mathcal U\subset\mathcal V$ is almost-equivalent to $\mathcal V,$ and the desired decomposition into minimal UBSs exists and is unique in the required sense.  We now proceed inductively.  Let $\mathcal U_1$ be a minimal UBS contained in $\mathcal V.$  Let $\mathcal V_1=\mathcal V-\mathcal U_1$.  If $\mathcal V_1$ is finite, then $\mathcal V$ is almost-equivalent to $\mathcal U_1$, and $\mathcal V$ is already minimal.

Let $V\in\mathcal V_1$ be a hyperplane.  If $V$ crosses infinitely many hyperplanes of $\mathcal U_1$, then $V$ crosses all but finitely many hyperplanes of $\mathcal U_1$ since, for all $U\in\mathcal U_1$, all but finitely many hyperplanes of $\mathcal U_1$ lie in a common halfspace associated to $U.$  Let $\mathcal V'_1\subset\mathcal V_1$ be the set of hyperplanes in $\mathcal V-\mathcal U_1$ that cross only finitely many elements of $\mathcal U_1$.  If $\mathcal V'_1$ is infinite, then since $\mathbf X$ is strongly locally finite, $\mathcal V'_1$ contains an infinite nested set $\{V_i\}_{i\geq 0}$ with initial hyperplane $V_0$ such that for all but finitely many $U\in\mathcal U_1$, $V_0$ separates $U$ from each $V_i$ with $i\geq 1$.  This implies that $\mathcal U_1\cup\mathcal V'_1$ contains a nested set with no initial hyperplane, contradicting the fact that $\mathcal V$ is a UBS.  Thus $\mathcal V'_1$ is finite and can be assumed to be empty without affecting the almost-equivalence class of $\mathcal V$.  Hence each $V\in\mathcal V_1$ crosses all but finitely many elements of $\mathcal U_1$, and thus $\mathcal V_1$ contains an infinite inseparable set.  $\mathcal V_1$ thus contains a minimal UBS $\mathcal U_2$, by Lemma~\ref{lem:UBSesexist}.  Moreover, each $U\in\mathcal U_2$ crosses all but finitely many of the hyperplanes in $\mathcal U_1.$

By induction, we obtain a (a priori, infinite) maximal index set $K\subseteq\naturals$ and a subset $\bigsqcup_{k\in K}\mathcal U_k\subseteq\mathcal V,$ where each $\mathcal U_k$ is a minimal UBS and for all $k<n,$ each $U\in\mathcal U_n$ crosses all but finitely many elements of $\mathcal U_k$.  We can thus choose, for each $k\in K$, $U_k\in\mathcal U_k$ such that the hyperplanes $U_k,U_j$ cross whenever $j\neq k$.  Hence $|K|<\infty$ since $\mathbf X$ is strongly locally finite.  (If $\dimension\mathbf X<\infty$, then $|K|\leq\dimension(\mathbf X)$).

Consider the set $\mathcal F=\mathcal V-\bigsqcup_{k\in K}\mathcal U_k\subseteq\mathcal V.$  If $\mathcal F$ is infinite, $\mathcal F$ contains a UBS, contradicting maximality of $K$.  Hence $\mathcal F$ is finite, and we may remove $\mathcal F$ from $\mathcal V$ without affecting the almost-equivalence class $v$ of $\mathcal V.$

Finally, suppose $\mathcal V'=\bigcup_{k'\in K'}\mathcal U'_{k'}$ is another such decomposition into minimal inseparable sets, with $\mathcal V'$ almost-equivalent to $\mathcal V$.  Suppose there exist $k'_1$ and $k'_2\in K'$ and $k\in K$ such that $\mathcal U_k$ has infinite intersection with $\mathcal U'_{k'_i}$ for $i=1,2$.  By inseparability, for $i\in\{1,2\}$, we have that $\mathcal U_k\cap\mathcal U'_{k'_i}$ contains all but finitely many elements of $\mathcal U_k$ and $\mathcal U'_{k'_i}$ (see below).  Without affecting the almost-equivalence class, we can remove finitely many hyperplanes from the symmetric difference of these sets to conclude that $\mathcal U_k=\mathcal U'_{k_1'}=\mathcal U'_{k_2'}$, and hence that $k_1=k_2$.  Thus $\mathcal U_k$ is almost-equivalent to at most one of the sets $\mathcal U'_{k'}$.

Suppose $\mathcal U_k$ is not almost-equivalent to $\mathcal U'_{k'}$ for any $k'\in K'$.  Then $\mathcal U_k$ has finite intersection with each $\mathcal U'_{k'}$.  But $\mathcal U_k$ is infinite and contained in the finite union $\bigcup_{k'\in K'}\mathcal U'_{k'}$, a contradiction.  Thus $\mathcal U_k$ is almost-equivalent to at least, and thus exactly, one of the $\mathcal U'_{k'}$.

We used the fact that, if $\mathcal U_1,\mathcal U_2$ are minimal UBSs, then $\mathcal U_1\cap\mathcal U_2$ is infinite if and only if $\mathcal U_1$ and $\mathcal U_2$ are almost-equivalent.  Indeed, if $\mathcal U_1$ is almost-equivalent to $\mathcal U_2,$ then, since each is infinite and their symmetric difference finite, their intersection must be infinite.  Conversely, if $\mathcal U_3=\mathcal U_1\cap\mathcal U_2$ is infinite, then it contains a UBS $\mathcal U_3'$.  But $\mathcal U'_3$ is almost-equivalent to $\mathcal U_1$.  Likewise, $\mathcal U'_3$ is almost-equivalent to $\mathcal U_2$, whence $\mathcal U_1\triangle\mathcal U_2$ is finite.
\end{proof}

\subsection{Simplices at infinity}\label{sec:simplicesatinfinity}
The unidirectional boundary set $\mathcal V$ is \emph{$k$-dimensional} if the decomposition of $\mathcal V$ into minimal UBSs given by Theorem~\ref{thm:structureofboundarysets} has $k$ factors.  Theorem~\ref{thm:structureofboundarysets} implies that, if $v$ is an almost-equivalence class of UBSs, then there is a unique integer $k$ such that every UBS representing $v$ is $k$-dimensional.  A \emph{0-simplex at infinity} is an almost-equivalence class of minimal UBSs.  More generally, a \emph{$k$-simplex at infinity} is an almost-equivalence class of UBSs whose representatives are $(k+1)$-dimensional.  If $u,v$ are simplices at infinity, let $u\leq v$ if and only if there exist representatives $\mathcal U$ of $u$ and $\mathcal V$ of $v$ such that $\mathcal U\subseteq\mathcal V$.  By Theorem~\ref{thm:structureofboundarysets}, $\leq$ is a partial ordering on the set of almost-equivalence classes.

\begin{defn}[Simplicial boundary]\label{defn:simplicialboundary}
Suppose that each collection of pairwise-crossing hyperplanes in $\mathbf X$ is finite.  The \emph{simplicial boundary} $\simp\mathbf X$ of $\mathbf X$ is the geometric realization of the abstract simplicial complex whose set of simplices is the set of simplices at infinity, in which $u$ is a face of $v$ if and only if $u\leq v$.

A 0-simplex $v\in\simp\mathbf X$ is \emph{isolated} if $v$ is not contained in a 1-simplex.  The graph $(\simp\mathbf X)^1$ is equipped with the standard path-metric, and two 0-simplices are at distance $+\infty$ if they lie in distinct components of $\simp\mathbf X$.  If $(\simp\mathbf X)^1$ has finite diameter in this metric, we say $\simp\mathbf X$ is \emph{bounded}.  Otherwise, $\simp\mathbf X$ is \emph{unbounded}.  Define $\diam(\simp\mathbf X)$ to be the diameter of $(\simp\mathbf X)^1$ with the standard path-metric.
\end{defn}

\begin{exmp}
The simplicial boundary of an eighth-flat or a nondiagonal quarter flat is a single 1-simplex, that of a diagonal quarter flat or non-diagonal half-flat is a subdivided interval of length 2, that of a diagonal half-flat is a subdivided interval of length 3, and that of a flat is a 4-cycle.  The simplicial boundary of a tree with more than one end is totally disconnected.
\end{exmp}

\begin{rem}\label{rem:slfneed}
If $\mathcal W$ contains an infinite set $\mathcal U$ of pairwise-crossing hyperplanes, then the preceding method of defining simplices is unavailable.  Indeed, $\mathcal U$ is a UBS.  However, let $\{U_n\}_{n=1}^\infty$ be a countably infinite subset of $\mathcal U$, and for each $m\geq 2$, note that $\{U_{mn}\}_{n\geq 1}$ is a UBS that is not almost-equivalent to $\mathcal U$.  In particular, $\ldots\{U_{2^kn}\}_n\subset\{U_{2^{k-1}n}\}_n\ldots\subset\{U_n\}_n\subset\mathcal U$ is an infinite chain of UBSs, no two of which are almost-equivalent.  Thus $\mathcal U$ contains no minimal UBS and it is not clear how to associate a simplex with the almost-equivalence class of $\mathcal U$ in a way that is compatible with the assignment of simplices made available by Theorem~\ref{thm:structureofboundarysets}.  It would be undesirable to ignore UBSs of this type, however, because one must then accept that there are geodesic rays that do not represent simplices of $\simp\mathbf X$.  This is why we disallow such $\mathbf X$ when defining the simplicial boundary.  \emph{Hence $\simp\mathbf X$ is defined as long as every family of pairwise-crossing hyperplanes is finite, and nonempty as long as some halfspace is deep.}
\end{rem}
\begin{thm}[Basic properties of $\simp\mathbf X$]\label{thm:boundaryproperties}
If $\simp\mathbf X$ is defined, then:
\begin{compactenum}
\item $\simp\mathbf X=\emptyset$ if and only if for all hyperplanes $H$, each halfspace $\mathfrak h(H),\mathfrak h^*(H)$ is contained in a uniform neighborhood of $H$, i.e. $\mathfrak h(H)$ is \emph{shallow}.
\item Each simplex of $\simp\mathbf X$ is contained in a finite-dimensional maximal simplex.
\item $\simp\mathbf X$ is a flag complex.
\item If $\mathbf X$ is locally finite and is compactly decomposable into at least two unbounded components, then $\simp\mathbf X$ is disconnected.
\item If $\mathbf X$ is unbounded and hyperbolic, then $\simp\mathbf X$ is a totally disconnected set.
\end{compactenum}
\end{thm}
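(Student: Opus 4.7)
The plan is to handle each of the five statements by reducing to Theorem~\ref{thm:structureofboundarysets}, Lemma~\ref{lem:UBSesexist}, and the definition of a UBS, using the standing assumption that there is no infinite family of pairwise-crossing hyperplanes.

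For~(1), if every halfspace is contained in a uniform neighborhood of its bounding hyperplane, then every nested set of hyperplanes is finite. Any UBS $\mathcal U$ contains an infinite nested sequence (apply Lemma~\ref{lem:nesting} to $\mathcal U$ itself, or use the proof of Lemma~\ref{lem:UBSesexist}), so no UBS exists and $\simp\mathbf X=\emptyset$. Conversely, a deep halfspace contains an infinite nested family $\{U_i\}_{i\ge 0}$ with $U_i$ separating $U_{i-1}$ from $U_{i+1}$; its inseparable closure is a UBS by the argument of Lemma~\ref{lem:UBSesexist}, so $\simp\mathbf X\ne\emptyset$.

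For~(2), given a simplex $v$ with representative $\mathcal V=\bigsqcup_{i=1}^k\mathcal U_i$, I will argue that the dimension of any extension of $v$ is bounded. If $u\ge v$ with $u$ represented by $\bigsqcup_{i=1}^{k'}\mathcal U'_i$ and $k'>k$, then by the uniqueness in Theorem~\ref{thm:structureofboundarysets} we may assume $\mathcal U_i$ is almost-equivalent to $\mathcal U'_i$ for $i\le k$, and $\mathcal U'_{k+1},\ldots,\mathcal U'_{k'}$ are additional minimal UBSs. By the cross-factor property, picking one hyperplane from each $\mathcal U'_i$ (with $i\le k'$) produces $k'$ pairwise-crossing hyperplanes (after discarding finitely many per factor). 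Since no infinite such family exists, the maximum value of $k'$ is finite, and $v$ lies in a finite-dimensional maximal simplex.

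For~(3), I must show that whenever 0-simplices $v_0,\ldots,v_n$ pairwise span 1-simplices, they span an $n$-simplex. Choose minimal UBS representatives $\mathcal U_0,\ldots,\mathcal U_n$. The pairwise hypothesis, combined with Theorem~\ref{thm:structureofboundarysets}, means that for every $i\ne j$ and every $H\in\mathcal U_j$, $H$ crosses all but finitely many elements of $\mathcal U_i$. I then verify that $\mathcal V=\bigsqcup_{i=0}^n\mathcal U_i$ is itself a UBS: it is infinite; unidirectionality is inherited factor-by-factor using the crossing property (the deep side of any $H\in\mathcal U_i$ contains only finitely many hyperplanes of each $\mathcal U_j$); inseparability follows because a hyperplane $W$ separating $U\in\mathcal U_i$ from $U'\in\mathcal U_j$ must either lie in some $\mathcal U_\ell$ or violate the crossing bound; and there is no facing triple since each pair $\mathcal U_i,\mathcal U_j$ yields only hyperplanes most of which cross, ruling out three pairwise-facing walls. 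Thus $\mathcal V$ represents an $n$-simplex with the prescribed faces.

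For~(4), let $K$ be a compact convex subcomplex with $\mathbf X-K$ decomposed into unbounded components $\{C_\alpha\}$. Each minimal UBS $\mathcal U$ contains an infinite nested sequence $\{U_i\}$, and by inseparability, all but finitely many hyperplanes of $\mathcal U$ are separated from $K$ by some $U_i$; hence $\mathcal U$ is ``eventually'' contained in a single $C_\alpha$, giving a well-defined component assignment $\kappa(\mathcal U)$. Almost-equivalence preserves $\kappa$, so $\kappa$ descends to 0-simplices. If $v_0,v_1$ span a 1-simplex, then by~(3) (or directly from Theorem~\ref{thm:structureofboundarysets}) their representatives can be combined into a UBS in which hyperplanes from $\mathcal U_1$ cross cofinitely many hyperplanes of $\mathcal U_0$; a 0-cube in the crossing of such a pair lies in both $N(U_0^i)$ and $N(U_1^j)$ for large $i,j$, which forces the eventual behavior of both sequences into the same component. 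Therefore $\kappa(v_0)=\kappa(v_1)$, and simplices within a single connected component of $\simp\mathbf X$ all share the same $\kappa$-value; as there are at least two unbounded components, $\simp\mathbf X$ is disconnected.

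For~(5), I use the bipartite crossing structure provided by Theorem~\ref{thm:structureofboundarysets}. A 1-simplex corresponds to a UBS $\mathcal U_0\sqcup\mathcal U_1$ in which every $H\in\mathcal U_1$ crosses all but finitely many elements of $\mathcal U_0$, so $\crossing X$ (and hence $\contact X$) contains $K_{p,p}$ for every $p$. The argument of~\cite[Theorem~7.6]{HagenQuasiArb} together with the quasi-tree property of $\contact X$ shows that this forces $\mathbf X$ to contain arbitrarily large flat-like subcomplexes, contradicting hyperbolicity of $\mathbf X$. Hence every simplex of $\simp\mathbf X$ is a 0-simplex and no two are joined by an edge, so $\simp\mathbf X$ is totally disconnected.

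The main obstacle is verifying in~(3) that the disjoint union of pairwise-compatible minimal UBSs is again a UBS: unidirectionality and absence of facing triples must be checked carefully using the cofinite-crossing condition, and one must rule out pathological nested sequences arising across distinct factors.
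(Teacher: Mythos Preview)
Your proof is correct and, for parts~(1), (2), (3), and~(5), follows essentially the same route as the paper: reduce to Lemma~\ref{lem:UBSesexist} and Theorem~\ref{thm:structureofboundarysets}, extract a pairwise-crossing family from the minimal factors to bound dimension, check the UBS axioms for the union of pairwise-compatible minimal UBSs (the paper phrases this as ``induction on the size of a clique, using the definition of a UBS''), and for~(5) observe that a 1-simplex forces $K_{p,p}\subset\crossing X$ for all $p$, contradicting hyperbolicity via~\cite[Theorem~7.6]{HagenQuasiArb}.

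The one genuine divergence is in~(4). The paper does not argue directly: it invokes the forward-referenced Theorem~\ref{thm:boundarysubcomplexes}, writing $\mathbf X=\mathbf X_1\cup\mathbf X_2$ as a union of convex subcomplexes with $\mathbf X_1\cap\mathbf X_2$ compact, so that $\simp\mathbf X\cong A_1\cup A_2$ with $A_1\cap A_2\cong\simp(\mathbf X_1\cap\mathbf X_2)=\emptyset$ by~(1). Your argument instead builds a component-valued invariant $\kappa$ on 0-simplices by tracking where the tail of a nested chain in a minimal UBS eventually lands, and shows $\kappa$ is constant along edges because crossing hyperplanes from adjacent minimal UBSs must physically intersect in a single component. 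Your approach is more hands-on and self-contained (no forward reference), while the paper's is shorter once Theorem~\ref{thm:boundarysubcomplexes} is in hand and makes the decomposition $\simp\mathbf X=A_1\sqcup A_2$ explicit. One small point to tighten in your version: you should note that each unbounded component actually contains the tail of some minimal UBS (via local finiteness and K\"onig's lemma), so that $\kappa$ is surjective onto the set of unbounded components and $\simp\mathbf X$ is genuinely disconnected.
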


\begin{proof}
If each halfspace is shallow, then $\mathcal W$ cannot contain a UBS, and thus $\simp\mathbf X$ is empty.  If $\simp\mathbf X$ is empty, then $\mathcal W$ does not contain a UBS.  Lemma~\ref{lem:UBSesexist} implies that for each halfspace $\mathfrak h(H)$, every hyperplane $V\subset\mathfrak h(H)$ lies at finite Hausdorff distance from $H$, so that $\mathfrak h(H)$ is shallow.  This proves assertion~$(1)$.
To prove assertion~$(4)$, apply~$(1)$ and Theorem~\ref{thm:boundarysubcomplexes} below.

Assertion~$(2)$ follows from Theorem~\ref{thm:structureofboundarysets}.  Indeed, the proof of Theorem~\ref{thm:structureofboundarysets} shows that if $\simp\mathbf X$ contains an infinite, increasing union of simplices, then $\mathbf X$ contains an infinite family of pairwise-crossing hyperplanes.  In particular, if $\dimension(\mathbf X)=D$, then $\dimension(\simp\mathbf X)\leq D-1$.  Assertion~$(3)$ then follows from~$(2)$ by induction on the size of a clique in $(\simp\mathbf X)^1$, using the definition of a UBS.  Finally, if $\simp\mathbf X$ contains a 1-simplex, then $\crossing X$ contains $K_{p,p}$ for $p$ arbitrarily large, by Theorem~\ref{thm:structureofboundarysets}.  Such an $\mathbf X$ is not hyperbolic, by the proof of~\cite[Theorem~7.6]{HagenQuasiArb}.
\end{proof}

The simplicial boundary behaves in the same way with respect to convex subcomplexes as the visual boundary of a CAT(0) space does with respect to convex subspaces.

\begin{thm}\label{thm:boundarysubcomplexes}
Let $\mathbf Y\subseteq\mathbf X$ be a convex subcomplex.  Then $\simp\mathbf Y$ is a subcomplex of $\simp\mathbf X$.  Moreover, if $\mathbf X=\mathbf X_1\cup\mathbf X_2$, where $\mathbf X_1$ and $\mathbf X_2$ are convex subcomplexes, then $\simp\mathbf X\cong A_1\cup A_2$, where $A_1\cong\simp\mathbf X_1,\,A_2\cong\simp\mathbf X_2$ and $A_1\cap A_2\cong\simp(\mathbf X_1\cap\mathbf X_2)$.
\end{thm}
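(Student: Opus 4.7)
The plan is to prove the two claims in sequence, using the first as the technical backbone of the second.

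For the first claim, I would exploit the identification of hyperplanes of $\mathbf Y$ with hyperplanes of $\mathbf X$ crossing $\mathbf Y$ via $H_{\mathbf Y}=H\cap\mathbf Y$. Convexity of $\mathbf Y$ ensures that crossings, non-crossings, separations, and containments in halfspaces transfer between $\mathbf Y$ and $\mathbf X$ under this identification. I would then verify that each of the four defining properties of a UBS---infinite, inseparable, unidirectional, and no facing triple---is preserved in both directions; the key transfer of inseparability uses that any $H\in\mathcal W$ separating two hyperplanes that cross $\mathbf Y$ must itself cross $\mathbf Y$, since $\mathbf Y$ is connected and convex. Almost-equivalence passes through because symmetric differences correspond bijectively, so quotienting yields the desired injective simplicial map $\simp\mathbf Y\hookrightarrow\simp\mathbf X$ respecting the face relation.

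For the second claim, I would first note that $\mathbf X_1\cap\mathbf X_2$ is convex (intersection of convex subcomplexes), so the first claim supplies subcomplexes $A_i:=\simp\mathbf X_i$ and $\simp(\mathbf X_1\cap\mathbf X_2)$ inside $\simp\mathbf X$. A short median argument---picking $z\in\mathbf X_1\cap\mathbf X_2$ and taking medians of $z$ with points of each $\mathbf X_i$ on the opposite side of a hyperplane $H$---gives $\mathcal W(\mathbf X_1)\cap\mathcal W(\mathbf X_2)=\mathcal W(\mathbf X_1\cap\mathbf X_2)$. Consequently, any $v\in A_1\cap A_2$ admits UBS representatives in both $\mathcal W(\mathbf X_i)$, whose cofinite intersection is a UBS in $\mathbf X_1\cap\mathbf X_2$, giving $A_1\cap A_2=\simp(\mathbf X_1\cap\mathbf X_2)$.

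The remaining task is to show $\simp\mathbf X\subseteq A_1\cup A_2$. Using $\mathcal W=\mathcal W(\mathbf X_1)\cup\mathcal W(\mathbf X_2)$, since every $1$-cube of $\mathbf X=\mathbf X_1\cup\mathbf X_2$ lies in some $\mathbf X_i$, I take a UBS $\mathcal U$ representing $v$ with minimal decomposition $\mathcal U=\bigsqcup_k\mathcal V_k$ from Theorem~\ref{thm:structureofboundarysets} and split $\mathcal V_k=\mathcal V_k^1\cup\mathcal V_k^2$ with $\mathcal V_k^i:=\mathcal V_k\cap\mathcal W(\mathbf X_i)$. Each infinite $\mathcal V_k^i$ is a UBS both in $\mathbf X$ (inseparability inherited from $\mathcal V_k$ as above) and in $\mathbf X_i$ (by the first claim). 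Minimality of $\mathcal V_k$ forces any infinite $\mathcal V_k^i$ to be almost-equivalent to $\mathcal V_k$, so $\mathcal V_k$ admits a representative inside $\mathcal W(\mathbf X_{j_k})$ for some $j_k\in\{1,2\}$; when both $\mathcal V_k^i$ are infinite, the representative can be placed inside $\mathcal W(\mathbf X_1\cap\mathbf X_2)$ and $j_k$ is free.

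The main obstacle is forcing a uniform choice of $j_k$ across all $k$. Suppose for contradiction that some $\mathcal V_k$ is rigidly in $\mathbf X_1$, i.e.\ $\mathcal V_k^2$ is finite so $\mathcal V_k\cap(\mathcal W(\mathbf X_1)\setminus\mathcal W(\mathbf X_2))$ is infinite, while some $\mathcal V_{k'}$ is rigidly in $\mathbf X_2$. Taking $k<k'$, the cross-crossing property of Theorem~\ref{thm:structureofboundarysets}, combined with the infinitude of these difference sets, produces $U\in\mathcal V_k\cap(\mathcal W(\mathbf X_1)\setminus\mathcal W(\mathbf X_2))$ and $U'\in\mathcal V_{k'}\cap(\mathcal W(\mathbf X_2)\setminus\mathcal W(\mathbf X_1))$ with $U\bot U'$. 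The dual $2$-cube lies in $\mathbf X=\mathbf X_1\cup\mathbf X_2$, hence in some $\mathbf X_j$, forcing one of $U,U'$ into $\mathcal W(\mathbf X_1)\cap\mathcal W(\mathbf X_2)$, a contradiction. Thus the rigid $j_k$ all agree; setting the flexible ones to match, $v\in A_{j_k}$.
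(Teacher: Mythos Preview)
Your argument is correct and, for the first claim, follows the same approach as the paper: identify hyperplanes of $\mathbf Y$ with hyperplanes of $\mathbf X$ that cross $\mathbf Y$, check that the UBS axioms and almost-equivalence transfer under convexity, and conclude that the induced simplicial map is a dimension-preserving injection.

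For the second claim your argument is considerably more detailed than the paper's. The paper disposes of $\simp\mathbf X\subseteq A_1\cup A_2$ in a single sentence (``every hyperplane crosses $\mathbf X_1$ or $\mathbf X_2$, and hence $\simp\mathbf X\cong A_1\cup A_2$''), leaving to the reader the nontrivial verification that a UBS whose hyperplanes are merely \emph{individually} in $\mathcal W(\mathbf X_1)\cup\mathcal W(\mathbf X_2)$ is almost-equivalent to one lying entirely in a single $\mathcal W(\mathbf X_i)$. Your decomposition into minimal pieces via Theorem~\ref{thm:structureofboundarysets}, together with the $2$-cube argument ruling out minimal factors rigidly in different $\mathbf X_i$, is a clean and correct way to make this step explicit. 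Likewise, the paper uses $\mathcal W(\mathbf X_1)\cap\mathcal W(\mathbf X_2)=\mathcal W(\mathbf X_1\cap\mathbf X_2)$ without comment, whereas you supply the median argument. So your route is not genuinely different, but it is a more complete execution of the same outline.
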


\begin{proof}
Let $u$ be a simplex of $\simp\mathbf Y$.  Let $\mathcal U$ be a UBS of hyperplanes of $\mathbf Y$ representing $u$.  Each $U\in\mathcal U$ is of the form $U'\cap\mathbf Y$, where $U'$ is a hyperplane of $\mathbf X$ crossing $\mathbf Y$.  Now, by convexity, $U'\cap\mathbf Y$ and $U''\cap\mathbf Y$ contact in $\mathbf Y$ if and only if $U'$ and $U''$ contact (in the same way) in $\mathbf X$.  Moreover, $U'''\cap\mathbf Y$ separates $U'\cap\mathbf Y$ and $U''\cap\mathbf Y$ if and only if $U'''$ separates $U'$ and $U''$ in $\mathbf X$.  Hence $\mathcal U'=\{U'\in\mathcal W\mid U'\cap\mathbf Y\in\mathcal U\}$ is a UBS in $\mathbf X$, representing a simplex $u'$ of $\simp\mathbf X$.  Since elements of $\mathcal U'$ that cross in $\mathbf X$ cross in $\mathbf Y$, Theorem~\ref{thm:structureofboundarysets} implies that $\dimension(u')=\dimension(u)$.

The assignment $u\mapsto u'$ therefore yields a simplicial map $\simp\mathbf Y\rightarrow\simp\mathbf X$ since the intersection of two simplices in $\simp\mathbf Y$ yields a simplex of $\simp\mathbf X$ in the same way as above.  This map is injective, since UBSs in $\mathbf Y$ that correspond to almost-equivalent UBSs in $\mathbf X$ are almost-equivalent.

In particular, if $\mathbf X=\mathbf X_1\cup\mathbf X_2$ is the union of two convex subcomplexes, let $A_1\cong\simp\mathbf X_1,\,A_2\cong\simp\mathbf X_2$.  Then every hyperplane crosses $\mathbf X_1$ or $\mathbf X_2$, and hence $\simp\mathbf X\cong A_1\cup A_2$.  Now, $\mathbf X_1\cap\mathbf X_2$ is convex, being the intersection of convex subcomplexes, and thus $\simp\mathbf X$ contains $\simp(\mathbf X_1\cap\mathbf X_2)=B$.  If $u\subset A_1\cap A_2$ is a simplex, then it is represented by a UBS, all of whose elements cross $\mathbf X_1\cap\mathbf X_2$.  Hence $A_1\cap A_2\subseteq B$, and the other inclusion is similar.
\end{proof}

\subsection{Visible simplices, visible pairs, and combinatorial geodesic completeness}\label{sec:visibility}
Recall that if $\gamma:[0,\infty)\rightarrow\mathbf X$ is a combinatorial geodesic ray, then $\mathcal W(\gamma)$ is a UBS.  The converse is not true, as illustrated by Example~\ref{exmp:invisible}.

\begin{defn}[Visible simplex, visible pair, fully visible, optical space]\label{defn:visible}
The simplex $v\subseteq\simp\mathbf X$ is \emph{visible} if there exists a combinatorial geodesic ray $\gamma$ such that $\mathcal W(\gamma)$ represents $v$.  The pair $u,v\subset\simp\mathbf X$ of visible simplices is a \emph{visible pair} if there exists a combinatorial geodesic $\gamma:\reals\rightarrow\infty$ such that $\mathcal W(\gamma((-\infty,0]))$ represents $u$ and $\mathcal W(\gamma([0,\infty))$ represents $v$. $\mathbf X$ is \emph{fully visible} if each simplex of $\simp\mathbf X$ is visible.  $\mathbf X$ is an \emph{optical space} if, whenever the pair $u,v$ of 0-simplices at infinity is invisible, $u=v$.
\end{defn}

\begin{exmp}[An invisible 0-simplex at infinity]\label{exmp:invisible}
In an eighth-flat, the set of ``horizontal hyperplanes'' is a UBS representing an invisible 0-simplex at infinity: every geodesic ray crosses infinitely many vertical hyperplanes.
\end{exmp}

The following lemma is a simple application of Sageev's construction.

\begin{lem}\label{lem:segments}
Let $\mathcal U=\{U_i\}$ be a nonempty, finite, inseparable set of hyperplanes.  Suppose that for each $U_i$, 
we can label the two distinct halfspaces associated to $U_i$ by $U^+_i,U^-_i$ in such a way that $U_i^+\cap 
U^+_j\neq\emptyset$ and $U^-_i\cap U^-_j\neq\emptyset$ for all $i,j$.  Then there exists a geodesic segment 
$P\rightarrow{\mathbf X}$ such that $\mathcal W(P)=\mathcal U$.
\end{lem}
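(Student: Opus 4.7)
The plan is to produce 0-cubes $x^{+}$ and $x^{-}$ of $\mathbf X$ whose set of separating hyperplanes is precisely $\mathcal U$; any combinatorial geodesic $P$ joining them will then satisfy $\mathcal W(P)=\mathcal U$, since a combinatorial geodesic crosses exactly the hyperplanes separating its endpoints. The candidates come from Helly: each halfspace $U_i^{\pm}$ is a convex subcomplex of $\mathbf X$, the finite families $\{U_i^{+}\}_i$ and $\{U_i^{-}\}_i$ are pairwise intersecting by hypothesis, and the Helly property yields $D:=\bigcap_iU_i^{+}\neq\emptyset$ and $C:=\bigcap_iU_i^{-}\neq\emptyset$. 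I choose a pair $(x^{+},x^{-})\in D\times C$ of 0-cubes minimizing $d_{\mathbf X}(x^{+},x^{-})$.

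The first claim to establish is that $\mathcal W([x^{+},x^{-}])$ coincides with the set of hyperplanes $H$ satisfying $D\subset H^{a}$ and $C\subset H^{-a}$ for some sign $a$. One direction is immediate. For the other, suppose $H$ separates $x^{+}\in H^{a}$ from $x^{-}\in H^{-a}$ but some $y\in D$ lies in $H^{-a}$, and let $m$ be the median of $x^{+},x^{-},y$. Then $m$ lies on a geodesic from $x^{+}$ to $y\in D$, so convexity of $D$ gives $m\in D$, and $m$ lies on the majority side $H^{-a}$. Thus $H$ separates $x^{+}$ from $m\in D$, so $m\neq x^{+}$ and $d(m,x^{-})<d(x^{+},x^{-})$, contradicting the minimality of $d(x^{+},x^{-})$. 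A symmetric argument yields $C\subset H^{-a}$.

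The second claim is that the hyperplanes separating $D$ from $C$ are exactly the elements of $\mathcal U$. Each $U_i$ separates $D$ from $C$ by construction. Conversely, suppose $H\notin\mathcal U$ separates $D$ from $C$, with $D\subset H^{a}$ and $C\subset H^{-a}$. Inseparability of $\mathcal U$ forces every $U_i$ not crossing $H$ onto a single side of $H$. If that side is $H^{a}$, a case check shows that $U_i^{-}\cap H^{a}\neq\emptyset$ in every configuration --- crossings give all four quarter-spaces; non-crossings with $H\subset U_i^{+}$ have $U_i^{-}\subseteq H^{a}$; non-crossings with $H\subset U_i^{-}$ have $U_i^{-}$ containing $H$ and hence meeting both sides --- so combining with the hypothesis $U_i^{-}\cap U_j^{-}\neq\emptyset$, Helly gives $H^{a}\cap C\neq\emptyset$, contradicting $C\subset H^{-a}$. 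The analogous analysis in the case that the non-crossing $U_i$ lie in $H^{-a}$ produces $H^{-a}\cap D\neq\emptyset$, contradicting $D\subset H^{a}$.

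The main technical obstacle is the case check just outlined: inseparability collapses the possible configurations to the single question of which side the non-crossing $U_i$ occupy, and the remaining verification --- that each $U_i^{-}$ (respectively $U_i^{+}$) reaches around $H$ into the opposite halfspace --- is where the combinatorial work sits. Combining the two claims, $\mathcal W([x^{+},x^{-}])=\mathcal U$, and any combinatorial geodesic segment $P$ joining $x^{+}$ to $x^{-}$ provides the required segment with $\mathcal W(P)=\mathcal U$.
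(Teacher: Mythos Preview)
Your proof is correct, and it takes a genuinely different route from the paper's.

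Both arguments begin identically: use the Helly property on the pairwise-intersecting families $\{U_i^+\}$ and $\{U_i^-\}$ to obtain nonempty intersections $D=\bigcap_iU_i^+$ and $C=\bigcap_iU_i^-$. From there the approaches diverge. The paper picks \emph{arbitrary} $0$-cubes $x\in C,\,y\in D$, takes a geodesic $P'$ between them, observes that $\mathcal W(P')\supseteq\mathcal U$ but may be strictly larger, and then uses inseparability to rearrange $P'$ (by the standard square-moves on adjacent $1$-cubes whose hyperplanes cross) so that the $1$-cubes dual to $\mathcal U$ occur contiguously; the desired $P$ is then the corresponding sub-geodesic.

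You instead pick the pair $(x^+,x^-)\in D\times C$ at \emph{minimal} distance---equivalently, mutual gates---and prove directly that the set of hyperplanes separating $x^+$ from $x^-$ is exactly $\mathcal U$. Your median argument for the first claim is the standard gate characterisation; your second claim, that no $H\notin\mathcal U$ can separate $D$ from $C$, is where inseparability enters, and your Helly-plus-case-check is clean (the case $H\subset U_i^+$ in fact cannot occur, since then $C\subset H^{-a}\subset U_i^+$ contradicts $C\subset U_i^-$, but your handling of it is still correct). The upshot is that \emph{any} geodesic from $x^+$ to $x^-$ already has $\mathcal W(P)=\mathcal U$, with no rearrangement or sub-path extraction needed.

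What each approach buys: the paper's argument is shorter to state but leans on the (standard, though unstated) fact that geodesics between fixed endpoints can be reordered via square-moves. Your argument is more self-contained and yields a slightly stronger conclusion---it identifies explicit endpoints for $P$---at the cost of a longer case analysis.
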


\begin{proof}
 Choose $0$--cubes $x\in\bigcap_iU^-_i,y\in\bigcap_iU^+_i$.  Let $P'$ be a geodesic from $x$ to $y$.  
Since $\mathcal U$ is inseparable, we can modify $P'$, fixing its endpoints, so that if $e,f$ are $1$--cubes 
of $P'$ dual to hyperplanes in $\mathcal U$, and $e'$ is a $1$--cube of $P'$ between $e$ and $f$, then $e'$ is 
dual to a $1$--cube of $\mathcal U$.  Hence there is a subgeodesic $P$ of $P'$ with the desired property.
\end{proof}

Every simplex of $\simp\mathbf X$ is contained in a visible simplex:

\begin{thm}\label{thm:visiblesimplex}
Let $\mathbf X$ be a CAT(0) cube complex with no infinite family of pairwise-crossing hyperplanes.  Then every maximal simplex of $\simp\mathbf X$ is visible.  In particular, each isolated 0-simplex is visible.
\end{thm}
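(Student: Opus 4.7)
The plan is to construct the required ray as an ascending union of geodesic segments, each realizing a controlled finite inseparable subset of a representative $\mathcal V$ of $v$. Write $\mathcal V=\bigsqcup_{i=1}^k\mathcal U_i$ via Theorem~\ref{thm:structureofboundarysets}, with each $\mathcal U_i$ a minimal UBS and with hyperplanes of $\mathcal U_j$ crossing all but finitely many of $\mathcal U_i$ for $i<j$. In each $\mathcal U_i$, extract (as in the proof of Lemma~\ref{lem:UBSesexist}) a nested sequence $\{U^{(i)}_n\}_{n\geq 0}$ with $U^{(i)}_n$ separating $U^{(i)}_{n-1}$ from $U^{(i)}_{n+1}$ and whose inseparable closure in $\mathcal W$ is almost-equivalent to $\mathcal U_i$. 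Lemma~\ref{lem:nesting} provides a $0$-cube at infinity $\chi$ orienting every $U\in\mathcal V$ outward (call this halfspace $U^+$), and one picks a basepoint $x_0$ lying in $\bigcap_{i=1}^kU^{(i)-}_0$, which is a finite and hence consistent intersection.

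Set $\mathcal F_n$ equal to the inseparable closure in $\mathcal W$ of $\{U^{(i)}_m:i\leq k,\,m\leq n\}$. Then $\mathcal F_n\subseteq\mathcal V$ by inseparability of $\mathcal V$; each $\mathcal F_n$ is finite, since any additional hyperplane either crosses some $U^{(i)}_m$ (of which there are only finitely many, by the no-infinite-crossing-family hypothesis) or separates two of them (finitely many by the finiteness of the cubical distance between their carriers); and $\bigcup_n\mathcal F_n$ misses at most finitely many hyperplanes of each $\mathcal U_i$ and so is almost-equivalent to $\mathcal V$. The heart of the argument is to realize each $\mathcal F_n$ as $\mathcal W(P_n)$ for a geodesic segment. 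For this, define an orientation $\eta_n$ of $\mathcal W$ by flipping $x_0$'s orientation on exactly the hyperplanes of $\mathcal F_n$; one must show $\eta_n$ is consistent and hence corresponds to a $0$-cube $x_n\in\mathbf X^0$, which is the case exactly when every hyperplane separating $x_0$ from a member of $\mathcal F_n$ is itself in $\mathcal F_n$. Granted this, Lemma~\ref{lem:segments} (with $x_0\in\bigcap_{U\in\mathcal F_n}U^-$ and $x_n\in\bigcap_{U\in\mathcal F_n}U^+$) supplies a geodesic segment $P_n$ from $x_0$ to $x_n$ with $\mathcal W(P_n)=\mathcal F_n$.

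Since $x_{n+1}$ and $x_n$ differ only on $\mathcal F_{n+1}\setminus\mathcal F_n$, one chooses $P_{n+1}$ to extend $P_n$ by appending a geodesic from $x_n$ to $x_{n+1}$. The ascending union $\gamma=\bigcup_nP_n$ is then a combinatorial geodesic ray with $\mathcal W(\gamma)=\bigcup_n\mathcal F_n$, almost-equivalent to $\mathcal V$, so $\gamma$ represents $v$. The isolated $0$-simplex statement follows at once, since an isolated $0$-simplex is by definition contained in no higher-dimensional simplex and hence maximal.

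The main obstacle is the consistency claim for $\eta_n$, which is where maximality of $v$ is genuinely used: the eighth-flat example, where the ``horizontal'' minimal UBS is an invisible non-maximal $0$-simplex face of the maximal $1$-simplex, shows the claim can fail absent maximality. To close the argument one must show that any hyperplane $W$ obstructing the consistency of $\eta_n$, together with the other hyperplanes it forces in by inseparability, produces a UBS which (after passing to cofinite subsets) either strictly extends $\mathcal V$ to a higher-dimensional simplex at infinity — contradicting maximality of $v$ — or can be absorbed into $\mathcal V$ without changing its almost-equivalence class. Either way, after such absorption each $\mathcal F_n$ becomes $x_0$-convex in the required sense, and the construction goes through.
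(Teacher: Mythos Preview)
Your strategy---decompose $\mathcal V$ via Theorem~\ref{thm:structureofboundarysets}, extract a chain in each minimal piece, exhaust $\mathcal V$ by finite inseparable sets $\mathcal F_n$, realize each $\mathcal F_n$ by a geodesic segment via Lemma~\ref{lem:segments}, and invoke maximality of $v$ to dispose of obstructions---is essentially the paper's. The organizational difference is that you fix a basepoint $x_0$ once and for all and try to show directly that the orientation $\eta_n$ obtained by flipping $x_0$ on $\mathcal F_n$ is a genuine $0$-cube, whereas the paper lets the initial point of each segment range over $N(V_0)$, assembles all such segments into a locally finite graph $\Omega$ whose edges record extension, and applies K\"onig's lemma.

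The gap is exactly where you flag it, and it is real. First, your stated characterization of consistency of $\eta_n$ is incomplete: it covers the mixed case $U\in\mathcal F_n$, $W\notin\mathcal F_n$, but misses the case of two non-crossing $U,U'\in\mathcal F_n$ lying on opposite sides of $x_0$, where the flipped halfspaces are disjoint. That case is excluded provided $x_0\in\bigcap_{U\in\mathcal F_n}(\mathbf X-\chi(U))$, but your choice $x_0\in\bigcap_i U^{(i)-}_0$ only guarantees this for those $U\in\mathcal U_i$ that do not cross $U^{(i)}_0$, so further adjustment is needed. Second, and more seriously, your final paragraph only \emph{asserts} the dichotomy ``an obstructing $W$ either yields a strictly larger UBS or can be absorbed''; neither branch is argued. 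A single $W\notin\mathcal V$ separating $x_0$ from some $U\in\mathcal F_n$ need not cross cofinitely many elements of each $\mathcal U_i$, so it is unclear how such $W$'s assemble into a UBS; and you give no argument that absorption terminates after finitely many adjustments uniformly in $n$ (which is exactly what fails in the eighth-flat). The paper's varying-basepoint device sidesteps this: when K\"onig fails, each non-extendable segment $\phi$ is separated from infinitely many longer segments by some $U(\phi)\notin\mathcal V$ crossing $V_0$, and the paper verifies that these $U(\phi)$ form an inseparable infinite set each of whose members crosses all but finitely many elements of $\mathcal V$, which directly produces the strictly larger simplex.
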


\begin{proof}
Let $\mathcal V$ be a UBS, semi-nested by a 0-cube $f$ at infinity, inducing a partial ordering $\prec$ on 
$\mathcal V$ such that any two elements of $\mathcal V$ cross or are comparable, no infinite $\prec$-chain has 
a maximum, and each infinite $\prec$-chain has a minimum.

By Theorem~\ref{thm:structureofboundarysets}, up to adding or discarding finitely many hyperplanes, we can 
assume that $\mathcal V=\bigsqcup_{i=1}^k\mathcal V_i$, where each $\mathcal V_i$ is a minimal UBS, and every 
element of $\mathcal V_j$ crosses all but finitely many elements of $\mathcal V_i$ when $i<j$.  

Since $\mathbf X$ has no infinite family of pairwise-crossing hyperplanes, each $\mathcal V_j$ contains a 
chain $\{V_j^i\}_{j\ge0}$, and hence contains the inseparable closure of this chain.  
The proof of Lemma~\ref{lem:UBSesexist} implies that this inseparable closure contains a UBS, so by minimality 
of $\mathcal V_j$, we can take $\mathcal V_j$ to be the inseparable closure of $\{V_j^i\}_{j\ge0}$.

By discarding finitely many hyperplanes, we can assume that $V^i_0$ crosses $V^{i'}_0$ 
whenever $i>i'$, i.e. the hyperplanes $V^i_0$ pairwise cross.

For each $U,V\in\mathcal V$, we have $f(U)\cap f(V)\neq\emptyset$.  On the other hand, by the above 
assumptions, $(\mathbf X-f(U))\cap(\mathbf X-f(V))\neq\emptyset$.

Indeed, let $U\in\mathcal V_i$ and $V\in\mathcal V_j$, with $i\leq j$.  Then $V^i_0\subseteq \mathbf X-f(U)$ 
and $V^j_0\subseteq\mathbf X-f(V)$.  Since $V^i_0\cap V^j_0\neq\emptyset$, we have that $(\mathbf 
X-f(U))\cap(\mathbf X-f(V))\neq\emptyset$, as required.

\textbf{A sequence of segments:}  For any $n$, the inseparable closure of 
$\bigcup_{j=1}^k\{V_j^0,\ldots,V_j^n\}$ therefore satisfies the hypotheses of Lemma~\ref{lem:segments}.  Hence 
we can choose a $\prec$--minimal hyperplane $V_0\in\mathcal V$ and subsets $\mathcal V_0\subset\mathcal 
V_1\subset\ldots\subset\mathcal V_r\subset\ldots$ such that: $|\mathcal V_r|=r+1$, and $\bigcup_{r\geq 
0}\mathcal V_r=\mathcal V$, and for each $r\geq 0$, there exists a 0-cube $c_r\in N(V_0)$ and a geodesic 
segment $P_r\rightarrow{\mathbf X}$ emanating from $c_r$ such that $\mathcal W(P_r)=\mathcal V_r$.

%
%

Let $\Omega$ be the graph whose vertex-set is $\bigcup_{r\geq 0}\mathcal R_r$, where $\mathcal R_r$ is the set of geodesic segments $P_r$ whose 1-cubes are dual to exactly the hyperplanes in $\mathcal V_r$.  Let an edge join $P_r\in\mathcal R_r$ to $P_{r+1}\in\mathcal R_{r+1}$ if and only if $P_r\subset P_{r+1}$.  The graph $\Omega$ is infinite and locally finite.  Indeed, local finiteness follows from the absence of infinite cubes in ${\mathbf X}$. If $P_r$ is a geodesic segment crossing $\mathcal V_r$ and extending to $P_{r+1}$, then let $c'$ be the 1-cube of $P_{r+1}-P_r$.  If $V$ is dual to the terminal 1-cube of $P_r$ adjacent to $c'$, and $V_1,\ldots,V_n\in\mathcal V_{r+1}-\mathcal V_r$ are dual to 1-cubes $c_1,\ldots,c_n$ extending $P_r$, then since $\mathcal V$ contains no facing triple, the set $V,V_1,\ldots,V_n$ contains a collection of $n$ pairwise-crossing hyperplanes, and thus $n$ is bounded above by the dimension of the maximal cube containing $c'$.  
Hence each $P_r$ extends in finitely many ways in each direction, and thus $\Omega$ is an infinite, locally finite graph.

\textbf{Taking a limit to obtain a ray:}  If $\Omega$ has an infinite connected component, then by K\"{o}nig's lemma, there is an infinite path $\gamma\rightarrow{\mathbf X}$ that crosses every hyperplane of $\bigcup_{r\geq 0}\mathcal V_r=\mathcal V$ exactly once.  Hence $\gamma$ is a geodesic ray whose initial 1-cube is dual to the given hyperplane $V_0$.  By construction, any hyperplane crossing $\gamma$ belongs to some (and hence all but finite many) $\mathcal V_r$, whence $\mathcal V$ represents a visible simplex.

\textbf{When the limit does not exist:}  If every component of $\Omega$ is finite, then for each $n$ and each $\phi\in\mathcal R_n$, there exists a maximal $k(\phi)\geq n$ such that $\phi$ extends to an element of $\mathcal R_{k(\phi)}$.  Suppose there is a hyperplane $U(\phi)$ separating $\phi$ from infinitely many $\psi\in\bigcup_{m>k(\phi)}\mathcal R_m$.  Since it separates the initial point of $\phi$ from the initial point of infinitely many such segments $\psi$, and $\phi$ and each such $\psi$ emanate from $N(V_0)$, the hyperplane $U(\phi)$ crosses $V_0$.  Moreover, since $U(\phi)$ separates $\phi$ from each $\psi\in\bigcup_{m\geq k(\phi)}\mathcal R_m$, the hyperplane $U(\phi)\not\in\mathcal V$, since each hyperplane of $\mathcal V$ is dual to a 1-cube of infinitely many of the segments $\psi$.

Hence we can choose a subsequence $(i_j)_{j\geq 1}$ and, for each $j\geq 1$, a segment $\phi_j\in\mathcal R_{i_j}$ and a hyperplane $U_j\not\in\mathcal V$ that separates $\phi_j$ from $\phi_k$ for all $k>j$, and thus crosses $V_0$.  See the left side of Figure~\ref{fig:diagonalquarterflat}.
\begin{figure}[h]
\begin{center}
  \includegraphics[width=3.5in]{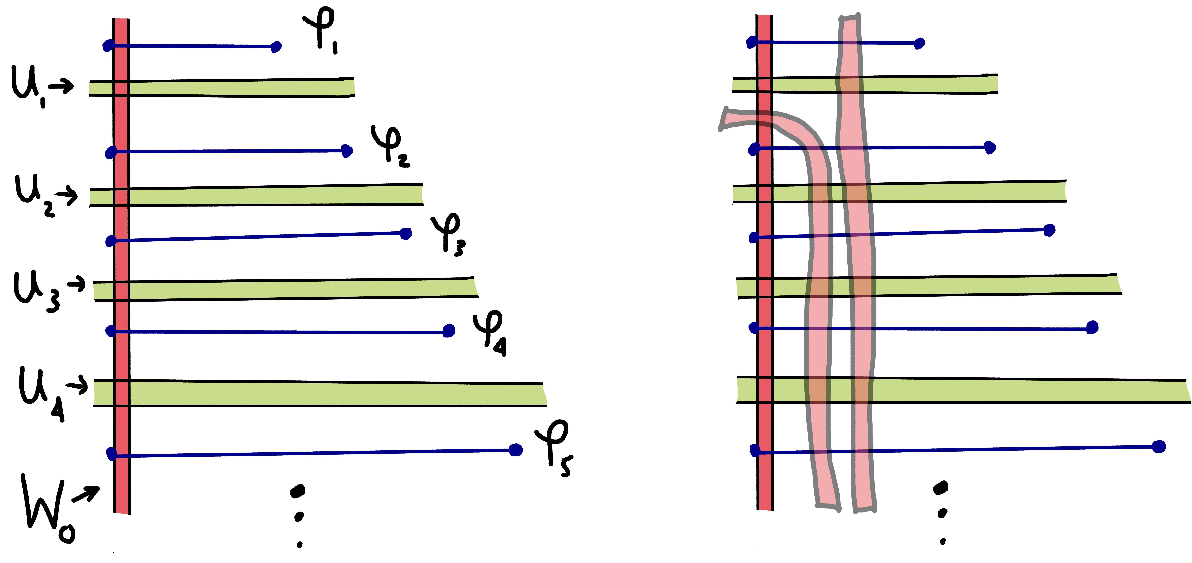}\\
  \caption{The situation arising in the absence of a ray associated to a simplex at infinity.}\label{fig:diagonalquarterflat}
\end{center}
\end{figure}
Note that if $V\in\mathcal V_{i_j}$, then $V$ crosses $\phi_k$ for all $k\geq j$, and hence $V$ crosses $U_k$ for all $k\geq j$.  On the other hand, if some hyperplane $U$ separates $U_i$ from $U_j$ with $i<j$, then $U$ must cross every hyperplane crossed by both $U_i$ and $U_j$.  Hence there is an infinite, inseparable, partially-ordered set $\mathcal U$, containing all of the $U_j$, such that, for all but finitely many $V\in\mathcal V$, $V$ crosses all but finitely many $U\in\mathcal U$.

\textbf{Finding a larger simplex:} The set $\mathcal V'=\mathcal V\cup\mathcal U$ is a UBS containing $\mathcal V$ and representing a simplex $v'$ of $\simp\mathbf X$ that properly contains the simplex $v$ represented by $\mathcal V$.  Hence each maximal simplex of $\mathbf X$ is visible.  By Theorem~\ref{thm:boundaryproperties}, the simplex $v$ is contained in a maximal simplex and hence $\mathcal V$ extends to a UBS almost-equivalent to $\mathcal W(\gamma)$ for some geodesic ray $\gamma$.

\textbf{Proof that $U(\phi)$ exists:}  Suppose $\phi\in\mathcal R_n$ does not extend to any $\psi\in\mathcal R_m$ for $m>k(n)$.  Either infinitely many $\psi\in\bigcup_{m>k(n)}\mathcal R_m$ are separated from $\phi$ by a hyperplane $U\not\in\mathcal V$, or we are in the following situation for all but finitely many such $\psi$.  Let $m>k(n)\geq n+1$, so that $\psi=\phi'\alpha$, where $\phi'$ is a length-$n$ geodesic segment crossing $\mathcal V_n$ and $\alpha$ is a geodesic segment crossing $\mathcal V_m-\mathcal V_n$.  Then $\phi(n)$ and $\phi'(n)$ are separated by a hyperplane $U$ that crosses each of the hyperplanes in $\mathcal V_m-\mathcal V_n$, for otherwise $\phi$ would extend to a segment $\phi\alpha\in\mathcal R_m$.  If $U\not\in\mathcal V$, then we are done, since $U$ must then separate $\phi$ from $\psi$.  Otherwise, $U$ crosses $\phi$ or $\phi'$ and thus $U\in\mathcal V_n$.  Hence $\phi\cap\phi'\neq\emptyset$, so that some sub-segment of $\phi$ extends to a segment in $\mathcal R_m$.  If this holds for arbitrarily large values of $m$, with $\phi$ fixed, then K\"{o}nig's lemma yields a ray whose initial 0-cube lies on $\phi$ and whose set of dual hyperplanes is $\mathcal V$.  Thus we may suppose that there exists $U\not\in\mathcal V$ that separates $\phi$ and $\psi$.
\end{proof}

Theorems~\ref{thm:boundaryproperties} and~\ref{thm:visiblesimplex} imply that a hyperbolic cube complex is fully visible.  More generally:

\begin{cor}\label{cor:nowayoffthetrain}
Let $\gamma$ be a rank-one combinatorial geodesic ray representing a simplex $v$ of $\simp\mathbf X$.  Then $v$ is an isolated 0-simplex.
\end{cor}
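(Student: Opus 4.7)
I argue by contradiction. Suppose $v$ is not isolated, so $v$ is a face of some $1$-simplex of $\simp\mathbf X$. By Theorem~\ref{thm:structureofboundarysets} applied to any representative of this $1$-simplex, there are minimal UBSs $\mathcal U_1$ and $\mathcal U_2$ whose disjoint union represents it, such that $\mathcal U_1$ is almost-equivalent either to $\mathcal W(\gamma)$ (when $\dim v=0$) or to a sub-UBS of $\mathcal W(\gamma)$ (when $\dim v\geq 1$), and every $V\in\mathcal U_2$ crosses all but finitely many elements of $\mathcal U_1$. By the proof of Lemma~\ref{lem:UBSesexist}, $\mathcal U_2$ contains an infinite nested chain $V_0\prec V_1\prec\cdots$. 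After discarding finitely many hyperplanes and truncating $\gamma$, I arrange that $\mathcal U_1\subseteq\mathcal W(\gamma)$ and that every $V_i$ crosses every hyperplane $H_n$ dual to $\gamma([n,n+1])$. The goal is to construct an isometrically embedded eighth-flat of $\mathbf X$ containing $\gamma$, contradicting the rank-one hypothesis.

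\textit{Case I} ($\mathcal U_2\subseteq\mathcal W(\gamma)$, i.e.\ $\dim v\geq 1$): Each $V_i$ equals some $H_{m_i}$, and since the $V_j$ pairwise fail to cross, they are precisely the hyperplanes crossed by $\gamma$ that do not cross $V_0$. Truncate so that $V_0=H_0$ and apply the ``Building an eighth-flat'' portion of the proof of Theorem~\ref{thm:trichotomy2} with $V=V_0$: the gate function $f(n)=d_{_{\mathbf X}}(\gamma(n),g_n^0)$ increments by $1$ precisely when $\gamma$ crosses some $V_j$ with $j\geq 1$ and is constant otherwise. Since $\mathcal U_2$ is infinite, $f$ is unbounded, and the construction yields an isometrically embedded eighth-flat with $\gamma$ as its top bounding ray.

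\textit{Case II} ($\mathcal U_2\cap\mathcal W(\gamma)$ is finite, i.e.\ $\dim v=0$): After truncation, no $V_i$ crosses $\gamma$. For each $i,n$ let $g_n^i$ denote the gate of $\gamma(n)$ in the convex subcomplex $N(V_i)$. Since every $H_n$ crosses $V_i$, any hyperplane separating $\gamma(n)$ from $N(V_i)$ must fail to cross $\gamma$, so the set $\mathcal S_i$ of such hyperplanes is independent of $n$ and consists of hyperplanes disjoint from $\gamma$. The gate map therefore produces an isometrically embedded rectangular strip $\Sigma_i\subset\mathbf X$ of constant width $R_i:=|\mathcal S_i|$ and infinite length, with $\gamma$ as one edge. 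The nesting $V_{i-1}\prec V_i$ forces $V_{i-1}\in\mathcal S_i$, so $\mathcal S_0\subsetneq\mathcal S_1\subsetneq\cdots$ and the strips nest as $\Sigma_0\subsetneq\Sigma_1\subsetneq\cdots$ with $R_i\to\infty$. The union $\mathbf E=\bigcup_i\Sigma_i$ is an isometrically embedded non-diagonal quarter-flat with $\gamma$ as one bounding ray, and any such quarter-flat contains an eighth-flat whose image contains $\gamma$, contradicting rank-one.

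The main obstacle is the isometric embedding of $\mathbf E$ in Case II: one must verify that for each fixed $n$ the gates $g_n^0,g_n^1,\ldots$ lie in order on a single combinatorial geodesic emerging perpendicularly from $\gamma(n)$, that the inclusions $\mathcal S_0\subsetneq\mathcal S_1\subsetneq\cdots$ are realized geometrically along these perpendiculars, and that no stray hyperplane of $\mathbf X$ disrupts the $2$-dimensional grid structure. These verifications follow from convexity of each $N(V_i)$, the Helly property for the family $\{N(V_i)\}\cup\{N(H_n)\}$, and the nesting of the chain $V_0\prec V_1\prec\cdots$, but will require a careful hyperplane-separation argument in the median graph $\mathbf X^1$.
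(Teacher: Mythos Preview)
Your Case~I coincides with the paper's short argument. Your Case~II, however, goes beyond what the paper actually writes: the paper's opening hypothesis ``$\mathcal W(\gamma)=\mathcal V_1\sqcup\mathcal V_2$'' literally covers only $\dim v\geq 1$, not the situation where $v$ is a $0$-simplex adjacent to some $v'$ whose representing UBS lies outside $\mathcal W(\gamma)$. The eighth-flat construction in the proof of Theorem~\ref{thm:trichotomy2} realizes $\gamma$ as the \emph{top} bounding ray and needs the gate-distance to $N(H_0)$ to grow without bound; when $\mathcal W(\gamma)$ is minimal this need not happen, so one must instead exhibit an eighth-flat with $\gamma$ as \emph{bottom} bounding ray. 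Your Case~II supplies exactly this, so your case split is a genuine and necessary addition to the written proof.

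There is one real flaw in your Case~II setup. You assert that after a single truncation of $\gamma$ and discarding finitely many hyperplanes, \emph{every} $V_i$ crosses \emph{every} $H_n$. This is false in general: take $\mathbf X$ to be an eighth-flat, $\gamma$ its bottom ray, and $V_i$ the $i$-th horizontal hyperplane; then $V_i$ misses roughly the first $i$ hyperplanes of $\gamma$, and no fixed truncation repairs this for all $i$ simultaneously. Consequently your $\mathbf E$ is not a quarter-flat. The repair stays entirely within your framework: for each $i$ there is $N_i$ with $V_i\bot H_n$ for all $n\geq N_i$, so the strip $\Sigma_i$ exists only over $\gamma([N_i,\infty))$; your nesting argument $\mathcal S_{i-1}\subsetneq\mathcal S_i$ and hence $R_{i-1}<R_i$ survives for $n\geq N_i$, and $\bigcup_i\Sigma_i$ is then an eighth-flat with $\gamma$ as its bottom bounding ray, which is all that is required. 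Your separate concern---that elements of $\mathcal S_i$ may cross, so $\Sigma_i$ need not be a $2$-dimensional rectangle---is resolved just as you propose, by fixing compatible geodesics $\omega_n^{i}\subset\omega_n^{i+1}$ through the nested gates $g_n^i$.
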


\begin{proof}
Suppose that $\mathcal W(\gamma)=\mathcal V_1\sqcup\mathcal V_2$, where $\mathcal V_1$ and $\mathcal V_2$ are disjoint UBSs.  By Theorem~\ref{thm:structureofboundarysets}, each element of $\mathcal V_1$ crosses all but finitely many elements of $\mathcal V_2$.  By the proof of Theorem~\ref{thm:trichotomy2}, $\gamma$ bounds an eighth-flat, a contradiction.
\end{proof}

\begin{lem}[Equating basepoints]\label{lem:samestart}
Let $\gamma_1,\gamma_2:[0,\infty)\rightarrow\mathbf X$ be combinatorial geodesic rays.  Then there exist combinatorial geodesic rays $\gamma'_1,\gamma'_2:[0,\infty)\rightarrow\mathbf X$ such that $\gamma'_1(0)=\gamma'_2(0)$ and $\gamma'_i$ is almost-equivalent to $\gamma_i$ for $i\in\{1,2\}$.
\end{lem}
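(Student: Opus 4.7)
The plan is to reduce the statement to the problem of replacing the basepoint of a single ray. Set $x = \gamma_1(0)$ and take $\gamma'_1 = \gamma_1$; it suffices to construct a combinatorial geodesic ray $\gamma'_2$ starting at $x$ and almost-equivalent to $\gamma_2$. Write $y = \gamma_2(0)$, and let $S$ be the set of hyperplanes separating $x$ from $y$. Since $x,y\in\mathbf X^0$, this set is finite (with $|S| = d_{_{\mathbf X}}(x,y)$). Partition $S = S_+ \sqcup S_-$, where $S_+ = S \cap \mathcal W(\gamma_2)$ and $S_-$ is the complement of $S_+$ in $S$.

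Next, since $S_+$ is finite and every element of $S_+$ is eventually crossed by $\gamma_2$, there is an $N \geq 0$ such that $S_+ \subseteq \mathcal W(\gamma_2|_{[0,N]})$. Let $\alpha$ be any combinatorial geodesic segment from $x$ to $\gamma_2(N)$, and take
\[
\gamma'_2 \;=\; \alpha \cdot \gamma_2|_{[N,\infty)}.
\]
Using the standard identity $\mathcal W(\alpha) = S \,\triangle\, \mathcal W(\gamma_2|_{[0,N]})$ (a hyperplane separates $x$ from $\gamma_2(N)$ iff it separates exactly one of the pairs $(x,y)$, $(y,\gamma_2(N))$), together with the choice of $N$, I compute
\[
\mathcal W(\alpha) \;=\; \bigl(\mathcal W(\gamma_2|_{[0,N]}) \setminus S_+\bigr) \cup S_-.
\]

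To see that $\gamma'_2$ is a combinatorial geodesic ray I must check $\mathcal W(\alpha) \cap \mathcal W(\gamma_2|_{[N,\infty)}) = \emptyset$: the hyperplanes in $S_-$ do not cross $\gamma_2$ at all by definition, and the hyperplanes in $\mathcal W(\gamma_2|_{[0,N]}) \setminus S_+$ are crossed by $\gamma_2$ only within $\gamma_2|_{[0,N]}$, which is disjoint from the tail $\gamma_2|_{[N,\infty)}$. Finally,
\[
\mathcal W(\gamma'_2) \;=\; \bigl(\mathcal W(\gamma_2) \setminus S_+\bigr) \cup S_-,
\]
so $\mathcal W(\gamma'_2) \triangle \mathcal W(\gamma_2) = S_+ \cup S_- = S$, which is finite; hence $\gamma'_2$ is almost-equivalent to $\gamma_2$. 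The construction is essentially combinatorial bookkeeping using that symmetric differences of separating sets behave well in median graphs; there is no substantive obstacle, only the mild care required in choosing $N$ so that the concatenation of $\alpha$ with the tail of $\gamma_2$ crosses no hyperplane twice.
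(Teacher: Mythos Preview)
Your proof is correct and follows essentially the same approach as the paper's. The only cosmetic differences are that the paper fixes $\gamma_2$ and replaces $\gamma_1$ (you do the reverse), and that you compute $\mathcal W(\alpha)$ explicitly via the symmetric-difference identity while the paper verifies $\mathcal W(P)\cap\mathcal W(\gamma_1([T,\infty)))=\emptyset$ by a short contradiction argument; the choice of $N$ (resp.\ $T$) and the resulting concatenation are identical in substance.
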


\begin{proof}
Since only finitely many hyperplanes separate $\gamma_1(0)$ from $\gamma_2(0)$, there exists $T\geq 0$ such that for all $t\geq T$, the hyperplane dual to $\gamma_1([t,t+1])$ does not separate $\gamma_1(0)$ from $\gamma_2(0)$.  Choose a segment $P$ joining $\gamma_2(0)$ to $\gamma_1(T)$.  The concatenation $P\gamma_1([T,\infty))=\gamma'_1$ is almost-equivalent to $\gamma_1$, and $\gamma'_1(0)=\gamma_2(0)$.  Any hyperplane $H$ crossing $\gamma([T,\infty))$ and $P$ separates $\gamma_2(0)$ from $\gamma_1(T)$.  Since $H$ crosses $\gamma_1$, $H$ cannot separate $\gamma_1(0)$ from $\gamma_1(T)$ and hence separates $\gamma_1(0)$ from $\gamma_2(0)$, but this contradicts the choice of $T$.  Thus $\gamma'_1$ has the desired properties, and we take $\gamma'_2=\gamma_2$.  See Figure~\ref{fig:equating}.
\begin{figure}[h]
  \includegraphics[width=2.25in]{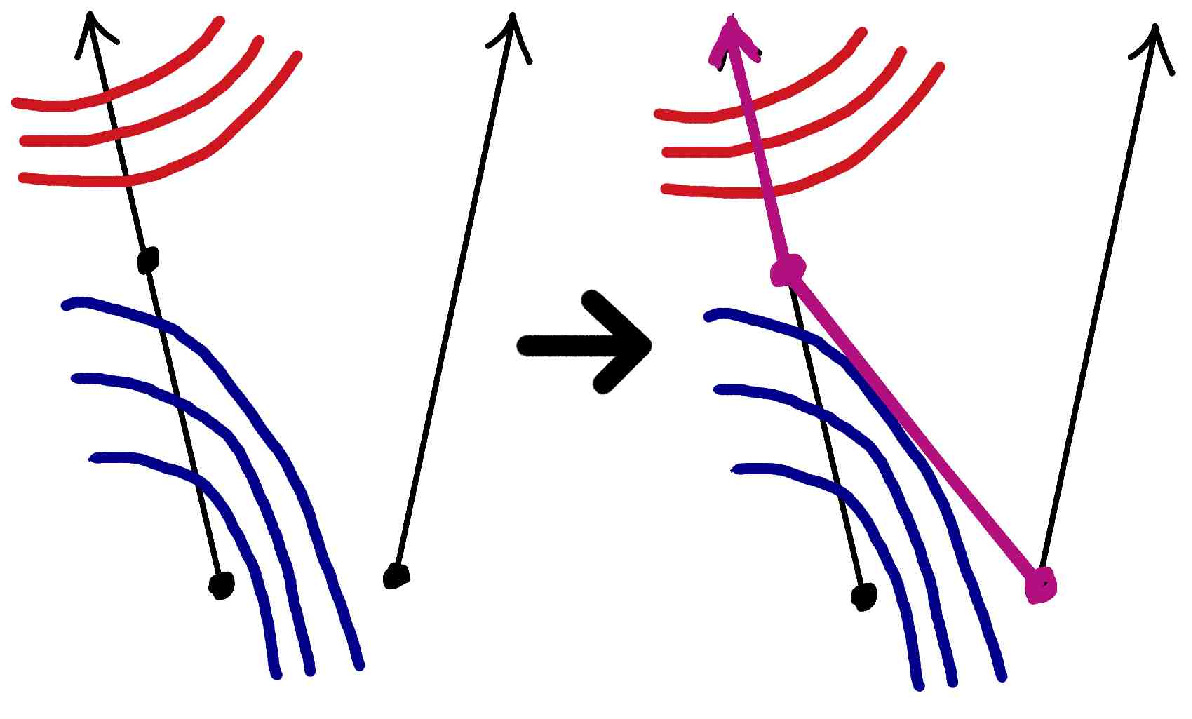}\\
  \caption{Equating basepoints.}\label{fig:equating}
\end{figure}
\end{proof}

\begin{lem}[Folding Lemma]\label{lem:folding}
Let $\gamma_1,\gamma_2:[0,\infty)\rightarrow\mathbf X$ be geodesic rays with common initial 0-cube.  Suppose that some hyperplane $H$ crosses $\gamma_1$ and $\gamma_2$, so that $H$ is dual to $\gamma_1([s,s+1])$ and $\gamma_2([t,t+1])$ with $0\leq s\leq t$.  Then there exist geodesic rays $\gamma_1',\gamma_2':[0,\infty)\rightarrow\mathbf X$ such that $\gamma_1([s+1,\infty))\subset\gamma'_1,\,\gamma_2([s+1,\infty))\subset\gamma'_2$, and $\gamma_1',\gamma_2'$ coincide in an initial segment $P$ whose initial 0-cube is $\gamma_1(0)$ and whose terminal 1-cube is dual to $H$.
\end{lem}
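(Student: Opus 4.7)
The plan is to build $\gamma_1'$ and $\gamma_2'$ by routing each ray through the carrier of $H$ in a canonical way using gates, and then verifying that no hyperplane is crossed twice. Let $x_0=\gamma_1(0)=\gamma_2(0)$ and write $H^-$ for the halfspace containing $x_0$ and $H^+$ for the opposite halfspace. Let $g$ be the gate of $x_0$ in the convex subcomplex $N(H)$, so $g\in N(H)\cap H^-$, and let $g'\in N(H)\cap H^+$ be its partner across $H$. Set $y_1=\gamma_1(s+1)$ and $y_2=\gamma_2(t+1)$; since $\gamma_i$ crosses $H$ on its last 1-cube before $y_i$, both $y_1,y_2$ lie in $N(H)\cap H^+$. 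I will then fix a geodesic segment $\sigma$ from $x_0$ to $g$, the 1-cube $e\subset N(H)$ from $g$ to $g'$ (dual to $H$), and, using convexity of $N(H)$, geodesic segments $\tau_i\rightarrow N(H)$ from $g'$ to $y_i$. The candidate rays are
\[\gamma_1'=\sigma\cdot e\cdot\tau_1\cdot\gamma_1([s+1,\infty)),\qquad \gamma_2'=\sigma\cdot e\cdot\tau_2\cdot\gamma_2([t+1,\infty)),\]
with common initial segment $P=\sigma e$, whose terminal 1-cube $e$ is dual to $H$.

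The key step is to show that $\sigma e\tau_i$ is a geodesic from $x_0$ to $y_i$; the rest then follows because $\gamma_i$ is itself a geodesic. For this I will do the hyperplane bookkeeping: $\sigma$ crosses exactly the hyperplanes separating $x_0$ from $N(H)$ (those are the hyperplanes separating $x_0$ from its gate $g$), $e$ crosses only $H$, and $\tau_i$ lies in $N(H)\cap H^+$ so it crosses only hyperplanes that cross $H$ and separate $g'$ from $y_i$ inside $N(H)$. These three sets are pairwise disjoint: a hyperplane separating $x_0$ from $N(H)$ cannot cross $N(H)$, hence cannot cross $H$ or be crossed by $\tau_i$; and neither $\sigma$ nor $\tau_i$ crosses $H$ since $\sigma\subseteq\mathbf X-\interior{N(H)}$ (essentially) and $\tau_i\subseteq N(H)\cap H^+$.

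The potentially delicate part, and the main obstacle to watch, is to verify that the union of these three sets of hyperplanes is \emph{exactly} the set $\mathcal W(x_0|y_i)$ of hyperplanes separating $x_0$ from $y_i$, so that $\sigma e\tau_i$ is a geodesic of length $d_{_{\mathbf X}}(x_0,y_i)=s_i+1$. I will handle a hyperplane $U\in\mathcal W(x_0|y_i)\setminus\{H\}$ in two cases. If $U$ does not cross $H$, then since $y_i\in N(H)$ and $U$ is disjoint from $N(H)$, the hyperplane $U$ cannot separate $y_i$ from $H$, so $U$ must be on the $x_0$-side of $H$ and separate $x_0$ from $N(H)$; thus $U$ is crossed by $\sigma$. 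If $U$ crosses $H$, then $U$ crosses $N(H)$ as $(U\cap H)\times I$, so $g$ and $g'$ lie on the same side of $U$; since $U$ separates $x_0$ from $y_i$, it also separates $g'$ from $y_i$ inside $N(H)$, so $U$ is crossed by $\tau_i$.

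Once this count is done, $\sigma e\tau_i$ and $\gamma_i|_{[0,s_i+1]}$ cross the same set of hyperplanes (namely $\mathcal W(x_0|y_i)$), so the concatenation $\gamma_i'=\sigma e\tau_i\cdot\gamma_i([s_i+1,\infty))$ crosses each hyperplane at most once and is therefore a combinatorial geodesic ray. By construction, $\gamma_1'$ and $\gamma_2'$ agree on $P=\sigma e$, whose terminal 1-cube is dual to $H$, and contain $\gamma_1([s+1,\infty))$ and $\gamma_2([t+1,\infty))$ respectively, completing the proof.
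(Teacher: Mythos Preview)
Your proof is correct and essentially complete; the hyperplane bookkeeping you outline goes through exactly as you describe, and the gate property of $g$ ensures that the three families of hyperplanes crossed by $\sigma$, $e$, and $\tau_i$ partition $\mathcal W(x_0\mid y_i)$.

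The paper takes a slightly different route: instead of routing through the gate $g'$ of $x_0$ in $N(H)\cap H^+$, it routes through the median $m=m(x_0,y_1,y_2)$, observing that $m\in N(H)$ (by convexity, since $y_1,y_2\in N(H)$) and that $H$ separates $m$ from $x_0$ (since $H$ separates both $y_i$ from $x_0$). The common initial segment $P$ then runs from $x_0$ to $m$, chosen so that its terminal $1$-cube is dual to $H$, and the connecting segments $P_i$ lie in $N(H)$. The two pivot points $g'$ and $m$ are different in general (the gate depends only on $x_0$ and $H$, while the median depends on all three points), but both choices work for the same reason: they lie in $N(H)\cap H^+$ and are reached from $x_0$ by a geodesic crossing $H$ last. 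Your gate-based version has the mild advantage that the verification that $\sigma e\tau_i$ is a geodesic is more explicit than in the paper, where the analogous check for $PP_i$ is left to the reader; the median-based version has the feature that $m$ lies on a geodesic between $y_1$ and $y_2$, though this is not used here.
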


\begin{proof}
If $a_1=\gamma_1(s+1)=\gamma_2(t+1)=a_2$, then replace $\gamma_1([0,s+1])$ and $\gamma_2([0,t+1])$ by a geodesic segment $P$ joining $a_1$ to $\gamma_1(0)=\gamma_2(0)=b$.  If not, then let $m$ be the median of $b, a_1,a_2$.  Since $H$ separates both $a_1$ and $a_2$ from $b$, $H$ must separate $m$ from $b$.  Since $a_1,a_2\in N(H)$ and that subcomplex is convex, $m\in N(H)$.  Let $P$ be a geodesic path joining $b$ to $m$, so that the terminal 1-cube of $P$ is dual to $H$.  Choose geodesic segments $P_1,P_2\rightarrow N(H)$ joining $m$ to $a_1$ and to $a_2$ respectively.  Then replace $\gamma_1([0,s+t])$ by $PP_1$ and $\gamma_2([0,t+1])$ by $PP_2$.  See Figure~\ref{fig:folding}.
\end{proof}

\begin{figure}[h]
  \includegraphics[width=2.25in]{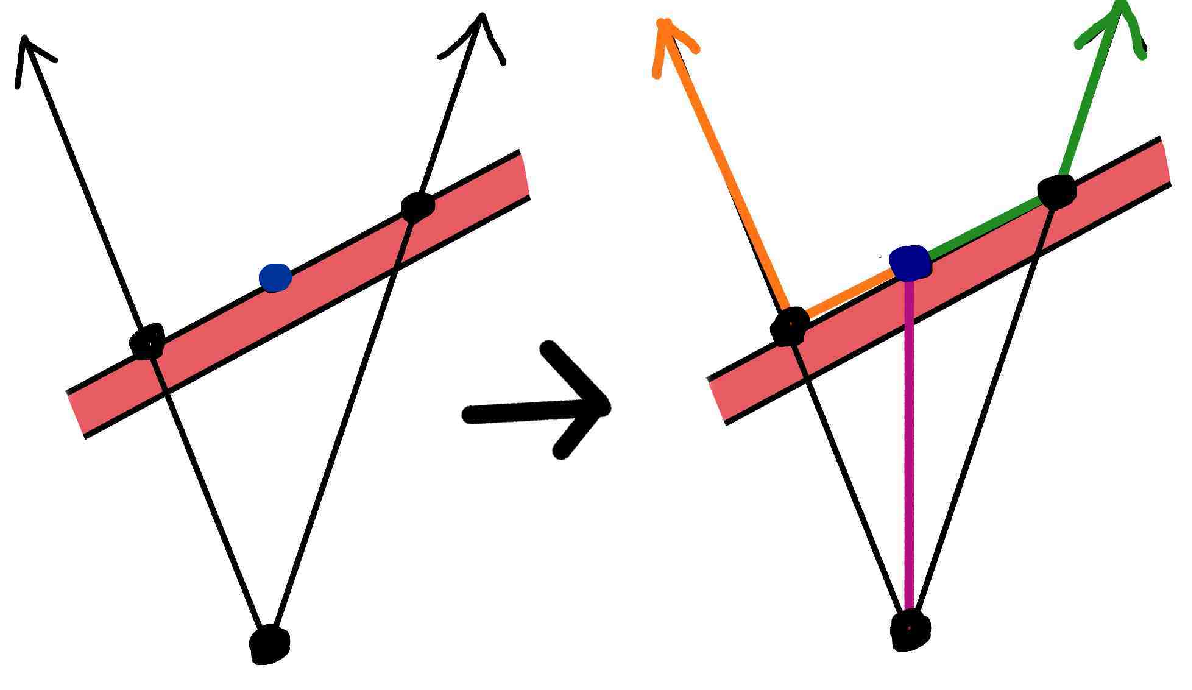}\\
  \caption{Folding.}\label{fig:folding}
\end{figure}

\begin{thm}\label{thm:visiblefracflat}
Let $v\subset\mathbf X$ be an $n$-simplex of $\simp\mathbf X$, each of whose 0-simplices is visible.  Then there exists an isometric embedding $\mathbf R\rightarrow\mathbf X$, where $\mathbf R$ is the standard tiling of $[0,\infty)^{n+1}$ by $(n+1)$-cubes.
\end{thm}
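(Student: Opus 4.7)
For each 0-simplex $v_i$ of $v$ ($0\le i\le n$), choose by visibility a combinatorial geodesic ray $\gamma_i:[0,\infty)\to\mathbf X$ with $\mathcal W(\gamma_i)$ representing $v_i$.  By iterated application of Lemma~\ref{lem:samestart}, I first arrange that all $\gamma_i$ share a common initial 0-cube $x_0$.  Since distinct 0-simplices are represented by non-almost-equivalent UBSs, $\mathcal W(\gamma_i)\cap\mathcal W(\gamma_j)$ is finite for $i\ne j$; applying Lemma~\ref{lem:folding} once for each hyperplane in a pairwise intersection and then replacing each ray by the sub-ray starting at the end of the resulting shared initial segment, I reduce to the situation where the rays share the basepoint $x_0$ and have pairwise disjoint hyperplane sets.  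By Theorem~\ref{thm:structureofboundarysets}, applied in every ordering of the decomposition $\bigsqcup_i\mathcal W(\gamma_i)$, each $H\in\mathcal W(\gamma_j)$ crosses all but finitely many elements of $\mathcal W(\gamma_i)$ for every $i\ne j$.

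The main step is to upgrade this cofinite-crossing property to full pairwise crossing.  For each $N$ I would find truncation parameters $T^{(i)}_N\ge 0$ such that the first $N$ hyperplanes of $\gamma_i([T^{(i)}_N,\infty))$ cross every hyperplane in the first $N$ positions of $\gamma_j([T^{(j)}_N,\infty))$, for all $i\ne j$.  This is achieved by iteratively pushing each $T^{(i)}_N$ past the finitely many hyperplanes on $\gamma_i$ that currently fail to cross one of the selected hyperplanes on a different ray; since the bad sets are finite and no new bad pairs arise among already-vetted later hyperplanes, the process terminates.  After re-equating the basepoints of the resulting sub-rays using Lemma~\ref{lem:samestart} -- iteratively taking medians to ensure no new bad hyperplanes are inserted among the first $N$ positions -- I obtain rays $\gamma_i^N$ with a common basepoint $x_N$ and first $N$ hyperplanes $H^{(i),N}_0,\dots,H^{(i),N}_{N-1}$ that are nested within each ray and pairwise cross across rays.

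With this setup, for each multi-index $a=(a_0,\dots,a_n)\in\{0,\dots,N\}^{n+1}$ the set $\mathcal W^N_a=\{H^{(i),N}_k:0\le k<a_i,\,0\le i\le n\}$ is finite and inseparable: a hyperplane separating two members lying in a single $\gamma_i^N$ is itself one of the nested $H^{(i),N}_k$, while pairwise-crossing members across rays admit no separating hyperplane.  The pairwise-crossing property then guarantees that orienting $x_N$ across exactly $\mathcal W^N_a$ yields a canonical consistent orientation and hence a 0-cube $y^N_a\in\mathbf X^0$; the assignment $a\mapsto y^N_a$ is cubical, and $d_{\mathbf X}(y^N_a,y^N_{a'})=\sum_i|a_i-a'_i|$, producing a convex isometric embedding of $[0,N]^{n+1}$ into $\mathbf X$.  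Finally, passing to the direct limit over $N$ -- or, more carefully, extracting a coherent sequence of orthants by K\"onig's lemma in the spirit of Theorem~\ref{thm:visiblesimplex} -- produces the desired isometric embedding $\mathbf R\to\mathbf X$.  The hard part will be the iterative truncation-and-rebasing argument: ensuring that truncating, re-equating basepoints, and repeating does not re-introduce bad crossings among hyperplanes already placed, so that the orthants fit together coherently.
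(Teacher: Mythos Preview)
Your setup (Lemmas~\ref{lem:samestart} and~\ref{lem:folding} to get common basepoint $x_0$ and pairwise-disjoint $\mathcal W(\gamma_i)$) matches the paper exactly.  The divergence is in what you do next.  You settle for cofinite crossing and then propose an iterative truncation--rebasing--K\"onig scheme to manufacture nested finite orthants.  The paper instead observes that, once the rays share a basepoint and have disjoint hyperplane sets, the crossing is already \emph{full}: every $H_k^i\in\mathcal W(\gamma_i)$ crosses every $H_m^j\in\mathcal W(\gamma_j)$.  From there a one-line induction on $n$ gives $\prod_i\gamma_i\hookrightarrow\mathbf X$ as the required orthant, with no limiting argument.

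The missed observation is short.  Suppose $H_k^i\not\perp H_m^j$.  Since $H_m^j\notin\mathcal W(\gamma_i)$, the ray $\gamma_i$ lies entirely on the $x_0$--side of $H_m^j$, so $H_k^i$ does too; symmetrically $H_m^j$ lies on the $x_0$--side of $H_k^i$.  Now use that $v_j$ is a \emph{0--simplex}: minimality of $\mathcal W(\gamma_j)$ forces $H_m^j$ to be crossed by only finitely many members of $\mathcal W(\gamma_j)$ (otherwise the crossers form a sub-UBS that is cofinite, but the nested chain whose inseparable closure generates $\mathcal W(\gamma_j)$ supplies infinitely many non-crossers of $H_m^j$, a contradiction).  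Hence infinitely many $H\in\mathcal W(\gamma_j)$ lie strictly beyond $H_m^j$ and are therefore separated from $H_k^i$ by $H_m^j$.  These infinitely many $H$ fail to cross $H_k^i$, contradicting the cofinite-crossing coming from Theorem~\ref{thm:structureofboundarysets} (run the same argument with $i,j$ swapped if the ordering goes the other way).  Thus $H_k^i\perp H_m^j$ always, and your ``hard part'' evaporates.

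Two smaller issues with your route as written: first, Theorem~\ref{thm:structureofboundarysets} gives the cofinite-crossing for \emph{some} ordering of the minimal pieces, not every ordering---the eighth-flat shows the asymmetry is real in general---so ``applied in every ordering'' needs justification (it does hold here, but precisely via the argument above).  Second, your orthants $[0,N]^{n+1}$ for different $N$ are built from rays with different basepoints $x_N$ and different initial hyperplane sets, so they need not nest; making the K\"onig step rigorous would require extra bookkeeping that the direct argument avoids entirely.
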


\begin{proof}
Let $v=[v_0,v_1,\ldots,v_n]$ and let $\gamma_0,\gamma_1,\ldots,\gamma_n$ be combinatorial geodesic rays, with $\gamma_i$ representing $v_i$.  By finitely many applications of Lemma~\ref{lem:samestart}, we can assume that the $\gamma_i$ all emanate from a common 0-cube $x_0$.  Since $v_i\cap v_j=\emptyset$ for $i\neq j$, we have $|\mathcal W(\gamma_i)\cap\mathcal W(\gamma_j)|<\infty$ for all $i\neq j$, so that by finitely many applications of the folding lemma, followed by the removal of a finite common initial segment of the $\gamma_i$, we can assume that $\mathcal W(\gamma_i)\cap\mathcal W(\gamma_j)=\emptyset$ when $i\neq j$.

For any $i\neq j$, let $H_k^i$ be dual to the $k^{th}$ 1-cube of $\gamma_i$ and let $H_m^j$ be dual to the $m^{th}$ 1-cube of $\gamma_j$.  If $H_k^i$ does not cross $H_m^j$, then, since $v_j$ is a 0-simplex, there are infinitely many hyperplanes $H$ crossing $\gamma_j$ and separated from $H_k^i$ by $H_m^j$.  But every hyperplane crossing $\gamma_i$ crosses all but finitely many of the hyperplanes crossing $\gamma_j$, and thus $H_m^j\bot H_k^i$ for all $m,k\geq 1$ and all $i\neq j$.

By induction, $\mathbf R'\cong\prod_{i=0}^{n-1}\gamma_i$ is an isometrically embedded subcomplex of $\mathbf X$ (the base case amounts to the existence of $\gamma_0$, which is assumed).  Indeed, every hyperplane $H$ either crosses exactly one of $\mathbf R'$ or $\gamma_n$, or $H$ crosses neither of those subcomplexes.  Each hyperplane crossing $\mathbf R'$ crosses each hyperplane crossing $\gamma_n$.  Hence $\mathbf R\cong\mathbf R'\times\gamma_n$ isometrically embeds in $\mathbf X$.
\end{proof}

\subsubsection{Visible pairs}  A notable difference between the combinatorial metric and the CAT(0) metric is that distinct 0-simplices of the same simplex of $\simp\mathbf X$ are joined by a bi-infinite combinatorial geodesic in $\mathbf X$, but the corresponding points on the visual boundary (see Section~\ref{sec:titscompare}) are not joined by a CAT(0) geodesic.  More precisely:

\begin{thm}\label{thm:opticalspace}
Let $\mathbf X$ be fully visible.  Then $\mathbf X$ is an optical space.
\end{thm}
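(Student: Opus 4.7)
Let $u,v$ be distinct 0-simplices of $\simp\mathbf X$; the task is to build a bi-infinite combinatorial geodesic $\gamma:\reals\to\mathbf X$ whose two halves have hyperplane sets representing $u$ and $v$, respectively. Full visibility furnishes combinatorial geodesic rays $\gamma_u,\gamma_v$ representing $u$ and $v$, and Lemma~\ref{lem:samestart} lets us arrange $\gamma_u(0)=\gamma_v(0)=x_0$. Since $u\neq v$ and both are 0-simplices, their representative minimal UBSs $\mathcal W(\gamma_u)$ and $\mathcal W(\gamma_v)$ are not almost-equivalent; the final paragraph of the proof of Theorem~\ref{thm:structureofboundarysets} records that two minimal UBSs have infinite intersection if and only if they are almost-equivalent, so $\mathcal W(\gamma_u)\cap\mathcal W(\gamma_v)$ is finite.

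The plan is to reduce to the case $\mathcal W(\gamma_u)\cap\mathcal W(\gamma_v)=\emptyset$. Once we achieve this, the concatenation $\gamma_u^{-1}\cdot\gamma_v$, reparametrized over $\reals$, is a bi-infinite combinatorial path crossing each hyperplane at most once: each ray is itself a combinatorial geodesic, and they share no hyperplanes. Hence this concatenation is a combinatorial geodesic, whose positive and negative halves cross exactly the hyperplanes of $\mathcal W(\gamma_v)$ and $\mathcal W(\gamma_u)$, respectively, realizing the pair $(u,v)$.

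To achieve disjointness, induct on $k=|\mathcal W(\gamma_u)\cap\mathcal W(\gamma_v)|$. When $k>0$, pick any $H$ in the intersection and apply Lemma~\ref{lem:folding} to produce rays $\gamma_u',\gamma_v'$ coinciding on an initial segment $P$ whose terminal 1-cube is dual to $H$, and whose tails $\gamma_i([s+1,\infty))\subseteq\gamma_i'$ are preserved. Inspection of the construction in Lemma~\ref{lem:folding} shows that $\mathcal W(\gamma_i')=\mathcal W(\gamma_i)$: the modified initial portion $PP_i$ has the same endpoints as $\gamma_i([0,s_i+1])$ and therefore crosses exactly the same hyperplanes (namely, those separating $x_0$ from $\gamma_i(s_i+1)$). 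Now truncate by the common segment $P$, producing rays $\tilde\gamma_u,\tilde\gamma_v$ with common initial 0-cube equal to the terminal 0-cube of $P$, and with $\mathcal W(\tilde\gamma_i)=\mathcal W(\gamma_i)\setminus\mathcal W(P)$. The intersection $\mathcal W(\tilde\gamma_u)\cap\mathcal W(\tilde\gamma_v)=(\mathcal W(\gamma_u)\cap\mathcal W(\gamma_v))\setminus\mathcal W(P)$ is a strict subset of the previous intersection since $H\in\mathcal W(P)$; and each $\tilde\gamma_i$ still represents the same 0-simplex because its hyperplane set differs from the original by only finitely many elements. The induction reaches $k=0$ after finitely many steps, at which point the concatenation argument above completes the proof.

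The delicate point, and really the only thing to verify beyond bookkeeping, is that folding does not re-introduce shared hyperplanes into the new rays: this is exactly the identity $\mathcal W(\gamma_i')=\mathcal W(\gamma_i)$ noted above, which ensures that the finite shared set is strictly shrunk by each truncation. Full visibility is invoked only at the very start, to produce the initial pair of rays; after that, the argument is a purely combinatorial manipulation via Lemmas~\ref{lem:samestart} and~\ref{lem:folding}.
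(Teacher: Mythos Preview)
Your proof is correct and follows essentially the same approach as the paper's: use Lemma~\ref{lem:samestart} to equate basepoints, observe that the intersection $\mathcal W(\gamma_u)\cap\mathcal W(\gamma_v)$ is finite, then fold and truncate to make it empty before concatenating. The paper phrases the reduction as a single choice of $C\in\mathcal C$ after which the truncated tails are disjoint, whereas you make the induction on $|\mathcal W(\gamma_u)\cap\mathcal W(\gamma_v)|$ explicit; your verification that $\mathcal W(\gamma_i')=\mathcal W(\gamma_i)$ (so truncating $P$ strictly shrinks the intersection) is exactly the bookkeeping the paper leaves implicit. One small difference in the finiteness step: you invoke the minimal-UBS fact from Theorem~\ref{thm:structureofboundarysets}, while the paper observes directly that an infinite $\mathcal C$ would itself be a UBS representing a common subsimplex, forcing $u=v$; both routes are valid for 0-simplices.
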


\begin{proof}
Let $v,w$ be visible simplices of $\simp\mathbf X$.  We shall show that either $u,v$ is a visible pair, or $u\cap v\neq\emptyset$.  In the latter case, if $u,v$ are 0-simplices, then $u=v$, and the claim follows.

Let $\gamma_u$ and $\gamma_v$ be combinatorial geodesic rays representing $u$ and $v$ respectively.  By Lemma~\ref{lem:samestart}, we can take $\gamma_u$ and $\gamma_v$ to have the same initial 0-cube $\gamma_u(0)=\gamma_v(0)$.  Let $\mathcal C=\mathcal W(\gamma_u)\cap\mathcal W(\gamma_v)$.  Then $\mathcal C$ contains no facing triple, being the intersection of UBSs.  If $C_1,C_2\in\mathcal C$ are separated by $H$, then $H$ crosses $\gamma_u$ and $\gamma_v$, and hence belongs to $\mathcal C$.  Therefore, if $\mathcal C$ is infinite, then it is a UBS representing a simplex $c\subset u\cap v$, and we are done.  Hence suppose that $\mathcal C$ is finite.  If $|\mathcal C|=0$, then define $\alpha:\reals\rightarrow\infty$ by $\alpha(t)=\gamma_u(t)$ for $t\geq0$ and $\gamma_v(t)$ otherwise, yielding a bi-infinite geodesic showing that $u,v$ is a visible pair.  If $|\mathcal C|=n>0$, then choose $C\in\mathcal C$.  By the folding lemma, we can assume that there exists $T\geq 0$ such that $\gamma_u(t)=\gamma_v(t)$ for $t\leq T$ and that $C$ is dual to the 1-cube $\gamma_u([T,T+1])$.  Since $\mathcal C$ is finite, we can choose $C$ so that $\mathcal W(\gamma_u([T+1,\infty)))\cap\mathcal W(\gamma_v([T+1,\infty)))=\emptyset$.  Now simply remove the path $\gamma_u([0,T))$ from $\gamma_u\cup\gamma_v$ and define $\alpha$ as before.
\end{proof}

\begin{rem}
If $\gamma_v,\gamma_u$ are combinatorial geodesic rays representing simplices $u,v$ of $\mathbf X$, and $u\subset v$, then by Lemma~\ref{lem:samestart}, we can choose $\gamma_u$ and $\gamma_v$ so that $\mathcal W(\gamma_u)\subset\mathcal W(\gamma_v)$.
\end{rem}

\subsubsection{Combinatorial geodesic completeness}  The notion of combinatorial geodesic completeness is needed in stating Theorem~\ref{thm:divergenceofgroup}.  In that context, it coincides with CAT(0) geodesic completeness (see~\cite[Chapter~7]{HagenPhD} for a discussion of how combinatorial geodesic completeness is used in a mostly combinatorial proof of that theorem).  However, in general, the notions differ, and their relationship warrants discussion.

\begin{defn}[Combinatorial geodesic completeness]\label{defn:combgeodesiccomplete}
Suppose $\mathbf X$ has no infinite family of pairwise-crossing hyperplanes.  Then $\mathbf X$ is \emph{combinatorially geodesically complete} if the following holds for every maximal set $\{W_1,\ldots,W_n\}$ of pairwise-crossing hyperplanes.  For each $1\leq i\leq n$, let $\chi(W_i)$ be a halfspace associated to $W_i$.  Then $\bigcap_{i=1}^n\chi(W_i)$ contains 0-cubes arbitrarily far from $\bigcap_{i=1}^nN(W_i)$, for any choice of $\chi$.
\end{defn}

\begin{rem}[Combinatorial geodesic completeness vs. essentiality and visibility]
The cube complex $\reals\times[-\frac{1}{2},\frac{1}{2}]$ shows that combinatorial geodesic completeness does not imply essentiality.  A diagonal half-flat shows that essentiality does not imply combinatorial geodesic completeness, though the two notions are equivalent for trees.  Gluing a 2-cube to $\reals\times[-\frac{1}{2},\frac{1}{2}]$ along any proper face yields a an optical space that is not combinatorially geodesically complete.  Let $\mathbf E$ be an eight-flat.  At each 0-cube $e$ of $\mathbf E$ that is contained in a single 2-cube, attach a ray by identifying its initial point with $e$ to form a cube complex that is combinatorially geodesically complete but not fully visible.
\end{rem}

\begin{rem}[Combinatorial vs. CAT(0) geodesic completeness]\label{rem:geodesicallycomplete}
A CAT(0) space $Y$ is \emph{geodesically complete} if each geodesic path $P$ extends to a bi-infinite geodesic $\gamma:\reals\rightarrow Y$.

If $\mathbf X$ is CAT(0) geodesically complete, then $\mathbf X$ is combinatorially geodesically complete, since each diagonal in a maximal cube is a CAT(0) geodesic path.  However, there exists a combinatorially geodesically complete cube complex $\mathbf C$ that is not CAT(0) geodesically complete: let $\mathbf C_0\cong[0,\infty)^2$ be a non-diagonal quarter-flat, and let $\mathbf C_1,\mathbf C_2$ be copies of $[0,\infty)\times[0,1]$.  $\mathbf C$ is formed by identifying $\{0\}\times[0,1]\subset\mathbf C_1$ with $\{0\}\times[0,1]\subset\mathbf C_0$ and $\{0\}\times[0,1]\subset\mathbf C_2$ with $[0,1]\times\{0\}$ in the obvious cubical way, as shown in Figure~\ref{fig:noextend}.  For $a\geq 1$, the CAT(0) geodesic line segment joining $(0,a)$ to $(a,0)$ does not extend to a bi-infinite CAT(0) geodesic.  $\mathbf C$ is, however, combinatorially geodesically complete.
\begin{figure}[h]
  \includegraphics[width=2.25in]{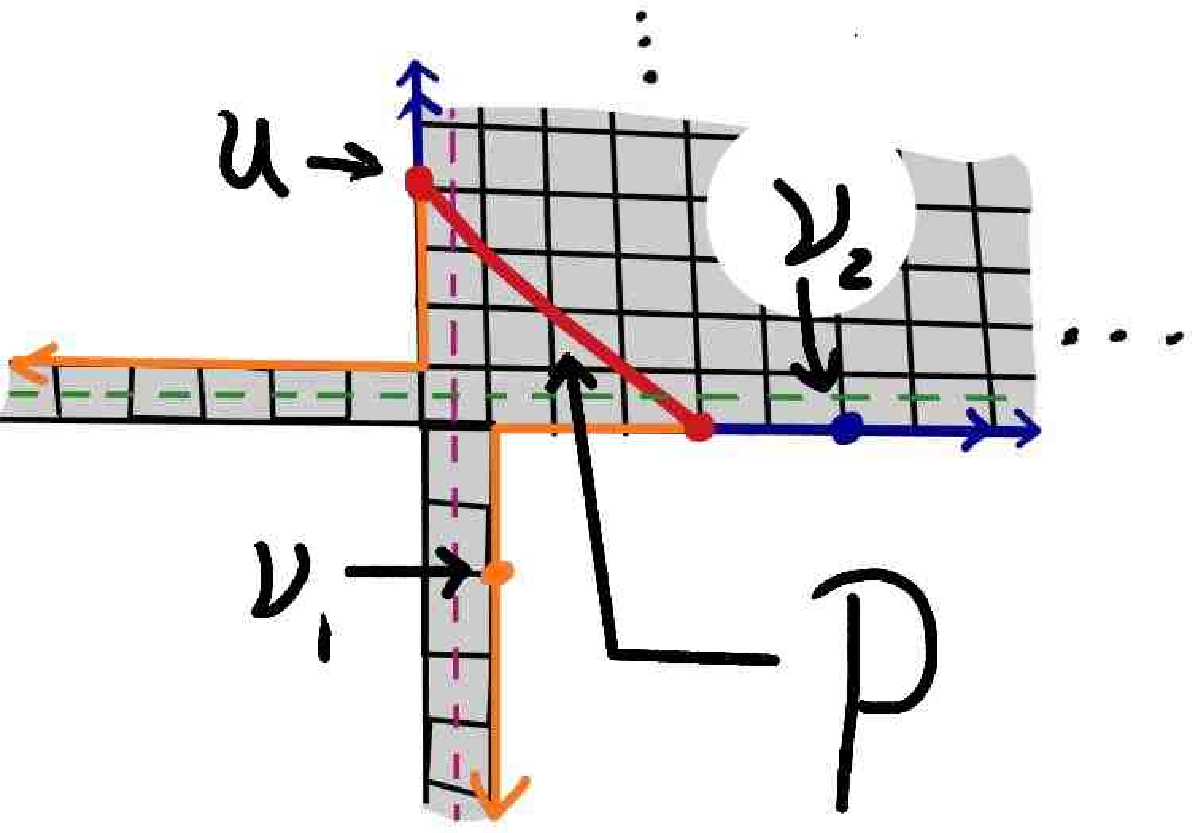}\\
  \caption{The CAT(0) geodesic segment $P$ does not extend to a geodesic line. In any of the possible lines extending $P$, there exists a point ($v_1$ or $v_2$) closer to $u$ in $\mathbf C$ than in the putative geodesic.  However, each 2-cube $s$ determines four quarter-spaces, each of which contains 0-cubes arbitrarily far from $s$.}\label{fig:noextend}
\end{figure}

\end{rem}

The choice of nomenclature comes from the following characterization of combinatorial geodesic completeness.

\begin{lem}\label{lem:combgeodesiccomplete}
If $\mathcal W\neq\emptyset$, then $\mathbf X$ is combinatorially geodesically complete if and only if, for each combinatorial geodesic segment or ray $P\rightarrow\mathbf X$, there exists a bi-infinite combinatorial geodesic $\gamma:\reals\rightarrow\mathbf X$ such that $P\subset\gamma$.
\end{lem}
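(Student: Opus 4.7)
My plan is to prove the two implications of the lemma separately: the reverse direction by a direct construction, and the forward direction by reducing to a single-$1$-cube extension step whose only subtlety is a short median-graph computation.

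For $(\Leftarrow)$, suppose every combinatorial geodesic segment or ray extends to a bi-infinite combinatorial geodesic. Fix a maximal pairwise-crossing family $\{W_1,\ldots,W_n\}$, put $c=\bigcap_iN(W_i)$, and let $\chi$ be any choice of halfspaces; let $y$ be the unique $0$-cube of $c$ with $y\in\chi(W_i)$ for every $i$, and let $y^*$ be the antipodal $0$-cube of $c$. A combinatorial geodesic segment $P_0\subset c$ from $y^*$ to $y$ satisfies $\mathcal W(P_0)=\{W_1,\ldots,W_n\}$; by hypothesis $P_0$ lies in a bi-infinite combinatorial geodesic $\gamma:\reals\to\mathbf X$, which I parameterize so that $\gamma(0)=y$. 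The forward ray $\gamma|_{[0,\infty)}$ crosses hyperplanes disjoint from $\{W_i\}$, so it lies in the convex subcomplex $\bigcap_i\chi(W_i)$. Since each $W_i$ separates $\gamma(t)$ from every $0$-cube of $c$ except $y$, the gate of $\gamma(t)$ in $c$ is $y$; hence $d_{_{\mathbf X}}(\gamma(t),c)=d_{_{\mathbf X}}(\gamma(t),y)=t$, producing $0$-cubes in $\bigcap_i\chi(W_i)$ arbitrarily far from $c$.

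For $(\Rightarrow)$, suppose $\mathbf X$ is combinatorially geodesically complete. It suffices to show that any combinatorial geodesic $P$ with finite endpoint $y$ admits a $1$-cube $e$ at $y$ whose dual hyperplane lies outside $\mathcal W(P)$, for then $Pe$ is a strictly longer geodesic, and iterating at each finite endpoint (via dependent choice) yields a bi-infinite geodesic containing $P$. Let $S$ be the set of hyperplanes carrying $y$. If $|S|=\infty$, the required $1$-cube is immediate from finiteness of $\mathcal W(P)$; otherwise suppose for contradiction $S\subseteq\mathcal W(P)=\mathcal W(xy)$, where $x$ is the other endpoint of $P$. Then every hyperplane at $y$ separates $x$ from $y$, so every neighbor of $y$ in $\mathbf X^1$ lies one step closer to $x$: in short, $y$ is a local maximum of $d_{_{\mathbf X}}(x,\cdot)$.

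The technical core of the argument is the claim that, in this local-maximum situation, any two distinct $H_1,H_2\in S$ must cross at $y$. I would prove it by contradiction: assuming $H_1,H_2$ osculate at $y$, let $y_i$ be the neighbor of $y$ across $H_i$ and consider the median $m=m(y_1,y_2,x)$, which exists by Chepoi's theorem that $\mathbf X^1$ is a median graph. From $d_{_{\mathbf X}}(x,y_i)=d_{_{\mathbf X}}(x,y)-1$ and $d_{_{\mathbf X}}(y_1,y_2)=2$, the defining median identities force $d_{_{\mathbf X}}(m,y_1)=d_{_{\mathbf X}}(m,y_2)=1$; tracking sides of $H_1$ and $H_2$ places $m$ in the quadrant $H_1^-\cap H_2^-$, so $m$ is a common neighbor of $y_1$ (across $H_2$) and of $y_2$ (across $H_1$). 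This yields an embedded $4$-cycle $y,y_1,m,y_2$ whose opposite edges are dual to $H_1$ and $H_2$, and such a configuration exists only when $H_1\bot H_2$ at $y$, contradicting osculation. I expect this $4$-cycle step to be the main technical hurdle. Granted the claim, $S$ is pairwise-crossing and hence coincides with a maximal pairwise-crossing family $\{W_1,\ldots,W_n\}$, so the star of $y$ is the single maximal cube $c=\bigcap_iN(W_i)$. Choosing $\chi(W_i)$ to be the halfspace containing $y$, every neighbor of $y$ crosses some $W_j$ and so leaves the orthant $O=\bigcap_i\chi(W_i)$; since $O$ is convex hence connected, $y$ being isolated in $O^0$ forces $O=\{y\}$, which is a bounded orthant contradicting combinatorial geodesic completeness.
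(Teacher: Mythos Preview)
Your reverse direction matches the paper's almost verbatim. For the forward direction you take a genuinely different route. The paper argues by induction on the length of a segment $P=P'c$: by induction $P'$ lies in a bi-infinite geodesic $\gamma$, and one then splices the backward ray of $\gamma$, the extra $1$-cube $c$, and a new ray $\beta$ lying in the deep orthant determined by a maximal cube containing $c$; a hyperplane crossing the spliced path twice would have to cross every hyperplane of that maximal cube, contradicting maximality. You instead prove a uniform one-step extension: at any finite endpoint $y$ of $P$, some $1$-cube at $y$ is dual to a hyperplane outside $\mathcal W(P)$. Your median computation is correct (the four vertices $y,y_1,m,y_2$ occupy the four quarter-spaces of $H_1,H_2$, so $H_1\bot H_2$), and, with the flag-link condition, it forces $S$ to be pairwise-crossing. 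The subsequent assertion that $S$ is then a \emph{maximal} pairwise-crossing family is also correct, though your ``hence'' hides a short argument: if $W\notin S$ crossed every element of $S$, then the first hyperplane on a geodesic from $y$ to its gate in $N(W)$ would lie in $S$ but separate $y$ from $N(W)$, hence fail to cross $W$. Your approach is more direct and avoids the induction; the paper's is slightly more constructive.

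There is one genuine wrinkle. As written, your one-step extension only treats segments: when $P$ is a ray, $\mathcal W(P)$ is infinite, so the clause ``immediate from finiteness of $\mathcal W(P)$'' fails, and the later appeal to ``the other endpoint $x$'' is unavailable. The fix is easy and keeps your strategy intact: drop the case split on $|S|$, assume $S\subseteq\mathcal W(P)$, and for each pair $H_1,H_2\in S$ run your median argument with $x$ replaced by $P(N)$ for any $N$ beyond the times at which $P$ crosses $H_1$ and $H_2$. You still conclude $H_1\bot H_2$, so $S$ is pairwise-crossing and hence finite by the standing hypothesis that $\mathbf X$ has no infinite clique of hyperplanes; your orthant argument then applies unchanged. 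With this adjustment your proof is complete.
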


\begin{proof}
Suppose that every combinatorial geodesic segment in $\mathbf X$ extends to a geodesic.  Let $W_1,\ldots,W_n$ be a finite set of pairwise-crossing hyperplanes and let $\chi$ choose a halfspace associated to each $W_i$.  Let $A_0$ be a combinatorial geodesic segment in the unique maximal cube of $\bigcap_{i=1}^nN(W_i)$, so that the initial 0-cube of $A_0$ lies in $\bigcap_i(\mathbf X-\chi(W_i))$ and the terminal 0-cube lies in $\bigcap_{i=1}^n\chi(W_i)$.  By assumption, there is a combinatorial geodesic $\gamma:\reals\rightarrow\infty$ that contains $A_0$ and hence crosses each $W_i$.  Parameterize $\gamma$ so that $\gamma(0)$ is the terminal 0-cube of $A_0$ and $\gamma(-n)$ is the initial 0-cube of $A_0$.  Then $\gamma([0,\infty))$ contains 0-cubes arbitrarily far from $\gamma(0)$ and hence from the cube $\bigcap_iN(W_i)$.  Moreover, $\gamma([0,\infty))$ cannot cross any $W_i$, since $\gamma$ is a geodesic, and thus $\mathbf X$ is combinatorially geodesically complete.

Now suppose that $\mathbf X$ is combinatorially geodesically complete.  Let $P:[0,n]\rightarrow\infty$ be a finite combinatorial geodesic path.  Suppose first that $n=0$.  Let $c$ be a maximal cube containing $P=P(0)$.  Now, $\bigcap_{W\in\mathcal W(c)}P(0)(W)$ is a maximal finite intersection of halfspaces, and so contains 0-cubes arbitrarily far from $c$ and hence from $P(0)$, and the same holds for $\bigcap_{W\in\mathcal W(c)}(\mathbf X-P(0)(W))$.  Thus, for any $N\geq 0$, $P$ is contained in a geodesic segment $\gamma_N$ such that each of the above two intersections 0-cubes of $\gamma_N$ at distance $N$ from $P(0)$.  Applying the same argument to the paths consisting of the endpoints of $\gamma_N$ shows that $\gamma_{N+1}$ may be chosen with $\gamma_N\subsetneq\gamma_{N+1}$, whence $P$ extends to a bi-infinite combinatorial geodesic.

Suppose for some $n\geq 0$ that every geodesic segment of length at most $n$ extends to a bi-infinite geodesic.  Let $P:[0,n+1]\rightarrow\mathbf X$ be a combinatorial geodesic segment, so that $P=P'c$, where $c$ is the terminal 1-cube.  By induction, there is a combinatorial geodesic $\gamma$ containing $P'$.  Let $\alpha$ be the sub-ray of $\gamma$ whose initial 0-cube is the initial 0-cube of $c$ and which contains $P'$.  Let $\chi$ be a (finite) maximal cube containing $c$, and let $\chi$ be the terminal 0-cube of $c$.  If $\alpha$ contains $c$, then we are done.  Hence suppose that $\chi$ differs from every 0-cube of $\alpha$ on the hyperplane $W$ dual to $c$.  Since $\bigcap_{V\in\mathcal W(c)}\chi(V)$ contains 0-cubes arbitrarily far from $\chi$, there is a geodesic ray $\beta\subset\bigcap_{V\in\mathcal W(c)}\chi(V)$ whose initial 0-cube is $\chi$.  Let $\gamma'=\alpha\cup c\cup\beta$.  Any hyperplane $U$ that meets $\gamma$ in two distinct 1-cubes must cross $\alpha$ and $c\cup\beta$.  Now, each element of $\mathcal W(c)$ separates $\alpha$ from $\beta$, and so $U$ must cross each $V\in\mathcal W(C)$.  Since $\mathcal W(c)$ is a maximal family of pairwise-crossing hyperplanes, this is impossible.  This argument also shows, mutatis mutandis, that every combinatorial geodesic ray extends to a geodesic.
\end{proof}

\subsection{Products and joins}\label{sec:productsandjoins}
Theorem~II.9.24 of~\cite{BridsonHaefliger} equates the existence of a spherical join decomposition of the Tits boundary of a CAT(0) space with the existence of a nontrivial product decomposition of that space.  This generalizes a result of Schroeder on Hadamard manifolds (see Appendix~4 of~\cite{BaGS}).  Here we prove an analogous result about simplicial join decompositions of $\simp\mathbf X$.  Remark~\ref{rem:PandJ} compares Theorem~\ref{thm:productsandjoins} to the corresponding CAT(0) result.

Given simplicial complexes $A,B$, we denote by $A\star B$ their \emph{(simplicial) join}, i.e. the simplicial complex formed from $A\sqcup B$ by adding a simplex $[a_1,\ldots,a_n,b_1,\ldots,b_m]$ whenever $a_1,\ldots,a_n\in A$ and $b_1,\ldots,b_m\in B$ are pairwise-distinct 0-simplices.

\begin{thm}\label{thm:productsandjoins}
Let $\mathbf X$ be an essential CAT(0) cube complex with no infinite collection of pairwise-crossing hyperplanes.  Consider the following two statements:
\begin{compactenum}
\item There exist unbounded convex subcomplexes $\mathbf X_1$ and $\mathbf X_2$ such that $\mathbf X\cong\mathbf X_1\times\mathbf X_2$.
\item There exist disjoint, nonempty subcomplexes $A_1,A_2\subset\simp\mathbf X$ such that $\simp\mathbf X\cong A_1\star A_2$.
\end{compactenum}
Then $(1)\Rightarrow(2)$.  If, in addition, $\mathbf X$ is fully visible, then $(2)\Rightarrow(1)$.
\end{thm}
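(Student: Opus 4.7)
The plan is to treat the two implications separately, with the backward direction substantially harder.

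For $(1) \Rightarrow (2)$: if $\mathbf X = \mathbf X_1 \times \mathbf X_2$, then $\mathcal W = \mathcal W(\mathbf X_1) \sqcup \mathcal W(\mathbf X_2)$, and Proposition~\ref{prop:productchar} guarantees that every $H_1 \in \mathcal W(\mathbf X_1)$ crosses every $H_2 \in \mathcal W(\mathbf X_2)$. In particular, no hyperplane of one factor separates two hyperplanes of the other, so any UBS $\mathcal U \subset \mathcal W$ splits as $\mathcal U_1 \sqcup \mathcal U_2$ with $\mathcal U_i = \mathcal U \cap \mathcal W(\mathbf X_i)$, each summand being empty or a UBS of $\mathbf X_i$; inseparability, unidirectionality, and absence of facing triples transfer to each summand. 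Combined with the subcomplex embeddings $\simp \mathbf X_i \hookrightarrow \simp \mathbf X$ from Theorem~\ref{thm:boundarysubcomplexes}, setting $A_i = \simp \mathbf X_i$ yields $\simp \mathbf X = A_1 \star A_2$.

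For $(2) \Rightarrow (1)$, the aim is to apply Proposition~\ref{prop:productchar} by producing a partition $\mathcal W = \mathcal W_1 \sqcup \mathcal W_2$ in which every $H_1 \in \mathcal W_1$ crosses every $H_2 \in \mathcal W_2$. For each hyperplane $H$, the carrier $N(H) \cong H \times [-\tfrac{1}{2},\tfrac{1}{2}]$ is convex, so Theorem~\ref{thm:boundarysubcomplexes} yields a simplicial embedding $\simp H \hookrightarrow \simp \mathbf X = A_1 \star A_2$; the image inherits a join decomposition $\simp H = B_1(H) \star B_2(H)$ with $B_i(H) = \simp H \cap A_i$. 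I will declare $H \in \mathcal W_1$ when $B_2(H) = A_2$ and $H \in \mathcal W_2$ when $B_1(H) = A_1$; the correctness of this assignment reduces to a single key lemma: for each hyperplane $H$, exactly one of $B_1(H) = A_1$ or $B_2(H) = A_2$ holds.

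The main obstacle is this key lemma, which I intend to prove by contradiction. Suppose both $B_1(H) \subsetneq A_1$ and $B_2(H) \subsetneq A_2$, and choose $v_i \in A_i \setminus B_i(H)$. The $1$-simplex $[v_1,v_2]$ lies in $\simp \mathbf X$, so full visibility and Theorem~\ref{thm:visiblefracflat} produce an isometrically embedded quarter-flat $\mathbf Q \cong [0,\infty)^2$ whose bounding rays represent $v_1$ and $v_2$. I then analyze how $H$ meets $\mathbf Q$. If $H$ crosses $\mathbf Q$, the intersection $H \cap \mathbf Q$ is a hyperplane of $\mathbf Q$ (a coordinate ray) whose unique end is $v_1$ or $v_2$; pushing this through the convex-subcomplex inclusion $H \cap \mathbf Q \hookrightarrow H$ yields $v_i \in \simp H$, contradicting $v_i \notin B_i(H)$. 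If instead $H$ is disjoint from $\mathbf Q$, then $\mathbf Q$ sits in a single halfspace of $H$, and essentiality of $\mathbf X$ together with iterated application of Lemma~\ref{lem:samestart} and Lemma~\ref{lem:folding} allows us to modify a bounding ray of $\mathbf Q$ across $H$ without changing its almost-equivalence class, again landing $v_i$ in $\simp H$.

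Once the key lemma is established, $\mathcal W_1$ and $\mathcal W_2$ are disjoint and exhaust $\mathcal W$, and nonemptyness of each follows from the nonemptyness of $A_i$ via full visibility. To verify the crossing condition, suppose for contradiction that $H_1 \in \mathcal W_1$ and $H_2 \in \mathcal W_2$ do not cross; by the Helly property we may assume $H_2 \subset H_1^+$. The defining containments $A_2 \subseteq \simp H_1$ and $A_1 \subseteq \simp H_2$, combined with the folding lemma and visibility, then produce a geodesic ray representing a vertex of $A_1$ that lies in $N(H_2) \subset H_1^+$ yet must cross $H_1$, an absurdity. Proposition~\ref{prop:productchar} therefore yields $\mathbf X = \mathbf X_1 \times \mathbf X_2$, each factor being unbounded since $A_i$ is nonempty.
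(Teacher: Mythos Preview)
Your $(1)\Rightarrow(2)$ matches the paper. For $(2)\Rightarrow(1)$ you take a genuinely different route: the paper builds the factors directly, letting $\mathbf X_i$ be the convex hull of all rays from a fixed basepoint representing $0$-simplices of $A_i$, and then checks $\mathbf X\cong\mathbf X_1\times\mathbf X_2$ via the gate maps $\rho_i$. Your plan---partition $\mathcal W$ by the rule ``$H\in\mathcal W_1$ iff $A_2\subseteq\simp H$'' and invoke Proposition~\ref{prop:productchar}---is a legitimate alternative, and the key lemma you isolate is true. But two of your steps do not work as written.

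In Case~2 of the key lemma, ``modify a bounding ray of $\mathbf Q$ across $H$\ldots again landing $v_i$ in $\simp H$'' fails: moving the basepoint into $H^-$ via Lemma~\ref{lem:samestart} makes the new ray cross $H$ once, but a single crossing says nothing about $v_i\in\simp H$. The correct argument is different. Since each $\mathcal W(\gamma_i)$ is minimal and $v_i\notin\simp H$, only finitely many elements of $\mathcal W(\gamma_i)$ cross $H$; inseparability together with the fact that every element of $\mathcal W(\gamma_1)$ crosses every element of $\mathcal W(\gamma_2)$ then forces all but finitely many hyperplanes of $\mathcal W(\gamma_1)\cup\mathcal W(\gamma_2)$ into a single halfspace $H^+$. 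Essentiality now produces a minimal UBS $\mathcal U\subset H^-$ representing some $0$-simplex $u$, say $u\in A_1$. The join gives a $1$-simplex $[u,v_2]$, so by Theorem~\ref{thm:structureofboundarysets} cofinitely many hyperplanes in a representative of $u$ (lying in $H^-$) must cross cofinitely many in a representative of $v_2$ (lying in $H^+$)---impossible, since $H$ separates them.

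Your crossing verification has the same defect. The claim that a ray in $N(H_2)$ representing a vertex of $A_1$ ``must cross $H_1$'' is unsupported: the hypothesis $H_1\in\mathcal W_1$ says $A_2\subseteq\simp H_1$, which constrains $A_2$, not $A_1$. The fix is again essentiality: take a $0$-simplex $u$ represented by a UBS in $H_1^-$. If $u\in A_2$, this contradicts $A_2\subseteq\simp H_1$ immediately; if $u\in A_1$, then $u\in\simp H_2$ would require cofinitely many hyperplanes in $H_1^-$ to cross $H_2\subset H_1^+$, again impossible. You also omit the easy ``at most one'' half of the key lemma: if both $A_i\subseteq\simp H$, then $\simp H$ (a full subcomplex) equals $\simp\mathbf X$, contradicting the existence of a UBS in a deep halfspace of $H$.
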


\begin{proof}
\textbf{$(1)\Rightarrow(2)$:}  Let $\mathcal W_i$ be the set of hyperplanes of ${\mathbf X}_i$.  Then $\mathcal W=\mathcal W'_1\sqcup\mathcal W'_2$, where $\mathcal W'_1=\{V\times{\mathbf X}_2\,:\,V\in\mathcal W_1\}$ and, similarly, $\mathcal W'_2=\{{\mathbf X}_1\times H\,:\,H\in\mathcal W_2\},$ and for all $V\in\mathcal W'_1$ and $H\in\mathcal W'_2$, the hyperplanes $V$ and $H$ cross.  This implies that any 0-simplex of $\simp{\mathbf X}$ lies in the image of $\simp{\mathbf X}_i$ for some $i\in\{1,2\}$.  From the above description of the crossings, the claimed join structure is immediate: every 0-simplex in the image of $\simp\mathbf X_1$ is adjacent to every simplex in the image of $\simp\mathbf X_2$.  Essentiality ensures that the $A_i$ are nonempty.

\textbf{$(2)\Rightarrow(1)$:}  This is proven by loose analogy with the proof of~\cite[Proposition~II.9.25]{BridsonHaefliger}.  Fix a base 0-cube $x$.  For each 0-simplex $u\in\simp{\mathbf X}$, appealing to full visibility, choose a geodesic ray $\gamma:[0,\infty)\rightarrow\mathbf X$ such that $\gamma(0)=x$ and $\mathcal W(\gamma)$ represents the 0-simplex $u.$  For $i\in\{1,2\}$, let $\mathbb T_i$ be the graph in ${\mathbf X}^{1}$ obtained by taking the union of all rays $\gamma$ such that $\gamma(0)=x$ and $\gamma$ represents a 0-simplex of $A_1$.  Let ${\mathbf X}_i(x)=\mathbf X_i$ be the convex hull of $\mathbb T_i$.

\textbf{${\mathbf X}_i(x)$ is independent of basepoint in ${\mathbf X}_1(x)$:}  Let $x'\in{\mathbf X}_1(x)$ be a 0-cube, and construct the complex ${\mathbf X}_i(x')$ from rays $\gamma'$ based at $x'$ and representing 0-simplices of $A_i$, exactly as was done for ${\mathbf X}_i$.  Since ${\mathbf X}_1(x)$ is convex, any hyperplane $V$ that separates $x$ from $x'$ crosses ${\mathbf X}_1(x)$ and ${\mathbf X}_1(x')$.  Indeed, if $P$ is a geodesic segment joining $x$ to $x'$, then $P\subset\mathbf X_1(x)\cap\mathbf X_1(x')$, since $x,x'\in\mathbf X_1(x)\cap\mathbf X_1(x')$.

If $V$ is a hyperplane crossing $\mathbf X_1(x)$, then $V$ crosses a geodesic ray $\gamma$ emanating from $x$ and representing a 0-simplex of $A_1$.  There is a geodesic ray $\gamma'$ emanating from $x'$ that is almost-equivalent to $\gamma$.  If $V$ crosses $\gamma$ and $\gamma'$, then $V$ crosses $\mathbf X_1(x)\cap\mathbf X_1(x')$.  If $V$ separates $x$ from $x'$, then $V$ crosses $\mathbf X_1(x)\cap\mathbf X_1(x')$, as shown above.  If not, then $V$ separates $\gamma'$ from an infinite subray $\alpha$ of $\gamma$.  Hence $V$ crosses all but finitely many of the hyperplanes that cross $\gamma$ and $\gamma'$.  Choose a ray $\gamma_1$ such that $\gamma_1(0)=x'$ and $\gamma_1$ is almost-equivalent to $\gamma$, and $\gamma_1$ contains a 1-cube dual to $V$.  Hence $V$ crosses $\mathbf X_1(x)$.  Thus $\mathcal W(\mathbf X_1(x))=\mathcal W(\mathbf X_1(x'))$, since $\mathbf X_1(x)$ is the smallest convex subcomplex containing all rays representing 0-simplices in $A_1$ and emanating from $x'$.

\textbf{Verification that ${\mathbf X}_1\cap{\mathbf X}_2=\{x\}$:}  Let $V$ be a hyperplane that crosses ${\mathbf X}_1\cap{\mathbf X}_2$.  Then ${\mathbf X}_1-{\mathbf X_2}$ and ${\mathbf X_2}-{\mathbf X}_1$ lie in distinct halfspaces associated to $V$.  By essentiality, there exist minimal UBSs $\mathcal V_1,\mathcal V_2$ such that $\mathcal V_i$ consists of hyperplanes that cross ${\mathbf X}_i$ only, and $V$ separates $\mathcal V_1$ from $\mathcal V_2$.  Let $v_1,v_2$ be the 0-simplices of $A_1,A_2$, respectively, represented by $\mathcal V_1$ and $\mathcal V_2$.  Since $\mathcal V_1$ and $\mathcal V_2$ are separated by $V$, we have $d_{\simp{\mathbf X}}(v_1,v_2)>1.$  Indeed, if $v_1,v_2$ are adjacent, then they are represented by UBS $\mathcal V_1'\sqcup\mathcal V_2'$ such that each $V'\in\mathcal V_1'$ crosses all but finitely many $V''\in\mathcal V_2'$, and hence crosses $V$, a contradiction.  On the other hand, since $v_1\in A_1$ and $v_2\in A_2$ and $\simp{\mathbf X}\cong A_1\star A_2$, the simplices $v_1$ and $v_2$ are adjacent, i.e. $d_{\simp{\mathbf X}}(v_1,v_2)=1,$ a contradiction.  Hence no hyperplane crosses ${\mathbf X}_1\cap{\mathbf X}_2$, which therefore consists entirely of the 0-cube $x$.

\textbf{Defining orthogonal projections:}  Define \emph{orthogonal projections} $\rho_i:{\mathbf X}^{(0)}\rightarrow{\mathbf X}^{(0)}_i$ by letting $\rho_i(y)$ be the gate of $y$ in ${\mathbf X}_i$.  In particular, $\rho_i$ is the identity on the 0-skeleton of ${\mathbf X}_i$.

Note that if $p,q\in\mathbf X$ are adjacent 0-cubes, then one of the following situations occurs.  If $p,q\in{\mathbf X}_i$, then $\rho_i(p)=p,\,\rho_i(q)=q$ and in particular the projections are adjacent.  If $p\in{\mathbf X}_i$ and $q\not\in{\mathbf X}_i$, then $\rho_i(p)=\rho_i(q)=q$ since $q$ is at distance at least 1 from ${\mathbf X}_i$ and at distance exactly 1 from $p$.  If $p,q\not\in{\mathbf X}_i$, then either $p$ is closer to ${\mathbf X}_i$ than is $q$, in which case $\rho_i(p)=\rho_i(q)$, or they are at equal distance from ${\mathbf X}_i$, in which case $p,q$ are separated by a single hyperplane $V$ that does not separate either from ${\mathbf X}_i$, whence $V$ crosses ${\mathbf X}_i$ and is thus the unique hyperplane separating $\rho_i(p)$ from $\rho_i(q)$.  Hence we obtain a map $\rho_i:{\mathbf X}^{1}\rightarrow{\mathbf X}^{1}_i$ that sends 0-cubes to 0-cubes and sends each 1-cube $c$ isometrically to a 1-cube $\rho_i(c)$ or collapses $c$ to a 0-cube.

\textbf{Verification that ${\mathbf X}_2(x)=\rho_1^{-1}(x)$:}  Let $p\in{\mathbf X}_2$ and let $m$ be the unique median of $x,p$ and $\rho_1(p)$.  Then $m$ lies on a geodesic joining $x$ to $\rho_1(p)$, so that, by convexity of ${\mathbf X}_1$, we have $m\in{\mathbf X}_1$.  On the other hand, $m$ lies on a geodesic joining $x$ to $p$ so that, by convexity of ${\mathbf X}_2$, we have $m\in{\mathbf X}_2$.  Therefore, $m\in{\mathbf X}_1\cap{\mathbf X}_2$, whence $m=x$.  Thus $x\in{\mathbf X}_1$ lies on a geodesic joining $p$ to $\rho_1(p)$.  Since $\rho_1(p)$ is the closest 0-cube of ${\mathbf X}_1$ to $p,$ we have $\rho_1(p)=x$, so that ${\mathbf X}_2\subseteq\rho_1^{-1}(x)$.  On the other hand, since $\rho_1$ is the identity on ${\mathbf X}_1$, if $p\in\rho_1^{-1}(x),$ then $p=x$ or $p\in{\mathbf X}-{\mathbf X}_1$.  In the latter case, let $V$ be a hyperplane separating $p$ from ${\mathbf X}_1$.  Then by essentiality of ${\mathbf X}$, there exists a ray $\gamma,$ containing a 1-cube dual to $V$, such that $\gamma(0)=x$, $\gamma(t)=p$ for some $t>0$, and $\gamma$ represents a simplex $u\in A_2$, from which it follows that $x\in{\mathbf X}_2.$

\textbf{Conclusion:}  Define a map $j:{\mathbf X}^{(0)}\rightarrow{\mathbf X}_1^{(0)}\times{\mathbf X}_2^{(0)}$ by $j(p)=(\rho_1(p),\rho_2(p)),$ where $p\in\mathbf X$ is a 0-cube.  For all 0-cubes $p,q\in\mathbf X$, we have
\[d_{{\mathbf X}_1\times{\mathbf X}_2}=d_{{\mathbf X}_1}(\rho_1(p),\rho_1(q))+d_{{\mathbf X}_2}(\rho_2(p),\rho_2(q));\]
the first term on the right counts hyperplanes in $\mathbf X$ that cross ${\mathbf X}_1$ and separate $p,q$ and the second term counts hyperplanes that cross ${\mathbf X}_2$ and separate $p,q$.  Since each hyperplane in $\mathbf X$ crosses exactly one of the ${\mathbf X}_i$, we see that $j$ is an isometric embedding on 0-skeleta and thus extends to an isometric embedding $j:{\mathbf X}^{1}\rightarrow{\mathbf X}^{1}_1\times{\mathbf X}^{1}_2.$  Since a CAT(0) cube complex is determined by its 1-skeleton, it suffices to show that $j$ is surjective on 0-cubes.

Let $(p,q)\in{\mathbf X}_1\times{\mathbf X}_2$ be a 0-cube.  Let $\mathcal V_1(p)$ be the set of hyperplanes in ${\mathbf X}$ separating $p$ from $x$ and let $\mathcal V_2(q)$ be the set of hyperplanes separating $q$ from $x$.  Suppose that some $V\in\mathcal V_2(q)$ fails to cross some $H\in\mathcal V_1(p)$.  By essentiality, there are UBSs $\mathcal V\subset\mathcal V_2$ and $\mathcal H\subset\mathcal V_1$ that are separated by $V$ and $H$ and thus represent simplices $v\in A_2,h\in A_1$ that are non-adjacent in $\simp{\mathbf X}$, a contradiction.  Thus each hyperplane in $\mathcal V_1(p)$ crosses each hyperplane in $\mathcal V_2(q)$, so there is a 0-cube $p'$ such that the set of hyperplanes separating $p'$ from $x$ is precisely $\mathcal V_1(p)\cup\mathcal V_2(q)$.  But then $\rho_1(p')=p$ and $\rho_2(p')=q$, whence $j$ is surjective.
\end{proof}

\begin{rem}\label{rem:PandJ}
To deduce Theorem~\ref{thm:productsandjoins} from Theorem~II.9.24 of~\cite{BridsonHaefliger}, one would need to establish that each geodesic segment in $(\mathbf X,\mathfrak d_{\mathbf X})$ extends to a geodesic line when $\mathbf X$ is essential and fully visible and $\simp\mathbf X$ has a nontrivial join decomposition.  The example shown in Figure~\ref{fig:noextend} shows that the last fact would play an essential role in such a proof.  More seriously, one would need the CAT(1) realization of $\simp\mathbf X$ to be isometric to the Tits boundary (see Section~\ref{sec:titscompare}), but there are essential, fully visible product cube complexes for which this is not the case.
\end{rem}


\subsection{Comparison to other boundaries}\label{sec:titscompare}
Throughout this section, $\partial_{\infty}\mathbf X$ denotes the visual boundary of $(\mathbf X,\mathfrak d_{\mathbf X})$, endowed with the cone topology.  The Tits boundary $\partial_T\mathbf X$ is obtained by endowing the visual boundary with the Tits metric.  In this section, we discuss the relationship between $\simp\mathbf X$, the visual boundary, the Tits boundary, and the \emph{Roller boundary} which $\mathbf X$ possesses as a CAT(0) cube complex.

The construction of $\simp\mathbf X$ somewhat resembles that of the Roller boundary $\mathfrak R\mathbf X$ of $\mathbf X$ (of which a compact description appears in Section~2.2 of~\cite{NevoSageev}).  However, $\mathfrak R\mathbf X$ is necessarily compact, so that by Theorem~\ref{thm:boundaryproperties}, even a tree suffices to distinguish the simplicial boundary from the Roller boundary.  Likewise, $\mathbf X$ differs in general from $\partial_{\infty}\mathbf X$.

Topologically, $\partial_{\infty}\mathbf X$ is sensitive to perturbations of the CAT(0) metric, as shown in~\cite{CrokeKleiner}: Croke and Kleiner demonstrated that the visual boundary of the CAT(0) cube complex associated to the right-angled Artin group $\langle a_1,a_2,a_3,a_4\mid[a_i,a_{i+1}],1\leq i\leq 3\rangle$ varies in response to changes in the angles at the corners of the 2-cubes. By contrast, $\simp\mathbf X$ depends only on $\mathbf X^{(1)}$ -- it does not ``notice'' the CAT(0) metric -- and is thus invariant under such perturbations.

\subsubsection{Simplices and the visual boundary}
Let $\sigma:[o,\infty)\rightarrow\mathbf X$ be a CAT(0) geodesic ray, and let $\mathcal W(\sigma)$ be the set of hyperplanes $H$ with $H\cap\sigma=\emptyset$.  Then $\mathcal W(\sigma)$ is a UBS.  If the geodesic ray $\sigma'$ lies at uniform Hausdorff distance from $\sigma$, then clearly $|\mathcal W(\sigma)\cap\mathcal W(\sigma')|<\infty$, so that the simplices of $\simp\mathbf X$ represented by $\mathcal W(\sigma)$ and $\mathcal W(\sigma')$ coincide.  Denote this simplex by $w(\sigma)$.  Let $[\sigma]$ be the point of $\partial_{\infty}\mathbf X$ represented by $\sigma$ and let $\mathfrak S\mathbf X$ be the set of simplices of $\simp\mathbf X$.  The earlier observation shows that the assignment $[\sigma]\mapsto w(\sigma)$ gives a well-defined map $\mathfrak s:\partial_{\infty}\mathbf X\rightarrow\mathfrak S\mathbf X$.  The following simple lemma is crucial here and in the next section.

\begin{lem}[Representing 0-simplices with CAT(0) geodesic rays]\label{lem:CAT0represent}
Let $\gamma$ be a combinatorial geodesic ray such that $\mathcal W(\gamma)$ is minimal.  Then there exists a combinatorial geodesic ray $\gamma'$ such that $|\mathcal W(\gamma)\triangle\mathcal W(\gamma')|<\infty$, and a CAT(0) geodesic ray $\sigma$ such that $\gamma'$ is contained in the union of cubes $c$ such that $c\cap\sigma\neq\emptyset$.
\end{lem}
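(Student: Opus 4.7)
The plan is to exploit the nearly-nested structure of the minimal UBS $\mathcal W(\gamma)$ to simultaneously produce $\gamma'$ and $\sigma$. First I would apply Lemma \ref{lem:nesting} to $\mathcal W(\gamma)$, extracting a nested backbone $\{H_{n_i}\}_{i\geq 0}$ with $H_{n_i}$ separating $H_{n_{i-1}}$ from $H_{n_{i+1}}$; by minimality, $\mathcal W(\gamma)$ coincides up to a finite set with the inseparable closure of $\{H_{n_i}\}$. Orient each $H_{n_i}$ so that $\gamma(0)$ and $H_{n_{i-1}}$ lie in a common halfspace. For each $i$, let $g_i$ be the gate of $\gamma(0)$ in the convex subcomplex $N(H_{n_i})$, and let $\gamma'$ be the combinatorial geodesic ray obtained by concatenating a geodesic from $\gamma(0)$ to $g_0$ with geodesic segments from $g_i$ to $g_{i+1}$.

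To see that $\gamma'$ is a bona fide combinatorial geodesic ray and that $|\mathcal W(\gamma)\triangle\mathcal W(\gamma')|<\infty$, I would check that the set $S_i$ of hyperplanes separating $\gamma(0)$ from $g_i$ satisfies $S_0\subseteq S_1\subseteq\cdots$: any $H\in S_i$ lies on the $\gamma(0)$-side of $H_{n_i}$ (it does not cross $H_{n_i}$ and lies between $\gamma(0)$ and $N(H_{n_i})$), so $H$ is separated from $N(H_{n_{i+1}})$ by $H_{n_i}$ itself, whence $H\in S_{i+1}$. Thus $\mathcal W(\gamma')=\bigcup_i S_i$. Each $H\in S_i$ separates $\gamma(0)$ from the 0-cube of $\gamma$ where $\gamma$ first enters $N(H_{n_i})$, and so $H\in\mathcal W(\gamma)$. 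Conversely, every element of the inseparable closure of $\{H_{n_i}\}$ either equals some $H_{n_j}$ (and then lies in $S_{j+1}$) or separates some pair $H_{n_j},H_{n_k}$ (and then lies in $S_k$); by minimality of $\mathcal W(\gamma)$ this forces $\mathcal W(\gamma)\setminus\mathcal W(\gamma')$ to be finite as well.

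For $\sigma$, take $\sigma_i$ to be the CAT(0) geodesic segment from $\gamma(0)$ to $g_i$. Each $\sigma_i$ lies in the convex hull $Y_i$ of $\{\gamma(0),g_0,\ldots,g_i\}$, which by Proposition \ref{prop:recovery} is a compact convex subcomplex, since $S_i\cup\{H_{n_0},\ldots,H_{n_{i-1}}\}$ is a finite inseparable set of hyperplanes. The $Y_i$ exhaust the combinatorial convex hull of $\gamma'$, and for each fixed $j$ the initial CAT(0) sub-segments of $\sigma_i$ of length at most $\diam(Y_j)$ all live in the compact subcomplex $Y_j$; Arzel\`a--Ascoli inside $Y_j$, combined with a standard diagonal argument, produces a CAT(0) geodesic ray $\sigma$ from $\gamma(0)$ with $\sigma_i\to\sigma$ uniformly on compact sets. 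Containment then follows because any cube $c$ meeting $\gamma'$ lies in some $Y_i$, so every $\sigma_k$ with $k\geq i$ meets $c$ at CAT(0)-parameter at most $\diam(Y_i)$, and uniform convergence forces $\sigma$ to meet $c$ as well; hence $\gamma'\subset\bigcup\{c:c\cap\sigma\neq\emptyset\}$.

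The main obstacle is the CAT(0) convergence step, because $\mathbf X$ is not assumed to be locally finite and so one cannot apply Arzel\`a--Ascoli to $\mathbf X$ directly. The argument goes through only because minimality of $\mathcal W(\gamma)$ confines everything to an increasing union of compact convex hulls $Y_i$, inside which Arzel\`a--Ascoli does apply, and forces the gate sequence $(g_i)$ to escape to infinity in a single asymptotic direction; without minimality the $\sigma_i$ could spread out in angle and fail to converge, as suggested by the invisible 0-simplex of Example \ref{exmp:invisible}.
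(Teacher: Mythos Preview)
Your construction of $\gamma'$ via gates is fine, and the argument that $\mathcal W(\gamma')$ and $\mathcal W(\gamma)$ differ finitely is correct.  The gap is in the final containment step.  The assertion ``any cube $c$ meeting $\gamma'$ lies in some $Y_i$, so every $\sigma_k$ with $k\geq i$ meets $c$'' is a non sequitur: there is no reason the CAT(0) segment $\sigma_k$ from $\gamma(0)$ to $g_k$ should pass through an arbitrary cube of the interval $[\gamma(0),g_k]$.  Locally, if some $Y_i$ looks like a $3\times 3$ grid of $2$-cubes with $\gamma(0)$ and $g_i$ at opposite corners, the diagonal $\sigma_i$ misses the corner $2$-cubes entirely, while your $\gamma'$ (which is only constrained to pass through the gates, not through every intermediate lattice point) may well traverse them.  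Nothing in your setup prevents such fat intervals from occurring between consecutive gates; indeed the cube complex of Example~\ref{exmp:titsnotisometric}.(2) is built precisely so that the convex hull of a minimal ray has arbitrarily large finite bicliques.  So your fixed $\gamma'$ need not lie in the cubes meeting your limiting $\sigma$, and the proof does not close.

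The paper avoids this by reversing the order: it first works in the convex hull $\mathbf C$ of $\gamma$, observes that minimality of $\mathcal W(\gamma)$ forces every $0$-cube of $\mathbf C$ to have finite degree (the $1$-cubes at $x_o$ are dual to pairwise-crossing hyperplanes), uses this local finiteness to extract the CAT(0) ray $\sigma$ via K\"onig's lemma, and only \emph{then} chooses $\gamma'$ to be any combinatorial geodesic ray inside the union $\mathbf C'$ of closed cubes meeting $\sigma$.  With that order the containment is automatic, and the only remaining check is that $\mathbf C'\hookrightarrow\mathbf C$ is an isometric embedding so that $\gamma'$ is a geodesic ray of $\mathbf X$ with $\mathcal W(\gamma')=\mathcal W(\sigma)=\mathcal W(\gamma)$.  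Your Arzel\`a--Ascoli step is essentially the same as the paper's K\"onig step and is not the real obstacle (and note that you do not need Proposition~\ref{prop:recovery}, which assumes local finiteness of $\mathbf X$: each $Y_i$ is automatically compact because it is convex with finitely many hyperplanes, hence finitely many $0$-cubes).  To repair your argument, drop the attempt to show your particular $\gamma'$ lies near $\sigma$, and instead replace $\gamma'$ at the end by any combinatorial ray in $\bigcup\{c:c\cap\sigma\neq\emptyset\}$.
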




\begin{proof}[Proof of Lemma~\ref{lem:CAT0represent}]
Let $\mathbf C$ be the convex hull of $\gamma$, so that $\mathcal W(\mathbf C)=\mathcal W(\gamma)$.  Since $\mathbf C$ is CAT(0)-convex, it suffices to prove the claim for $\mathbf C$.  Now, $\mathcal W(\gamma)$ is the inseparable closure of a set $\{H_i\}_{i=1}^{\infty}$ of pairwise non-crossing hyperplanes such that $H_{i-1}$ and $H_{i+1}$ are separated by $H_i$ for all $i\geq 2$.  Letting $x_o=\gamma(0)$, we see that for each $i\geq 1$, there exists a 0-cube $x_i$ in $\mathbf C$ that is separated from $x_o$ by $H_j$ for $j\leq i$, but not by $H_{i+1}$.

Now, $x_o$ has finite degree.  Indeed, each 1-cube of $\mathbf C$ emanating from $x_o$ is dual to a unique hyperplane, and these hyperplanes pairwise cross, since $\mathcal W(\mathbf C)=\mathcal W(\gamma)$.  Since $\mathbf X$ contains no infinite family of pairwise-crossing hyperplanes, there are finitely many such 1-cubes.  By induction, for each $i\geq 1$, there are finitely many 0-cubes $x_i$ of the type described above.  Hence there are finitely many CAT(0) geodesic segments $\sigma_i$ that emanate from $x_o$ and cross $H_i$ but not $H_{i+1}$.  By K\"{o}nig's lemma, there is a CAT(0) geodesic $\sigma$ emanating from $x_o$ that crosses each $H_i$.  Since $\mathcal W(\gamma)$ is the inseparable closure of $\{H_i\}_i$, we have $\mathcal W(\sigma)=\mathcal W(\gamma)$.  Let $\mathbf C'$ be the union of all closed maximal cubes whose interiors have nonempty intersection with $\sigma$.  Since each hyperplane crossing $\mathbf C'$ crosses $\sigma$, the intersection of $\mathbf C'$ with each hyperplane-carrier consists of a single maximal cube and is thus connected.  Hence the inclusion $\mathbf C'\hookrightarrow\mathbf C$ is a (combinatorial) isometric embedding.  We thus choose $\gamma'$ to be any combinatorial geodesic ray in $\mathbf C'$ emanating from $x_o$.
\end{proof}

\begin{cor}\label{cor:surjectivepi}
If $\mathbf X$ is fully visible, then $\mathfrak s$ is surjective.
\end{cor}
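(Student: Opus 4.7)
The plan is to produce, for each simplex $v\subset\simp\mathbf X$, a CAT(0) geodesic ray $\sigma$ in $\mathbf X$ with $\mathfrak s([\sigma])=v$. I would handle $0$-simplices by appealing directly to Lemma~\ref{lem:CAT0represent}, and higher-dimensional simplices by combining Theorem~\ref{thm:visiblefracflat} with a Euclidean diagonal construction inside the resulting fractional flat.

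If $v$ is a $0$-simplex then full visibility supplies a combinatorial geodesic ray $\gamma$ with $\mathcal W(\gamma)$ a minimal UBS representing $v$, and Lemma~\ref{lem:CAT0represent} produces a CAT(0) geodesic ray $\sigma$ in the convex hull of $\gamma$ with $\mathcal W(\sigma)=\mathcal W(\gamma)$ (as established in the lemma's proof), so $\mathfrak s([\sigma])=v$. If $v$ is an $(n-1)$-simplex with $n\geq 2$, then every $0$-face of $v$ is visible, and Theorem~\ref{thm:visiblefracflat} supplies an isometric embedding $\iota\colon\mathbf R\hookrightarrow\mathbf X$, where $\mathbf R$ is the standard cubing of $[0,\infty)^n$. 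The proof of that theorem realises $\iota_*(\mathcal W(\mathbf R))$ as the disjoint union of $n$ minimal UBSs, one per coordinate direction, each corresponding to a $0$-face of $v$, with hyperplanes from distinct directions pairwise crossing; Theorem~\ref{thm:structureofboundarysets} then identifies this set as a UBS representing $v$.

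Granted that $\iota(\mathbf R)$ is a convex subcomplex of $\mathbf X$, its intrinsic Euclidean CAT(0) metric coincides with the restriction of $\mathfrak d_{\mathbf X}$, and the Euclidean diagonal $\sigma(t)=\iota(t/\sqrt n,\ldots,t/\sqrt n)$ is a CAT(0) geodesic ray in $\mathbf X$. By design $\sigma$ passes through the centre of each diagonal unit cube $[k,k+1]^n$ at time $t=\sqrt n(k+\tfrac12)$, simultaneously crossing all $n$ midplanes at level $k+\tfrac12$; accumulating over $k\geq 0$ yields $\mathcal W(\sigma)=\iota_*(\mathcal W(\mathbf R))$, and hence $\mathfrak s([\sigma])=v$.

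The main obstacle is verifying that $\iota(\mathbf R)$ is in fact convex, not merely isometrically embedded. Hyperplanes of $\mathbf X$ hitting $\iota(\mathbf R)$ in midplanes of distinct coordinate directions cross inside $\iota(\mathbf R)$ automatically at the corresponding midsquare, so the content is to rule out crossings in $\mathbf X$ between two hyperplanes that hit $\iota(\mathbf R)$ in parallel midplanes of a single coordinate direction; both such hyperplanes lie in one minimal UBS $\mathcal W(\gamma_i)$, and I would argue by tracing through Lemma~\ref{lem:UBSesexist} that the minimal UBSs arising from the product decomposition of $\iota(\mathbf R)$ are inseparable closures of pairwise non-crossing nested chains, with no crossings introduced by the closure in the setting at hand. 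Once convexity is secured, the rest of the argument is routine and surjectivity of $\mathfrak s$ follows.
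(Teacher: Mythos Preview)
Your treatment of $0$-simplices is correct and matches the paper. The gap is in the higher-dimensional case: the orthant $\iota(\mathbf R)$ produced by Theorem~\ref{thm:visiblefracflat} is only \emph{isometrically embedded}, and your route to convexity fails. You claim that the minimal UBSs $\mathcal W(\gamma_i)$ can be taken to have no internal crossings, but this is false in general. A minimal UBS is one that contains no proper sub-UBS up to almost-equivalence; nothing prevents two of its members from crossing. Indeed, Example~\ref{exmp:titsnotisometric}.(2) (Figure~\ref{fig:notisometric}) exhibits a $2$-dimensional cube complex in which the set of \emph{all} hyperplanes is a single minimal UBS, and there are certainly crossings. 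Taking the product of that complex with $[0,\infty)$ gives a fully visible $\mathbf X$ with a $1$-simplex $v=[v_0,v_1]$ for which any ray $\gamma_0$ representing $v_0$ has crossing dual hyperplanes; then $\gamma_0$ is not convex (a $1$-dimensional subcomplex is convex only if no two of its dual hyperplanes cross), so $\iota(\mathbf R)=\gamma_0\times\gamma_1$ is not convex, and the Euclidean diagonal need not be a CAT(0) geodesic of $\mathbf X$.

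The paper sidesteps this entirely: rather than seeking a convex orthant, it takes the rays $\gamma_0,\ldots,\gamma_n$ from Theorem~\ref{thm:visiblefracflat} (whose hyperplane sets pairwise cross) and then \emph{argues as in Lemma~\ref{lem:CAT0represent}}. That argument works inside the convex hull of the union of the rays --- which is genuinely convex by construction and has hyperplane set exactly $\bigcup_i\mathcal W(\gamma_i)$ --- and produces a CAT(0) geodesic $\sigma$ via K\"onig's lemma with $\mathcal W(\sigma)=\bigcup_i\mathcal W(\gamma_i)$. You could repair your argument by replacing $\iota(\mathbf R)$ with this convex hull, but at that point you are essentially reproducing the paper's proof.
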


\begin{proof}
Let $v$ be an $n$-simplex of $\simp\mathbf X$.  If $n=0$, then by Lemma~\ref{lem:CAT0represent}, there exists a CAT(0) geodesic ray $\sigma$ such that $\mathcal W(\sigma)$ represents $v$, so that $\mathfrak s([\sigma])=v$.  Suppose $n\geq 1$, so that $v$ is spanned by a set $v_0,\ldots,v_n$ of 0-simplices, each of which is visible.  By Theorem~\ref{thm:visiblefracflat}, there exist combinatorial geodesic rays $\gamma_0,\ldots,\gamma_n$ with common initial 0-cube such that for all $i\neq j$ and all $H\in\mathcal W(\gamma_i),H'\in\mathcal(\gamma_j)$, the hyperplanes $H$ and $H'$ cross.  Arguing as in Lemma~\ref{lem:CAT0represent} yields a CAT(0) geodesic ray $\sigma$ with $\mathcal W(\sigma)=\bigcup_i\mathcal W(\gamma_i)$, whence $\mathfrak s([\sigma])=v$.
\end{proof}

Topologize $\mathfrak S\mathbf X$ by declaring the set $\mathfrak V\subset\mathfrak S\mathbf X$ to be open if $\mathfrak s^{-1}(\mathfrak V)\subset\partial_{\infty}\mathbf X$ is open in the cone topology.  Hence $\mathfrak S\mathbf X$, with the given quotient topology, is compact.
\begin{cor}\label{cor:0dim}
If $\simp\mathbf X$ is 0-dimensional, then $\simp\mathbf X$ has a compact topology as a quotient of $\partial_{\infty}\mathbf X$.  If $\mathbf X^{(1)}$ is hyperbolic, then the quotient $\partial_{\infty}\mathbf X\rightarrow\simp\mathbf X$ is a homeomorphism.
\end{cor}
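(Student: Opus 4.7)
The plan is to treat the two assertions in sequence, with the bulk of the work going into establishing injectivity of $\mathfrak s$ under the hyperbolicity hypothesis.

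First I would handle the compactness claim. In the 0-dimensional setting, every simplex of $\simp \mathbf X$ is an isolated 0-simplex, and Theorem~\ref{thm:visiblesimplex} ensures each such simplex is visible. That is, each 0-simplex $v$ is represented by a combinatorial geodesic ray $\gamma$. Lemma~\ref{lem:CAT0represent} then produces a CAT(0) geodesic ray $\sigma$ with $\mathcal W(\sigma) = \mathcal W(\gamma)$, so $\mathfrak s([\sigma]) = v$. Hence $\mathfrak s : \partial_\infty \mathbf X \to \simp \mathbf X$ is surjective; it is continuous by the definition of the quotient topology on $\mathfrak S \mathbf X$, and since local finiteness of $\mathbf X$ makes $\partial_\infty \mathbf X$ compact in the cone topology, $\simp \mathbf X$ is compact.

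For the second assertion, Theorem~\ref{thm:boundaryproperties}(5) gives that $\simp \mathbf X$ is totally disconnected, hence 0-dimensional, when $\mathbf X^{(1)}$ is hyperbolic, so the first part already yields a continuous surjection $\mathfrak s$ from the compact Hausdorff space $\partial_\infty \mathbf X$ onto $\simp \mathbf X$. A continuous bijection from a compact space onto a Hausdorff space is automatically a homeomorphism, and Hausdorffness of $\simp \mathbf X$ is inherited via $\mathfrak s$ once $\mathfrak s$ is bijective (the equivalence relation becomes the diagonal, which is closed). Thus the proof reduces to showing that $\mathfrak s$ is injective.

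Injectivity is the main obstacle. Suppose $\sigma_1, \sigma_2$ are CAT(0) geodesic rays with $\mathfrak s([\sigma_1]) = \mathfrak s([\sigma_2])$. After truncating and reattaching initial segments in the spirit of Lemma~\ref{lem:samestart}, one may assume $\sigma_1(0) = \sigma_2(0) = x$ and $|\mathcal W(\sigma_1) \triangle \mathcal W(\sigma_2)| < \infty$. Using the subcomplex construction inside the proof of Lemma~\ref{lem:CAT0represent}, I would replace each $\sigma_i$ by a combinatorial geodesic ray $\gamma_i$ from $x$ with $\mathcal W(\gamma_i) = \mathcal W(\sigma_i)$ and $\sigma_i$ contained in the union of cubes meeting $\gamma_i$. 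Both $\mathcal W(\gamma_i)$ are minimal UBSs representing the common 0-simplex, so their intersection is cofinite in each and contains a nested chain $H_1 \supset H_2 \supset \cdots$ crossed by both $\gamma_1$ and $\gamma_2$. Hyperbolicity of $\mathbf X^{(1)}$ prevents $\crossing X$ from containing arbitrarily large complete bipartite subgraphs (as used in the proof of Theorem~7.6 of~\cite{HagenQuasiArb}), and this in turn forces the entry points of the two rays into each carrier $N(H_j)$ to lie at combinatorial distance bounded uniformly in $j$, with the bound depending only on $|\mathcal W(\gamma_1) \triangle \mathcal W(\gamma_2)|$ and the hyperbolicity constant. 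Consequently $d_{\mathbf X}(\gamma_1(t), \gamma_2(t))$ is uniformly bounded in $t$; the quasi-isometry between $d_{\mathbf X}$ and $\mathfrak d_{\mathbf X}$ transfers this bound to the CAT(0) metric, so $\sigma_1$ and $\sigma_2$ are asymptotic and $[\sigma_1] = [\sigma_2]$ in $\partial_\infty \mathbf X$.

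The hard step is the uniform bound on the entry points into $N(H_j)$: this is precisely where hyperbolicity is essential, since in a non-hyperbolic example such as a flat those entry points can diverge linearly in $j$. With injectivity established, $\mathfrak s$ is a continuous bijection from a compact space onto a Hausdorff space, hence a homeomorphism.
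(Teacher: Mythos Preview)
Your proposal is correct and follows essentially the same approach as the paper, which is even terser on the key injectivity step (the paper writes only ``it is not hard to see that $[\gamma]=[\gamma']$''; your thin-bicliques argument is exactly the intended filling, and the paper later makes this explicit when discussing bounded bending). One small misattribution: Lemma~\ref{lem:CAT0represent} produces a CAT(0) ray from a combinatorial one, not the reverse, but the direction you actually need---extracting a combinatorial geodesic ray from the cubical neighbourhood of a given CAT(0) ray---is immediate and does not affect the argument.
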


\begin{proof}
Since each simplex of $\simp\mathbf X$ is maximal, $\mathbf X$ is fully visible by Theorem~\ref{thm:visiblesimplex}, so that the above quotient topology on $\mathfrak S\mathbf X$ exists.  The identity $\mathfrak S\mathbf X\rightarrow\simp\mathbf X$ endows $\simp\mathbf X$ with the same topology.  If $\mathbf X$ is hyperbolic and the CAT(0) geodesic rays $\gamma,\gamma'$ satisfy $w(\gamma)=w(\gamma')\in\simp\mathbf X^{(0)}$, it is not hard to see that $[\gamma]=[\gamma']$, so that $\mathfrak s$ is injective.  (See Example~\ref{exmp:titsnotisometric} for a situation in which the quotient is not injective.)
\end{proof}

\subsubsection{The CAT(1) realization of $\simp\mathbf X$ and $\partial_T\mathbf X$}  For $n\geq 0$, let $\varsigma_n$ be a right-angled spherical $n$-simplex whose 1-simplices have length $\pi/2$.  Each $n$-simplex of $\simp\mathbf X$ is metrized so as to be isometric to $\varsigma_n$.  A \emph{piecewise geodesic path} $P$ in $\simp\mathbf X$ is a path for which $P\cap v$ is a geodesic of $v$, for each simplex $v$.  The length of $P$ is $|P|=\sum_{v\in\mathfrak S\mathbf X}|P\cap v|_v$, where $|Q|_v$ denotes the length of the path $Q$ in $v$ (and $|\emptyset|_v=0$).  Given $x,y\in\simp\mathbf X$, let $\ddot d(x,y)=\inf_P|P|$, where $P$ varies over paths joining $a$ to $b$.  The simplicial boundary endowed with the metric $\ddot d:\simp\mathbf X\rightarrow\reals_{\geq0}\cup\{\infty\}$ is the \emph{CAT(1) realization} of the simplicial boundary.

According to Theorem~\ref{thm:boundaryproperties}, $\simp\mathbf X$ is a finite-dimensional flag complex, so the CAT(1) realization of $\simp\mathbf X$ is indeed a CAT(1) space.  The same is true of $\partial_T\mathbf X$ and it is natural to ask when $\simp\mathbf X$ is actually an isometric triangulation of the Tits boundary.

\begin{exmp}\label{exmp:titsnotisometric}
The following examples suggest necessary conditions for the CAT(1) realization of the simplicial boundary to be isometric to the Tits boundary.
\begin{compactenum}
\item Let $\mathbf X$ be a diagonal half-flat.  In this situation, $\simp\mathbf X$ consists of a closed interval divided into three 1-simplices, so that $\diam(\simp\mathbf X,\ddot d)=\frac{3\pi}{2}$.  However, $\diam(\partial_T\mathbf X)$ takes some value in $(\frac{\pi}{2},\frac{3\pi}{2})$, depending on the constituent eighth-flats of $\mathbf X$.  This shows that the existence of a proper, essential group action is insufficient to make $\simp\mathbf X\cong\partial_T\mathbf X$.
\item The cube complex $\mathbf X$ illustrated in Figure~\ref{fig:notisometric} has $\simp\mathbf X$ a single 0-simplex since the set of all hyperplanes is a minimal UBS\footnote{The operative property of UBSs in this example is inseparability.}.  The arrowed CAT(0) geodesic rays, however, determine points at positive distance in $\partial_T\mathbf X$.
\end{compactenum}
\end{exmp}
\begin{figure}[h]
  \includegraphics[width=1.5in]{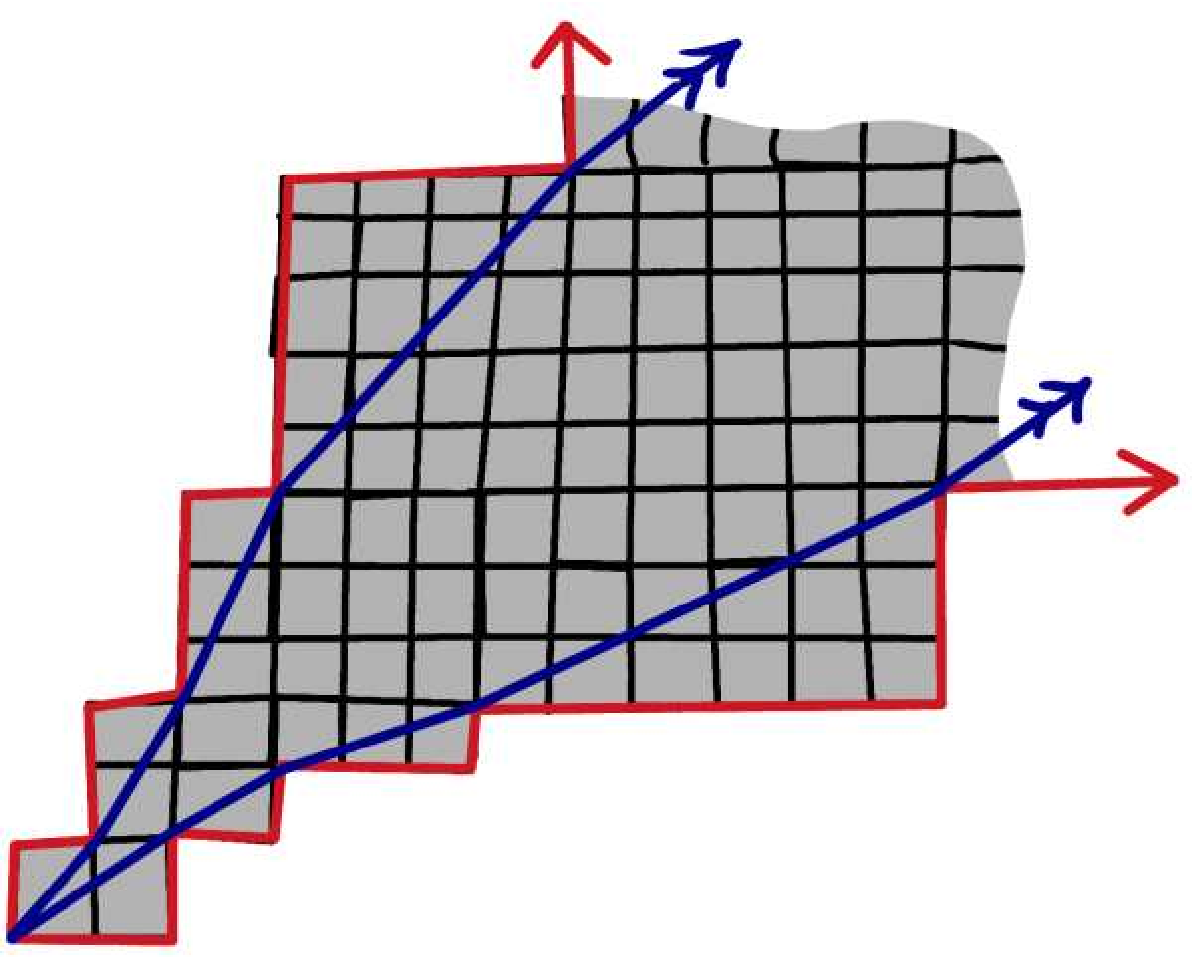}\\
  \caption{The hyperplanes in this cube complex are compact, and the set of all hyperplanes is a minimal UBS.  The single-arrowed combinatorial geodesic rays are hyperplane-equivalent, for example.  The double arrowed CAT(0) geodesic rays represent different points on the Tits boundary, however.}\label{fig:notisometric}
\end{figure}

By Theorem~\ref{thm:structureofboundarysets}, if the combinatorial geodesic ray $\gamma$ represents a positive-dimensional simplex of $\simp\mathbf X$, then $\mathcal W(\gamma)$ cannot have thin bicliques.  Example~\ref{exmp:titsnotisometric}.(2) shows that it is possible for two almost-equivalent minimal combinatorial geodesic rays to diverge, by virtue of the existence of arbitrarily large \emph{finite} bicliques in the subgraph of $\crossing X$ generated by the corresponding UBS.

\begin{defn}[Bounded bending]\label{defn:boundedbending}
The 0-simplex $v$ of $\simp\mathbf X$ has \emph{bounded bending} if for all combinatorial geodesic rays $\gamma,\gamma'$ such that $\mathcal W(\gamma)$ and $\mathcal W(\gamma')$ represent $v$, the rays $\gamma,\gamma'$ lie at finite Hausdorff distance in $\mathbf X^{(1)}$.
\end{defn}

In general, even in the presence of a proper, cocompact group action, there are 0-simplices of $\simp\mathbf X$ that do not have bounded bending; in private communication, Mike Carr brought to my attention examples of such 0-simplices when $\mathbf X$ is the universal cover of the Salvetti complex of the Croke-Kleiner group $\langle a,b,c,d\mid[a,b],[b,c],[c,d]\rangle$.  The failure of bounded bending is the obstruction to the existence of an isometry between the CAT(1) realization of $\mathbf X$ and the Tits boundary.  However, we have:

\begin{prop}\label{prop:simplicialvstits}
Let $\mathbf X$ be a fully visible CAT(0) cube complex for which $\simp\mathbf X$ is defined.  Let $(\simp\mathbf X,\ddot d)$ be the CAT(1) realization of $\simp\mathbf X$.  Then there is an isometric embedding $I:\simp\mathbf X\rightarrow\partial_T\mathbf X$ that is a section of a surjective map $R:\partial_T\mathbf X\rightarrow\simp\mathbf X$ such that:
\begin{enumerate}
 \item For each $z\in\simp\mathbf X$, the preimage $R^{-1}(z)$ has Tits diameter less than $\frac{\pi}{2}$.
 \item $I$ is $\frac{\pi}{2}$-quasi-surjective, and has quasi-inverse $R$.
 \item $I$ is an isometry if and only if every 0-simplex of $\simp\mathbf X$ has bounded bending.
 \item If $\gamma:\reals\rightarrow\mathbf X$ is an axis of an isometry $g\in\Aut(\mathbf X)$, with endpoints $p_{\pm}\in\partial_T\mathbf X$, and $\mathcal H(\gamma)$ represents a pair of 0-simplices, then $p_{\pm}\in\image(I)$.
\end{enumerate}
\end{prop}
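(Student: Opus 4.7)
The strategy is to construct the surjection $R:\partial_T\mathbf X\to\simp\mathbf X$ directly from the combinatorics of UBSs, then to define $I$ as a section using the isometrically embedded Euclidean cubical flat sectors provided by Theorem~\ref{thm:visiblefracflat}. The organizing geometric fact is that the Tits boundary of the standard cubical flat sector $[0,\infty)^{n+1}$ is canonically isometric to the right-angled spherical $n$-simplex used to metrize $\simp\mathbf X$, so that simplices of $\simp\mathbf X$ correspond to genuine round simplices in $\partial_T\mathbf X$.

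I begin with $R$. Given $p\in\partial_T\mathbf X$ represented by a CAT(0) geodesic ray $\sigma$ emanating from a $0$-cube, the set $\mathcal W(\sigma)$ is a UBS representing a simplex $v=w(\sigma)$ of some dimension $n$. Using the argument of Lemma~\ref{lem:CAT0represent} together with full visibility and Theorem~\ref{thm:visiblefracflat}, and possibly replacing $\sigma$ by an asymptotic ray, place $\sigma$ inside an isometrically embedded $\mathbf R\cong[0,\infty)^{n+1}\subset\mathbf X$ whose hyperplanes realize the minimal factors of $\mathcal W(\sigma)$. The Tits boundary $\partial_T\mathbf R$ is canonically isometric to the CAT(1) realization of $v$; the endpoint of $\sigma$ in $\partial_T\mathbf R$ transports to a point $R(p)\in v$. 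Well-definedness under the choice of $\sigma$ follows because asymptotic CAT(0) rays share the same UBS up to finite symmetric difference and share an asymptotic direction in any flat containing them; independence of the choice of $\mathbf R$ follows from the canonicity of the identification $\partial_T\mathbf R\cong v$.

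Next I define $I$. For each simplex $v\subset\simp\mathbf X$, which by Theorem~\ref{thm:visiblesimplex} lies in a visible maximal simplex, choose a flat sector $\mathbf R_v$ via Theorem~\ref{thm:visiblefracflat} and define $I|_v$ to be the canonical isometric identification $v\hookrightarrow\partial_T\mathbf R_v\hookrightarrow\partial_T\mathbf X$. Compatibility across faces is handled by restricting $\mathbf R_v$ to a flat sector realizing each face of $v$, so the choices can be made coherently through a maximal simplex containment. Since $\mathbf R_v$ is CAT(0)-isometrically embedded in $\mathbf X$, the inclusion $\partial_T\mathbf R_v\hookrightarrow\partial_T\mathbf X$ is isometric; global isometry of $I$ follows because piecewise-geodesic paths in the CAT(1) realization map to piecewise-geodesic paths of the same length in $\partial_T\mathbf X$. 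By construction, $R\circ I=\mathrm{id}$, establishing surjectivity of $R$ and the section property. For (1), if $R(p)=R(p')$ then the representing rays share their UBS and their associated direction; such rays cannot span a Tits-orthogonal sector, since orthogonality would require their UBSs to be transversal in the sense of Theorem~\ref{thm:structureofboundarysets}, yielding a uniform bound strictly less than $\pi/2$ on the Tits diameter of the fiber. Property (2) is then an immediate consequence of (1) and the section property, with quasi-inverse constant $\pi/2$. For (4), if bounded bending holds at every $0$-simplex then each fiber $R^{-1}(z)$ over a $0$-simplex is a singleton, and (1) bootstraps to give singleton fibers everywhere (since higher-dimensional fibers project to $0$-simplex fibers on vertices), so $I$ is surjective and hence an isometry; the converse is immediate. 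For (5), a combinatorial axis $\gamma$ fellow-travels a CAT(0) axis, which lies inside the flat sectors associated to the simplices represented by $\mathcal W(\gamma([0,\infty)))$ and $\mathcal W(\gamma((-\infty,0]))$, placing its endpoints in $\image(I)$.

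\textbf{Main obstacle.} The most delicate step is the strict bound $\diam_{\mathrm{Tits}}R^{-1}(z)<\pi/2$ in (1), because the examples of failure of bounded bending (such as those in the Croke-Kleiner cube complex) show that these fibers can be genuinely nontrivial, so the bound cannot be zero. The argument must convert the purely combinatorial data ``same UBS, same asymptotic direction'' into a genuine CAT(0) angular estimate, and the orthogonality threshold $\pi/2$ is blocked precisely by the inseparability and factorization structure of UBSs established in Theorem~\ref{thm:structureofboundarysets}---two rays whose combinatorial shadows coincide cannot exhibit the transversal hyperplane behavior that would be needed to span a right-angled Euclidean half-plane at infinity.
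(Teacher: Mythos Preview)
Your outline captures the right ingredients---flat orthants from Theorem~\ref{thm:visiblefracflat}, the identification of $\partial_T[0,\infty)^{n+1}$ with the right-angled spherical simplex, and the $\pi/2$ threshold---but there is a genuine gap at the step ``global isometry of $I$ follows because piecewise-geodesic paths in the CAT(1) realization map to piecewise-geodesic paths of the same length in $\partial_T\mathbf X$.'' This establishes only the inequality $d_T(I(z),I(z'))\leq\ddot d(z,z')$. The reverse inequality---that no shortcut exists in $\partial_T\mathbf X$ outside $\image(I)$---is the substantive part of the argument, and it is \emph{not} automatic. In the paper this is handled by covering $\partial_T\mathbf X$ with the open $\frac{\pi}{2}$-neighborhoods $\hat v$ of the images of maximal simplices, showing that $\hat u\cap\hat v\neq\emptyset$ forces $u\cap v\neq\emptyset$ or $u,v$ to lie in a common simplex, and then inducting along a minimal chain of such neighborhoods to pull an arbitrary Tits path back into $\image(I)$ without increasing length. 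The last step in turn requires a separate argument that $I|_{u\cup v}$ is isometric whenever $u\cap v\neq\emptyset$, using a comparison of hyperplane orderings along competing rays. None of this follows from the piecewise-geodesic observation.

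There are secondary issues as well. Your definition of $R$ requires placing an \emph{arbitrary} CAT(0) ray $\sigma$ inside a flat orthant realizing $v_\sigma$; Theorem~\ref{thm:visiblefracflat} guarantees such an orthant exists but does not say $\sigma$ lies in it, and when bounded bending fails at a 0-simplex of $v_\sigma$ there is no reason it should. (The paper sidesteps this by defining $R$ \emph{after} $I$, as a closest-point retraction onto $\image(I)$ within the relevant simplex.) Finally, your bootstrap for item~(3)---that singleton fibers over 0-simplices propagate to singleton fibers over interior points---is not justified; the paper instead argues via the parameter space $\Lambda$ of all admissible choices of $I$ on the 0-skeleton, noting that bounded bending forces $|\Lambda|=1$ and that every point of $\partial_T\mathbf X$ lies in $\image(I_\lambda)$ for some $\lambda\in\Lambda$.
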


\begin{proof}
The plan of the proof is as follows.  We first define an embedding $I$ of the 0-skeleton of $\simp\mathbf X$ in $\partial_T\mathbf X$.  In the absence of bounded bending, this involves some arbitrary choices.  We next show that for any admissible choice of $I$, we can extend $I$ in a unique way to a map $\simp\mathbf X\rightarrow\partial_T\mathbf X$ that is an isometric embedding on each simplex.  We then verify that, if $I,I'$ are maps constructed in this way, $\image(I)$ is contained in the open $\frac{\pi}{2}$-neighborhood of $\image(I')$ and vice versa.  From this, it follows that $I$ is quasi-surjective, and we also argue using this fact to show that $I$ is an isometric embedding.  At this point, it is easy to construct $R$ and verify the remaining claims.

We use the following notation: if $\gamma$ is a CAT(0) geodesic ray, then $[\gamma]$ is the corresponding point on the Tits boundary, $\mathcal W(\gamma)$ is the UBS consisting of hyperplanes crossing $\gamma$, and $v_{\gamma}$ is the simplex of $\simp\mathbf X$ represented by $\mathcal W(\gamma)$.

\textbf{Defining $I$ on the 0-skeleton:}  Let $v\in\simp\mathbf X$ be a 0-simplex.  By full visibility of $\mathbf X$, and Lemma~\ref{lem:CAT0represent}, there exists a CAT(0) geodesic ray $\gamma$ with $v_{\gamma}=v$.  Let $I(v)=[\gamma]$.  Clearly, if $I(v)=I(v')$ for some 0-simplex $v'$, then since rays that are asymptotic are hyperplane-equivalent up to a finite set of hyperplanes, $v=v'$, i.e. $I:(\simp\mathbf X)^{(0)}\rightarrow\partial_T\mathbf X$ is injective.

Now, if $v$ does not have bounded bending, then there is an arbitrary choice involved in defining $I$, since there may be a ray $\gamma'$ such that $[\gamma']\neq[\gamma]$ but $v_{\gamma'}=v$.  However, in such a case, $\partial_T([\gamma],[\gamma'])<\frac{\pi}{2}$.  Indeed, suppose to the contrary that $\partial_T([\gamma],[\gamma'])\geq\frac{\pi}{2}$.  Without loss of generality, $\gamma(0)=\gamma'(0)$.  If $\partial_T([\gamma],[\gamma'])\geq\pi$, then the concatenation of $\gamma$ and $\gamma'$ fellow-travels with a bi-infinite geodesic, so that $v_{\gamma}\neq v_{\gamma'}$, a contradiction.  Otherwise, examining triangles determined by $\gamma(t)$ and $\gamma'(t)$ shows that $|\mathcal W(\gamma)\triangle\mathcal W(\gamma')|$ is infinite if $\gamma$ and $\gamma'$ meet at an angle at least $\frac{\pi}{2}$.  Since this argument does not use minimality of $\mathcal W(\gamma)$, the same conclusion holds even if $v_{\gamma}$ and $v_{\gamma'}$ are not necessarily 0-simplices: either $\partial_T([\gamma],[\gamma'])<\frac{\pi}{2}$ or $v_{\gamma}\neq v_{\gamma'}$.  Conversely, it is easily checked that if the 0-simplex $v$ has bounded bending, then any two rays representing $v$ are asymptotic.

\textbf{Extending $I$ to $\simp\mathbf X$:}  Let $w$ be a $d$-simplex of $\simp\mathbf X$, spanned by the 0-simplices $v_0,\ldots,v_d$.  For $0\leq i\leq d$, let $\sigma_i$ be the CAT(0) geodesic ray chosen above, representing $I(v_i)$.  By induction on $d$, there exists $x\in\mathbf X$ and CAT(0) geodesic rays $\sigma'_0,\ldots,\sigma'_{d-1}$ such that each $\sigma'_i$ fellow-travels with $\sigma_i$, each $\sigma'_i$ emanates from $x$, and $\mathbf X$ contains an isometrically embedded subspace $E\cong\prod_{i=0}^{d-1}\sigma'_i$.  In the base case, when $d=0$, this is clear, since we can take $E=\sigma_0$.

Now, by removing a finite initial subpath from each $\sigma'_i$, and from $\sigma_d$, if necessary, we can assume that if $H$ is a hyperplane crossing $E$, and $V$ is a hyperplane crossing $\sigma_d$, then $V$ and $H$ cross (see the proof of Theorem~\ref{thm:visiblefracflat}).  Therefore, for each hyperplane $H$ crossing $E$, any hyperplane $W$ that separates a point of $\sigma_d$ from $H$ must separate all of $\sigma_d$ from $H$.  Hence $\sigma_d$ fellow-travels with a geodesic ray $\sigma'_d$ that emanates from $x$ and lies in a hyperplane crossing $E$.  Arguing as in the proof of Theorem~\ref{thm:visiblefracflat}, we find that the Euclidean orthant $\sigma_d\times E$ embeds isometrically in $\mathbf X$.

Hence $\mathbf X$ contains a $(d+1)$-dimensional flat orthant $F\cong\prod_{i=0}^d\sigma'_i$, where each $\sigma'_i$ is a CAT(0) geodesic ray representing $I(v_i)$ in the Tits boundary.  Each point $z\in w$ is determined by barycentric coordinates $(\alpha_j(z))_{j=0}^d$, where $\alpha_j(z)\in[0,1]$ and $\sum_{i=0}^d\alpha_j^2(z)=1$.  Let $\gamma_z$ be the geodesic ray in $F$ emanating from the origin $x$ in the direcion of the vector $(\alpha_i(z))_{i=0}^d$, and let $I(z)$ be the point of $\partial_T\mathbf X$ represented by $\gamma_z$ (which is a geodesic of $\mathbf X$ since $F$ is isometrically embedded).  This gives an isometric embedding $I:w\rightarrow\partial_T\mathbf X$ for each right-angled spherical simplex $w$ of $\simp\mathbf X$.  Note that $I(w)$ is completely determined by $\{I(v_i)\}_{i=0}^d$.

(What we have actually proved is that the cubical convex hull of $\sigma'_0,\ldots,\sigma'_d$ contains an orthant isometric to the product of those rays.)

\textbf{$I$ is quasi-surjective:}  Let $\Lambda$ be the set of all maps $\lambda:(\simp\mathbf X)^{(0)}\rightarrow\partial_T\mathbf X$ defined as in the first part of the proof.  Note that for each $\lambda\in\Lambda$, there exists a unique isometric embedding $I_{\lambda}:\simp\mathbf X\rightarrow\partial_T\mathbf X$ that extends the map $\lambda$ and is constructed in the above manner.

Now, let $[\gamma]\in\partial_T\mathbf X$.  If $v_{\gamma}$ is a 0-simplex, we can choose $\lambda\in\Lambda$ such that $\lambda(v_{\gamma})=[\gamma]$, and then extend to $I_{\lambda}$ as above.  Hence $[\gamma]\in\image(I_{\lambda})$ for some $\lambda$.

More generally, let $[\gamma]\in\partial_T\mathbf X$ and let $v_{\gamma}$ be a $d$-simplex spanned by 0-simplices $v_0,\ldots,v_d$.  For some choice of $\lambda$, the ray $\gamma$ is asymptotic to a ray in the orthant $\prod_i\lambda(v_i)$, and thus we can choose $\lambda$ so that $\image(I_{\lambda})$.

We have shown that for each $[\gamma]\in\partial_T\mathbf X$, there exists $\lambda\in\Lambda$ such that $[\gamma]\in\image(I_{\lambda})$.  On the other hand, let $z\in\simp\mathbf X$ and let $\lambda,\lambda'\in\Lambda$.  Then $I_{\lambda}(z)$ and $I_{\lambda'}(z)$ are represented by rays that are hyperplane-equivalent up to a finite set of hyperplanes; we have seen that this implies that $d_T(I_{\lambda}(x),I_{\lambda'}(x))<\frac{\pi}{2}$.  Combining these facts, fixing $\lambda\in\Lambda$ and letting $I=I_{\lambda}$, we see that for any $p\in\partial_T\mathbf X$, there exists $z\in\simp\mathbf X$ such that $d_T(p,I(z))<\frac{\pi}{2}$.  Thus $I$ is $\frac{\pi}{2}$-quasi-surjective.

\textbf{$I$ is an isometric embedding:}  Let $\mathfrak M$ be the set of all maximal simplices in $\simp\mathbf X$, and for each $v\in\mathfrak M$, let $\hat v$ be the open $\frac{\pi}{2}$-neighbourhood of $I(v)$ in $\partial_T\mathbf X$, so that $\cup_{v\in\mathfrak M}\hat v=\partial_T\mathbf X$.  Obviously, if $u\cap v\neq\emptyset$, then $\hat u\cap\hat v\neq\emptyset$.  Conversely, using full visibility, let $u=v_{\gamma_u}$ and $v=v_{\gamma_v}$ for rays $\gamma,\gamma'$.  Let $\hat u\cap\hat v$ contain a point $[\gamma]$.  Then $\mathcal W(\gamma_u)\cap\mathcal W(\gamma)$ is infinite, for otherwise $[\gamma_u],[\gamma]$ are either the endpoints of a bi-infinite \emph{combinatorial} geodesic, which is impossible if $d_T([\gamma_u],[\gamma])<\frac{\pi}{2}$.  Similarly,  $\mathcal W(\gamma_v)\cap\mathcal W(\gamma)$ is infinite.  Hence either $u\cap v\neq\emptyset$, or $u\cap v_{\gamma}$ and $v\cap v_{\gamma}$ are nonempty (or both).  If $u\cap v=\emptyset$, then $\mathcal W(\gamma_v)\cap\mathcal W(\gamma)$ and $\mathcal W(\gamma_u)\cap\mathcal W(\gamma)$ are disjoint (up to finite subsets) UBSes in the UBS $\mathcal W(\gamma)$, whence $u$ and $v$ are contained in a common simplex.  In short: if $\hat u\cap\hat v\neq\emptyset$, then either $u\cap v\neq\emptyset$, or $u$ and $v$ are contained in a common simplex.

Now, let $z,z'\in\simp\mathbf X$.  The above argument shows that $d_T(I(z),I(z'))<\infty$ if and only if $\ddot d(z,z')<\infty$, as follows.  If $\ddot d(z,z')$ is infinite, i.e. if $z,z'$ lie in distinct path-components of $\simp\mathbf X$, then $I(z),I(z')$ cannot be joined by a sequence of subsets of the form $\hat v$, by the preceding argument, and thus $\partial_T(I(z),I(z'))=\infty$; the converse is almost identical.  Hence it suffices to show that $I$ is an isometric embedding on each path-component of $\simp\mathbf X$, i.e. we can assume that $\ddot d(z,z')<\infty$.

For any $\epsilon>0$, we can choose a path $Q$ in $\partial_T\mathbf X$ such that $d_T(I(z),I(z'))<|Q|+\epsilon$.  The path $Q$ is a concatenation of paths joining points at finite Tits distance, and $|Q|$ is the sum of the lengths of these subpaths.

Since $\{\hat v:v\in\mathfrak M\}$ covers $\partial_T\mathbf X$, we can choose a shortest sequence $\hat v_0,\ldots,\hat v_n$ such that $I(z)\in I(v_0),I(z')\in I(v_n)$, and $\hat v_i\cap\hat v_{i+1}\neq\emptyset$ for $0\leq i\leq n-1$, and $Q\subset\cup_i\hat V_i$.  If $n=0$, $z,z'$ lie in the same simplex, whence $Q$ can be chosen in $\image(I)$ since $I$ is an isometric embedding on simplices.

For $n\geq 0$, by induction we can write $Q=Q'Q''$, where $Q'\subset\image(I)$ and $Q''$ joins the terminal point $z''$ of $Q'$ to $z'$, with $z''\in I(v_{n-1})$.  Now, if $v_{n-1}\cap v_n=\emptyset$, then there exists a simplex $w$ containing $v_{n-1}$ and $v_n$, contradicting minimality of $n$.  Thus $I(v_n)\cap I(v_{n-1})\neq\emptyset$, by the dichotomy established above.  We shall show momentarily that, if $u,v$ are simplices with nonempty intersection, then $I|_{u\cup v}$ is an isometric embedding.  It follows that we can choose $Q''$ in the image of $I$, and thus $I(z)$ and $I(z')$ are joined by a path in $\image(I)$ of length less than $d_T(I(z),I(z'))+\epsilon$.  This completes the proof that $I$ is an isometric embedding.

It remains to establish the supporting claim about $I_{u\cup v}$.  Suppose that $a\in u,b\in v$ are interior points, with $I(a)=[\gamma_u]$ and $I(b)=[\gamma_v]$, where $\gamma_u,\gamma_v$ are geodesic rays emanating from a common point $x$.  Since $u\cap v\neq\emptyset$, infinitely many hyperplanes cross both $\gamma_u$ and $\gamma_v$, and since each of $I(u)$ and $I(v)$ is isometrically embedded in $\partial_T\mathbf X$, we have that
\[\ddot d(a,b)\leq\inf_{c\in u\cap v}[\ddot d(a,c)+\ddot d(c,b)]=\inf_{c\in u\cap v}[d_T(I(a),I(c))+d_T(I(c),I(b))].\]

Now, if $P$ is a path in $\partial_T\mathbf X$ from $I(a)$ to $I(b)$, then $P$ contains a point of the form $I_{\lambda}(c)$ where $c\in u\cap v$ and $\lambda\in\Lambda$.  Let $\sigma$ be a ray emanating from $x$ with $I(c)=[\sigma]$ and let $\xi$ be a ray emanating from $x$ with $I_{\lambda}(c)=[\xi]$.  Let $\mathcal U=\mathcal W(\sigma)=\mathcal W(\xi)$ be the infinite subset of $\mathcal W(\gamma_a)$ and $\mathcal W(\gamma_b)$ representing the simplex $u\cap v$.  For $H,H'\in\mathcal U$, write $H<_{\sigma}H'$ if a traveler leaving $x$ along $\sigma$ must cross $H$ before $H'$, and define $<_{\gamma_a},<_{\gamma_b},<_{\xi}$ analogously.  Each of these is clearly a partial ordering of $\mathcal U$.  Now, by construction, $\gamma_a$ and $\sigma$ lie uniformly close to a common orthant, one of whose proper factors is an orthant containing $\sigma$, so that for all but finitely many $H,H'\in\mathcal U$, we have $H<_{\gamma_a}H'$ if and only if $H<_{\sigma}H'$.  The same holds with $\gamma_a$ replaced by $\gamma_b$.  On the other hand, since $\sigma$ and $\xi$ are hyperplane-equivalent, either $[\sigma]=[\xi]$ or there are infinitely many $H\in\mathcal U$ and infinitely many $H'\in\mathcal U$ such that $H<_{\sigma}H'$ but $H'<_{\xi}H$.  (This is illustrated in Figure~\ref{fig:notisometric}.)  In the latter case, this implies that $\mathfrak d(\gamma_a(t),\xi(t))-\mathfrak d(\gamma_a(t),\sigma(t))$ is increasing, and the same is true with $\gamma_a$ replaced by $\gamma_b$.  Thus $|P|\geq d_T(I(a),I(c))+d_T(I(c),I(b))$, and we are done.

\textbf{The map $R$:}  Given $[\gamma]\in\partial_T\mathbf X$, there exists a closest $z\in v_{\gamma}$ such that $d_T([\gamma],I(z))<\frac{\pi}{2}$ and $I(z)$ is represented by a ray hyperplane-equivalent to $\gamma$.  Let $R([\gamma])=z$.  For all $z\in\simp\mathbf X$, we have $R\circ I(z)=z$.  For all $[\gamma]$, $I\circ R([\gamma])$ lies in $I(v_{\gamma})$, whence $R$ is a quasi-inverse for $I$.

\textbf{The bounded bending case:}  If every 0-simplex has bounded bending, then there is a unique choice of $I(v)$ for each 0-simplex $v$.  Since $I$ is uniquely determined by its restriction to the 0-skeleton, $I$ is uniquely determined, i.e. $|\Lambda|=1$.  Since each point in $\partial_T\mathbf X$ lies in the image of $I_{\lambda}$ for some $\lambda\in\Lambda$, the map $I$ is surjective, i.e. an isometry.

Conversely, if the 0-simplex $v$ does not have bounded bending, then there exist hyperplane-equivalent geodesics $\gamma,\gamma'$ that do not fellow-travel but whose sets of dual hyperplanes represent $v$.  By construction, at most one of the corresponding points on the Tits boundary lies in $\image(I)$, which is therefore not surjective.

\textbf{Axes of isometries:}  Let $g\in\Aut(\mathbf X)$ be a hyperbolic isometry and let $\gamma$ be an axis for $g$, so that $\gamma$ determines a pair $v_-,v_+$ of $\langle g\rangle$-invariant 0-simplices of $\simp\mathbf X$.  Since $\gamma$ is periodic and represents a pair of 0-simplices, the convex hull of $\gamma$ is contained in a uniform neighborhood of $\gamma$, whence every ray that is hyperplane-equivalent to a subray of $\gamma$ must fellow-travel with $\gamma$.  Hence $I(v_{\pm})$ are uniquely determined; they are the endpoints $p_{\pm}$ of $\gamma$.
\end{proof}

A useful corollary of Proposition~\ref{prop:simplicialvstits} is that $I$ always induces a bijection from the set of components of $\simp\mathbf X$ to the set of components of $\partial_T\mathbf X$.  Also, if $\mathbf X$ is hyperbolic, then each 0-simplex has bounded bending because $\contact X$ has thin bicliques, whence $\simp\mathbf X$ and $\partial_T\mathbf X$ coincide -- both are discrete sets of equal cardinality.

It seems that the case in which every 0-simplex has bounded bending is in some sense rare, and we close with the following:

\begin{question}
Let $G$ act properly, cocomapctly, and essentially on the CAT(0) cube complex $\mathbf X$.  Under what additional conditions does every 0-simplex in $\simp\mathbf X$ have bouned bending?
\end{question}


\section{Bounded contact graphs}\label{sec:boundedcontact}
Although trees with a single point are easily classified (!), locally infinite quasi-points are considerably less so, and it is thus natural to ask for conditions on $\mathbf X$ that make the quasi-tree $\contact X$ bounded.  For the moment, we study this question without using any group action on $\mathbf X$.

\subsection{Applying the projection trichotomy}\label{sec:quick}
We begin with a simple corollary of Theorem~\ref{thm:trichotomy2}.

\begin{cor}\label{cor:boundedcontactsimple}
Let $\mathbf X$ be a CAT(0) cube complex with no infinite collection of pairwise-crossing hyperplanes.  If $\diam(\contact X)<\infty$, then for every isolated 0-simplex $v\in\simp\mathbf X$, there exists a hyperplane $H$ such that $v$ lies in the image of $\partial_{\triangle}N(H)\hookrightarrow\simp\mathbf X$.
\end{cor}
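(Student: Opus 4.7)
The plan is to produce a combinatorial geodesic ray representing $v$, confine it to a tubular neighborhood of a hyperplane via Theorem \ref{thm:trichotomy2}, and then transfer the UBS representing $v$ into $N(H)$ via the gate projection. First, since $v$ is an isolated $0$-simplex it is maximal in $\simp\mathbf X$, so Theorem \ref{thm:visiblesimplex} produces a combinatorial geodesic ray $\gamma:[0,\infty)\to\mathbf X$ with $\mathcal W(\gamma)$ representing $v$. Boundedness of $\contact X$ forces $\gproj(\gamma)$ to be bounded, so Theorem \ref{thm:trichotomy2} offers two alternatives. The eighth-flat alternative would yield a decomposition $\mathcal W(\gamma)=\mathcal H\sqcup\mathcal V$ into two UBSs---the ``horizontal'' and ``vertical'' inseparable families of the eighth-flat, each infinite, unidirectional, and without a facing triple by the structure of $\mathbf E$---such that every hyperplane of $\mathcal H$ crosses all but finitely many hyperplanes of $\mathcal V$. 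Theorem \ref{thm:structureofboundarysets} would then force $v$ to have dimension at least one, contradicting that $v$ is a $0$-simplex. Hence $\gamma\subseteq N_R(N(H))$ for some hyperplane $H$ and some $R\geq 0$.

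Next, for each $n\geq 0$ I would let $g_n$ denote the unique $0$-cube of the convex subcomplex $N(H)$ closest to $\gamma(n)$, and let $\sigma$ be the path obtained by concatenating the edges between consecutive distinct gates. Standard median-graph facts about the gate projection (also implicit in the proof of Theorem \ref{thm:trichotomy2}) give that $\sigma$ is a combinatorial geodesic in $N(H)$ and that a hyperplane separates $g_n$ from $g_m$ if and only if it crosses both $N(H)$ and the segment $\gamma([n,m])$. Crucially, $\sigma$ must be infinite: otherwise the gates stabilize to some $g\in N(H)$, whence $d_{\mathbf X}(\gamma(n),g)\leq R$ for all sufficiently large $n$, contradicting $d_{\mathbf X}(\gamma(0),\gamma(n))=n\to\infty$. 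Consequently $\mathcal W(\sigma)$ is a UBS contained in $\mathcal W(\gamma)$, each of whose elements crosses $N(H)$.

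To conclude, since $v$ is $0$-dimensional, $\mathcal W(\gamma)$ is a minimal UBS, so $\mathcal W(\sigma)\subseteq\mathcal W(\gamma)$ is almost-equivalent to $\mathcal W(\gamma)$ and therefore also represents $v$. Viewing $\sigma$ as a ray in $N(H)$, its dual hyperplanes form a UBS of $N(H)$ representing a $0$-simplex $\bar v\in\simp N(H)$ whose image in $\simp\mathbf X$ under the embedding from Theorem \ref{thm:boundarysubcomplexes} is $v$. The main subtlety I anticipate is verifying that the two inseparable families inside an eighth-flat genuinely qualify as UBSs (so that Theorem \ref{thm:structureofboundarysets} applies) and ruling out the degenerate ``gate stabilizes'' case---this is where the properness of $\gamma$, rather than any local finiteness hypothesis, does the essential work.
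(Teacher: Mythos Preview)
Your proposal is correct and follows essentially the same route as the paper. The paper's proof invokes Corollary~\ref{cor:nowayoffthetrain} to assert that a ray representing an isolated $0$-simplex is rank-one, then applies Theorem~\ref{thm:trichotomy2} to place $\gamma$ in $N_R(N(H))$, and finally observes directly that all but finitely many hyperplanes crossing $\gamma$ must cross $H$; you instead rule out the eighth-flat alternative by hand (which is really the contrapositive the paper needs) and replace the last observation with the gate projection $\sigma$, using minimality of $\mathcal W(\gamma)$ to conclude $\mathcal W(\sigma)$ is almost-equivalent to it. Regarding the subtlety you flagged: you do not actually need both $\mathcal H$ and $\mathcal V$ to be UBSs in $\mathbf X$ to derive the contradiction---the bottom bounding ray $\sigma'$ of the eighth-flat built in Theorem~\ref{thm:trichotomy2} is a geodesic ray, so $\mathcal W(\sigma')\subsetneq\mathcal W(\gamma)$ is already a UBS with infinite complement, and minimality of $\mathcal W(\gamma)$ fails immediately.
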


\begin{proof}
By Corollary~\ref{cor:nowayoffthetrain}, a combinatorial geodesic ray $\gamma$ representing an isolated 0-simplex $v$ at infinity is rank-one.  Hence, by Theorem~\ref{thm:trichotomy2}, either $\gamma$ projects to an unbounded path in $\contact X$, or $\gamma$ lies in a uniform neighborhood of a hyperplane $H$.  In the latter case, it follows that all but finitely many hyperplanes crossing $\gamma$ cross $H$.  In this case, $v$ lies in the image of $\simp N(H)$.  If $\simp\mathbf X$ contains some isolated 0-simplex $v$, then Theorem~\ref{thm:visiblesimplex} implies that $v$ is visible, and the preceding argument shows that either $\contact X$ has infinite diameter or $v$ lies in the image of the simplicial boundary of a hyperplane.
\end{proof}

\begin{exmp}[Spiral of quarter-flats]\label{exmp:spiral}
For $i\geq 1$, let ${\mathbf F}_i$ be a non-diagonal quarter-flat, bounded by rays $I_i$ and $O_i$.  Let ${\mathbf X}_1={\mathbf F}_1$ and for $i\geq 2$, construct ${\mathbf X}_i$ by attaching ${\mathbf F}_{i}$ to ${\mathbf X}_{i-1}$ by identifying the ray $I_i$ with the sub-ray $O'_{i-1}\subset O_{i-1}$ beginning at the second 0-cube.  ${\mathbf X}_i$ is strongly locally finite and fully visible.  For each $i<\infty$, the simplicial boundary ${\simp{\mathbf X}_i}$ is a subdivided line segment of length $i$, and it is easily seen that $\diam(\crossing{X}_i)=i+1.$  The \emph{spiral of quarter-flats} $\mathbf S$ obtained by gluing two copies of $\bigcup_{i\geq 1}\mathbf X_i$ along $I_0$ has an unbounded contact graph, and $\simp\mathbf S$ consists of $\reals$ together with a pair of isolated 0-simplices corresponding to a ``spiraling'' geodesic ray.  See Figure~\ref{fig:spiral}.
\end{exmp}
\begin{figure}[h]
\begin{center}
  \includegraphics[width=1.75in]{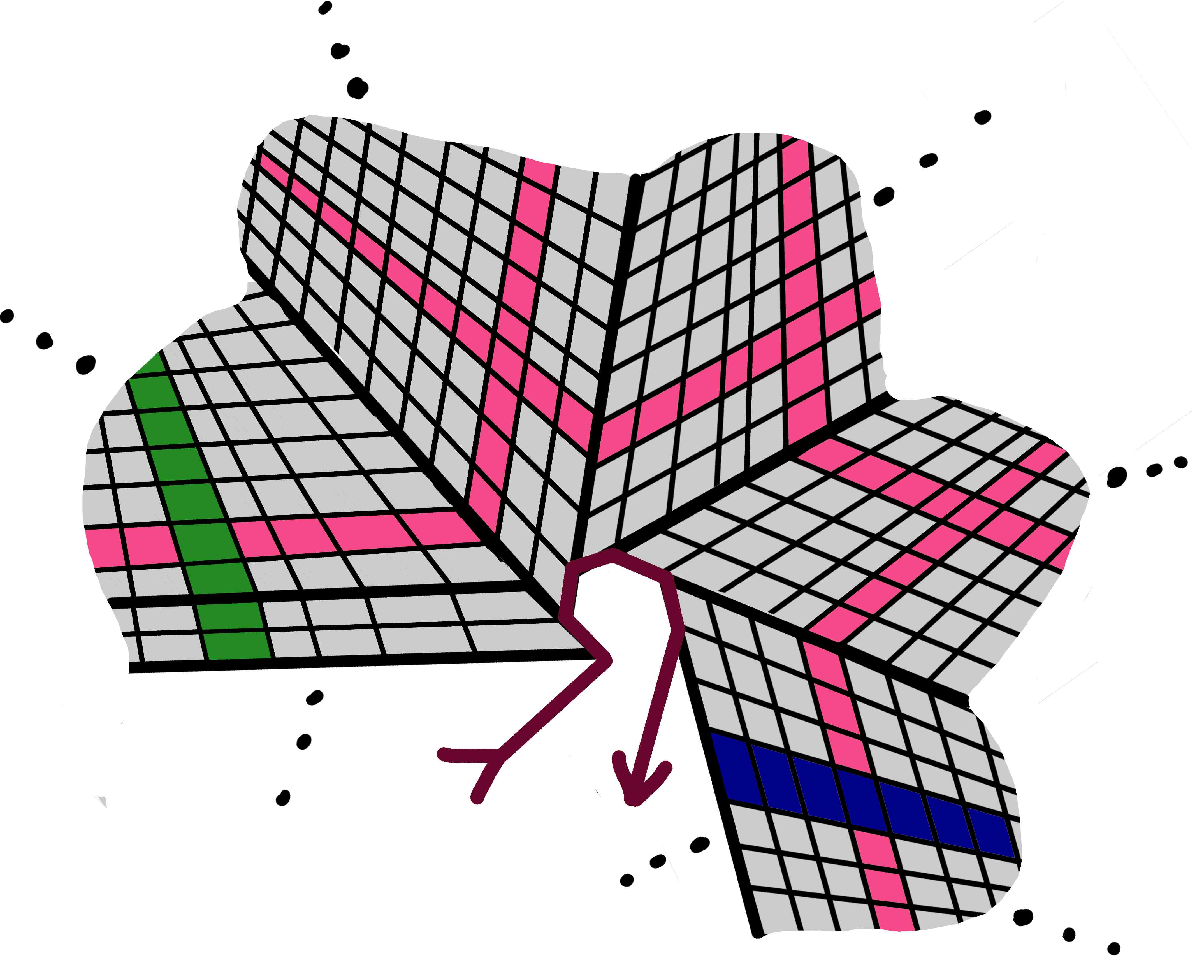}\\
  \caption{An infinite spiral of quarter-flats.  The arrowed geodesic spends finite time in each quarter-flat, and represents a pair of isolated 0-simplices in $\simp\mathbf S$.}\label{fig:spiral}
\end{center}
\end{figure}
\vspace{-1em}
\subsection{Relating $\contact X$ and $\simp\mathbf X$}\label{sec:boundedcontactfull}
The main theorem of this section gives conditions under which $\contact X$ and $\crossing X$ are bounded.

\begin{thm}\label{thm:boundedcontact}
Let $\mathbf X$ be a strongly locally finite, essential, compactly indecomposable CAT(0) cube complex with degree $D\in\naturals\cup\{\infty\}$.  Then
\[\diam(\contact X)\leq\diam(\crossing X)\leq\diam(\simp\mathbf X)\leq 2\diam(\crossing X)-2\leq 2D\diam(\contact X)-2.\]
Moreover:
\begin{compactenum}
\item If $\simp\mathbf X$ is bounded, then $\diam(\contact X)<\infty$ and $\mathbf X$ contains no rank-one geodesic ray.
\item $\mathbf X$ decomposes as an iterated pseudoproduct if and only if $\crossing X$ is bounded.
\item If $\mathbf X$ decomposes as an iterated pseudoproduct, then $\simp\mathbf X$ is bounded.
\item If $D<\infty$, then $(2)$ holds with $\crossing X$ replaced by $\contact X$.
\end{compactenum}
\end{thm}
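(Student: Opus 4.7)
The plan is to prove the chain of inequalities and then derive items~(1)--(4). The inequality $\diam(\contact X)\leq\diam(\crossing X)$ is immediate because $\crossing X$ is a spanning subgraph of $\contact X$. For $\diam(\crossing X)\leq\diam(\simp\mathbf X)$, I would assign to each hyperplane $H$ a 0-simplex $v(H)$ of $\simp\mathbf X$ represented by a minimal UBS containing $H$, using essentiality and Lemma~\ref{lem:UBSesexist} together with Theorem~\ref{thm:structureofboundarysets}. A shortest path $v(H)=v_0,\ldots,v_k=v(H')$ in $(\simp\mathbf X)^{(1)}$ then induces a $\crossing X$-path of length at most $k$, since each edge $v_{i-1}v_i$ is represented by a two-dimensional UBS $\mathcal A_i\sqcup\mathcal B_i$ in which every hyperplane of $\mathcal A_i$ crosses all but finitely many of $\mathcal B_i$, and these crossings compose into the required path after reselecting representative hyperplanes in each minimal UBS (which agree with each other up to finite symmetric difference). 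The upper bound $\diam(\simp\mathbf X)\leq 2\diam(\crossing X)-2$ is the subtlest step: from a $\crossing X$-geodesic $H=K_0\bot\cdots\bot K_n=H'$ connecting hyperplanes $H,H'$ in representative UBSes of given 0-simplices $u,v$, each crossing $K_{i-1}\bot K_i$ produces a visible 1-simplex of $\simp\mathbf X$ through the four deep quarter-spaces determined by $K_{i-1},K_i$ (using essentiality, compact indecomposability, and the visibility machinery of Theorems~\ref{thm:visiblesimplex} and~\ref{thm:visiblefracflat}); these 1-simplices concatenate, after realignment via the folding lemma (Lemma~\ref{lem:folding}), into a $(\simp\mathbf X)^{(1)}$-path of length at most $2n-2$. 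Finally, $\diam(\crossing X)\leq 2D\diam(\contact X)-2$ in the finite-degree case is obtained by replacing each osculation edge of a $\contact X$-geodesic by a short $\crossing X$-detour through a maximal pairwise-crossing family of size at most $D$ dual to 1-cubes incident to the common 0-cube.

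Given the chain, the numbered items follow as short corollaries. For~(1), $\diam(\contact X)<\infty$ is immediate from the first inequality; for the rank-one claim, Corollary~\ref{cor:nowayoffthetrain} turns a rank-one ray into an isolated 0-simplex of $\simp\mathbf X$, while essentiality and compact indecomposability together supply other 0-simplices of $\simp\mathbf X$ at distance $+\infty$ from the isolated one, contradicting boundedness. For~(2), bounded $\crossing X$ forces a join decomposition via Proposition~\ref{prop:productchar} and hence a product (or pseudoproduct) factorization of $\mathbf X$; iterating on each factor terminates in finitely many steps by strong local finiteness, producing an iterated pseudoproduct, and the converse is a short induction on the number of factors. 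Item~(3) translates an iterated pseudoproduct of $\mathbf X$, via an analogue of Theorem~\ref{thm:productsandjoins}, into an iterated simplicial-join decomposition of $\simp\mathbf X$, each factor of which has bounded 1-skeleton, so $\diam(\simp\mathbf X)$ is finite. Item~(4) combines~(2) with the final inequality in the chain.

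The principal obstacle is the factor-of-2 bound $\diam(\simp\mathbf X)\leq 2\diam(\crossing X)-2$: producing a length-two simplicial-edge path between 0-simplices containing two given crossing hyperplanes requires a delicate combination of essentiality (so that all four quarter-spaces are deep), one-endedness (so that the UBSes one produces avoid pathological invisible or split simplices), and the folding lemma (to align basepoints of rays between consecutive steps). A secondary difficulty is the inductive termination of the pseudoproduct decomposition in~(2), which depends on the definition of iterated pseudoproduct developed later in the paper and requires verifying that the iteration interacts correctly with Proposition~\ref{prop:productchar} to terminate under strong local finiteness; similar care is needed to make the osculation-to-crossing replacement in Step~4 quantitative in $D$.
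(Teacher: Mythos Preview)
Your proposal has two genuine gaps, both in places you correctly flag as delicate.

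\textbf{The approach to assertion~(2) via Proposition~\ref{prop:productchar} does not work.} Boundedness of $\crossing X$ does not force a join decomposition of $\crossing X$: an eighth-flat has $\diam(\crossing X)=3$, yet $\crossing X$ is not a join and $\mathbf X$ is not a product. Pseudoproducts are strictly weaker than products, and you cannot produce them from Proposition~\ref{prop:productchar}. The paper's construction is entirely different: fix a base hyperplane $H_0$, \emph{grade} every hyperplane by its $\crossing X$-distance to $H_0$, and let $\mathcal V_1$ be the set of hyperplanes of grade at most $R-1$ and $\mathcal V_2$ the set of grade-$R$ hyperplanes, where $R=\diam(\crossing X)$. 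The key technical lemma you are missing is a separation lemma: compact indecomposability implies that no finite subgraph of $\crossing X$ disconnects it. This is what forces each grade-$R$ hyperplane to cross infinitely many grade-$(R-1)$ hyperplanes, which is exactly condition~(4) in the definition of pseudoproduct. The iteration then proceeds by decreasing $R$, terminating at $N(H_0)$.

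\textbf{Your direct route to $\diam(\simp\mathbf X)\leq 2\diam(\crossing X)-2$ is not justified.} The assertion that a single crossing $K_{i-1}\bot K_i$ ``produces a visible 1-simplex of $\simp\mathbf X$'' is unfounded: two crossing hyperplanes yield four quarter-spaces, and essentiality makes each \emph{halfspace} deep, but it does not make the quarter-spaces deep, nor does it supply two minimal UBSs each of whose members crosses all but finitely many members of the other. A 1-simplex at infinity is a much more rigid object than a pair of crossing hyperplanes. The paper instead proves this inequality \emph{through} the pseudoproduct decomposition: by induction on $R$, using that every 0-simplex of $\simp\mathbf Q_2$ is adjacent in $\simp\mathbf X$ to a 0-simplex of $\simp\mathbf Q_1$ (this adjacency again uses the separation lemma to produce the required infinite family of common crossers), one obtains $K(R)=K(R-1)+2$ with base case $K(2)=2$.

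Your treatments of $\diam(\crossing X)\leq\diam(\simp\mathbf X)$ and of the contact--crossing comparison are close to the paper's, and your plan for~(1) is essentially correct.
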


\begin{proof}
Assertion $(1)$ follows from Lemma~\ref{lem:boundedcrossingrankone} and Lemma~\ref{lem:finalinequality}.  Assertion~$(2)$ is the content of Lemma~\ref{lem:findingpseudoproduct}.  From Lemma~\ref{lem:pseudoproductboundedboundary}, which establishes the bound $\diam(\simp\mathbf X)\leq 2\diam(\crossing X)-2,$ we get Assertion~$(3)$; the lower bounds on $\diam(\simp\mathbf X)$ come from Lemma~\ref{lem:finalinequality} and the fact that $\crossing X$ is a spanning subgraph of $\contact X$.  Finally, Lemma~\ref{lem:contactcrossing} says that, if $D<\infty$, then $\contact X$ is bounded if and only if $\crossing X$ is bounded, which yields assertion~$(4)$.
\end{proof}

\begin{rem}
Example~\ref{exmp:spiral} shows that the bounds on $\diam(\simp\mathbf X)$ in Theorem~\ref{thm:boundedcontact} are attained.  Also, if $\mathbf X$ is the standard tiling of $\reals^2$ by 2-cubes, then $\diam\crossing X=2$ and $\diam(\simp\mathbf X)=2$, since $\simp\mathbf X$ is a 4-cycle.  Looking ahead, one also sees that the combinatorial geodesic ray in $\mathbf S$ that ``spirals'', spending finite time in each quarter-flat, has quadratic divergence.

The assumption that $\mathbf X$ is strongly locally finite is needed to give access to $\simp\mathbf X$ and to guarantee that any infinite set of hyperplanes contains a UBS, which is needed to compare $\simp\mathbf X$ to $\crossing X$.
\end{rem}

The following notion is used to relate $\crossing X$ to $\simp\mathbf X$.
\begin{defn}[Pseudoproduct, iterated pseudoproduct]\label{defn:pseudoproduct}
$\mathbf X$ is the \emph{pseudo-product} of the cube complexes $\mathbf Q_1$ and $\mathbf Q_2$, denoted by $\mathbf X\cong\mathbf Q_1\psprod\mathbf Q_2$, if all of the following hold:
\begin{compactenum}
\item There is a convex embedding $a:\mathbf Q_1\rightarrow\mathbf X$, and the image of $\mathbf Q_1$ is crossed by an infinite set $\mathcal V_1$ of hyperplanes.  There is a restriction quotient $q_1:\mathbf X\rightarrow\mathbf Q_1$ such that $a$ is a section of $q_1$, and $q_1$ is obtained by restricting to the set $\mathcal V_1$ of hyperplanes.
\item There is a restriction quotient $q_2:\mathbf X\rightarrow\mathbf Q_2$ obtained by restricting to the set $\mathcal V_2=\mathcal W-\mathcal V_1$ of hyperplanes.  Moreover, $\mathcal V_2$ is infinite.
\item There is a cubical isometric embedding $e:\mathbf X\rightarrow\mathbf Q_1\times\mathbf Q_2$ for which the convex hull of $e(\mathbf X)$ is all of $\mathbf Q_1\times\mathbf Q_2$.  Also, if $\pi_1:\mathbf Q_1\times\mathbf Q_2\rightarrow\mathbf Q_1$ is the projection, then $\pi_1\circ e\circ a$ is the identity on $\mathbf Q_1$.
\item For each $H\in\mathcal V_2$, there are infinitely many $V\in\mathcal V_1$ such that $H\bot V$ in $\mathbf X.$
\end{compactenum}
$\mathbf X$ is an \emph{iterated pseudoproduct} if there exists a hyperplane $H_0\subset\mathbf X$, an integer $k\geq 0$ called the \emph{depth}, and a sequence of cube complexes $Q_1^i,Q_2^i$, for $0\leq i\leq k$, such that $\mathbf X\cong Q^0_1\psprod Q^0_2$, for all $0\leq i\leq k-2$, and $Q^i_1\cong Q^{i+1}_1\psprod Q^{i+1}_2$, and finally $Q^k_1\cong H_0,\,Q^k_2\cong[-\frac{1}{2},\frac{1}{2}]$ and $Q^{k-1}_1\cong Q_1^k\times Q^k_2$.
\end{defn}

If $\mathbf X\cong\mathbf Q_1\times\mathbf Q_2,$ then $\mathbf X\cong\mathbf Q_1\psprod\mathbf Q_2$.  However, for example, an eighth-flat is a pseudo-product of two infinite rays but is not a product of any two cube complexes since its crossing graph has diameter at least 3.

Fix a base hyperplane $H_0$.  The \emph{grade $g(H)$} of the hyperplane $H$ with respect to $H_0$ is the distance of $H$ to $H_0$ in $\crossing X$.  In other words, $H_0$ has grade 0 and $H$ has grade $n$ if $H$ \emph{crosses} a grade-$(n-1)$ hyperplane but does not cross a grade-$(n-2)$ hyperplane.

\subsubsection{Proof of Theorem~\ref{thm:boundedcontact}}
Theorem~\ref{thm:boundedcontact} is proved as the following sequence of lemmas.  Throughout, $\mathbf X$ is supposed to be strongly locally finite, leafless, and compactly indecomposable.

\begin{lem}\label{lem:boundedcrossingrankone}
If $\simp\mathbf X$ is bounded, then $\mathbf X$ contains no rank-one combinatorial geodesic ray.
\end{lem}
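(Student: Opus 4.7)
The plan is to assume that $\simp\mathbf X$ is bounded and that some combinatorial geodesic ray $\gamma:[0,\infty)\to\mathbf X$ is rank-one, and then use essentiality of $\mathbf X$ to exhibit a second 0-simplex of $\simp\mathbf X$ lying in a different component from $v_\gamma$, contradicting bounded diameter. By Corollary~\ref{cor:nowayoffthetrain}, $v_\gamma$ is an isolated 0-simplex, so its component of $\simp\mathbf X$ is the singleton $\{v_\gamma\}$; hence any other 0-simplex is automatically at distance $+\infty$ from $v_\gamma$ in $(\simp\mathbf X)^{(1)}$.

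The ``other'' 0-simplex will come from the side of $\gamma$ opposite to where it is headed. Let $H_0$ be the hyperplane dual to the initial 1-cube of $\gamma$, and let $H_0^-$ be the halfspace not containing $\gamma([1,\infty))$. Since $\mathbf X$ is essential, $H_0$ is essential, so $H_0^-$ contains an infinite nested chain of hyperplanes $\{V_i\}_{i\geq 0}$, with $V_i$ separating $V_{i-1}$ from $V_{i+1}$; this is exactly the ingredient used in the last paragraph of the proof of Lemma~\ref{lem:UBSesexist}. I claim that the inseparable closure $\mathcal V$ of $\{V_i\}$ is a UBS contained in $H_0^-$. That $\mathcal V$ is a UBS is the main body of that same proof. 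To see that it lies in $H_0^-$, orient each $V_i$ so that its ``inward'' halfspace $V_i^+$ contains $H_0$; then the ``far'' halfspace $V_i^-$ is contained in $H_0^-$, and any hyperplane separating $V_i$ from $V_j$ (for $i<j$) lies in the between-region $V_i^-\cap V_j^+\subseteq H_0^-$. By Lemma~\ref{lem:UBSesexist} again, $\mathcal V$ contains a minimal UBS $\mathcal V'\subseteq H_0^-$, which represents a 0-simplex $u\in\simp\mathbf X$.

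Finally I verify that $u$ and $v_\gamma$ cannot be joined by a 1-simplex. Every hyperplane in $\mathcal W(\gamma)$ either equals $H_0$, lies in $H_0^+$, or crosses $H_0$, so in particular none lies in $H_0^-$; hence $\mathcal W(\gamma)\cap\mathcal V'=\emptyset$, so $u\neq v_\gamma$. Moreover $H_0\in\mathcal W(\gamma)$ crosses no element of $\mathcal V'$, so by Theorem~\ref{thm:structureofboundarysets} the union $\mathcal W(\gamma)\sqcup\mathcal V'$ is not a $2$-dimensional UBS, i.e.\ $\{u,v_\gamma\}$ does not span a 1-simplex. Since $v_\gamma$ is isolated, $u$ must lie in a different component of $\simp\mathbf X$, whence $d_{(\simp\mathbf X)^{(1)}}(v_\gamma,u)=+\infty$, contradicting boundedness. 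The main obstacle is producing the UBS in $H_0^-$, since essentiality a priori only provides 0-cubes far from $H_0$; this passage is handled by the nested-chain extraction already developed in the proof of Lemma~\ref{lem:UBSesexist} (using strong local finiteness), together with the observation that an inseparable closure of a nested chain contained in a common halfspace remains in that halfspace.
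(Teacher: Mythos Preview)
Your proof is correct and follows essentially the same approach as the paper: both use Corollary~\ref{cor:nowayoffthetrain} to see that a rank-one ray represents an isolated 0-simplex, and then invoke essentiality of a suitable hyperplane to produce a second 0-simplex, forcing $\simp\mathbf X$ to be disconnected. The paper organizes this as a case split (either $\simp\mathbf X$ has no isolated 0-simplex, or it is a single 0-simplex, the latter being ruled out by essentiality), whereas you argue directly; your version is slightly more streamlined. One minor remark: the sentence invoking Theorem~\ref{thm:structureofboundarysets} to show that $\{u,v_\gamma\}$ does not span a 1-simplex is redundant, since you have already established that $v_\gamma$ is isolated.
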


\begin{proof}
If $\simp\mathbf X$ is bounded, then it is connected.  Hence either $\simp\mathbf X$ is a single 0-simplex, or $\simp\mathbf X$ contains no isolated 0-simplex.  In the latter case, $\mathbf X$ contains no rank-one ray by Corollary~\ref{cor:nowayoffthetrain}.  If $\simp\mathbf X$ is a single 0-simplex $v$, then let $\mathcal V$ be a minimal UBS representing $v$.  But there is a hyperplane $V$ that is initial in $\mathcal V$: every element of $\mathcal V-\{V\}$ lies in a single halfspace $V^+$ associated to $V$, or crosses $V$.  But by essentiality, $V^-$ contains a UBS representing a 0-simplex at infinity distinct from $v$, a contradiction.
\end{proof}

\begin{lem}\label{lem:findingpseudoproduct}
$\crossing X$ is bounded if and only if $\mathbf X$ is an iterated pseudoproduct.
\end{lem}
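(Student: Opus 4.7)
My plan is to prove the two directions of the biconditional separately, with the reverse implication being a straightforward induction and the forward direction requiring more care.

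For the reverse direction, I induct on the depth $k$ of the pseudoproduct iteration. The base case $k=0$ gives $\mathbf X \cong Q^0_1 \times Q^0_2 \cong H_0 \times [-\frac{1}{2},\frac{1}{2}] \cong N(H_0)$, so every hyperplane either equals or crosses $H_0$, yielding $\diam(\crossing X) \leq 2$. For the inductive step, write $\mathbf X \cong Q^0_1 \psprod Q^0_2$, where $Q^0_1$ is an iterated pseudoproduct of depth $k-1$ containing $H_0$. The inductive hypothesis bounds $\diam(\crossing{Q^0_1})$. By condition~$(4)$ of Definition~\ref{defn:pseudoproduct}, each hyperplane of $Q^0_2$ crosses at least one hyperplane of $Q^0_1$, and since the embedding $Q^0_1 \hookrightarrow \mathbf X$ is convex, these crossings persist in $\crossing X$. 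Hence every hyperplane of $\mathbf X$ is at $\crossing X$-distance at most $1$ from some hyperplane of $Q^0_1$, giving $\diam(\crossing X) \leq \diam(\crossing{Q^0_1}) + 2$.

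For the forward direction, fix a base hyperplane $H_0$ and let $e \leq d := \diam(\crossing X) < \infty$ be its $\crossing X$-eccentricity; I induct on $e$. The base case $e \leq 1$ gives $\mathbf X \cong N(H_0) \cong H_0 \times [-\frac{1}{2},\frac{1}{2}]$, an iterated pseudoproduct of depth $0$. For the inductive step, grade each hyperplane by its $\crossing X$-distance from $H_0$, write $\mathcal G_i$ for the grade-$i$ hyperplanes, and set $\mathcal V^{(2)} := \mathcal G_e$ and $\mathcal V^{(1)} := \mathcal W - \mathcal V^{(2)}$. Let $\mathbf Q_j := \mathbf X(\mathcal V^{(j)})$ be the associated restriction quotients. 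I aim to show $\mathbf X \cong \mathbf Q_1 \psprod \mathbf Q_2$; once this is established, since any $\crossing X$-geodesic from $H_0$ to a grade-$(e{-}1)$ hyperplane has all intermediate vertices of grade strictly smaller than $e-1$ (a consequence of the triangle inequality on $\crossing X$), the geodesic lies wholly in $\mathcal V^{(1)}$, and thus $H_0$'s eccentricity in $\crossing{Q_1}$ is at most $e-1$. The inductive hypothesis applied to $\mathbf Q_1$ then completes the iterated pseudoproduct structure.

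Of the four conditions of Definition~\ref{defn:pseudoproduct}, condition~$(2)$ is immediate, with infinitude of $\mathcal V^{(2)}=\mathcal G_e$ following from essentiality combined with one-endedness of $\mathbf X$ (from compact indecomposability plus local finiteness). Condition~$(3)$ follows from the disjoint partition $\mathcal W = \mathcal V^{(1)} \sqcup \mathcal V^{(2)}$: distances in $\mathbf X$ split as $\ell^1$-sums of $\mathbf Q_1$- and $\mathbf Q_2$-distances, making $x \mapsto (q_1(x), q_2(x))$ a cubical isometric embedding, and essentiality forces every hyperplane of $\mathbf Q_1 \times \mathbf Q_2$ to meet the image on both sides, so its convex hull fills the product. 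For condition~$(1)$ I must first show that $\mathcal V^{(1)}$ is inseparable: if a grade-$e$ hyperplane $V$ separated two hyperplanes $W_1,W_2$ of strictly lower grade, then a $\crossing X$-geodesic from $H_0$ to $W_1$ could be rerouted through $W_2$ to produce a strictly shorter path, contradicting minimality; here one uses the absence of an infinite family of pairwise-crossing hyperplanes to control grade propagation. Given inseparability, a convex subcomplex $\mathbf Y \subseteq \mathbf X$ with $\mathcal W(\mathbf Y) = \mathcal V^{(1)}$ is obtained by cubulating the wallspace $(\mathbf X^0, \mathcal V^{(1)})$, invoking an infinite analogue of Proposition~\ref{prop:recovery} in the presence of strong local finiteness.

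The principal obstacle is condition~$(4)$: each $H \in \mathcal G_e$ must cross infinitely many hyperplanes in $\mathcal V^{(1)}$. At least one such crossing is automatic from the grading. To upgrade to infinitely many, I combine essentiality of $H$ (so both halfspaces contain hyperplanes arbitrarily far from $H$) with one-endedness of $\mathbf X$: if $H$ crossed only finitely many hyperplanes altogether, a sufficiently large compact convex neighborhood of those crossings together with $N(H)$ could be shown to disconnect $\mathbf X$, contradicting compact indecomposability. To further ensure that all but finitely many of these crossings occur with hyperplanes in $\mathcal V^{(1)}$, I use that a neighbor of $H$ in $\crossing X$ has grade at most $e$ (no hyperplane has grade $> e$) and that only finitely many grade-$e$ hyperplanes can all cross $H$ simultaneously without forcing either an infinite family of pairwise-crossing hyperplanes or a violation of essentiality of some hyperplane of $\mathcal V^{(1)}$ lying between them. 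This interplay of essentiality, one-endedness, strong local finiteness, and the grade-$e$ partition is the delicate heart of the proof, and also where the inheritance of the hypotheses (essentiality and compact indecomposability) by $\mathbf Q_1$, needed for the induction to close, must be verified.
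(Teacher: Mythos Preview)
Your overall architecture matches the paper's: grade from a base hyperplane $H_0$, peel off the top grade as $\mathcal V^{(2)}$, and iterate. The reverse direction is fine and essentially identical to the paper's.

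The real gap is your treatment of condition~$(4)$. You want each grade-$e$ hyperplane $H$ to cross infinitely many hyperplanes of strictly lower grade, and you try to get this by arguing that $H$ crosses infinitely many hyperplanes in total and that ``only finitely many grade-$e$ hyperplanes can all cross $H$ simultaneously.'' The second claim is neither proved nor true in general, and in any case it is not what is needed: condition~$(4)$ does not forbid $H$ from crossing infinitely many grade-$e$ hyperplanes, it only demands infinitely many crossings with $\mathcal V^{(1)}$. The paper's mechanism is quite different and is the idea you are missing. By essentiality one finds a nested chain $U_0,U_1,\ldots$ of grade-$e$ hyperplanes that $H$ separates from $H_0$; each $U_i$ crosses some grade-$(e{-}1)$ hyperplane $W_i$, and any such $W_i$ must cross $H$ (since $H$ separates $U_i$ from $H_0$). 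The set $\{H\}\cup\{W_i\}$ then separates $\crossing X$ into at least two components (one containing $H_0$, one containing the $U_i$), and the key input is Lemma~\ref{lem:separation}: under compact indecomposability, no \emph{finite} subgraph of $\crossing X$ disconnects it. Hence there are infinitely many such $W_i$, which is exactly condition~$(4)$.

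Two smaller points. Your inseparability argument for $\mathcal V^{(1)}$ is garbled: there is no ``rerouting through $W_2$.'' The correct argument is that if $V$ separates $W_1,W_2$ of grade $\leq e-1$, then (assuming $H_0$ does not cross $V$, else $g(V)=1$) one of $W_1,W_2$ is separated from $H_0$ by $V$, so a $\crossing X$-geodesic of length $\leq e-1$ from $H_0$ to it contains an interior vertex crossing $V$, forcing $g(V)\leq e-1$. Finally, the inheritance problem you flag (does $\mathbf Q_1$ remain essential and compactly indecomposable?) is real for your phrasing of the induction, but the paper avoids it entirely: it never leaves $\mathbf X$. All steps of the iteration use the fixed grading of $\mathcal W$ from $H_0$, and essentiality and Lemma~\ref{lem:separation} are invoked only for $\mathbf X$ itself.
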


\begin{proof}
Choose $R\leq\diam\crossing X<\infty$ and a hyperplane $H_0$ such that, for all $H\in\mathcal W$, the grade $g(H)$ of $H$ in $\crossing X$, with respect to $H_0,$ is at most $R$.  By essentiality, we can assume $R\geq 2$, for otherwise $H_0$ crosses each $H\neq H_0$, and thus $H_0$ is not essential.  Moreover, choose $R$ so that there exists $H$ with $g(H)=R.$

Let $\mathcal V_1$ be the set of hyperplanes $H$ with $0\leq g(H)\leq R-1$, and let $\mathcal V_2$ be the set of grade-$R$ hyperplanes.  Since $\mathbf X$ is compactly indecomposable, $N(H_0)$ cannot be compact, and thus there are infinitely many hyperplanes crossing $H_0$.  Hence $\mathcal V_1$ is infinite.

On the other hand, let $H_R$ be a grade-$R$ hyperplane.  Then since $R\geq 2$, the hyperplanes $H_R$ and $H_0$ do not cross.  Since $H_R$ is essential, there exists a hyperplane $H'$ such that $H_R$ separates $H'$ from $H_0$.  Any path in $\crossing X$ from $H'$ to $H_0$ contains $H_R$ or some hyperplane crossing $H_R$, and thus $H'$ has grade at least $R$, whence, since every hyperplane has grade at most $R$, we have $H'\in\mathcal V_2$.  Now $H'$ is essential, and thus separates some $H''$ from $H_0$.  Thus $H''$ has grade $R$, by the same argument.  Hence $\mathcal V_2$ is infinite.

\textbf{The quotients $\mathbf X\rightarrow\mathbf Q_i$:}  For $i\in\{1,2\}$, let $\mathbf Q_i$ be the cube complex dual to the wallspace $(\mathbf X^0,\mathcal V_i)$ whose walls are those induced by the hyperplanes in $\mathcal V_i$.  This gives a cubical quotient $q_i:\mathbf X\rightarrow\mathbf Q_i$; this is just a restriction quotient that collapses the hyperplanes in $\mathcal W-\mathcal V_i$.

Let $\mathbb V_i$ be the set of hyperplanes in $\mathbf Q_i$, so that there is a natural bijection $\mathcal V_i\rightarrow\mathbb V_i$ given by $V\mapsto q_i(V)$.  Note that, if $V,V'\in\mathcal V_i$ are hyperplanes, then $V\bot V'$ if and only if $q_i(V)\bot q_i(V')$.  Moreover, if $V,V'$ osculate, then $q_i(V)$ and $q_i(V')$ osculate.  For $V,V'\in\mathcal V_1$, the converse is true.  If no hyperplane separates $q_1(V)$ from $q_1(V')$, then any hyperplane $U$ separating $V,V'$ must belong to $\mathcal V_2$.  But if $g(V),g(V')\leq R-1$ and $U$ separates $V,V'$, then $g(U)\leq R-1$, a contradiction.  In other words, $\mathcal V_1$ is inseparable, and hence $V,V'\in\mathcal V_1$ contact if and only if their images in $\mathbf Q_1$ do.  However, there could exist non-contacting $V,V'\in\mathcal V_2$ such that $q_2(V)$ osculates with $q_2(V')$, if every hyperplane separating $V,V'$ has grade at most $R-1$.

\textbf{The embedding $e:\mathbf X\rightarrow\mathbf Q_1\times\mathbf Q_2$:}  Fix a base 0-cube $x_o\in\mathbf X$.  Recall that this is a section $x_o:\mathcal W\rightarrow\mathcal W^{\pm}$ of the map $\pi:\mathcal W^{\pm}\rightarrow\mathcal W$ that associates each halfspace of $\mathbf X$ to the corresponding hyperplane.  Moreover, $x_o$ is automatically consistent and canonical.  Fix a base 0-cube $(y^1_o,y_o^2)\in\mathbf Q_1\times\mathbf Q_2$.  Then $y^1_o$ is a consistent, canonical orientation of the hyperplanes $q_1(V)\in\mathbb V_1$ and $y_o^2$ a consistent canonical orientation of the hyperplanes in $\mathbb V_2$.  Define $y_o^i(q_i(V))=q_i(x_o(V))$, i.e. the image in $\mathbf Q_i$ of the halfspace associated to $V$ that contains $x_o$.  This is the halfspace of $\mathbf Q_i$ containing $q_i(x_o)$.  In other words, let $y_o^i=q_i(x_o)$.

We now define a map $\mathbf X^0\ni x\mapsto\psi(x)=(\psi^1(x),\psi^2(x))\in\mathbf Q_1\times\mathbf Q_2$ as follows.  Let $\psi^i(x)(q_i(V))=q_i(x(V))$.  Then, since $x$ is consistent, for all $V,V'\in\mathcal V_i$, we have $x(V)\cap x(V')\neq\emptyset$ and hence $q_i(x(V))\cap q_i(x(V'))\supseteq q_i(x(V)\cap x'(V))\neq\emptyset$.  Thus $\psi^i(x)$ is a consistent orientation of the hyperplanes of $\mathbf Q_i$.  Since $x$ differs from $x_o$ on finitely many hyperplanes in $\mathcal V_i$, and $\psi^i(x)(q_i(V))\neq\psi^i(x_o)(q_i(V))$ only if $x(V)\neq x_o(V)$, the 0-cube $\psi^i(x)$ is canonical. Hence $\psi(x)$ is a genuine 0-cube of $\mathbf Q_1\times\mathbf Q_2$.

Now $\psi^i(x)(q_i(V))$ differs from $\psi^i(y)(q_i(V))$ when $q_i(x(V))$ and $q_i(y(V))$ are different halfspaces associated to $q_i(V)$.  This happens if and only if $x(V)\neq y(V)$, so that $d_{\mathbf Q_1\times\mathbf Q_2}(\psi(x),\psi(y))$ counts the hyperplanes in $\mathcal V_1\sqcup\mathcal V_2=\mathcal W$ on which $x,y$ differ.  Thus $\psi:\mathbf X^0\rightarrow(\mathbf Q_1\times\mathbf Q_2)^0$ is an isometric embedding, and hence the map $\psi$ extends to an isometric embedding $e:\mathbf X\rightarrow\mathbf Q_1\times\mathbf Q_2$, since each median graph uniquely determines a CAT(0) cube complex.

\textbf{The embedding $a:\mathbf Q_1\rightarrow\mathbf X$:}  Choose a base 0-cube $x_o\in N(H_0)$.  Let $\mathbf Y^0$ be the set of 0-cubes $y$ in $\mathbf X$ such that every hyperplane separating $y$ from $x_o$ belongs to $\mathcal V_1$.  Let $\mathbf Y\subset\mathbf X$ be the convex hull of $\mathbf Y^0$.  Each 0-cube $y\in\mathbf Y$, viewed as a section of $\pi$, has the property that $y(W)=x_o(W)$ for $W\in\mathcal V_2$, since $W$ cannot separate $y$ from $x_o$.  In other words, $\mathbf Y$ is isomorphic to the restriction quotient obtained by restricting each 0-cube of $\mathbf X$ to $\mathcal V_1$!  More precisely, for each $q(x)\in\mathbf Q_1$, define $a(q(x)):\mathcal W\rightarrow\mathcal W^{\pm}$ by $a(q(x))(W)=x(W)$ for $W\in\mathcal V_1$ and, for all $W\in\mathcal V_2$, let $a(q(x))(W)$ be the halfspace containing $H_0$ (and thus $x_o$).  This is well-defined since no $W\in\mathcal W_2$ crosses $H_0.$  This orientation is consistent, since $x$ is consistent on $\mathcal W_1$.  If $a(q(x))$ orients $W\in\mathcal W_1$ inconsistently with $V\in\mathcal W_2$, then $V$ separates $W$ from $H_0,$ but we have seen this to be impossible.  Finally, $a(q(x))$ agrees with $x_o$ on all but finitely many hyperplanes, namely those hyperplanes in $\mathcal V_1$ on which $x,x_o$ disagree.  Hence $a(q(x))$ is canonical and belongs to $\mathbf Y$, and, if $x\in\mathbf Y$, then $a(q(x))=x$.  In the usual way, one checks that $q(x)\mapsto a(q(x))$ determines an isometric embedding $a:\mathbf Q_1\rightarrow\mathbf X$.  (In general, there is no isometric embedding $\mathbf Q_2\rightarrow\mathbf X$.)

\textbf{The convex hull of $e(\mathbf X)$:}  Let $\mathbf Z\subseteq\mathbf Q_1\times\mathbf Q_2$ be the convex hull of $e(\mathbf X)$.  Every hyperplane of $\mathbf Q_1\times\mathbf Q_2$ is of the form $q_1(V)\times\mathbf Q_2$ or $\mathbf Q_1\times q_2(H)$, where $V\in\mathcal V_1$ and $H\in\mathcal V_2$, and each hyperplane of the former type crosses each hyperplane of the latter type.  It follows that every hyperplane of $\mathbf Q_1\times\mathbf Q_2$ crosses $e(\mathbf X)$, whence $\mathbf Z=\mathbf Q_1\times\mathbf Q_2$.  Indeed, $\mathbf Z$ is either all of $\mathbf Q_1\times\mathbf Q_2$, or $\mathbf Z$ is the intersection of all halfspaces that contain $e(\mathbf X)$.  The latter is impossible since every hyperplane crosses $e(\mathbf X)$.

\textbf{The 2-cubes:}  Let $H\in\mathcal V_2$ be a grade-$R$ hyperplane.  By essentiality, there exists an infinite set $U_0,U_1,\ldots$ of grade-$R$ hyperplanes such that $H$ separates $U_i$ from $H_0$, for all $i\geq 0$, and for all $i\geq 1$, the hyperplanes $U_{i+1}$ and $U_{i-1}$ are separated by $U_i$.  Every $U_i$ crosses a set $\{W_i^j\}_j$ of grade-$R-1$ hyperplanes, each of which must cross $H$.  Let $\Omega$ be the full subgraph of $\crossing X$ generated by $\{W_i^j\}\cup\{H\}$.  Then every path in $\crossing X$ from $U_i$ to $H_0$ passes through $H$ or through one of the $W_i^j$, since $H$ separates $U_i$ from $H_0$.  Thus $\crossing X-\Omega$ has at least two nonempty components, namely the component containing $U_0$ and the component containing $H_0$.  Hence, by Lemma~\ref{lem:separation} and compact indecomposability, $\Omega$ is infinite, whence infinitely many grade-$(R-1)$ hyperplanes cross $H$ in $\mathbf X$.  We conclude that $\mathbf X\cong\mathbf Q_1\psprod\mathbf Q_2$.

\textbf{Induction:}  $\mathbf Q_1$ has infinitely many hyperplanes and $\crossing Q_1$ has finite diameter -- grading from $H_0$, every hyperplane has grade at most $R-1$.  If $R=2$, then $Q_1\cong H_0\times[-\frac{1}{2},\frac{1}{2}]$.  Otherwise, by essentiality, the set $\mathcal V_2^1$ of grade $(R-1)$ hyperplanes is infinite, and, letting $\mathcal V_1^1$ be the (infinite) set of hyperplanes of grade at most $R-2$, we proceed as before, to find that $\mathbf Q_1\cong\mathbf Q_1^1\psprod\mathbf Q_2^1.$
Continuing in this way, since the highest grade decreases at each step, we find that as long as $R>2$, we can continue the pseudo-product decompositions, using the above argument.  This terminates with $Q^{R-1}_1\cong H_0\times[-\frac{1}{2},\frac{1}{2}]\cong N(H_0)$.  Hence $\mathbf X$ is an iterated pseudoproduct.

\textbf{Bounding $\diam(\crossing X)$:}  Conversely, suppose that $\mathbf X$ is an iterated pseudoproduct, and let $k$ be the constant from Definition~\ref{defn:pseudoproduct}.  Then, for each hyperplane $H\neq H_0$ of $\mathbf X$, either there exists a maximal $m\leq k-1$ such that $H$ survives in the restriction quotient $\mathbf X\rightarrow\mathbf Q_1^0\rightarrow\mathbf Q_1^1\rightarrow\ldots\rightarrow\mathbf Q_1^m$, or $H$ crosses a hyperplane crossing $\mathbf Q_1^0$.  In the latter case, $Q^{m}_1\cong Q^{m+1}_1\psprod Q_{m+1}^2$, so that, by the definition of a pseudo-product, $H$ crosses a hyperplane that crosses $Q^{m+1}_1$.  Hence $H$ is at distance at most $k+1$ from $H_0$ in $\crossing X$.
\end{proof}

\begin{lem}\label{lem:pseudoproductboundedboundary}
If $\mathbf X$ is an iterated pseudoproduct, then $\diam(\simp\mathbf X)\leq2(\diam(\crossing X)-1).$
\end{lem}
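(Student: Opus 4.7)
The plan is to induct on $R = \diam(\crossing X)$, which is finite by Lemma~\ref{lem:findingpseudoproduct}. By that lemma, I may fix a base hyperplane $H_0$ such that every hyperplane of $\mathbf X$ has grade at most $R$ with respect to $H_0$; set $\mathcal V_2$ to be the set of grade-$R$ hyperplanes and $\mathcal V_1 = \mathcal W - \mathcal V_2$. Then $\mathbf X \cong Q_1 \psprod Q_2$, where $Q_i$ is the restriction quotient to $\mathcal V_i$, and $Q_1$ is itself an iterated pseudoproduct with $\diam(\crossing Q_1) \leq R - 1$. The inductive hypothesis yields $\diam(\simp Q_1) \leq 2(R - 2)$.

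The first key step is to show that every 0-simplex $v$ of $\simp\mathbf X$ has a minimal representative almost contained in either $\mathcal V_1$ or $\mathcal V_2$. Were both $\mathcal U \cap \mathcal V_1$ and $\mathcal U \cap \mathcal V_2$ infinite for some minimal representative $\mathcal U$, each intersection would inherit unidirectionality and the facing-triple-free property from $\mathcal U$, and inherit inseparability from the inseparability of $\mathcal V_1$ (and, analogously, $\mathcal V_2$) verified during the proof of Lemma~\ref{lem:findingpseudoproduct}, producing a proper UBS inside $\mathcal U$ and contradicting minimality. This partitions $(\simp\mathbf X)^{(0)} = A_1 \sqcup A_2$ according to where the representative sits. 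The second step shows that every $v_2 \in A_2$ is adjacent in $\simp \mathbf X$ to some $v_1 \in A_1$: given $\mathcal U \subseteq \mathcal V_2$ representing $v_2$, the pseudoproduct property supplies, for each $H \in \mathcal U$, infinitely many hyperplanes in $\mathcal V_1$ crossing $H$. Combining this with compact indecomposability as in the ``2-cubes'' step of the proof of Lemma~\ref{lem:findingpseudoproduct} produces an inseparable, unidirectional, facing-triple-free set $\mathcal C \subseteq \mathcal V_1$ whose elements each cross all but finitely many hyperplanes of $\mathcal U$. Lemma~\ref{lem:UBSesexist} then gives a minimal sub-UBS $\mathcal C' \subseteq \mathcal C$ representing some $v_1 \in A_1$, and $\mathcal U \cup \mathcal C'$ is a UBS representing a 1-simplex joining $v_1$ and $v_2$.

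Third, I will identify $A_1$ with the 0-skeleton of $\simp Q_1$. Since $\mathcal V_1$ is inseparable in $\mathbf X$, any UBS inside $\mathcal V_1$ has inseparable image under $q_1$, and $\mathcal U \mapsto q_1(\mathcal U)$ preserves minimality and almost-equivalence, inducing a simplicial embedding $\iota : \simp Q_1 \hookrightarrow \simp \mathbf X$ with image containing $A_1$. Consequently, any two 0-simplices of $A_1$ lying in a common component of $\simp Q_1$ are at distance at most $2(R-2)$ in $\simp \mathbf X$. Putting the pieces together, for any $v, v' \in (\simp \mathbf X)^{(0)}$ I reduce to $u, u' \in A_1$ via at most one edge each (using Step~2 when $v$ or $v'$ lies in $A_2$); provided $u, u'$ lie in a common component of $\simp Q_1$, the path through $\iota(\simp Q_1)$ has length at most $2(R-2)$, giving a total of at most $1 + 2(R-2) + 1 = 2(R-1)$.

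The main obstacle I anticipate is the case where $u$ and $u'$ are in distinct components of $\simp Q_1$ -- and, relatedly, the base case $R = 2$ in which $Q_1 \cong H_0 \times [-\tfrac{1}{2},\tfrac{1}{2}]$ and $\simp Q_1 \cong \simp H_0$ is typically disconnected (for example when $\mathbf X$ is the standard tiling of $\reals^2$). Here I plan to use essentiality and compact indecomposability of $\mathbf X$ to exhibit, for any two 0-simplices of $\simp H_0$, a \emph{common} adjacent 0-simplex of $A_2$: this is the same mechanism as Step~2 but applied symmetrically from two sides, using that grade-$R$ hyperplanes are abundant enough to be spanned by UBSes that combine coherently with UBSs on either side of $H_0$. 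This should yield distance at most $2$ between any two 0-simplices of $A_1$ via $A_2$, closing the diameter estimate at $2 + 2(R-2) = 2(R-1)$ and securing both the base case and the disconnected case of the induction.
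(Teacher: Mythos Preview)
Your approach is essentially the paper's: grade the hyperplanes from $H_0$, show that every $0$-simplex represented in the top grade is adjacent in $\simp\mathbf X$ to one represented in lower grades, and handle the base case $R=2$ by producing, for any two $0$-simplices $s^1,s^2$ in $\simp(N(H_0))$, a common neighbour using compact indecomposability (via Lemma~\ref{lem:separation}). The recursion $K(R)=K(R-1)+2$, $K(2)=2$ is exactly what the paper obtains.

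Two small corrections to your write-up. First, your parenthetical claim that $\mathcal V_2$ is inseparable is false in general (a grade-$(R{-}1)$ hyperplane can separate two grade-$R$ hyperplanes); but you only need inseparability of $\mathcal V_1$, and that suffices: if $\mathcal U$ is minimal and $\mathcal U\cap\mathcal V_1$ is infinite, it is a UBS by inseparability of $\mathcal V_1$, hence almost all of $\mathcal U$, so $\mathcal U\cap\mathcal V_2$ is finite. Second, your induction literally asks for $\diam(\simp Q_1)\leq 2(R-2)$, which would require checking that $Q_1$ is itself essential and compactly indecomposable. The paper sidesteps this by running the recursion entirely inside $\simp\mathbf X$: the inductive hypothesis is that any two $0$-simplices represented by UBSes of grade $\leq R-1$ are joined by a path \emph{in $\simp\mathbf X$} of length $\leq K(R-1)$, and the base case uses compact indecomposability of $\mathbf X$ (not of $Q_1$). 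Once you phrase the induction this way, your worry about ``distinct components of $\simp Q_1$'' evaporates, and the common neighbour in the $R=2$ step need not lie in $A_2$ --- any $0$-simplex of $\simp\mathbf X$ will do.
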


\begin{proof}
Let $R<\infty$ be the maximal grade in $\crossing X$ with respect to a hyperplane $H_0$, and let $\mathbf X\cong\mathbf Q_1\psprod\mathbf Q_2$ be the pseudoproduct decomposition from Lemma~\ref{lem:findingpseudoproduct}.  Since $a(\mathbf Q_1)\subset\mathbf X$ is convex, we have a simplicial embedding $\simp a:\simp\mathbf Q_1\rightarrow\simp\mathbf X$, by Theorem~\ref{thm:boundarysubcomplexes}.  Let $A_1=\simp a(\simp\mathbf X)$ be its image.  Hence, by Theorem~\ref{thm:productsandjoins}, we have $\simp(\mathbf Q_1\times\mathbf Q_2)\cong A_1\star\simp\mathbf Q_2,$ since $e\circ a:\mathbf Q_1\rightarrow\mathbf Q_2$ embeds $\mathbf Q_1$ convexly in $\mathbf Q_1\times\mathbf Q_2$ (with image $\mathbf Q_1\times y_o^2$).

Let $A_2\cong\simp\mathbf Q_2$, so that $\simp(\mathbf Q_1\times\mathbf Q_2)=A_1\star A_2$.  In particular, $A_1\cap A_2=\emptyset$.  Thus every simplex of $\simp(\mathbf Q_1\times\mathbf Q_2)$ is of the form $[u_0,u_1,\ldots,u_p,v_0,\ldots,v_q],$ where each $u_i$ represented in $\simp\mathbf X$ by a UBS $\mathcal U_i\subset\mathcal V_1$ and each $v_j$ is represented by a UBS $q_2(\mathcal W_j)$, where $\mathcal W_j\subset\mathcal V_2$ is a UBS.

Let $\mathcal Q^2$ be a UBS of hyperplanes in $\mathbf Q_2$ that represents the 0-simplex $q$.  Let $\{W_i\}_{i\geq 0}\subseteq\mathcal Q^2$ be a set of hyperplanes such that for all $i\geq 1$, the hyperplane $W_i$ separates $W_{i-1}$ from $W_{i+1}$.  By the pseudoproduct decomposition, for each $i\geq 0,$ there exist infinitely many hyperplanes $H$ of grade at most $R-1$ that cross $W_i$ in ${\mathbf X}$, and thus cross $W_j$ in ${\mathbf X}$, for all $j<i$.  Hence there is a UBS $\mathcal Q^1$ representing a 0-simplex $q^1\in\simp{\mathbf Q}_1$ such that $[q^1,q]$ is a 1-simplex of $\image (\simp{\mathbf X}\rightarrow\simp(\mathbf Q_1\times\mathbf Q_2)).$  In other words, each 0-simplex of $\simp{\mathbf Q}_2$ is adjacent in the image of $\simp\mathbf X$ to a 0-simplex of $\simp{\mathbf Q}_1.$

We now argue by induction on $R$.  First suppose that $R\geq 3$ and, by induction, that there is a function $K:\naturals\rightarrow\naturals$ such that, for all 0-simplices $s,s'\in\simp{\mathbf Q}_1,$ there is a path in $\simp\mathbf X$ of length at most $K(R-1)$ joining $s$ to $s'$.  Then, if $q,q'\in\simp{\mathbf Q}_2$ are 0-simplices, they are respectively at distance 1 from some 0-simplices $s,s'\in\simp{\mathbf Q}_1,$ by the preceding paragraph, and hence there is a path in $\simp{\mathbf X}$ of length at most $K(R)=K(R-1)+2$ joining $q,q'$.  Hence we have
\[\diam(\simp{\mathbf X})\leq K(R)=K(2)+2(R-2).\]

Next, let $R=2$ and let $s^1,s^2$ be 0-simplices of $\simp{\mathbf Q}_1,$ so that for $i\in\{1,2\}$, the simplex $s^i$ is represented by a UBS $\mathcal S^i$ consisting only of grade-1 hyperplanes.  Let $\{S^i_j\}_{j\geq 0}\subseteq\mathcal S^i$ be a maximal set of pairwise non-crossing hyperplanes.  Either $[s^1,s^2]$ is a 1-simplex, and we are done, or we can choose $\mathcal S^i$ so that $\mathcal S^1\cap\mathcal S^2=\emptyset.$

In the latter case, for all $j,k\geq 0$, there is a geodesic path $S^1_j\bot H_0\bot S^2_k$ in $\crossing X$, where $H_0$ is the base hyperplane of the grading.  Thus the set of hyperplanes $H$ such that $H\bot S_j^1$ and $H\bot S^2_k$ is nonempty.  On the other hand, the set of such $H,$ together with the 2-element set $\{S_j^1,S_k^2\}$, generates a subgraph $\Lambda\subset\crossing X$ such that $\crossing X-\Lambda$ has at least two infinite components, since $\mathbf X$ is essential.  Since $\mathbf X$ is compactly indecomposable, there are infinitely many such $H$, by Lemma~\ref{lem:separation}, and hence there is a minimal UBS $\mathcal U$ such that, for all $U\in\mathcal U$, we have a path $S^1_j\bot U\bot S^2_k$ in $\crossing X$.  But then $s^1$ and $s^2$ are both adjacent to the 0-simplex $u$ represented by $\mathcal U.$  Thus $K(2)=2$ and $\diam(\simp{\mathbf X})\leq 2R-2\leq 2\diam(\crossing X)-2,$ as required.
\end{proof}

\begin{lem}\label{lem:finalinequality}
If $\mathbf X$ is essential and strongly locally finite, then $\diam(\crossing X)\leq\diam(\simp\mathbf X)$.
\end{lem}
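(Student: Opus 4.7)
The plan is to show, when $D := \diam(\simp\mathbf X) < \infty$, that $d_{\crossing X}(H,H') \leq D$ for any pair of hyperplanes $H, H'$; the bound is otherwise vacuous, and since $\mathbf X$ is essential the boundary $\simp\mathbf X$ is nonempty, so finite diameter forces connectedness. The strategy proceeds in two steps: first, associate each hyperplane $H$ to a 0-simplex $v_H$ of $\simp\mathbf X$ whose representative minimal UBS actually \emph{contains} $H$; second, transport a geodesic between such 0-simplices in $\simp\mathbf X$ into a path of the same length in $\crossing X$.

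For the first step, given essential $H$, I use essentiality together with strong local finiteness (to extract a pairwise non-crossing subsequence from hyperplanes receding from $H$ in $H^+$) to obtain an infinite nested sequence $H_1, H_2, \ldots$ in $H^+$, with $H_i$ separating $H_{i-1}$ from $H_{i+1}$. Prepending $H_0 = H$ keeps the sequence nested. Set $\mathcal U_H$ to be the inseparable closure of $\{H_i\}_{i \geq 0}$; this is a UBS containing $H$. The key observation is that no element of $\mathcal U_H$ crosses any $H_i$: each element is either some $H_j$ (pairwise non-crossing by nestedness) or separates some pair $H_p, H_q$, in which case it lies in a single slab of halfspaces and hence in one halfspace of every $H_k$. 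Consequently, the set $\mathcal C_1$ appearing in the minimality argument of the proof of Lemma~\ref{lem:UBSesexist} (with $U_i = H_i$) is empty, so every UBS contained in $\mathcal U_H$ is almost-equivalent to $\mathcal U_H$; thus $\mathcal U_H$ is minimal and represents a 0-simplex $v_H$ with $H \in \mathcal U_H$.

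For the second step, given $H, H'$, let $k = d_{\simp\mathbf X}(v_H, v_{H'}) \leq D$ and fix a shortest path $v_H = v_0, v_1, \ldots, v_k = v_{H'}$. Choose minimal UBS representatives $\mathcal U_0 = \mathcal U_H,\, \mathcal U_1, \ldots, \mathcal U_k = \mathcal U_{H'}$. By Theorem~\ref{thm:structureofboundarysets}, applied in either extraction order for each adjacent pair $v_i, v_{i+1}$, the cross property is symmetric: each element of $\mathcal U_i$ crosses all but finitely many elements of $\mathcal U_{i+1}$, and vice versa. I inductively build $H_0 = H, H_1, \ldots, H_{k-1}, H_k = H'$ with $H_i \in \mathcal U_i$ and $H_{i-1} \bot H_i$: for $1 \leq i \leq k-1$ pick $H_i$ crossing $H_{i-1}$ (a finite exclusion from the infinite set $\mathcal U_i$), with $H_{k-1}$ also required to cross $H_k = H'$ (a further finite exclusion). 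The resulting path has length $k \leq D$ in $\crossing X$. The small-$k$ cases, $k = 0$ with $H \neq H'$, or $k = 1$ with $H \not\bot H'$, are handled by a length-two detour $H \bot V \bot H'$ through a 0-simplex adjacent to $v_H$; such a neighbor exists because $\simp\mathbf X$ is connected and essentiality forces $\diam(\simp\mathbf X) \geq 2$ whenever $\mathcal W$ is nonempty (distinct halfspaces of any hyperplane yield 0-simplices that cannot be joined by a 1-simplex).

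The hard part will be establishing the minimality of $\mathcal U_H$ in the first step, which rests on the slab analysis ensuring that the inseparable closure of a nested sequence introduces no crossings with the sequence's members; the symmetric cross property in Theorem~\ref{thm:structureofboundarysets} and the small-$k$ detours are routine verifications.
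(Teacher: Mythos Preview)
Your Step~2 rests on the claim that for adjacent $0$-simplices the crossing relation of Theorem~\ref{thm:structureofboundarysets} is symmetric (``applied in either extraction order''). It is not: the ordering of the minimal pieces in that theorem is intrinsic. A diagonal half-flat---essential, strongly locally finite, with $\diam(\simp\mathbf X)=3$---already exhibits the failure. Writing $\mathrm V_+=\{V_k\}_{k\ge0}$ and $\mathrm H_+=\{H_j\}_{j\ge0}$ for the rightward verticals and upward horizontals, each $H_j\in\mathrm H_+$ crosses all but finitely many $V_k\in\mathrm V_+$, yet $V_0$ crosses \emph{no} $H_j$ with $j\ge0$. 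Taking $H=V_0$ and $H'=H_5$, your construction yields $v_H=\mathrm V_+$ adjacent to $v_{H'}=\mathrm H_+$, so $k=1$; but there is no $H_1\in\mathrm H_+$ with $H\bot H_1$, and your length-two detour through a neighbor of $v_H$ fails too (the only other neighbor is the ``downward horizontals,'' none of which cross $H_5$). Indeed $d_{\crossing X}(V_0,H_5)=3$, so the promised length-$2$ path cannot exist. (A smaller point: your Step~1 assertion that no element of $\mathcal U_H$ crosses any $H_i$ is also false---a hyperplane separating $H_p$ from $H_q$ may well cross $H_k$ for $p<k<q$---though the weaker true statement that each such element crosses only \emph{finitely} many $H_i$ does suffice for minimality.)

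The paper sidesteps the asymmetry by not pinning the endpoints of the lifted path. It picks hyperplanes $U_0,V_0$ with $d_{\crossing X}(U_0,V_0)=R$, builds minimal UBSs $\mathcal U,\mathcal V$ in which $U_0,V_0$ are \emph{initial} (so $U_0$ separates $\mathcal U\setminus\{U_0\}$ from $V_0$, and symmetrically), and lifts a path $u,s_1,\ldots,s_k,v$ in $(\simp\mathbf X)^1$ to some path $U\bot S_1\bot\cdots\bot S_k\bot V$ in $\crossing X$ with $U\in\mathcal U$, $V\in\mathcal V$ unconstrained. Because $U_0$ and $V_0$ each separate $U$ from $V$, the lifted path must contain hyperplanes $S_t,S_r$ crossing $U_0,V_0$ respectively, yielding $U_0\bot S_t\bot\cdots\bot S_r\bot V_0$ of length at most $k+1$, whence $R\le k+1\le\diam(\simp\mathbf X)$. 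The separation trick is exactly what replaces the symmetry you assumed; your setup has $H$ initial in $\mathcal U_H$, so this idea is available to you, but you would then need $H$ and $H'$ on opposite sides of one another, which is how the paper's choice of $U_0,V_0$ (at $\crossing X$-distance $R\ge2$) enters.
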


\begin{proof}
Choose $R\leq\diam(\crossing X)$.  Also, assume that we have chosen $R\geq 2$, for if we can not make this choice then essentiality is contradicted.  Choose hyperplanes $U_0$ and $V_0$ such that $d_{_{\crossing X}}(U_0,V_0)=R\geq 2$.  By essentiality, there exists a minimal UBS $\mathcal U$ and a minimal UBS $\mathcal V$ such that $U_0\in\mathcal U$ and $V_0\in\mathcal V$ are initial hyperplanes in the given UBSs: for all $U'\in\mathcal U$, $U_0$ separates $U'$ from $V_0$, and for all $V'\in\mathcal V$, $V_0$ separates $V'$ from $U_0$.

Let $u,v$ be the 0-simplices at infinity represented by $\mathcal U$ and $\mathcal V$ respectively.  If $u,v$ lie in different components of $\simp\mathbf X$, then $\diam(\simp\mathbf X)=\infty$ and we are done.  Therefore, let $u,s_1,\ldots,s_k,v$ be a path in $(\simp{\mathbf X})^1$ joining $u$ to $v$.  Let $\mathcal S_i$ be a UBS representing the 0-simplex $s_i$.  Then there is a path $\sigma=U\bot S_1\bot\ldots\bot S_k\bot V$ in $\crossing X$, where $U\in\mathcal U,V\in\mathcal V$ and for each $1\leq t\leq k$, we have $S_t\in\mathcal S_t$.  Without loss of generality, $U_0$ and $V_0$ both separate $U$ from $V$.  Hence $\sigma$ must either contain $U_0$ (respectively, $V_0$) or $\sigma$ must contain a hyperplane $S_t$ that crosses $U_0$ (respectively, a hyperplane $S_r$ that crosses $V_0$).  Hence, in the worst case, we have a path $U_0\bot S_t\bot S_{t+1}\bot\ldots\bot S_r\bot V_0$ of length $r-t+2$ joining $U_0$ to $V_0$.  Thus $k+1\geq r-t+2\geq R$, whence $\diam(\simp{\mathbf X})\geq\diam(\crossing X)$.
\end{proof}

Lemma~\ref{lem:separation} allows us to assume that no finite subgraph separates the crossing graph; this is how compact indecomposability manifests itself in $\crossing X$.

\begin{lem}\label{lem:separation}
Let $\mathbf X$ be strongly locally finite and compactly indecomposable.  Then for any finite subgraph $\Lambda$ of $\crossing X$, the complement $\crossing X-\Lambda$ is connected.
\end{lem}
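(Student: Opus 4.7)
The plan is to argue by contradiction, producing from a hypothetical disconnection of $\crossing X-\Lambda$ a compact convex subcomplex whose removal disconnects $\mathbf X$, thereby contradicting compact indecomposability. Suppose $\Lambda\subseteq\crossing X$ is finite and $\crossing X-\Lambda$ is disconnected, and let $\mathcal L=V(\Lambda)$. Since $\mathbf X$ is strongly locally finite, the set of hyperplanes separating any given pair in $\mathcal L$ is finite, so the inseparable closure $\overline{\mathcal L}$ is still finite; Proposition~\ref{prop:recovery} then furnishes a compact convex subcomplex $Y\subseteq\mathbf X$ with $\mathcal W(Y)=\overline{\mathcal L}$. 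After absorbing any finite components of $\crossing X-\Lambda$ into $\Lambda$ (which keeps it finite), we may assume that at least two components $A,B$ of $\crossing X-\overline{\Lambda}$ each meet $\mathcal W\setminus\overline{\mathcal L}$; fix $U\in A$ and $V\in B$ with $U,V\notin\overline{\mathcal L}$.

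For each $H\in\mathcal W\setminus\overline{\mathcal L}$, $H$ fails to cross $Y$, so exactly one halfspace of $H$ contains $Y$; we denote the other by $H^{\mathrm{out}}$. For each $z\in\mathbf X^0\setminus Y^0$, let $\mathcal S(z)$ be the nonempty set of hyperplanes separating $z$ from $Y$, which lies in $\mathcal W\setminus\overline{\mathcal L}$. The plan is to define a label $\phi(z)$ equal to the component of $\crossing X-\overline{\Lambda}$ containing $\mathcal S(z)$, and then show that $\phi$ is well-defined and locally constant across $1$-cubes of $\mathbf X^1\setminus Y^0$. Compact indecomposability forces $\mathbf X^1\setminus Y^0$ to be path-connected, so $\phi$ would then be constant; but $\phi(z)=A$ for $z\in U^{\mathrm{out}}\cap(\mathbf X^0\setminus Y^0)$ while $\phi(z)=B$ for $z\in V^{\mathrm{out}}\cap(\mathbf X^0\setminus Y^0)$, yielding the desired contradiction. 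Local constancy is easy: if $z,z'$ are joined by a $1$-cube $e$ dual to $H_e$, then either $H_e\in\overline{\mathcal L}$ (and $\mathcal S(z)=\mathcal S(z')$, since $H_e$ crosses $Y$ and lies in neither $\mathcal S(z)$ nor $\mathcal S(z')$), or $H_e\notin\overline{\mathcal L}$ (and $\mathcal S(z)\triangle\mathcal S(z')=\{H_e\}$), in which case preservation of $\phi$ reduces to knowing $H_e$ lies in the same component of $\crossing X-\overline{\Lambda}$ as $\mathcal S(z)\cap\mathcal S(z')$.

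The main obstacle will be the well-definedness of $\phi$, namely the claim that all hyperplanes in $\mathcal S(z)$ lie in a single component of $\crossing X-\overline{\Lambda}$. Induction on $d_{\mathbf X}(z,Y)$ reduces this to showing that two consecutive hyperplanes $H,H'$ along a combinatorial geodesic from $z$ to its gate in $Y$ are connected in $\crossing X-\overline{\Lambda}$. When $H\bot H'$ the claim is immediate; the hard case is when $H,H'$ merely osculate, giving no edge between them in $\crossing X$. The plan to bridge such osculations is a disc diagram argument: given an osculating pair $H,H'\in\mathcal S(z)$, we consider an alternative combinatorial geodesic from $z$ to $Y$ (whose existence near $Y$ is guaranteed by compact indecomposability and the structure of the carriers of $H,H'$ outside $Y$) and a minimal-area disc diagram between the two geodesics. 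A dual curve emanating from near the osculation point should yield a hyperplane $W\in\mathcal W\setminus\overline{\mathcal L}$ crossing both $H$ and $H'$, producing a length-two path $H\bot W\bot H'$ in $\crossing X-\overline{\Lambda}$ and closing the induction.
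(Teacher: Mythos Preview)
Your setup is fine: replacing $\mathcal L$ by its inseparable closure and invoking Proposition~\ref{prop:recovery} to get a compact convex $Y$ is legitimate, and the local-constancy part of the labeling argument is correct \emph{given} well-definedness. The genuine gap is exactly where you locate it, and your proposed disc-diagram fix does not close it. Consider the minimal bad case: $d_{\mathbf X}(z,Y)=2$ and $\mathcal S(z)=\{H,H'\}$ with $H,H'$ osculating at $p_1$. Any two geodesics from $z$ to its gate $g(z)$ cross exactly the set $\mathcal S(z)$, so a minimal bigon diagram between them has only the two dual curves for $H,H'$; there is no third dual curve to play the role of $W$. Allowing the second geodesic to land elsewhere in $Y$ does not help: by the gate property it still passes through $g(z)$, so the extra dual curves lie in $\overline{\mathcal L}$ and cannot witness an edge of $\crossing X-\overline{\Lambda}$. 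Your appeal to compact indecomposability to manufacture an ``alternative geodesic near $Y$'' is not specified, and I do not see what diagram it would produce whose dual curves avoid $\overline{\mathcal L}$ yet cross both $H$ and $H'$. In effect, the statement ``$\mathcal S(z)$ lies in a single component of $\crossing X-\overline{\Lambda}$'' is a local avatar of the very lemma you are proving, and you have not found an independent handle on it.

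The paper's argument sidesteps this circularity by a different reduction. First it proves the base case $\Lambda=\emptyset$: if $\crossing X$ is disconnected, pick contacting $H,H'$ in different components; then $N(H)\cap N(H')$ is a single $0$-cube, and a disc-diagram argument (shrinking the length of an avoiding path) shows this $0$-cube is a cut point, contradicting compact indecomposability. For general finite $\Lambda$ the paper passes to the restriction quotient $q:\mathbf X\to\mathbf X'$ obtained by deleting the hyperplanes in $\Lambda$. Since crossings are preserved and reflected by restriction, disconnection of $\crossing X-\Lambda$ gives disconnection of $\crossing{X'}$; the base case then yields a cut $0$-cube $a\in\mathbf X'$, and $K=q^{-1}(a)$ is a finite convex subcomplex disconnecting $\mathbf X$. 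The compact convex witness produced this way is not your $Y$: it is a fibre of the restriction map, and the mechanism showing it disconnects is the base-case disc-diagram argument transported through the quotient, rather than a labeling of $\mathbf X\setminus Y$.
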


\begin{proof}
\textbf{Proof that $\crossing X$ is connected:}  If $\crossing X$ is disconnected, $\mathbf X$ is the wedge sum of two proper subcomplexes, and is thus not compactly indecomposable.  Indeed, let $H,H'$ be hyperplanes in different components of the crossing graph.  If $H''$ separates $H$ from $H'$, then $H''$ cannot lie in the same component of $\crossing X$ as both $H$ and $H'$, so we may choose these hyperplanes so that $H\coll H'$.  Now $N(H)\cap N(H')$ consists of a single 0-cube $x$, since, if $N(H)\cap N(H')$ contained distinct 0-cubes $x,y$, each hyperplane separating $x$ from $y$ would cross $H$ and $H'$.

Neither of $H,H'$ is compact, since $\mathbf X$ is compactly indecomposable.  Thus we can choose 0-cubes $y\in N(H)$ and $y'\in N(H')$ so that $x,y,y'$ are pairwise-distinct.  Suppose that $P\rightarrow\mathbf X$ is a path joining $y$ to $y'$ with $x\not\in P$.  Let $A\rightarrow N(H)$ be a shortest path joining $x$ to $y$ and let $B\rightarrow N(H')$ be a shortest path joining $y'$ to $x$, and consider a minimal-area disc diagram $D\rightarrow\mathbf X$ bounded by $APB$, as in Figure~\ref{fig:disconnecteddelta}.  Let $K$ be the dual curve emanating from the initial 1-cube of $A$ and let $K'$ be the dual curve emanating from the terminal 1-cube of $B$.  Let $U$ be the hyperplane to which $K$ maps, and let $U'$ be the hyperplane to which $K'$ maps.  If $U=U'$, then $H\bot U\bot H'$, a contradiction.  If $K$ crosses $K'$, then $H\bot U\bot U'\bot H'$, another contradiction.  By minimality of area, $K$ and $K'$ end on $P$.  Let $P'$ be the subpath of $P$ between and including the 1-cubes $c$ and $c'$ dual to $K$ and $K'$, as in Figure~\ref{fig:disconnecteddelta}, so that $P'=cQc'$, where $Q$ is a path such that $|Q|<|P|$ and $x\not\in Q$.  On the other hand, $x\in N(U)\cap N(U')$, and no hyperplane $W$ crosses $U$ and $U'$, since otherwise the path $H\bot U\bot W\bot U'\bot H'$ would join $H$ and $H'$ in $\crossing X$.  Thus we can replace $H$ and $H'$ by $U$ and $U'$ respectively, and replace $P$ by $Q$, and obtain a lower-length counterexample.  Eventually, we are in a situation where $|Q|=0$ and $x\not\in Q$.  But then $Q\in N(U)\cap N(U')$ is separated from $x\in N(U)\cap N(U')$, and thus $U$ and $U'$ are crossed by a common hyperplane, a contradiction.  Hence no such $P$ can exist, i.e. every path from $N(H)$ to $N(H')$ passes through $x$.  It is now obvious that $x$ disconnects $\mathbf X$.

\begin{figure}[h]
  \includegraphics[width=1.5in]{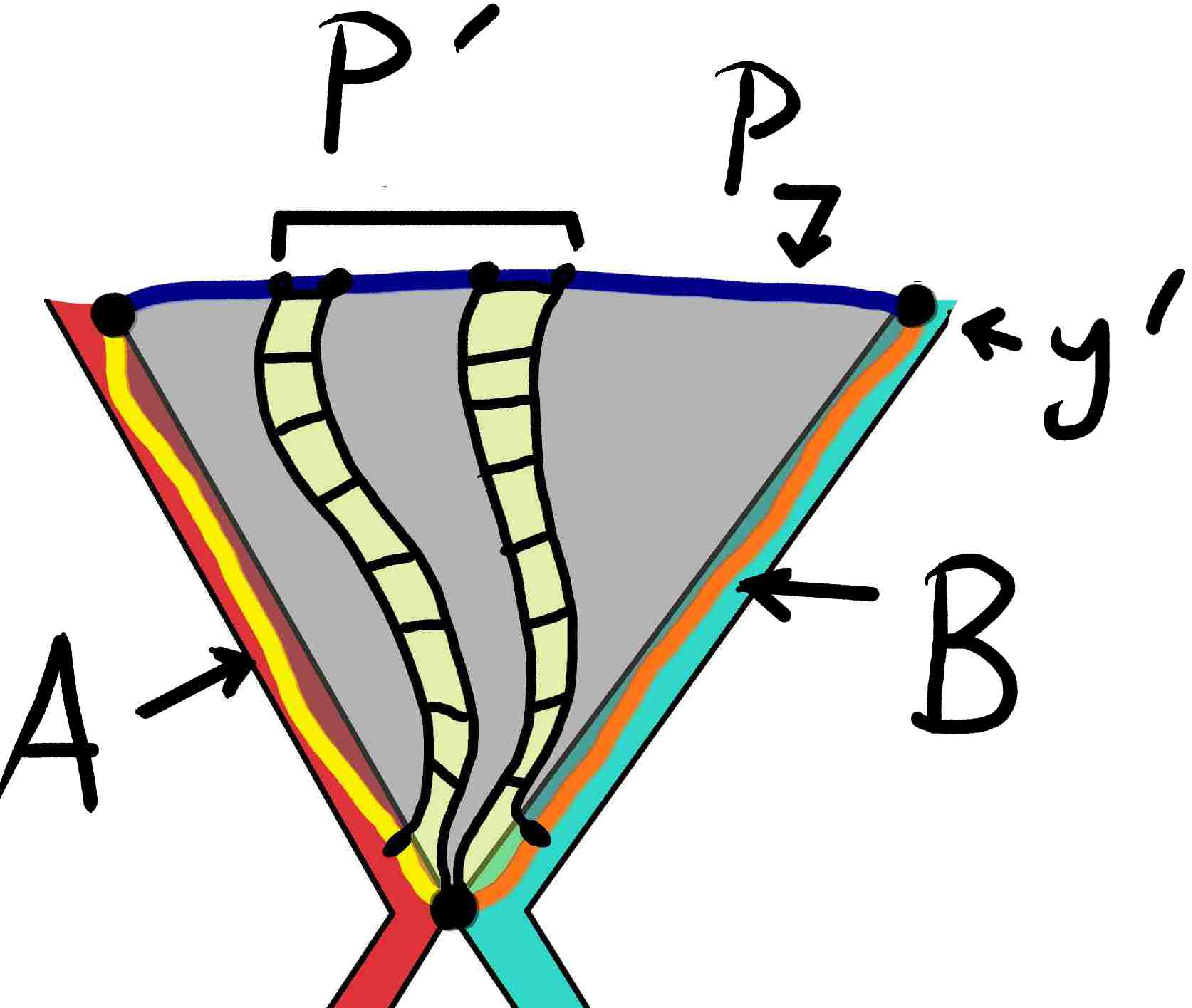}\\
  \caption{If $\crossing X$ is disconnected, then $\mathbf X$ has a cut-0-cube.}\label{fig:disconnecteddelta}
\end{figure}

\textbf{Disconnecting $\crossing X$ in a restriction quotient:}  Now suppose $\Lambda$ is a finite graph that separates $\crossing X$.  Let $\mathcal W'\subset\mathcal W$ be the set of hyperplanes that do not correspond to vertices of $\Lambda$.  Let $\mathbf X'=\mathbf X(\mathcal W')$ be the restriction quotient corresponding to this subset, and let $q:\mathbf X\rightarrow\mathbf X'$ be the quotient map.  If $W,W'$ are hyperplanes of $\mathbf X$, then $W,W'$ cross if and only if $q(W)$ crosses $q(W')$ in $\mathbf X'$.  Therefore, if $\crossing X-\Lambda$ is disconnected, it follows that $\crossing X'$ is disconnected, and therefore that $\mathbf X'\cong\mathbf Y_1\cup_{\{a\}}\mathbf Y_2$, where $\mathbf Y_i$ is a nonempty subcomplex containing at least one hyperplane, and $a$ is a single 0-cube.  Let $K=q^{-1}(\mathbf Y_1)\cap q^{-1}(\mathbf Y_2)=q^{-1}(a)$.  Since $q$ is continuous, $K$ separates $\mathbf X$ into two nonempty subcomplexes.  On the other hand, if $b,b'\in K$ are 0-cubes, then $q(b)=q(a)=q(b')$, so that $b(H)\neq b(H')$ if and only if $H$ is a hyperplane corresponding to a vertex of $\Lambda$.  Thus $K$ is finite and disconnects $\mathbf X$, contradicting compact indecomposability.
\end{proof}

\begin{lem}\label{lem:contactcrossing}
Let $\mathbf X$ be compactly indecomposable, with degree $D<\infty$.  If $\diam(\contact X)<\infty$, then $\diam(\crossing X)<\infty$.
\end{lem}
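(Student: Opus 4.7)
Since $\crossing X$ is a spanning subgraph of $\contact X$, it suffices to exhibit a function $g(D,R)$, where $R:=\diam(\contact X)$, such that every pair $U\coll V$ with $U\coll V$ an osculation satisfies $d_{\crossing X}(U,V)\le g(D,R)$. Granting this ``osculation bridging'' lemma, any contact geodesic $H=W_0\coll W_1\coll\cdots\coll W_r=H'$ with $r\le R$ can be converted edge-by-edge into a crossing path, yielding $\diam(\crossing X)\le R\cdot g(D,R)<\infty$.

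\textbf{Local case (link analysis).} Suppose $U\coll V$ osculate at a 0-cube $x$. The link of $x$ in $\mathbf X$ is a flag simplicial complex with at most $D$ vertices, each representing a hyperplane through $x$; two vertices are adjacent exactly when the corresponding hyperplanes cross at $x$. If $U$ and $V$ lie in the same connected component of the 1-skeleton of this link, there is a crossing path between them of length at most $D-1$.

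\textbf{Nonlocal case (disc diagram argument).} If $U$ and $V$ lie in different link-components at $x$, we adapt the disc diagram argument in the proof of Lemma~\ref{lem:separation}. By Lemma~\ref{lem:separation} (via compact indecomposability), $\mathbf X\setminus\{x\}$ is connected, so there is a combinatorial path $P$ from some $y\in N(U)\setminus\{x\}$ to $y'\in N(V)\setminus\{x\}$ avoiding $x$. Form a minimal-area disc diagram $D\to\mathbf X$ bounded by $c\cdot P\cdot (c')^{-1}$, where $c,c'$ are the 1-cubes at $x$ dual to $U,V$, with the auxiliary minimality condition that $|P|$ is as small as possible. The dual curves $K,K'$ emanating from $c$ and $(c')^{-1}$ map to $U$ and $V$; they cannot cross in $D$ (else $U\bot V$) and, by the usual minimality and geodesic arguments, they terminate on $P$ at 1-cubes dual to hyperplanes $W_1,W_2$ satisfying $U\bot W_1$ and $V\bot W_2$. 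If $W_1=W_2$ then $d_{\crossing X}(U,V)\le 2$; if $W_1\bot W_2$ then $d_{\crossing X}(U,V)\le 3$; otherwise $W_1,W_2$ are distinct and do not cross, and the subdiagram bounded by the carriers of $K,K'$ and the subpath $P'\subsetneq P$ between their terminal 1-cubes presents a new osculating pair $(W_1,W_2)$ with a strictly shorter connecting path, on which we recurse.

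\textbf{Termination and the main obstacle.} Termination of the recursion is immediate from the strict decrease of $|P|$; at each step the crossing path from $U$ to $V$ grows by 2, and at the base case (which triggers whenever the link argument of the local case succeeds or one of the first two subcases of the disc diagram step applies) we get a bounded bridge. The principal difficulty is bounding the recursion \emph{depth} uniformly by a function of $D$ and $R$, rather than merely showing termination. The length of the initial $P$ can be arbitrarily large, so one cannot bound depth by $|P|$ alone. The plan is to exploit the finite-degree hypothesis together with the hypothesis $R<\infty$: at each recursion step the new osculating hyperplanes are forced to lie in the contact-ball of radius $1$ around the previous ones and to be incident to 0-cubes whose links have at most $D$ vertices; a pigeonhole argument combined with the inseparable structure of hyperplane carriers then bounds the number of genuinely new nested pairs by a function of $D$ and $R$. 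Carrying out this combinatorial bookkeeping is the heart of the proof, and the ultimate yield is a bound of the form $g(D,R)=O(DR)$, consistent with the chain of inequalities in Theorem~\ref{thm:boundedcontact}.
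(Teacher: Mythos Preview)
Your proposal has a genuine gap precisely where you identify it: the recursion in the ``nonlocal case'' terminates, but you never actually bound its depth by a function of $D$ and $R$. The pigeonhole sketch in your final paragraph is not a proof; the new osculating pair $(W_1,W_2)$ need not lie in the contact-ball of radius $1$ around $(U,V)$ in any way that forces finiteness, and nothing prevents the recursion from producing a long nested chain of hyperplanes. The claimed bound $g(D,R)=O(DR)$ is asserted but not derived.

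The paper's argument bypasses the recursion entirely and yields a bound \emph{independent of} $R=\diam(\contact X)$. Suppose $H$ and $H'$ osculate, and set $R':=d_{\crossing X}(H,H')$ (finite, since $\crossing X$ is connected by Lemma~\ref{lem:separation}). Choose a crossing-geodesic $H=H_0\bot H_1\bot\cdots\bot H_{R'}=H'$ and form a disc diagram over the closed loop obtained by concatenating geodesic segments $A_i\to N(H_i)$, starting and ending at the osculation point $x\in N(H)\cap N(H')$. Minimize area, then the lengths $|A_i|$, then the diagram over all crossing-geodesics of length $R'$ from $H$ to $H'$. The standard fixed-carrier argument (as in the proof of Theorem~\ref{thm:trichotomy1}) eliminates every dual curve: none can travel $A_i\to A_i$ (geodesic), $A_i\to A_{i\pm1}$ (hexagon move), $A_i\to A_j$ with $|i-j|\ge2$ (would shorten or shrink the crossing path). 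Hence the minimal diagram is the single 0-cube $x$, so $x\in\bigcap_iN(H_i)$ and all the $H_i$ pairwise contact. This forces $\mathrm{degree}(x)\ge R'+1$, whence $R'\le D-1$. Replacing each osculation-edge of a $\contact X$-geodesic by such a crossing-path gives $\diam(\crossing X)\le (D-1)\diam(\contact X)$.

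The moral: rather than recursively peeling off hyperplanes from a disc diagram, one should choose the crossing-geodesic itself as the combinatorial object to minimize over, and use the osculation point $x$ as an anchor that collapses the whole diagram.
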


\begin{proof}
Since $\mathbf X$ is compactly indecomposable, $\crossing X$ is path-connected, by Lemma~\ref{lem:separation}. If $H$ and $H'$ are osculating hyperplanes and $d_{_{\crossing X}}(H,H')=R$, then a simple disc diagram argument shows that there is a shortest path $H=H_0\bot H_1\bot\ldots\bot H_{R-1}\bot H_R=H'$ in $\crossing X$ with the property that $H_i\coll H_j$ for all $i,j$, and hence $N(H)\cap N(H')$ contains a 0-cube of degree at least $R+1$.  Thus, if $H,H'$ osculate, then $d_{_{\crossing X}}(H,H')\leq D-1$.  Thus $d_{_{\crossing X}}(H,H')\leq D$ whenever $H\coll H'$, and it follows from the triangle inequality that $\diam(\crossing X)\leq (D-1)\diam(\contact X)$.
\end{proof}

\vspace{-0.5em}

\section{Rank-one isometries, $\contact X$, and $\simp\mathbf X$}\label{sec:rankoneapp}
In this section, we first use Theorem~\ref{thm:trichotomy1} and Theorem~\ref{thm:trichotomy2} to analyze the action of hyperbolic isometries of $\mathbf X$ on $\contact X$, and then re-interpret the rank-rigidity theorem of Caprace-Sageev~\cite{CapraceSageev} in terms of the contact graph and the simplicial boundary.  Rank-one isometries of a $\mathbf X$ whose axes diverge from every hyperplane are exactly those that act as hyperbolically on $\contact X$:

\begin{prop}\label{prop:rankonecontacteasy}
Let $G$ act on the CAT(0) cube complex $\mathbf X$ and let $g\in G$.  If $g$ acts on $\contact X$ with an unbounded orbit, then $g$ is rank-one.
\end{prop}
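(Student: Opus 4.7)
The plan is to argue the contrapositive: under the assumption that $g$ is not rank-one, I will show that every $g$-orbit on $\contact X$ is bounded, so the unbounded-orbit hypothesis forces $g$ to be rank-one. The argument proceeds in three steps.

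First, I rule out the non-hyperbolic cases in Theorem~\ref{thm:semisimpleHAGLUND}. If $g$ stabilizes a cube $c$, then it permutes the (finite) set of hyperplanes crossing $c$, and for any fixed such $V$ and any hyperplane $W$, the triangle inequality gives $d_{_{\contact X}}(W,g^n W)\le 2d_{_{\contact X}}(W,V)+D$, where $D$ is the diameter of the (finite) orbit of $V$; all orbits are thus bounded. If $g$ stabilizes a hyperplane $H$ and exchanges its halfspaces, then $gH=H$ and $d_{_{\contact X}}(W,g^n W)\le 2d_{_{\contact X}}(W,H)$, again uniformly bounded. Each case contradicts the hypothesis, so $g$ is combinatorially hyperbolic with some combinatorial axis $\gamma$, and hence has a CAT(0) axis $\alpha$ (since a combinatorial axis is a CAT(0) quasi-geodesic on which $g$ translates by a positive amount).

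Now assume for contradiction that $g$ is not rank-one, so that $\alpha$ bounds an isometrically embedded Euclidean half-plane $F\subset\mathbf X$. The hyperplanes of $\mathbf X$ that cross $F$ split into two infinite $g$-invariant families: the set $\mathcal V$ of hyperplanes crossing $\alpha$ transversely, and the set $\mathcal H$ of hyperplanes crossing $F$ without crossing $\alpha$ (``parallel to $\alpha$'' inside $F$). Since a transverse line and a parallel line in a half-plane must intersect, every $V\in\mathcal V$ crosses every $H\in\mathcal H$ in $\mathbf X$. Fixing $H_0\in\mathcal H$, the two-edge path $V_1\coll H_0\coll V_2$ in $\contact X$ gives $d_{_{\contact X}}(V_1,V_2)\le 2$ for all $V_1,V_2\in\mathcal V$. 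In particular, for any $V_0\in\mathcal V$ the orbit $\{g^n V_0\}$ has $\contact X$-diameter at most $2$, and so the triangle inequality yields
\[
d_{_{\contact X}}(W,g^n W)\;\le\;d_{_{\contact X}}(W,V_0)+d_{_{\contact X}}(V_0,g^n V_0)+d_{_{\contact X}}(g^n V_0,g^n W)\;\le\;2d_{_{\contact X}}(W,V_0)+2
\]
for every hyperplane $W$ and every $n$, uniformly in $n$. This contradicts the unbounded-orbit hypothesis, so $g$ must be rank-one.

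The main obstacle lies in justifying the clean partition of the hyperplanes crossing $F$ into the transverse family $\mathcal V$ and the parallel family $\mathcal H$, together with the crossing property and the non-emptiness of $\mathcal H$. This is immediate when $F$ is a combinatorial half-flat, but requires extra care for an arbitrary isometrically embedded CAT(0) half-plane because hyperplanes of $\mathbf X$ need not respect the Euclidean structure of $F$. One way around this is to use that the existence of such a half-plane with $\alpha$ on its boundary guarantees a combinatorial half-flat whose bottom bounding combinatorial ray fellow-travels $\gamma$ (possibly after replacing $g$ by a positive power, using Proposition~\ref{prop:combinatoriallyelliptichyperbolic}), after which the argument above applies. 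An alternative, avoiding passage between the metrics, is to combine Theorem~\ref{thm:trichotomy2} with the combinatorial axis $\gamma$: a non-rank-one $g$ should force $\gamma$ into either an isometric eighth-flat or a uniform neighborhood of a hyperplane, in either case making $\gproj(\gamma)$ bounded in $\contact X$, and then $\langle g\rangle$ translates of any hyperplane dual to $\gamma$ lie in this bounded set.
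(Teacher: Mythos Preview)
Your contrapositive strategy---obtain a half-flat from the failure of rank-one, observe that hyperplanes crossing it form a bounded subset of $\contact X$, and then use the triangle inequality---is exactly the paper's approach. The paper's proof is terse: it invokes Theorem~\ref{thm:semisimpleHAGLUND} to obtain a combinatorial axis $\gamma$ (implicitly ruling out the cube-stabilizing and hyperplane-inverting cases, which you handle explicitly and correctly), then cites Proposition~\ref{prop:combinatorialrankone} to conclude that $\gamma$ lies in a \emph{combinatorial} half-flat, yielding $d_{_{\contact X}}(H,g^nH)\le 3$ for $H$ crossing $\gamma$, and finishes with the same triangle inequality as you.

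Your honest identification of the obstacle is well-placed: for an arbitrary CAT(0) half-plane $F$, hyperplane traces $H\cap F$ are merely convex subsets, not Euclidean lines, so the ``transverse/parallel'' dichotomy and the claimed crossing $V\bot H_0$ are not immediate. Your fix (a) is precisely what the paper does, and is the cleanest route. Two small refinements if you adopt it: first, the correct bound is $3$ rather than $2$, because the combinatorial half-flat produced by translating eighth-flats (as in the proof of Proposition~\ref{prop:combinatorialrankone}) may be diagonal, and in a diagonal half-flat not every ``vertical'' hyperplane crosses every ``horizontal'' one; second, you need not worry about $g$-invariance of $F$, since all that matters is that the set of hyperplanes crossing $\gamma$ is $g$-invariant and has bounded $\contact X$-diameter. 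Your alternative fix (b) via Theorem~\ref{thm:trichotomy2} also works but is heavier than needed here.
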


\begin{proof}
By Theorem~\ref{thm:semisimpleHAGLUND}, $g$ has a combinatorial axis $\gamma$.  If $g$ is not rank-one, then for any hyperplane $H$ crossing $\gamma$, we have $d_{_{\contact X}}(g^nH,H)\leq 3$ for all $n\in\integers$, since $g^nH$ and $H$ cross a common isometric half-flat by Proposition~\ref{prop:combinatorialrankone}. If $U$ is any other hyperplane, then $d_{_{\contact X}}(g^nU,U)\leq 3+2d_{_{\contact X}}(H,U)$, and thus every $g$-orbit in $\contact X$ is bounded.
\end{proof}

\begin{thm}\label{thm:rankonecontact}
Let $G$ act on the strongly locally finite CAT(0) cube complex $\mathbf X$ and let $g\in G$ be a combinatorially rank-one element.  Suppose that for all $n>0$, and for all hyperplanes $H$, $g^n\not\in\stabilizer(H)$.  Then $g$ has a quasi-geodesic axis in $\contact X$.
\end{thm}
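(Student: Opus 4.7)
The plan is to take a combinatorial geodesic axis $\gamma:\reals\to\mathbf X$ for $g$ (which exists since $g$ is combinatorially rank-one, hence combinatorially hyperbolic), with translation length $\tau$, and to show that its projection $\gproj(\gamma)$ is a quasi-isometric embedding of $\reals$ into $\contact X$. Since $g$ translates $\gproj(\gamma)$ along itself by $\tau$ vertices at a time, $\gproj(\gamma)$ is then the desired $g$-invariant quasi-geodesic axis. I apply Theorem~\ref{thm:trichotomy1} to $\gamma^+:=\gamma|_{[0,\infty)}$ (the argument for $\gamma^-$ is symmetric) and rule out the two non-q.i.e.\ options~(2) and~(3).

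To rule out (3), I use local finiteness, $g$-equivariance, and the no-hyperplane-stabilizer hypothesis. Suppose (3) holds: there is $R\ge 0$ such that for every $t$, some hyperplane $H_t$ has $\gamma\cap N_R(H_t)$ containing a connected subpath of length at least $t$. For $t>k\tau$, applying $g^0,g^1,\ldots,g^k$ to this subpath produces $k+1$ sub-paths of $\gamma$, each of length at least $t$, shifted successively by $\tau$ along $\gamma$ and each lying in $N_R(g^iH_t)$. Their pairwise intersection is a connected subpath of length at least $t-k\tau>0$, so contains a $0$-cube $x\in\gamma$ that lies within combinatorial distance $R$ of every $g^iH_t$. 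Local finiteness bounds the number of hyperplanes within distance $R$ of $x$ (uniformly in $x\in\gamma$, by cocompactness of $\langle g\rangle$ on $\gamma$); so for $k$ large, some $g^iH_t=g^jH_t$ with $i\ne j$, whence $g^{j-i}$ stabilizes $H_t$, contradicting the hypothesis. This same argument also yields a uniform bound $B''(R)<\infty$ on the length of any subpath of $\gamma$ inside an $R$-neighborhood of a hyperplane, i.e., hypothesis~(B) of Theorem~B holds.

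To rule out (2), suppose $K_{p,p}\subset\Lambda(\gamma^+)$ for every $p$. Any size-$p$ biclique occupies $2p$ distinct positions along $\gamma^+$, so for each $p$ it contains a contact pair $H_i\coll H_j$ with $|i-j|\ge p-1$. The disc-diagram argument in the proof of Lemma~\ref{lem:crossingbound}, combined with the uniform bound $B''(R)<\infty$, promotes each long-gap contact to a crossing biclique $K_{h,h}\subset\crossing X\cap\Lambda(\gamma^+)$ with $h\to\infty$ as $p\to\infty$; in particular, (2) also holds in the crossing graph. On the other hand, Proposition~\ref{prop:combinatorialrankone} gives that $\gamma^+$ is a rank-one ray, and Corollary~\ref{cor:nowayoffthetrain} then shows that $\mathcal W(\gamma^+)$ is a minimal UBS representing an isolated $0$-simplex of $\simp\mathbf X$.

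The main obstacle is the final step: exploiting the cocompact $\langle g\rangle$-action on $\mathcal W(\gamma^+)$ (with $\tau$ orbits) together with the growing crossing bicliques to produce a nontrivial decomposition of $\mathcal W(\gamma^+)$ into two UBSs in the sense of Theorem~\ref{thm:structureofboundarysets}, contradicting minimality. Concretely, pigeonhole on the bicliques yields orbit representatives $A_0,B_0\in\mathcal W(\gamma^+)$ such that the set $D=\{d\in\integers:A_0\bot g^dB_0\}$ is infinite, and taking inseparable closures inside $\mathcal W(\gamma^+)$ of the corresponding $\langle g\rangle$-translates should produce two disjoint UBSs whose elements pairwise cross modulo finite error; delicate bookkeeping on the pigeonhole data is needed to verify that the closures are genuine UBSs and that their union exhausts $\mathcal W(\gamma^+)$ up to finitely many hyperplanes. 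With (2) and (3) both ruled out, option~(1) of Theorem~\ref{thm:trichotomy1} applies to $\gamma^\pm$, so $\gproj(\gamma)$ is a quasi-isometric embedding and hence the required quasi-geodesic axis in $\contact X$.
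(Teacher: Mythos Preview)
Your argument ruling out option~(3) of Theorem~\ref{thm:trichotomy1} is correct and is essentially the intended use of the stabilizer hypothesis.  Your upgrade of option~(2) to arbitrarily large \emph{crossing} bicliques (via the disc-diagram argument in the proof of Lemma~\ref{lem:crossingbound}, using your uniform bound $B''(R)$), followed by the pigeonhole over the $\tau$ many $\langle g\rangle$--orbits producing a hyperplane $H_\alpha=A_0$ that crosses $g^dB_0$ for infinitely many $d$ --- equivalently, $H_\alpha$ crosses $H_j$ for infinitely many $j$ --- is also correct.

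The gap is in the last step.  Splitting $\mathcal W(\gamma^+)$ into two UBSs is the wrong route: the inseparable closures of $\langle g\rangle$--orbits that you propose will typically be all of $\mathcal W(\gamma^+)$, and the ``delicate bookkeeping'' you allude to does not obviously terminate.  Instead, feed your conclusion directly into the eighth-flat construction from the proof of Theorem~\ref{thm:trichotomy2}.  After truncating so that $H_\alpha$ is the initial hyperplane $H_0$ of $\gamma^+$, let $f(n)=d_{\mathbf X}(\gamma(n),N(H_0))$; this is non-decreasing because $\gamma(0)\in N(H_0)$, and your bound $B''(R)$ forces $f$ to be unbounded.  Since $H_0$ crosses infinitely many $H_j$, the gate-projection $\sigma$ of $\gamma^+$ to $N(H_0)$ is an unbounded ray, and the strips between $\gamma^+$ and $\sigma$ assemble, exactly as in that proof, into an isometrically embedded eighth-flat with $\gamma^+$ as its top bounding ray.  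This contradicts the rank-one hypothesis via Proposition~\ref{prop:combinatorialrankone}, so option~(2) is impossible and option~(1) holds.  This is why the paper's one-line proof cites both theorems: Theorem~\ref{thm:trichotomy1} supplies the trichotomy, and Theorem~\ref{thm:trichotomy2} supplies the eighth-flat mechanism that eliminates~(2).
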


\begin{proof}
Apply Theorem~\ref{thm:trichotomy1} and Theorem~\ref{thm:trichotomy2} to a combinatorial geodesic axis for $g$.
\end{proof}

We now restate the rank-rigidity theorem~\cite[Theorem~A, Theorem~B]{CapraceSageev} in terms of the contact graph and the simplicial boundary. Theorem~\ref{thm:RRcontact} follows directly from rank-rigidity, since the contact graph of a nontrivial product is a nontrivial join, and a hyperbolic element obtained as in~\cite{CapraceSageev} by applying double-skewering to a pair of \emph{strongly separated} (i.e. at distance at least 3 in $\crossing X$) hyperplanes necessarily acts with an unbounded orbit on $\contact X$.  We first need:

\begin{rem}
A group $G$ acting on $\mathbf X$ acts by simplicial automorphisms on $\simp\mathbf X$.  Also, the set of hyperplanes crossing a CAT(0) geodesic ray is a UBS, so that $g\in G$ has a fixed point in $\partial_{\infty}\mathbf X$ if and only if $g$ fixes the barycenter of a simplex of $\simp\mathbf X$.
\end{rem}

\begin{thm}[Rank-rigidity, contact graph form]\label{thm:RRcontact}
Let $G$ act essentially on the finite-dimensional CAT(0)cube complex $\mathbf X$.  Suppose in addition that either $G$ acts properly and cocompactly, or $G$ does not stabilize a simplex of $\simp\mathbf X$.  If $\contact X$ is unbounded, then $G$ contains a combinatorially rank-one element $g$, no power of which stabilizes a hyperplane.  Otherwise, $\contact X$ has diameter 2 and there exist unbounded convex subcomplexes $\mathbf Q_1,\mathbf Q_2\subset\mathbf X$ such that $\mathbf X\cong\mathbf Q_1\times\mathbf Q_2$.
\end{thm}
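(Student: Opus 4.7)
The plan is to invoke Caprace--Sageev rank-rigidity \cite[Theorems A and B]{CapraceSageev} and translate its dichotomy through Proposition~\ref{prop:productchar}. The hypothesis on $G$ matches the standing assumptions of rank-rigidity: the alternative to a proper cocompact action --- that $G$ stabilizes no simplex of $\simp\mathbf X$ --- is equivalent, via the remark preceding the theorem, to $G$ having no fixed point on $\partial_{\infty}\mathbf X$. Rank-rigidity then yields a dichotomy: either (i) $\mathbf X$ admits a nontrivial $G$-invariant decomposition $\mathbf X \cong \mathbf Q_1 \times \mathbf Q_2$ into unbounded convex subcomplexes, or (ii) $\mathbf X$ contains a pair of strongly separated hyperplanes (no third hyperplane crosses both) and $G$ contains a double-skewering element $g$, which is CAT(0) rank-one and satisfies $g\mathfrak h \subsetneq \mathfrak h$ for the halfspace $\mathfrak h$ bounded by the skewered hyperplane $H_0$.

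Case (i) is immediate: Proposition~\ref{prop:productchar} identifies $\contact X$ with a join $A_1' \star A_2'$ whose cross-edges are all crossings. Essentiality plus unboundedness of each $\mathbf Q_i$ furnishes two non-contacting hyperplanes in each factor, so each $A_i'$ has at least two non-adjacent vertices and $\diam(\contact X) = 2$.

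Case (ii) is the main work. By Theorem~\ref{thm:semisimpleHAGLUND} and Proposition~\ref{prop:combinatoriallyelliptichyperbolic}, a power $g^k$ is combinatorially hyperbolic with combinatorial axis $\alpha$ fellow-traveling a CAT(0) axis for $g^k$. If $\alpha$ lay in a combinatorial half-flat $\mathbf F$, then $\mathbf F$ is a convex subcomplex whose CAT(0) realization is a Euclidean half-plane containing a CAT(0) axis of $g^k$, contradicting CAT(0) rank-one; hence $g^k$ is combinatorially rank-one. For the hyperplane-stabilization claim: if some hyperplane $V$ satisfies $g^nV=V$ with $n\neq 0$, then $g^n$-invariance propagates the $V$-vs-$H_0$ side relationship to every translate $g^{mn}H_0$, so either $V$ crosses every $g^{mn}H_0$ --- impossible, since any two such translates are strongly separated --- or $V$ consistently lies inside $g^{mn}\mathfrak h$ for all $m\in\integers$, or consistently outside. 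Essentiality makes both $\bigcap_m g^{mn}\mathfrak h$ and $\bigcap_m(\mathbf X\setminus g^{mn}\mathfrak h)$ empty, since the nested family $g^{mn}\mathfrak h$ exits $\mathbf X$ as $m\to\infty$ while its complement family does so as $m\to-\infty$. Finally, $\contact X$ is unbounded because $\gproj(\alpha)$ is a quasi-geodesic ray: applying Theorem~\ref{thm:trichotomy1} to $\alpha$, the biclique alternative would supply a common crosser of two strongly separated $g^{mn}H_0$, while the hyperplane-neighborhood alternative would force $\alpha$ to linger indefinitely in one hyperplane carrier while crossing infinitely many strongly separated translates of $H_0$ --- both impossible.

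The main obstacle is this final step: strong separation controls only $\crossing X$-distance, and one must prevent osculation chains from collapsing $\contact X$-distance between $g$-translates of $H_0$. Running Theorem~\ref{thm:trichotomy1} on $\alpha$ and exploiting strong separation of successive skewered translates to eliminate each of its degenerate alternatives is the cleanest way to upgrade the $\crossing X$-separation output by Caprace--Sageev into an unbounded-orbit statement in $\contact X$.
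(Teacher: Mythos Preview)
Your approach is precisely the paper's: the paper's entire justification is the one-sentence remark preceding the theorem, which invokes Caprace--Sageev rank-rigidity and asserts that a double-skewering element for a strongly separated pair acts with unbounded orbit on $\contact X$. You have supplied considerably more detail than the paper does, and the outline---product case via Proposition~\ref{prop:productchar}, then strong separation handles the rank-one case---is correct.

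There is one genuine gap. Your deduction that $g^k$ is combinatorially rank-one asserts that a combinatorial half-flat $\mathbf F$ containing $\alpha$ is a \emph{convex} subcomplex containing a CAT(0) axis of $g^k$. Neither claim is justified: an isometrically embedded half-flat need not be convex, it need not be $g^k$-invariant, and the CAT(0) axis need not lie in it. The clean fix bypasses CAT(0) rank-one entirely and uses strong separation directly: if $\alpha$ lay in a combinatorial half-flat $\mathbf F$, then $H_0$ and $g^kH_0$ both cross $\mathbf F$, and since they do not cross each other they lie in the same parallelism class of hyperplanes of $\mathbf F$; any hyperplane of $\mathbf F$ from the other class then extends to a hyperplane of $\mathbf X$ crossing both, contradicting strong separation. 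Separately, your elimination of alternative~(3) of Theorem~\ref{thm:trichotomy1} is slightly misstated---the hyperplane there may vary with $t$---though the underlying strong-separation-plus-periodicity bound still works. Once combinatorial rank-one and ``no power stabilizes a hyperplane'' are established, it is cleaner to simply invoke Theorem~\ref{thm:rankonecontact} for the unbounded orbit on $\contact X$, which is how the paper intends the argument to conclude.
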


By Theorem~\ref{thm:productsandjoins}, a product decomposition of $\mathbf X$ with unbounded factors corresponds to a join decomposition of $\simp\mathbf X$, while an axis for a rank-one isometry yields a pair of isolated 0-simplices of $\simp\mathbf X$.  Rank-rigidity therefore implies:

\begin{thm}[Rank-rigidity, simplicial boundary form]\label{thm:RRsimplicial}
Under the same hypotheses on $G$ and $\mathbf X$ as in Theorem~\ref{thm:RRcontact}, if $\simp\mathbf X$ is connected, there exist unbounded convex subcomplexes $\mathbf Q_1,\mathbf Q_2\subset\mathbf X$ such that $\mathbf X\cong\mathbf Q_1\times\mathbf Q_2$, and $\simp\mathbf X$ decomposes as a nontrivial simplicial join.  Otherwise, $G$ contains a rank-one element $g$, no power of which stabilizes a hyperplane.
\end{thm}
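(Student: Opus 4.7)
The plan is to apply Theorem~\ref{thm:RRcontact} and translate each branch of its contact-graph dichotomy into a statement about $\simp\mathbf X$, using Theorem~\ref{thm:productsandjoins} to handle the product case and Corollary~\ref{cor:nowayoffthetrain} to handle the rank-one case. First, suppose $\contact X$ is bounded. Then Theorem~\ref{thm:RRcontact} supplies unbounded convex subcomplexes $\mathbf Q_1,\mathbf Q_2\subset\mathbf X$ with $\mathbf X\cong\mathbf Q_1\times\mathbf Q_2$. The implication $(1)\Rightarrow(2)$ of Theorem~\ref{thm:productsandjoins}---which does not require full visibility---then yields a nontrivial simplicial join decomposition $\simp\mathbf X\cong A_1\star A_2$ with $A_1,A_2$ nonempty and disjoint, so $\simp\mathbf X$ is connected.

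Second, suppose $\contact X$ is unbounded. Theorem~\ref{thm:RRcontact} produces a combinatorially rank-one $g\in G$, no power of which stabilizes a hyperplane, and I claim $\simp\mathbf X$ must then be disconnected. Let $\gamma:\reals\to\mathbf X$ be a combinatorial geodesic axis for $g$, and let $\gamma^+=\gamma([0,\infty))$ and $\gamma^-=\gamma((-\infty,0])$ be its two half-rays. Because $\gamma$ is a geodesic, every hyperplane crosses $\gamma$ in at most one $1$-cube, so $\mathcal W(\gamma^+)\cap\mathcal W(\gamma^-)=\emptyset$. Both of these are infinite UBSs, so their symmetric difference is infinite and they represent distinct simplices $v^\pm\in\simp\mathbf X$. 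By Proposition~\ref{prop:combinatorialrankone}, each $\gamma^{\pm}$ is itself a rank-one combinatorial geodesic ray, so Corollary~\ref{cor:nowayoffthetrain} forces $v^+$ and $v^-$ to be isolated $0$-simplices. Hence $\simp\mathbf X$ contains at least two distinct isolated $0$-simplices, and is disconnected.

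Since the two cases of Theorem~\ref{thm:RRcontact} are exhaustive, contraposition completes the argument: connectedness of $\simp\mathbf X$ rules out the rank-one case and so forces the product/join conclusion, while disconnectedness rules out the product case and so forces the existence of the rank-one element. The main obstacle I anticipate is the verification in the second case that $v^+$ and $v^-$ are genuinely distinct---this is where the geodesic property is essential, since it makes $\mathcal W(\gamma^+)$ and $\mathcal W(\gamma^-)$ disjoint rather than merely almost-disjoint, guaranteeing non-equivalence of the associated UBSs. Once this point is secured, Corollary~\ref{cor:nowayoffthetrain} yields disconnectedness with no further work.
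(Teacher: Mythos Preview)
Your proposal is correct and follows essentially the same route as the paper: apply the contact-graph form of rank-rigidity, use Theorem~\ref{thm:productsandjoins} to translate the product conclusion into a join decomposition of $\simp\mathbf X$, and use Proposition~\ref{prop:combinatorialrankone} together with Corollary~\ref{cor:nowayoffthetrain} to see that a rank-one axis yields two isolated $0$-simplices. The paper compresses all of this into a single sentence preceding the theorem; your write-up is simply a more explicit version, including the (correct and worthwhile) verification that $v^+\neq v^-$ via disjointness of $\mathcal W(\gamma^+)$ and $\mathcal W(\gamma^-)$.
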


\begin{prob}\label{prob:thmDRR}
Without using~\cite[Proposition~5.1]{CapraceSageev}, show that if $G$ acts on $\mathbf X$, satisfying the hypotheses of either form of the rank-rigidity theorem, and $\mathbf X$ decomposes as an iterated pseudoproduct of depth 2, then $\mathbf X\cong\mathbf Q_1\times\mathbf Q_2$ for unbounded subcomplexes $\mathbf Q_1,\mathbf Q_2$.  A solution to this problem would yield an alternate proof of rank-rigidity (in the locally finite case) using Theorem~\ref{thm:boundedcontact} together with double-skewering; this is discussed in~\cite[Chapter~6]{HagenPhD}.
\end{prob}

\section{Cubical divergence}\label{sec:divergence}
We study combinatorial divergence of geodesic rays in an arbitrary strongly locally finite CAT(0) cube complex $\mathbf X$, and obtain characterizations of linear and super-linear divergence and divergence of geodesics in terms of $\simp\mathbf X$.  In Section~\ref{sec:groupdiverge}, we state a criterion for linear divergence of cocompactly cubulated groups stated in terms of $\simp\mathbf X$ or $\contact X$.

\subsection{Simplex-paths and combinatorial divergence}\label{sec:simplexpaths}
Choose a 0-cube $x\in\mathbf X$.  Note that, by Lemma~\ref{lem:samestart}, each visible simplex of $\simp\mathbf X$ is represented by the set $\mathcal W(\gamma)$, where $\gamma:[0,\infty)\rightarrow\mathbf X$ is a combinatorial geodesic ray with $\gamma(0)=x$.  Given such a ray $\gamma$, let $v(\gamma)$ denote the simplex at infinity represented by the UBS $\mathcal W(\gamma)$.

\begin{defn}[Simplex path, simplicial Tits distance]\label{defn:simplexpath}
Let $u,v$ be simplices of $\simp\mathbf X$.  A \emph{simplex-path} is a sequence $u=u_0,u_1,\ldots,u_n=v$ such that $u_i\cap u_{i+1}\neq\emptyset$ for $0\leq i\leq n-1$.  The \emph{length} of this simplex-path is $n$. The \emph{simplicial Tits distance} $\eta(u,v)$ from $u$ to $v$ is the length of a shortest simplex-path whose initial simplex is $u$ and whose terminal simplex is $v$.  Note that $\eta(u,v)\leq1$ if and only if $u\cap v\neq\emptyset$, and $\eta(u,v)\leq2$ if $u,v$ are contained in a common simplex.
\end{defn}

The next lemma allows us to induct on the simplicial Tits distance between visible simplices without assuming that $\mathbf X$ is fully visible.

\begin{lem}\label{lem:shortestpathvisible}
Let $u_0,u_1,\ldots,u_n$ be a shortest simplex-path joining $u_0$ to $u_n$.  Then, for $1\leq i\leq n$, the $u_i$ can be chosen so that the simplex $u_i$ is visible.
\end{lem}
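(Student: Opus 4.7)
The plan is to exploit Theorem~\ref{thm:visiblesimplex}, which asserts that every maximal simplex of $\simp\mathbf X$ is visible. Concretely, I will enlarge each $u_i$ (for $i\geq 1$) to a maximal simplex while preserving the intersection pattern $u_{i-1}'\cap u_i'\neq\emptyset$ that witnesses the simplex-path. No new ideas beyond this replacement strategy appear to be required.

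Explicitly, fix a shortest simplex-path $u_0,u_1,\ldots,u_n$ and, for each $1\leq i\leq n-1$, set
\[w_i^- = u_{i-1}\cap u_i,\qquad w_i^+ = u_i\cap u_{i+1}.\]
By hypothesis each is a nonempty simplex, and each is a face of the single simplex $u_i$; hence $w_i^-\cup w_i^+$ is a face of $u_i$ and in particular a simplex of $\simp\mathbf X$. By Theorem~\ref{thm:boundaryproperties}(2), there is a (finite-dimensional) maximal simplex $u_i'$ containing $w_i^-\cup w_i^+$, and Theorem~\ref{thm:visiblesimplex} tells us that $u_i'$ is visible. For $i=n$, analogously choose any maximal simplex $u_n'$ containing $u_{n-1}\cap u_n$; this is again visible by Theorem~\ref{thm:visiblesimplex}. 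Finally set $u_0'=u_0$.

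It remains to check that $u_0',u_1',\ldots,u_n'$ is still a simplex-path of length $n$. We have $u_0'\cap u_1'\supseteq u_0\cap u_1=w_1^-\neq\emptyset$; for $1\leq i\leq n-2$ we have $u_i'\cap u_{i+1}'\supseteq w_i^+=w_{i+1}^-\neq\emptyset$; and $u_{n-1}'\cap u_n'\supseteq u_{n-1}\cap u_n\neq\emptyset$. Thus the modified sequence is indeed a simplex-path from $u_0$ to $u_n'$ of length $n$; since the original path was a shortest such path, so is the modified one, and each $u_i'$ with $i\geq 1$ is visible as required.

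The main (and rather mild) obstacle is just the bookkeeping to ensure that enlarging $u_i$ does not destroy either of the intersections $u_{i-1}\cap u_i$ or $u_i\cap u_{i+1}$; this is handled by choosing $u_i'$ to contain the union $w_i^-\cup w_i^+$, which is legitimate because both summands already lie inside the simplex $u_i$. Everything else is an immediate appeal to Theorem~\ref{thm:visiblesimplex} and Theorem~\ref{thm:boundaryproperties}(2).
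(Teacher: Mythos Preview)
Your proof is correct and follows the same idea as the paper's one-line argument: enlarge each $u_i$ to a maximal (hence visible, by Theorem~\ref{thm:visiblesimplex}) simplex, using Theorem~\ref{thm:boundaryproperties}(2), and observe that the intersection pattern is preserved. One small remark: the final clause ``since the original path was a shortest such path, so is the modified one'' is neither needed for the conclusion nor obviously true once you have replaced $u_n$ by $u_n'$; it is simpler to enlarge each $u_i$ to a maximal simplex containing all of $u_i$ (rather than just $w_i^-\cup w_i^+$), which makes the containments $u_i'\supseteq u_i$ transparent and avoids having to track the endpoint separately.
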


\begin{proof}
Each simplex is contained in a visible simplex by Theorem~\ref{thm:boundaryproperties} and Theorem~\ref{thm:visiblesimplex}.
\end{proof}

\begin{defn}[$r$-avoiding path, combinatorial divergence]\label{defn:ravoiding}
With respect to $x\in\mathbf X^{(0)}$, for $r\geq 0$, the combinatorial path $P\rightarrow\mathbf X$ is \emph{$r$-avoiding} if, for all 0-cubes $p\in P$, $d_{_{\mathbf X}}(p,x)\geq r$.  Let $\gamma,\gamma':[0,\infty)\rightarrow\mathbf X$ be combinatorial geodesic rays with $\gamma(0)=\gamma'(0)=x$.  For $r\geq 0$, let $\dive{\gamma}{\gamma'}(r)=\inf_P|P|,$ where $P$ varies over all $r$-avoiding paths joining $\gamma(r)$ to $\gamma'(r)$.  This is the \emph{combinatorial divergence} of the rays $\gamma,\gamma'$.

The rays $\gamma,\gamma'$ diverge \emph{linearly} if there exist $A>0,B\in\reals$ such that $\dive{\gamma}{\gamma'}(r)\leq Ar+B$ for all $r\geq 0$.  The bi-infinite combinatorial geodesic $\gamma:\reals\rightarrow\mathbf X$ has \emph{linear (combinatorial) divergence} if the rays $\gamma([0,\infty))$ and $\gamma((-\infty,0])$ diverge linearly.  $\mathbf X$ has \emph{linear divergence of rays} if each pair of combinatorial geodesic rays in $\mathbf X$ diverges linearly, and $\mathbf X$ has \emph{(uniformly) linear divergence} if there exist $A\geq 0,B\in\reals$ such that $\dive{\gamma}{\gamma'}(r)\leq Ar+B$ for all combinatorial geodesic rays $\gamma,\gamma'$ emanating from $x$ and for all $r\geq 0$.
\end{defn}

From now on, all geodesic rays are understood to emanate from a fixed initial 0-cube $x_0$.  If $\gamma,\gamma',\gamma''$ are three such rays, then $\dive{\gamma}{\gamma'}(r)\leq\dive{\gamma}{\gamma''}(r)+\dive{\gamma'}{\gamma''}(r)$ for all $r\geq 0$.

\subsection{Divergence of pairs of combinatorial geodesic rays}
Let $\gamma,\gamma:[0,\infty)\rightarrow\mathbf X$ be combinatorial geodesic rays emanating from $x_0$ and let $v,v'\subset\simp\mathbf X$ be the simplices at infinity represented by $\gamma$ and $\gamma'$ respectively.

\begin{lem}\label{lem:divergeupper1}
If $v\subseteq v'$, then there exists $B\in\reals$ such that $\dive{\gamma}{\gamma'}(r)\leq 2r+B$ for all $r\geq 0$.  If $v$ and $v'$ lie in a common simplex of $\simp\mathbf X$, then there exists $B$ such that the same inequality holds for all $r\geq 0$.
\end{lem}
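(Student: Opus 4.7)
The plan is to construct, for each $r\ge 0$, an explicit $r$-avoiding combinatorial path from $\gamma(r)$ to $\gamma'(r)$ of length at most $2r+B$. First I apply Lemma~\ref{lem:samestart} to assume $\gamma(0)=\gamma'(0)=x_0$, absorbing the finite modification into $B$. For the first statement ($v\subseteq v'$), the Remark following Theorem~\ref{thm:opticalspace} (with one more application of Lemma~\ref{lem:samestart}) lets me arrange $\mathcal W(\gamma)\subseteq\mathcal W(\gamma')$. For the second statement ($v,v'$ contained in a common simplex $w$ of $\simp\mathbf X$), I apply Theorem~\ref{thm:structureofboundarysets} to a UBS representing $w$ that contains $\mathcal W(\gamma)\cup\mathcal W(\gamma')$; this shows that, after discarding finitely many exceptional hyperplanes absorbed into $B$, every hyperplane in $\mathcal W(\gamma)\setminus\mathcal W(\gamma')$ crosses every hyperplane in $\mathcal W(\gamma')\setminus\mathcal W(\gamma)$.

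Given these reductions, the path passes through an explicit intermediate $0$-cube $y$. Writing
\[
\mathcal H_{\cap}=\mathcal W(\gamma|_{[0,r]})\cap\mathcal W(\gamma'|_{[0,r]}),\quad\mathcal H_{\gamma}=\mathcal W(\gamma|_{[0,r]})\setminus\mathcal W(\gamma'),\quad\mathcal K_{\gamma'}=\mathcal W(\gamma'|_{[0,r]})\setminus\mathcal W(\gamma),
\]
the cross-all-but-finitely-many property makes the orientation sending each element of $\mathcal H_{\cap}\cup\mathcal H_{\gamma}\cup\mathcal K_{\gamma'}$ to its halfspace away from $x_0$ consistent, and it differs from $x_0$ on only these finitely many hyperplanes, so it is canonical. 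The associated $0$-cube $y$ satisfies $d(x_0,y)=2r-k$ with $k=|\mathcal H_{\cap}|$, and $d(\gamma(r),y)=|\mathcal K_{\gamma'}|=r-k=|\mathcal H_{\gamma}|=d(y,\gamma'(r))$. Lemma~\ref{lem:segments}, applied inside the convex hulls in which the relevant hyperplane sets sit, yields geodesic segments $\gamma(r)\to y$ (crossing precisely $\mathcal K_{\gamma'}$) and $y\to\gamma'(r)$ (crossing precisely $\mathcal H_{\gamma}$); along each, every intermediate $0$-cube is separated from $x_0$ by all of $\mathcal H_{\cap}\cup\mathcal H_{\gamma}$ or all of $\mathcal H_{\cap}\cup\mathcal K_{\gamma'}$ together with a partial extension, hence by at least $r$ hyperplanes, making the concatenation $r$-avoiding of length $2(r-k)\le 2r$.

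The main obstacle is the case $v=v'$ with $v$ failing the bounded bending condition of Definition~\ref{defn:boundedbending}. There $\mathcal H_{\gamma}$ and $\mathcal K_{\gamma'}$ both sit inside the common minimal UBS $\mathcal W(\gamma)$, so the cross-all-but-finitely-many property between them is not automatic: a pair of hyperplanes inside a single minimal UBS may be $\prec$-comparable rather than crossing. One must extract the required crossing structure from the semi-nesting supplied by Lemma~\ref{lem:nesting} and build $y$ inside the common convex hull $\mathrm{Hull}(\gamma)=\mathrm{Hull}(\gamma')$. The bound $2r+B$ then comes from choosing $y$ to be maximal in the partial order induced by the common $0$-cube at infinity orienting $\mathcal W(\gamma)$, so that both connecting geodesic segments proceed outward from $x_0$ rather than dipping back through the shared portion near the basepoint.
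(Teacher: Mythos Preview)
Your hyperplane bookkeeping has a genuine gap.  You write
\[
\mathcal H_{\cap}=\mathcal W(\gamma|_{[0,r]})\cap\mathcal W(\gamma'|_{[0,r]}),\qquad
\mathcal H_{\gamma}=\mathcal W(\gamma|_{[0,r]})\setminus\mathcal W(\gamma'),\qquad
\mathcal K_{\gamma'}=\mathcal W(\gamma'|_{[0,r]})\setminus\mathcal W(\gamma),
\]
and then compute as if $\mathcal W(\gamma|_{[0,r]})=\mathcal H_{\cap}\sqcup\mathcal H_\gamma$ and $\mathcal W(\gamma'|_{[0,r]})=\mathcal H_{\cap}\sqcup\mathcal K_{\gamma'}$.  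This is false: you have omitted the ``out-of-sync'' hyperplanes, namely those lying in $\mathcal W(\gamma)\cap\mathcal W(\gamma')$ that cross one ray before time $r$ and the other after time $r$.  Concretely, take $\mathbf X=[0,\infty)^2$, let $\gamma$ run along the $x$-axis and $\gamma'$ be a diagonal staircase, and set $r=4$: then $\mathcal H_\gamma=\emptyset$ while $r-k=2$, and $d_{\mathbf X}(\gamma(r),y)=4\neq|\mathcal K_{\gamma'}|=2$.  So the identities $|\mathcal H_\gamma|=|\mathcal K_{\gamma'}|=r-k$ and $d_{\mathbf X}(x_0,y)=2r-k$ all fail, and your path need not have length $\le 2r$.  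The paper's argument meets exactly this issue by introducing the sets $\mathcal B_r$ (crossing $\gamma$ before $r$ and $\gamma'$ after $r$) and $\mathcal C_r$ (the analogous set for $\gamma'$), proving $\mathcal B_r\perp\mathcal C_r$, and then flipping $\mathcal B_r$ followed by $\mathcal C_r$ to build the $r$-avoiding geodesic; the key crossing fact is not between $\mathcal H_\gamma$ and $\mathcal K_{\gamma'}$ but between these out-of-sync families.

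Your final paragraph on the $v=v'$/bounded-bending case is also not a proof: you assert that the semi-nesting from Lemma~\ref{lem:nesting} lets one ``extract the required crossing structure'' and choose $y$ maximal in some partial order, but you do not say what $y$ is, why the resulting segments are $r$-avoiding, or why the length bound holds.  Once you repair the decomposition as above (so that the relevant crossing is $\mathcal B_r\perp\mathcal C_r$), this case needs no separate treatment: the construction already covers $\mathcal W(\gamma)\subseteq\mathcal W(\gamma')$ uniformly, and the nonempty $\mathcal H$ case is then handled by passing to a nearby ray $\gamma''$ with $\mathcal W(\gamma'')\subseteq\mathcal W(\gamma')$, as the paper does.
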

\begin{proof}
Suppose that $v\subseteq v'$.  Then the set $\mathcal H=\mathcal W(\gamma)-\mathcal W(\gamma')$ of hyperplanes that cross $\gamma$ but not $\gamma'$ is finite.  The set $\mathcal H'=\mathcal W(\gamma')-\mathcal W(\gamma)$ of hyperplanes that cross $\gamma'$ but not $\gamma$ is finite if and only if $v=v'$.

First, if $\mathcal H=\emptyset$, then for all $r\geq 0$, $\dive{\gamma}{\gamma'}(r)=d_{_{\mathbf X}}(\gamma(r),\gamma'(r))\leq 2r.$ To see this, first fix $r\geq 0$.  Consider the set of hyperplanes crossing the geodesic segment $P_r=\gamma([0,r])$.  Since $\mathcal H=\emptyset$, this set of hyperplanes is partitioned into two subsets: $\mathcal A_r$ is the set of hyperplanes that cross $P_r$ and $P'_r=\gamma'([0,r])$.  The set of remaining hyperplanes, $\mathcal B_r$, all cross $P_r$ and $\gamma'-P'_r$, since every hyperplane that crosses $\gamma$ also crosses $\gamma'$.

The set of hyperplanes crossing $P'_r$ consists of $\mathcal A_r$, together with a set $\mathcal C_r$ of hyperplanes $U$ such that each $U$ crosses $\gamma-P_r$, or separates an infinite subray of $\gamma$ from an infinite subray of $\gamma'$, and thus crosses all but finitely many elements of $\mathcal H$.  Note that $|\mathcal A_r|+|\mathcal B_r|=|\mathcal A_r|+|\mathcal C_r|=r$.  Note also that if $B\in\mathcal B_r$ and $C\in\mathcal C_r$, then $B\bot C$, so that the hyperplanes in $\mathcal B_r$ are orientable independently of those in $\mathcal C_r$.
\begin{figure}[h]
  \includegraphics[width=4in]{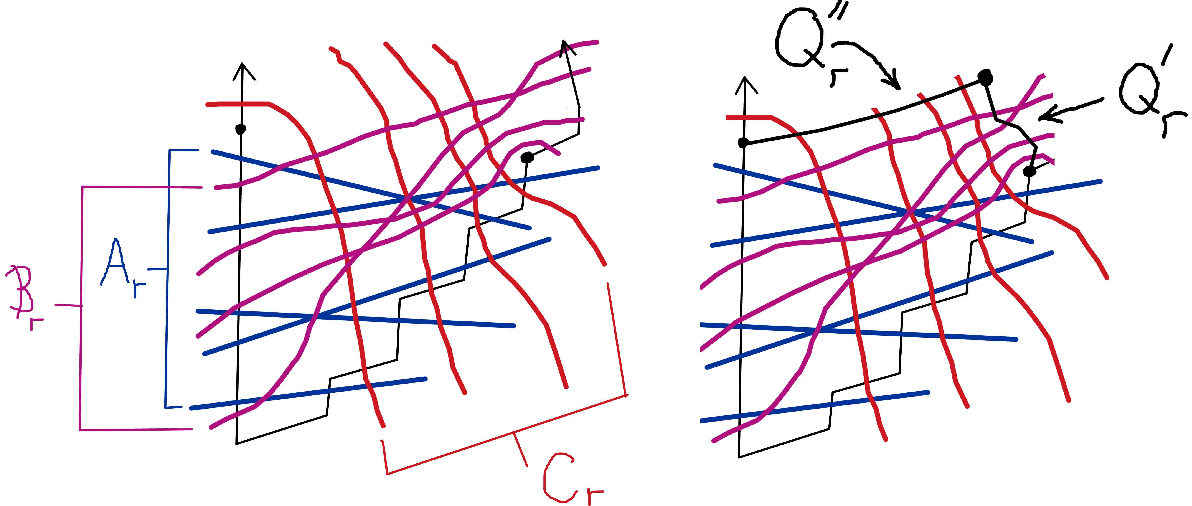}\\
  \caption{The sets $\mathcal A_r,\mathcal B_r,\mathcal C_r$ are shown at left. At right is the path $Q_r$.}\label{fig:hyperplanepartition}
\end{figure}

We now construct an $r$-avoiding geodesic segment $Q_r$ joining $x'_r=\gamma'(r)$ to $x_r=\gamma(r)$, from which the claimed equality follows.  Let $Q'_r(0)=x'_r$.  Let $B_0,\ldots,B_b$ be the hyperplanes in $\mathcal B_r$, numbered so that $x'_r\in N(B_0)$ and $B_0\coll B_1\coll\ldots\coll B_b$.  This assumption is justified, since any hyperplane separating $B_i$ from $B_{i+1}$ must cross $P_r$ and thus belong to $\mathcal B_r$.  Any hyperplane separating $x'_r$ from $B_0$ is likewise in $\mathcal B_r$.  For $0\leq t\leq b$, let $Q'_r(t)$ orient all hyperplanes $U\in\mathcal W-\mathcal B_r$ toward the halfspace $x'_r(U)$, and let $Q'_r(t)(B_s)=x'_r(B_s)$ for $s>t$.  For $s\leq t$, let $Q'_r(t)(B_s)=\mathbf X-x'_r(B_s)$.  This creates a geodesic segment from $x'_r$ to $Q'_r(t)$ that crosses exactly the set $\mathcal B_r$ of hyperplanes.  Note that each $Q'_r(t)$ is separated from $x_o$ by $\mathcal A_r\cup\mathcal C_r$, and thus $d_{_{\mathbf X}}(Q'_r(t),x_o)\geq r$ for all $t$.  In the same manner, we flip the hyperplanes $\mathcal C_r$ successively to yield a geodesic segment $Q''_r$ from $Q'_r(b)$ to $x_r$, with $Q''_r$ separated from $x_o$ by $\mathcal A_r\cup\mathcal B_r$.  The segment $Q_r=Q'_rQ''_r$ is the desired $r$-avoiding geodesic segment.  This yields the desired bound on the divergence.

Now, if $\mathcal H$ is nonempty, then there exists a combinatorial geodesic ray $\gamma''$ with $\gamma''(0)=x_0$, such that $d_{_{\mathbf X}}(\gamma(r),\gamma''(r))\leq 2|\mathcal H|$ for each $r\geq 0$.  Moreover, $\mathcal W(\gamma)=\mathcal W(\gamma'')\cup\mathcal H$, so that $\mathcal W(\gamma'')\subseteq\mathcal W(\gamma')$.  Given such a ray $\gamma''$, we see that
\[\dive{\gamma}{\gamma'}(r)\leq\dive{\gamma}{\gamma''}(r)+\dive{\gamma'}{\gamma''}(r)\leq 2|\mathcal H|+2r,\]
and the first assertion of the lemma follows, with $B=2|\mathcal H|$.

It thus suffices to produce the ray $\gamma''$.  Let $\mathcal A_r,\mathcal B_r,\mathcal C_r$ be as above and let $\mathcal H_r$ be the set of $H\in\mathcal H$ that cross $P_r$.  Since $\mathcal H$ is finite, $\mathcal H_r$ has uniformly bounded cardinality $h\leq|\mathcal H|$.  Let $\mathcal H=\{H_0,\ldots,H_h\}$, where for $i<j$, the hyperplane $H_i$ separates the 1-cube $N(H_j)\cap\gamma$ from $x_o$.  Let $R$ be the subpath of $\gamma$ joining $x_o$ to the 0-cube of $N(H_h)\cap\gamma$ separated from $x_o$ by $H_h$.  Let $\alpha$ be the combinatorial geodesic subray of $\gamma$ emanating from the terminal 0-cube of $R$.  Then $H_h$ crosses every hyperplane dual to $\alpha$ and $\alpha\rightarrow N(H_h)$.  For $t\geq 0$, define $\alpha'(t)$ to be the 0-cube that orients each hyperplane $U$ toward $\alpha(t)(U)$, except for $U=H_h$, in which case $\alpha'(t)(H_h)=\mathbf X-\alpha(t)(H_h)$.  This orientation is consistent and canonical for each $t$, and the map $t\mapsto\alpha'(t)$ yields a geodesic ray $\alpha'\rightarrow N(H_h)$ emanating from the penultimate 0-cube of $R$.  Concatenating $\alpha'$ with the first $|R|-1$ 1-cubes of $R$ yields a geodesic $\gamma''_h\rightarrow\mathbf X$ such that $\gamma''_h(t)$ is separated from $\gamma(t+1)$ by at most one hyperplane, namely $H_h$, for each $t\geq 0$.  The claim now follows by induction on $|\mathcal H|$.  Note that by our previous claim, for all $r\geq 0$, $\dive{\gamma}{\gamma''}(r)\leq 2|\mathcal H|,$ since $\mathcal W(\gamma'')\subseteq\mathcal W(\gamma)$.  On the other hand, $\mathcal W(\gamma'')\subseteq\mathcal W(\gamma')$ by construction.

Suppose that $v,v'$ belong to a common simplex $v''\subseteq\simp\mathbf X$.  If $v''=v$ or $v''=v'$, then the claim follows from the first assertion of the lemma.  There are two remaining possibilities:  first, we could have that $v,v'$ are distinct 0-simplices belonging to a common 1-simplex $v''$.  Second, if $v\not\subset v'$ and $v'\not\subset v$, and $v$ is not a 0-simplex, then there exists a simplex $u=v\cap v'$ that is properly contained in both $v,v'$.

Let $\mathcal V,\mathcal V'$ be as before.  Then $\mathcal V=\mathcal V_0\sqcup\mathcal H$ and $\mathcal V'=\mathcal V'_0\sqcup\mathcal H'$, where $\mathcal H,\mathcal H'$ are finite, and $\mathcal V_0,\mathcal V'_0$ satisfy the following conditions:
\begin{compactenum}
\item If $U\in\mathcal V_0$, then either $U$ crosses both $\gamma$ and $\gamma$, or $U$ crosses every element of $\mathcal V'_0$.
\item If $U'\in\mathcal V_0$, then either $U'$ crosses both $\gamma$ and $\gamma$, or $U$ crosses every element of $\mathcal V_0$.
\end{compactenum}
In either case ($v\cap v'=u$ or $v\cap v'=\emptyset$), the proof of assertion~$(1)$ shows, mutatis mutandis, that $\dive{\gamma}{\gamma'}(r)\leq d_{_{\mathbf X}}(\gamma(r),\gamma'(r))+2(|\mathcal H|+|\mathcal H'|)$ for all $r\geq 0$.
\end{proof}

Lemma~\ref{lem:divergeupper1} enables us to bound the $\dive{\gamma}{\gamma'}(r)$ using the simplicial Tits distance, as follows.

\begin{lem}\label{lem:divergeupper2}
There exists $B\in\reals$ such that $\dive{\gamma}{\gamma'}(r)\leq 2\left[\eta(v,v')+1\right]r+B$ for all $r\geq 0$.
\end{lem}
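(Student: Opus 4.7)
I will argue by induction on $n = \eta(v,v')$. For the base case $n = 0$, we have $v = v'$, so $\mathcal W(\gamma) \triangle \mathcal W(\gamma')$ is finite; Lemma~\ref{lem:divergeupper1} (applied with $\mathcal W(\gamma) \subseteq \mathcal W(\gamma')$ up to a finite set of hyperplanes) gives $\dive{\gamma}{\gamma'}(r) \le 2r + B$, which matches $2(0+1)r + B$.

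For the inductive step, let $\eta(v, v') = n \ge 1$ and assume the bound for all visible pairs at smaller simplicial Tits distance. Choose a shortest simplex-path $v = u_0, u_1, \ldots, u_n = v'$, whose intermediate simplices may be taken visible by Lemma~\ref{lem:shortestpathvisible}, and pick representing rays $\gamma_i$ sharing the basepoint $x_0$ via Lemma~\ref{lem:samestart}. For each $0 \le i \le n-1$, the face $u_i \cap u_{i+1}$ is nonempty, so I select a visible $0$-simplex $w_i$ in it together with a ray $\gamma_{w_i}$ based at $x_0$ that represents $w_i$.

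The central construction is the chain of $n+2$ rays
\[
\gamma,\ \gamma_{w_0},\ \gamma_{w_1},\ \ldots,\ \gamma_{w_{n-1}},\ \gamma',
\]
each of whose $n+1$ consecutive pairs is handled by Lemma~\ref{lem:divergeupper1}. The two extremal pairs $(\gamma, \gamma_{w_0})$ and $(\gamma_{w_{n-1}}, \gamma')$ are nested, since $w_0 \subseteq u_0 = v$ and $w_{n-1} \subseteq u_n = v'$. Each interior pair $(\gamma_{w_{i-1}}, \gamma_{w_i})$ with $1 \le i \le n-1$ has $w_{i-1}, w_i$ both lying in the visible simplex $u_i$, so they lie in the common simplex $u_i$. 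Lemma~\ref{lem:divergeupper1} therefore bounds each of the $n+1$ pairs by $2r + B_i$, and the triangle inequality for combinatorial divergence yields
\[
\dive{\gamma}{\gamma'}(r) \;\le\; (n+1)(2r + B_{\max}) \;=\; 2(n+1)r + B,
\]
which completes the induction.

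The principal technical obstacle is verifying that each $u_i \cap u_{i+1}$ actually contains a visible $0$-simplex, since general $0$-simplices need not be visible (Example~\ref{exmp:invisible}). When every $0$-simplex of $u_i \cap u_{i+1}$ is invisible, I would replace $w_i$ by a maximal visible simplex $M_i \supseteq u_i \cap u_{i+1}$ supplied by Theorem~\ref{thm:visiblesimplex}, inserting at most one additional link into the chain at that index; this worsens the multiplicative constant but preserves the linear dependence on both $n$ and $r$, which is the substantive content driving the characterizations of (super)linear divergence in Section~\ref{sec:groupdiverge}.
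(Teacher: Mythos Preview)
Your approach is essentially the paper's: both walk along a shortest simplex-path $v=u_0,\ldots,u_n=v'$, apply Lemma~\ref{lem:divergeupper1} at each step, and sum via the triangle inequality for $\mathbf{div}$; the paper merely phrases this as an induction on $\eta$, peeling off the last simplex and invoking Lemma~\ref{lem:divergeupper1} on the pair $(u_N\cap u_{N-1},\,v')$.

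The visibility obstacle you flag is genuine and is \emph{also present in the paper's proof}: the sentence ``let $\gamma''$ be a geodesic ray representing $u_N\cap u_{N-1}$'' presupposes that this face is visible, which is not guaranteed. Your fallback (passing to a maximal visible $M_i\supseteq u_i\cap u_{i+1}$ and inserting an extra link) works but, as you note, spoils the constant $2(\eta+1)$. A cleaner fix that preserves the stated bound is this: observe that the \emph{proof} of Lemma~\ref{lem:divergeupper1} (see the decomposition $\mathcal V=\mathcal V_0\sqcup\mathcal H$, $\mathcal V'=\mathcal V'_0\sqcup\mathcal H'$ and the sentence ``In either case ($v\cap v'=u$ or $v\cap v'=\emptyset$)\ldots'') actually yields $\dive{\gamma}{\gamma'}(r)\le 2r+B$ whenever $v\cap v'\neq\emptyset$, not only when one simplex is a face of the other or both lie in a common simplex. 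Granting this, take the intermediate rays to represent the \emph{visible} simplices $u_1,\ldots,u_{n-1}$ themselves (available by Lemma~\ref{lem:shortestpathvisible}); then each consecutive pair $(u_i,u_{i+1})$ has nonempty intersection, the chain has exactly $n$ links, and you recover $\dive{\gamma}{\gamma'}(r)\le 2nr+B\le 2(\eta(v,v')+1)r+B$ without any detour through invisible $0$-simplices.
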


\begin{proof}
If $\eta(v,v')=\infty$, then $A=\eta(v,v')$ suffices, so suppose $\eta(v,v')=N<\infty$.  The claim holds for $N=0$, with $A=1$, by Lemma~\ref{lem:divergeupper1}.  More generally, if $v$ and $v'$ lie in a common simplex, then the claimed inequality holds, for some $B$ and $A=1$, by Lemma~\ref{lem:divergeupper1}.

Suppose that the claim holds for some $N-1\geq 0$.  Let $\gamma,\gamma'$ satisfy $\eta(\gamma,\gamma')=N$.  Then there exist simplices $u_0,u_{N}\subset\simp\mathbf X$ such that $v=u_0,v'=u_{N}$, and there is a minimal simplex-path $P=u_0,u_1,\ldots,u_{N}$ in $\simp\mathbf X$ joining $u_0$ to $u_{N}$.  Let $\gamma''$ be a geodesic ray emanating from $x_o$ and representing $u_N\cap u_{N-1}$.  By induction, $\dive{\gamma}{\gamma''}(r)\leq 2Ar+B_1$ for all $r$ and some fixed $A$ depending on $N$, since $\eta(v,u_{N}\cap u_{N-1})=N-1$.  Since $u_{N}\cap u_{N-1}$ and $v'$ lie in a common simplex, namely $u_{N}$, Lemma~\ref{lem:divergeupper1} implies that $\dive{\gamma'}{\gamma''}(r)\leq 2r+B_1,$ so that $\dive{\gamma}{\gamma'}(r)\leq 2(A+1)r+B_1+B_2.$  Choosing $A=1$ suffices for $N=0$, so we may take $A=N+1$ and $B=B_1+B_2$, and conclude that $\dive{\gamma}{\gamma'}(r)\leq 2r\left(\eta(\gamma,\gamma')+1\right)+B.$
\end{proof}

Let $\gamma,\gamma'$ be geodesic rays with $\gamma(0)=\gamma'(0)=x_0$ and for each $r\geq 0$, let $x_r=\gamma(r)$ and $x'_r=\gamma'(r)$.  Let $S_r$ be a shortest $r$-avoiding path joining $x_r$ to $x'_r$.  Then $S_r$ decomposes as a ``minimal'' concatenation of geodesic segments as follows.  Let $P_1^r$ be a subpath of $S_r$ beginning at $x_r$ that is a geodesic segment of $\mathbf X$ and let $P_1^r$ have maximal length among such paths.  If $P_1^r$ is properly contained in $S_r$, then let $P_2^r$ be a maximal geodesic subpath of $S_r-\interior{P_1^r}$ beginning at the terminal 0-cube of $P_1^r$.  In this way, we can write $S_r=P_1^rP_2^r\ldots P_{c_r}^r,$ where each $P_i^r$ is a geodesic segment and the hyperplane dual to the initial 1-cube of $P_i^r$ crosses a 1-cube of $P_{i-1}^r$.  This is a \emph{fan decomposition} of $S_r$, and, if $c_r=1$ for some fan decomposition of $S_r$, for all $r\geq 0$, then the simplices $v,v'$ of $\simp\mathbf X$ represented by $\gamma,\gamma'$ either have nonempty intersection or lie in a common simplex.

Indeed, let $C_r=\gamma([0,r])$ and let $C'_r=\gamma'([0,r])$.  Suppose that $S_r$ is a geodesic segment for each $r\geq 0$, and suppose that $v\cap v'=\emptyset$.  Let $\mathcal U=\mathcal W(\gamma)\cap\mathcal W(\gamma')$.  Now $\mathcal U$ is finite, since $v\cap v'=\emptyset$ (see Theorem~\ref{thm:opticalspace}).  Then every hyperplane separating $\gamma(r)$ from $x_0$ either belongs to $\mathcal U$ or separates $\gamma'(r)$ from $\gamma(r)$.  Analogously, every hyperplane separating $\gamma'(r)$ from $x_0$ separates $\gamma(r)$ from $\gamma'(r)$ or belongs to $\mathcal U$.  Every hyperplane separating $\gamma(r)$ from $\gamma'(r)$ separates either $\gamma(r)$ or $\gamma'(r)$ from $x_0$.  Hence $|P_1^r|=\dive{\gamma}{\gamma'}(r)=2r-2|\mathcal U|$ for all $r\geq 0$.  Applying the Folding Lemma to $\gamma$ and $\gamma'$, and then truncating the common initial segment of the resulting rays, we can assume that $\mathcal U=\emptyset$ and $|P_1^r|=r$.

Now let $P_r=QQ'$, where $Q$ is a geodesic segment of length $r$ emanating from $x_r$ and $Q'$ is a geodesic segment of length $r$ terminating at $x'_r$.  Let $D\rightarrow\mathbf X$ be a minimal-area disc diagram bounded by the closed path $C_rQQ'(C'_r)^{-1}$.  Without affecting the points $x_r,x_0,x'_r$ or the sets $\mathcal W_r,\mathcal W'_r$, we can modify $D$ and the paths $C_r,C'_r$ so that no two dual curves in $D$ emanating from $C_r$ (and no two dual curves emanating from $C'_r$) cross.

Since $\partial_pD$ is the concatenation of three geodesic segments, and any two dual curves have a total of four ends on $\partial_pD$, distinct dual curves in $D$ must map to distinct hyperplanes.  Thus $D\rightarrow\mathbf X$ is an isometric embedding and, since $\mathcal U=\emptyset$, each dual curve travels from $QQ'$ to $C_r$ or from $QQ'$ to $C'_r$.  Since every 0-cube of $QQ'$ is separated from $x_0$ by at least $r$ dual curves, it follows that each dual curve emanating from $C'_r$ crosses each dual curve emanating from $C_r$, and hence that $v$ and $v'$ lie in a common simplex.  More generally:

\begin{lem}\label{lem:divergelower2}
Suppose that there exists $K\in\reals$ such that for all $r\geq 0$, $|S_r|\leq d_{_{\mathbf X}}(x_r,x'_r)+2K.$  Then, if $v$ and $v'$ are 0-simplices, they lie in a common simplex.  If the above inequality holds and $v$ is not a 0-simplex, then $v\cap v'\neq\emptyset$.
\end{lem}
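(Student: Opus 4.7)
The plan is to adapt the disc-diagram argument from the $c_r=1$ special case established in the paragraph immediately before the lemma, replacing the exact equality $|S_r|=d_{\mathbf X}(x_r,x'_r)$ by the inequality $|S_r|\leq d_{\mathbf X}(x_r,x'_r)+2K$. The key quantitative observation is the parity identity $|S_r|=d_{\mathbf X}(x_r,x'_r)+2\sum_H\lfloor n_H(S_r)/2\rfloor$, where $n_H(S_r)$ counts how many times $S_r$ crosses the hyperplane $H$; the hypothesis therefore translates into the bound $\sum_H\lfloor n_H(S_r)/2\rfloor\leq K$. Iterating the Folding Lemma (Lemma~\ref{lem:folding}) and truncating a common initial subray (as done in the preceding base case), I would reduce to $\mathcal W(\gamma)\cap\mathcal W(\gamma')=\emptyset$, updating $K$ by an additive constant, so that $d_{\mathbf X}(x_r,x'_r)=2r$ for all $r$.

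For each large $r$, let $D_r\to\mathbf X$ be a minimal-area disc diagram with boundary path $C_r\cdot S_r\cdot(C'_r)^{-1}$. Since $C_r$ and $C'_r$ are geodesic segments and $\mathcal W(\gamma)\cap\mathcal W(\gamma')=\emptyset$, every dual curve of $D_r$ has at least one endpoint on $S_r$ and falls into exactly one of three types: (i) endpoints on $C_r$ and $S_r$; (ii) endpoints on $C'_r$ and $S_r$; or (iii) both endpoints on $S_r$. The parity bound forces the number of type-(iii) dual curves to be at most $K$, and counting endpoints on $C_r$ and $C'_r$ yields exactly $r$ curves each of types (i) and (ii). After the planar modification of the base case, eliminating self-crossings among type-(i) dual curves and among type-(ii) dual curves, one obtains a nested ordering of type-(i) endpoints on $S_r$ and, separately, of type-(ii) endpoints on $S_r$. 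Using that $S_r$ is $r$-avoiding, every 0-cube of $S_r$ is separated from $x_0$ by at least $r$ hyperplanes, and translating this into a count of dual curves of $D_r$ separating that point from $x_0$ in the planar sense shows that, with at most $O(Kr)$ exceptions, each pair of a type-(i) and a type-(ii) dual curve must cross in $D_r$. Each such crossing yields a pair $(H,H')$ with $H\in\mathcal W(C_r)\subseteq\mathcal W(\gamma)$, $H'\in\mathcal W(C'_r)\subseteq\mathcal W(\gamma')$, and $H\bot H'$ in $\mathbf X$, producing $r^2-O(Kr)$ such crossing pairs at each level $r$.

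Letting $r\to\infty$, a diagonal extraction combined with strong local finiteness and Theorem~\ref{thm:structureofboundarysets} would produce minimal UBSes $\mathcal V\subseteq\mathcal W(\gamma)$ and $\mathcal V'\subseteq\mathcal W(\gamma')$ such that every element of $\mathcal V$ crosses all but finitely many elements of $\mathcal V'$. The union $\mathcal V\cup\mathcal V'$ then represents a simplex of $\simp\mathbf X$ having a 0-simplex face in each of $v$ and $v'$; when $v$ and $v'$ are themselves 0-simplices, $\mathcal V$ and $\mathcal V'$ are almost-equivalent to $\mathcal W(\gamma)$ and $\mathcal W(\gamma')$, so both $v$ and $v'$ lie in this common simplex, yielding the first assertion. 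For the second assertion, when $v$ has dimension at least one, $\mathcal W(\gamma)$ decomposes as $\mathcal V_1\sqcup\cdots\sqcup\mathcal V_k$ with $k\geq 2$ by Theorem~\ref{thm:structureofboundarysets}; running the extraction restricted to each minimal summand $\mathcal V_i$ and pigeonholing over the finitely many minimal summands of $\mathcal W(\gamma')$ aims to produce a minimal UBS appearing in both $\mathcal W(\gamma)$ and $\mathcal W(\gamma')$ up to finite error, hence a 0-simplex face shared by $v$ and $v'$.

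The main obstacle is the planar counting of non-crossing pairs in the second paragraph: although there are at most $K$ dual curves of type (iii), a single such curve forms a ``bubble'' along $S_r$ that could a priori nest $\Omega(r)$ endpoints of types (i) and (ii), potentially obstructing up to $\Omega(r^2)$ crossings of the desired type. Reducing this obstruction to the necessary $O(Kr)$ requires a delicate planar argument exploiting both the minimal-area assumption on $D_r$ (which forbids bigons and thereby constrains how dual curves interleave) and the nested orderings of type-(i) and type-(ii) endpoints coming from the base-case modification, so that within each type-(iii) bubble the internal pairs still contribute crossings at the expected rate up to linear error in $r$.
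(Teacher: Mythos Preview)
Your counting approach has a genuine gap, and it is precisely the one you flag at the end. The claim that type-(i) and type-(ii) dual curves cross in all but $O(Kr)$ of the $r^2$ pairs is not established: a single type-(iii) bubble can enclose an arc of $S_r$ of length $\Omega(r)$, and nothing you have said prevents that arc from carrying, say, all type-(i) endpoints and no type-(ii) endpoints, in which case no type-(i)/type-(ii) crossings at all are forced by the $r$-avoiding condition applied to points inside the bubble (the bubble curve itself contributes only $1$ to each separating count, and you have no control over how the remaining $r-1$ are apportioned between types (i) and (ii) across the bubble boundary). Minimal area forbids bigons but does not bound bubble width. Even granting the count, the passage from ``$r^2-O(Kr)$ crossing pairs at each level $r$'' to ``a minimal UBS in $\mathcal W(\gamma)$ every element of which crosses all but finitely many elements of a minimal UBS in $\mathcal W(\gamma')$'' is a real step that you leave entirely to a diagonal extraction with no details; this is where the work would actually be.

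The paper bypasses all of this by arguing the contrapositive. If $v$ and $v'$ do not lie in a common simplex, then there exist fixed $N,R$ such that $H_i$ fails to cross $H'_N$ for all $i\geq R$. Now look at a single 0-cube $c$ on $S_r$: the point where $S_r$ leaves the carrier $N(H'_N)$ on the side away from $x_0$. Travelling along the carrier of the dual curve $K_N$ mapping to $H'_N$ and then down $\gamma'$ gives a path from $c$ to $x_0$ in $D_r$ crossing at most $N$ type-(ii) curves and only those type-(i) curves whose hyperplanes cross $H'_N$ (of which there are at most $R$). Since $c$ must be separated from $x_0$ by at least $r$ dual curves, the shortfall $r-R-N$ must be made up by type-(iii) curves, so $u_r\geq r-R-N\to\infty$, contradicting $u_r\leq K$. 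No global crossing count, no limiting extraction: the negated conclusion hands you the single hyperplane $H'_N$, and a single strategic point on $S_r$ finishes the argument.
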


\begin{proof}
As before, suppose that $v\cap v'\neq\emptyset$ and that $\mathcal W(\gamma)\cap\mathcal W(\gamma')=\emptyset$.  To achieve the latter requires that we apply the Folding Lemma and truncate the common initial segment of the resulting rays.  This does not affect $v$ or $v'$, and modifies $\gamma$ and $\gamma'$ in only finitely many 1-cubes.

In case $K=0$, the claim follows from the previous discussion.  The hypothesis says that for all $r\geq 0$, there are at most $K$ hyperplanes that cross $S_r$ in more than one 1-cube.  Suppose that $v,v'$ do not belong to any common simplex.  Let $H_r$ be the hyperplane dual to the $r^{th}$ 1-cube of $\gamma$, and define $H'_r$ analogously for $\gamma'$.  The there exists $R,N\geq 0$ such that, for all $r\geq R$, the hyperplane $H_r$ does not cross $H'_N$. Also, we can choose $R$ such that, for all $r\geq 0$, there are at most $R$ hyperplanes $U$ that cross $H_r$ and $H'_r$.

Each 0-cube $c\in S_r$ is separated from $x_o$ by at least $r$ hyperplanes.  Let $Q_r,Q'_r$ be the initial, length-$r$ segments of $\gamma,\gamma'$ respectively, emanating from $x_o$.  Let $D_r$ be a minimal-area disc diagram bounded by $Q_rS_r(Q'_r)^{-1}$.  Since $\gamma$ and $\gamma'$ have no common dual hyperplanes, every dual curve in $D_r$ travels from $Q_r$ or $Q'_r$ to $S_r$.  Without modifying $x_o,\gamma(r),\gamma'(r)$ or $v,v'$, we may change $Q_r,Q'_r$ so that no two dual curves in $D_r$ emanating from $Q_r$ cross, and no two dual curves emanating from $Q'_r$ cross.  See Figure~\ref{fig:fanagain}.  Let $u_r$ be the number of dual curves in $D_r$ that start and end on $S_r$, so that $|S_r|=2r+2u_r$.

\begin{figure}[h]
  \includegraphics[width=2.5in]{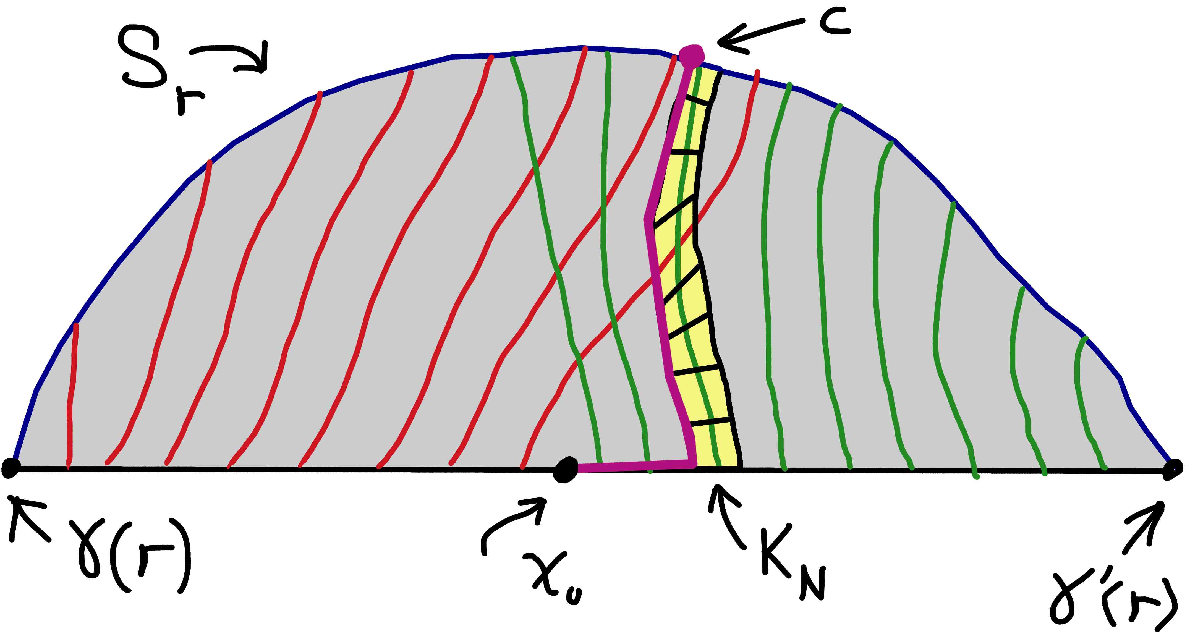}\\
  \caption{The diagram $D_r$ showing that either $\eta(v,v')\leq 1$ or $u_r\rightarrow\infty$.}\label{fig:fanagain}
\end{figure}

Let $c\in S_r$ be the 0-cube in $S_r\cap N(H'_N)$ that is separated from $\gamma'(r)$ by each $H'_i$ with $N\leq i\leq r$, as shown in Figure~\ref{fig:fanagain}.  Then there is a path in the image of $D_r$ joining $c$ to $x_o$ that crosses only $N$ of the dual curves mapping to hyperplanes $H'_i$: simply travel along the carrier of the dual curve $K_N$ mapping to $H_N$, until arriving at $\gamma'(N-1)$.  Then travel along $\gamma'$ to $x_o$.  Hence every dual curve in $D_r$ separating $c$ from $x_o$ maps to a hyperplane that crosses $H_0$.  Thus $u_r\geq r-R-N$ for all $r\geq 0$, and thus $|S_r|-d_{_{\mathbf X}}(\gamma(r),\gamma'(r))$ is unbounded as $r\rightarrow\infty$, a contradiction.  We conclude that $v$ and $v'$ either intersect or lie in a common simplex.
\end{proof}

\begin{lem}\label{lem:divergelower1}
Suppose that there exist $M,N\in\reals$ such that, for all $r\geq 0$, the rays $\gamma,\gamma'$ satisfy $\dive{\gamma}{\gamma'}(r)\leq 2Nr+M.$
Then $N\geq\eta(v,v')$.
\end{lem}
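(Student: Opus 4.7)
The plan is to prove the lemma by induction on $\lceil N\rceil$, with Lemma 5.7 serving as the base case. For each $r\geq 0$, fix a shortest $r$-avoiding combinatorial path $S_r$ from $\gamma(r)$ to $\gamma'(r)$ with $|S_r|\leq 2Nr+M$. Apply the fan decomposition $S_r=P_1^rP_2^r\cdots P_{c_r}^r$ into maximal geodesic subsegments, with interior 0-cubes $y_0^r=\gamma(r),y_1^r,\ldots,y_{c_r}^r=\gamma'(r)$. Since $S_r$ is $r$-avoiding, each $y_i^r$ satisfies $d_{_{\mathbf X}}(x_0,y_i^r)\geq r$, so a combinatorial geodesic from $x_0$ to $y_i^r$ has length at least $r$.

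Using strong local finiteness of $\mathbf X$ together with K\"onig's lemma, pass to a subsequence $r_n\to\infty$ along which, simultaneously for each surviving index $i$, the segments $\beta_i^{r_n}:x_0\rightsquigarrow y_i^{r_n}$ converge on initial segments to combinatorial geodesic rays $\beta_i$ based at $x_0$; let $u_i$ be the simplex of $\simp\mathbf X$ represented by $\beta_i$, with $u_0=v$ and $u_{c_{r_n}}=v'$. For the inductive step, split $S_{r_n}=S_{r_n}^1 S_{r_n}^2$ at one well-chosen $y_i^{r_n}$ and observe that $|S_{r_n}^1|+|S_{r_n}^2|\leq 2Nr_n+M$. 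Since $S_{r_n}^j$ is an $r_n$-avoiding path between the corresponding endpoints, after using the folding lemma (Lemma 3.17) to align the initial segments of $\gamma,\beta_i$ and of $\beta_i,\gamma'$, one translates these bounds into divergence estimates $\dive{\gamma}{\beta_i}(r)\leq 2N_1r+M_1$ and $\dive{\beta_i}{\gamma'}(r)\leq 2N_2r+M_2$ for all $r$, with $N_1+N_2\leq N$. Apply the inductive hypothesis to each half to obtain simplex-paths of lengths at most $N_1$ from $v$ to $u_i$ and at most $N_2$ from $u_i$ to $v'$; concatenating yields a simplex-path of length at most $N$ from $v$ to $v'$.

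For the base cases: if $N=0$, the hypothesis forces $\dive{\gamma}{\gamma'}$ to be bounded, so $\gamma,\gamma'$ are hyperplane-equivalent up to finitely many hyperplanes and $v=v'$, giving $\eta(v,v')=0$. If $N=1$, then $|S_r|\leq 2r+M\leq d_{_{\mathbf X}}(\gamma(r),\gamma'(r))+M$, and Lemma 5.7 yields that $v,v'$ intersect or lie in a common simplex, hence $\eta(v,v')\leq 1$ after invoking the visibility analysis of Theorem 3.10 to promote the "common simplex" conclusion to an actual length-$1$ simplex-path (because the shared sub-UBS, if infinite, represents a common simplex that meets both $v$ and $v'$).

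The main obstacle is the choice of the intermediate point $y_i^{r_n}$ and the corresponding additivity $N_1+N_2\leq N$. A naive splitting at the geometric midpoint of $S_r$ only gives $N_1+N_2\leq N+O(1/r)$, which is insufficient; one must instead partition $S_r$ using the fan-decomposition boundaries, where the geodesic structure of each $P_i^r$ lets one argue that the sub-divergences realized by the split are \emph{genuinely} bounded by the sub-lengths $|S_r^j|$ rather than overestimated via the triangle inequality. A second subtle point is ensuring that the diagonal extraction of the rays $\beta_i$ can be carried out uniformly in $i$ despite $c_r$ depending on $r$; here one first shows that only finitely many distinct simplex-types can arise (again by strong local finiteness and K\"onig's lemma applied to the tree of initial geodesic segments based at $x_0$), so that a further diagonal subsequence stabilizes the combinatorics of the decomposition and makes the induction valid.
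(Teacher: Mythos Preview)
Your inductive scheme has a genuine gap at the heart of the inductive step. Splitting $S_{r_n}$ at some $y_i^{r_n}$ gives you an $r_n$-avoiding path of length $|S_{r_n}^1|$ from $\gamma(r_n)$ to $y_i^{r_n}$, but this is a bound at a \emph{single} scale $r_n$, and $y_i^{r_n}$ need not equal $\beta_i(r_n)$. To invoke the inductive hypothesis you must produce, for some $N_1<N$, the inequality $\dive{\gamma}{\beta_i}(r)\leq 2N_1r+M_1$ \emph{for all $r\geq 0$}. The folding lemma only realigns basepoints; it does not manufacture divergence bounds at scales where you have none. There is no reason the split points $y_i^{r_n}$ should track a single ray $\beta_i$ closely enough to give a linear bound on $\dive{\gamma}{\beta_i}$ at every $r$ --- the midpoints of the $S_r$ could wander. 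You identify this yourself as ``the main obstacle'' and then wave at the fan decomposition, but the fan decomposition does not fix it: the number $c_r$ of fan pieces is uncontrolled, and the geodesic structure of each $P_i^r$ says nothing about how the pieces at different values of $r$ relate to each other.

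The paper's argument is not inductive and avoids this problem entirely. It subdivides each $S_r$ into exactly $N$ subpaths of length at most $2r+\lceil M/N\rceil$, extracts via K\"onig's lemma rays $\gamma_i$ through the division points $f_i(r)$, and then argues \emph{directly} that $\eta(v_i,v_{i+1})\leq 1$ for each consecutive pair, via a dichotomy: either the segments $Q_i(r),Q_{i+1}(r)$ share unboundedly many hyperplanes (so $v_i\cap v_{i+1}\neq\emptyset$), or the subpath $P_i^r$ fails to be geodesic by only a bounded amount, and Lemma~\ref{lem:divergelower2} applies. The point is that the $N$-fold subdivision is done once and for all, and the analysis of each piece is the base case --- there is no recursion.
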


\begin{proof}
First fix $r\geq0$, and let $S_r$ be a shortest $r$-avoiding path joining $\gamma(r)$ to $\gamma'(r)$, so that $|S_r|\leq 2Nr+M$.  Let $k=\left\lceil\frac{M}{N}\right\rceil$.  Let $S_r=P_1^r\ldots P_N^r$ be a concatenation of combinatorial paths such that each $P_i^r$ has length at most $2r+k$, and $||P_i^r|-|P_j^r||\leq 1$ for all $i,j$.  In other words, subdivide $S_r$ into $N$ paths, each of length as close as possible to $\frac{|S_r|}{N}$.

For $1\leq i\leq N$, let $f_i(r)$ be the terminal 0-cube of $P_i^r$, let $f_0(r)=\gamma(r),f_N(r)=\gamma'(r)$.  Let $Q_i(r)$ be a geodesic segment joining $f_i(r)$ to $x_o$, for $0\leq i\leq n$, so that $Q_0(r),Q_N(r)$ are hyperplane-equivalent to the initial length-$r$ segments of $\gamma,\gamma'$ respectively.

Let $t_i(r)$ be the number of hyperplanes crossing both $Q_i(r)$ and $Q_{i+1}(r)$.  Let $p_i(r)$ be the number of hyperplanes dual to at least 2 1-cubes of $P_i^r$.  Then $2r-t_i(r)+2p_i(r)\leq 2r+k$, so that $2t_i(r)\geq 2p_i(r)-k$.  By K\"{o}nig's lemma, there are combinatorial geodesic rays $\gamma_i$, for $1\leq i\leq N$, such that $\gamma_i(r)=f_i(r)$ for arbitrarily large values of $r$.  If $t_i(r)$ is unbounded as $r\rightarrow\infty$, then $\gamma_i,\gamma_{i+1}$ have infinitely many common hyperplanes, and thus the associated simplices $v_i,v_{i+1}\subset\simp\mathbf X$ satisfy $\eta(v_i,v_{i+1})\leq 1$.  If $t_i(r)$ is uniformly bounded as $r\rightarrow\infty$, then so is $p_i(r)$, and thus $P_i(r)$ fails to be a geodesic segment by a uniformly bounded number of 1-cubes, whence we can take $\eta(v_i,v_{i+1})\leq 1$ by Lemma~\ref{lem:divergelower2}.  Hence there is a simplex path of length at most $N$ joining $v$ to $v'$.  Indeed, if $t_i(r)$ is uniformly bounded, then either $v_i\cap v_{i+1}$ is nonempty, or $v_i$ and $v_{i+1}$ belong to a common simplex.
\end{proof}

\subsubsection{Linear divergence of rays}
$\mathbf X$ has \emph{weakly uniformly linear divergence} if there exists $A\geq 0$ such that, for all $x_0\in\mathbf X^0$ and for all combinatorial geodesic rays $\gamma,\gamma'$ with $\gamma(0)=\gamma'(0)=x_0$, we have $\dive{\gamma}{\gamma'}(r)\leq Ar+B$ for all $r\geq 0$, where $B$ (but not $A$) depends on $\gamma$ and $\gamma'$.

\begin{thm}\label{thm:lineardivergence}
Let $\mathbf X$ be strongly locally finite.  Then $\simp\mathbf X$ is bounded if and only if $\mathbf X$ has weakly uniformly linear divergence.
\end{thm}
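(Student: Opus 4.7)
The plan is to combine Lemma~\ref{lem:divergeupper2} and Lemma~\ref{lem:divergelower1}, which translate between combinatorial divergence of a pair of rays and the simplicial Tits distance $\eta$ of the simplices they represent, together with the elementary observation that $\eta$ is comparable to the standard path-metric on $(\simp\mathbf X)^{1}$. Concretely, given 0-simplices $u,v$ and a geodesic $u=u_0,u_1,\ldots,u_d=v$ in $(\simp\mathbf X)^{1}$, the interleaved sequence $u_0,[u_0,u_1],u_1,\ldots,[u_{d-1},u_d],u_d$ is a simplex-path of length $2d$, giving $\eta(u,v)\le 2d_{\simp\mathbf X}(u,v)$; conversely, choosing a 0-simplex $a_i\in u_{i-1}\cap u_i$ along a simplex-path $u=u_0,\ldots,u_n=v$ shows $d_{\simp\mathbf X}(u,v)\le\eta(u,v)$. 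For arbitrary simplices $v,v'$ with 0-simplex faces $v_0\subseteq v,v_0'\subseteq v'$, the triangle inequality for $\eta$ combined with $\eta(v,v_0)\le 1$ yields $\eta(v,v')\le\eta(v_0,v_0')+2$.

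For the forward direction, I would assume $\diam(\simp\mathbf X)=D<\infty$. For any pair of combinatorial geodesic rays $\gamma,\gamma'$ with common initial 0-cube, representing simplices $v,v'$, choosing 0-simplex faces gives $\eta(v,v')\le 2D+2$. Lemma~\ref{lem:divergeupper2} then delivers $\dive{\gamma}{\gamma'}(r)\le(4D+6)r+B$ with $B$ depending only on $\gamma,\gamma'$. Setting $A=4D+6$, which is independent of the rays and the basepoint, establishes weakly uniformly linear divergence.

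For the converse, assume $\mathbf X$ has weakly uniformly linear divergence with constant $A$, and let $u,v$ be arbitrary 0-simplices of $\simp\mathbf X$. By Theorem~\ref{thm:boundaryproperties}(2), each lies in a maximal simplex, and by Theorem~\ref{thm:visiblesimplex} these maximal simplices are visible; pick maximal $w_u\supseteq u$ and $w_v\supseteq v$ and combinatorial geodesic rays $\gamma,\gamma'$ representing them. Lemma~\ref{lem:samestart} lets me assume $\gamma(0)=\gamma'(0)$. The hypothesis then gives $\dive{\gamma}{\gamma'}(r)\le 2(A/2)r+B$ for some $B$, so Lemma~\ref{lem:divergelower1} forces $\eta(w_u,w_v)\le A/2$. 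Combining this with $\eta(u,w_u),\eta(w_v,v)\le 1$ and $d_{\simp\mathbf X}\le\eta$ gives $d_{\simp\mathbf X}(u,v)\le A/2+2$, so $\simp\mathbf X$ has diameter at most $A/2+2$.

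The main technical point to watch is the visibility bootstrap in the converse: Theorem~\ref{thm:visiblesimplex} produces rays representing the maximal simplices $w_u,w_v$ rather than $u,v$ themselves, so the triangle inequality for $\eta$ is genuinely needed to pass from the bound on $\eta(w_u,w_v)$ back to an estimate on $d_{\simp\mathbf X}(u,v)$. The finiteness of $\eta(w_u,w_v)$ enforced by Lemma~\ref{lem:divergelower1} simultaneously rules out the possibility that $u,v$ lie in different components of $\simp\mathbf X$, since that would force $\eta(u,v)=\infty$. The case $\simp\mathbf X=\emptyset$ is vacuous in both directions.
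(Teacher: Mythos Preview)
Your proposal is correct and follows essentially the same strategy as the paper's proof: both directions go through the comparison between $\eta$ and the path-metric on $(\simp\mathbf X)^{1}$, invoking Lemma~\ref{lem:divergeupper2} for the forward implication and Lemma~\ref{lem:divergelower1} for the converse, with the visibility of maximal simplices (Theorem~\ref{thm:visiblesimplex}) used to pass from arbitrary 0-simplices to simplices represented by rays. Your explicit $\eta$-versus-$d_{\simp\mathbf X}$ inequalities are just a more detailed version of the paper's final paragraph; the paper in fact cites Lemma~\ref{lem:divergelower2} where Lemma~\ref{lem:divergelower1} is meant, so your citation is the accurate one. One cosmetic point: Lemma~\ref{lem:divergelower1} is applied in the paper's proof with an integer $N$, so you should take $N=\lceil A/2\rceil$ rather than $A/2$.
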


\begin{proof}
Suppose there exists $A<\infty$ such that $\eta(v,v')\leq A$ for all simplices $v,v'$ of $\simp\mathbf X$.  By Lemma~\ref{lem:divergeupper2}, if $\gamma,\gamma'$ are rays emanating from $x_0$, then $\dive{\gamma}{\gamma'}(r)\leq 2(A+1)r+B$ for all $r\geq 0$.  Conversely, suppose that for some $A\in\reals$, and for each $\gamma,\gamma'$, there exists $B\in\reals$ such that the above inequality holds for all $r\geq 0$.  Then by Lemma~\ref{lem:divergelower2}, we have $\eta(v,v')\leq A+1$ for all visible simplices $v,v'$.

Therefore, it suffices to show that $\eta(v,v')$ is uniformly bounded as $v,v'$ vary over the set of visible simplices if and only if the graph $(\simp\mathbf X)^1$ has finite diameter.  First, suppose there exists $D<\infty$ such that, for all visible simplices $v,v'$, we have $\eta(v,v')\leq D$.  Let $u,u'$ be 0-simplices. Then, by Theorem~\ref{thm:visiblesimplex}, there exist visible simplices $v,v'$ with $u\subseteq v$ and $u'\subseteq v'$.  Let $v=v_0,v_1,\ldots,v_d=v'$ be a simplex-path joining $v$ to $v'$, where $d\leq D$.  For $0\leq i\leq d-1$, let $w_i$ be a 0-simplex of $v_i\cap v_{i+1}$.  Then $w_i$ is adjacent to $w_{i+1}$ for $0\leq i\leq d-1$.  Moreover, $w_0$ and $u$ lie in $v_0$ and are thus adjacent; similarly, $w_{d-1}$ and $u'$ are adjacent, so that $u$ and $u'$ are at distance at most $d-1+2\leq D+1$ in $(\simp\mathbf X)^1$. On the other hand, suppose that $\diam(\simp\mathbf X)\leq D<\infty$.  Let $v,v'$ be simplices.  Let $u\subseteq v,u'\subseteq v'$ be 0-simplices.  Let $P:[0,d]\rightarrow(\simp\mathbf X)^1$ be a combinatorial path, where $P_(0)=u$ and $P(d)=u'$, and $d\leq D$.  For each $i$, let $s_i=[P(i),P(i+1)]$ be a 1-simplex of $P$.  Then $v,s_0,s_1,\ldots,s_{d-1},v'$ is a simplex-path of length $d+2$ joining $v$ to $v'$, whence $\eta(v,v')\leq\diam(\simp\mathbf X)+2$.
\end{proof}

\subsubsection{Divergence of combinatorial geodesics}
Recall that $\diver{a}{b}{c}{\rho}$ is the divergence of the points $a,b\in\mathbf X^1$ with respect to the standard path-metric, the basepoint $c$, and the linear function $\rho$.  It follows from the proof of Theorem~\ref{thm:lineardivergence} that the bi-infinite combinatorial geodesic $\gamma:\reals\rightarrow\mathbf X$ has linear divergence if and only if the simplices of $\simp\mathbf X$ represented by the rays $\gamma((-\infty,0])$ and $\gamma([0,\infty))$ belong to the same component of $\simp\mathbf X$.  Note that if, for some function $f:\reals^+\rightarrow\reals^+$, the divergence $\diver{\alpha(r)}{\alpha(-r)}{\alpha(0)}{\rho}$ of the combinatorial geodesic $\alpha:\reals\rightarrow\mathbf X$ exceeds $f(r)$, then the divergence $\divers{\mathbf X}{\rho}(r)$ of $(\mathbf X,d_{_{\mathbf X}})$ exceeds $f(r)$.  The next observation is a consequence of the proof of Theorem~\ref{thm:lineardivergence}.

\begin{cor}\label{cor:superlinear1}
Let $\mathbf X$ be strongly locally finite.  Let $\alpha:\reals\rightarrow\mathbf X$ be a bi-infinite combinatorial geodesic.  Let $v^-,v^+$ be the simplices of $\simp\mathbf X$ represented by the rays $\gamma_-=\alpha((-\infty,0])$ and $\gamma_+=\alpha([0,\infty))$.  The simplices $v^-$ and $v^+$ lie in different components of $\simp\mathbf X$ if and only if the divergence of the geodesic $\alpha$ is super-linear, with respect to the combinatorial metric $d_{_{\mathbf X}}$.  In particular, if either $\gamma_+$ or $\gamma_-$ is rank-one, then $\alpha$ has super-linear divergence.
\end{cor}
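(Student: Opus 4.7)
The plan is to specialize the proofs of Lemmas~\ref{lem:divergeupper2} and~\ref{lem:divergelower1} to the specific pair of rays $\gamma_+, \gamma_-$ emanating from the basepoint $x_0 := \alpha(0)$, reproducing the argument of Theorem~\ref{thm:lineardivergence} without taking a supremum over all pairs of rays. Since $\alpha$ is a bi-infinite combinatorial geodesic, each hyperplane crosses $\alpha$ in at most one $1$-cube, whence $\mathcal W(\gamma_+) \cap \mathcal W(\gamma_-) = \emptyset$, a fact used repeatedly below.

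For the ``if'' direction, suppose that $v^+$ and $v^-$ lie in the same component of $\simp\mathbf X$. As in the proof of Theorem~\ref{thm:lineardivergence}, a combinatorial path of length $d$ in $(\simp\mathbf X)^{1}$ joining a $0$-face of $v^+$ to a $0$-face of $v^-$ yields a simplex path of length at most $d+2$, so $\eta(v^+, v^-) < \infty$. Lemma~\ref{lem:divergeupper2} then gives $\dive{\gamma_+}{\gamma_-}(r) \leq 2(\eta(v^+,v^-)+1)r + B$ for all $r \ge 0$, and since an $r$-avoiding path from $\alpha(r)$ to $\alpha(-r)$ is automatically $\rho(r)$-avoiding whenever $\rho(r) \leq r$, the divergence of $\alpha$ is linear.

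For the ``only if'' direction, suppose the divergence of $\alpha$ is linear, witnessed by constants $A \in (0,1)$, $B, C, D \geq 0$ with $\diver{\alpha(r)}{\alpha(-r)}{\alpha(0)}{\rho}(r) \leq Cr + D$ for $\rho(r) = Ar - B$. Following the proof of Lemma~\ref{lem:divergelower1} essentially verbatim, with ``$r$-avoiding'' replaced by ``$\rho(r)$-avoiding'' throughout, decompose a shortest $\rho(r)$-avoiding path $S_r$ into $N := \lceil C/2 \rceil$ subsegments of length close to $|S_r|/N \leq 2r + \mathrm{const}$, and apply K\"onig's lemma (valid since $\rho(r) \to \infty$ and $\mathbf X$ is strongly locally finite) to obtain combinatorial geodesic rays $\gamma_1,\ldots,\gamma_{N-1}$ from $x_0$ passing through the subdivision points of $S_r$ for arbitrarily large $r$. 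The counting of ``shared hyperplanes'' $t_i(r)$ versus ``repeat crossings'' $p_i(r)$ in the original argument shows that consecutive simplices $v_i$ represented by these rays either intersect or lie in a common simplex, producing a simplex path of length at most $N$ from $v^+$ to $v^-$; in particular, $v^\pm$ lie in the same component of $\simp\mathbf X$.

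The ``in particular'' statement is then immediate: if $\gamma_+$ is rank-one, then by Corollary~\ref{cor:nowayoffthetrain}, $v^+$ is an isolated $0$-simplex and constitutes its own component of $\simp\mathbf X$. Since $\mathcal W(\gamma_+)$ and $\mathcal W(\gamma_-)$ are disjoint infinite UBSes, their symmetric difference is infinite, so $v^- \neq v^+$, and $v^-$ therefore lies in a different component of $\simp\mathbf X$; the main equivalence then yields super-linear divergence of $\alpha$. The main (technical, but minor) obstacle is verifying that Lemma~\ref{lem:divergelower1} adapts to $\rho(r)$-avoiding paths; this hinges only on $\rho(r) \to \infty$ and strong local finiteness, and is otherwise routine.
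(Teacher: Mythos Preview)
Your ``if'' direction matches the paper's. For the ``only if'' direction, however, the paper takes a much simpler route than adapting Lemma~\ref{lem:divergelower1} to $\rho$-avoiding paths: given linear divergence of $\alpha$ with respect to $\rho(r)=Cr-D$, it sets $s=(D+r)/C$ so that $\rho(s)=r$, takes the $\rho(s)$-avoiding path $P_r$ joining $\gamma_+(s)$ to $\gamma_-(s)$ (which is automatically $r$-avoiding), and pads it with the segments $\gamma_{\pm}([r,s])$ along the geodesic itself. The concatenation $Q_rP_rQ'_r$ is an $r$-avoiding path from $\gamma_+(r)$ to $\gamma_-(r)$ of linear length, so $\dive{\gamma_+}{\gamma_-}$ is linear, and Theorem~\ref{thm:lineardivergence} (with Lemma~\ref{lem:divergelower1} in its original $r$-avoiding form) finishes the argument.

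Your direct adaptation is not quite routine, and in fact has a gap at the point you flag. With $N=\lceil C/2\rceil$, each subpath $P_i^r$ has length at most $2r+\text{const}$, but the points of $S_r$ are only guaranteed to lie at distance $\geq\rho(r)=Ar-B$ from $x_0$. The counting inequality from Lemma~\ref{lem:divergelower1} then becomes
\[2(Ar-B)-2t_i(r)+2p_i(r)\leq|P_i^r|\leq 2r+k,\]
which yields only $p_i(r)-t_i(r)\leq(1-A)r+\text{const}$. Since $A<1$, this no longer bounds $p_i(r)$ when $t_i(r)$ is bounded, and the dichotomy (unbounded $t_i$ versus bounded $p_i$) breaks down; you cannot then invoke Lemma~\ref{lem:divergelower2}. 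The repair is to take $N=\lceil C/(2A)\rceil$ so that each subpath has length at most $2Ar+\text{const}$, restoring $p_i-t_i\leq\text{const}$. With this correction your argument goes through, but the paper's padding trick sidesteps the issue entirely and reduces cleanly to the $r$-avoiding case already handled.
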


\begin{proof}
By the proof of Theorem~\ref{thm:lineardivergence}, $\dive{\gamma_-}{\gamma_+}(r)$ is at most linear if and only if $v^-,v^+$ lie in the same component of $\simp\mathbf X$.  Hence, if $v^-$ and $v^+$ lie in the same component of $\simp\mathbf X$, then for all $r\geq 0$, there is an $r$-avoiding path $P_r$ of length at most $Ar+B$ joining $\gamma_-(r)$ to $\gamma_+(r)$. Thus $\diver{\alpha(r)}{\alpha(-r)}{\alpha(0)}{Cr-D}\leq Ar+B$ for any $C<1,D\geq 0$ and for all $r\geq 0$, i.e. $\alpha$ has linear divergence with respect to the combinatorial metric.

Conversely, suppose that there exist $0<C<1,D\geq 0$ such that for all $r\geq 0$, the preceding inequality holds.  Then $\gamma_+(\frac{D+r}{C})$ and $\gamma_-(\frac{D+r}{C})$ are joined by an $r$-avoiding path $P_r\rightarrow\mathbf X$, for each $r\geq 0$, with $|P_r|\leq Ar+B$.  Let $Q_r$ be the subpath of $\gamma$ joining $\gamma_+(r)$ to $\gamma_+(\frac{D+r}{C})$ and let $Q'_r$ be the subpath of $\gamma_-$ joining $\gamma_-(\frac{D+r}{C})$ to $\gamma_-(r)$.  Then $Q_rP_rQ'_r$ is an $r$-avoiding path joining $\gamma_+(r)$ to $\gamma_-(r)$, so that
\[\dive{\gamma_+}{\gamma_-}(r)\leq|Q_r|+|Q'_r|+Ar+B=\left(2(C^{-1}-1)+A\right)r+B+2DC^{-1},\]
and $v,v'$ lie in the same component of $\simp\mathbf X$.
\end{proof}

\subsection{Divergence of cubulated groups}\label{sec:groupdiverge}
Let the group $G$ act properly, essentially, and cocompactly on the combinatorially geodesically complete CAT(0) cube complex $\mathbf X$.  Denote by $\divers{G}{\rho}(r)$ the divergence of $G$ with respect to the word-metric arising from some finite generating set and with respect to the linear function $\rho$.  The following is an immediate consequence of rank-rigidity and~\cite[Proposition~3.3]{KapovichLeeb}, and the fact that nontrivial product decompositions of $\mathbf X$ correspond to nontrivial join decompositions of $\simp\mathbf X$ and $\contact X$.

\begin{thm}\label{thm:divergenceofgroup}
Let $G$ act properly, cocompactly, and essentially on the combinatorially geodesically complete CAT(0) cube complex $\mathbf X$.  If $\contact X$ is bounded, then $G$ has linear divergence if and only if $\contact X$ decomposes as the join of two infinite proper subgraphs (i.e. $\simp\mathbf X$ decomposes as the simplicial join of two proper subcomplexes).  Otherwise, $G$ has at least quadratic divergence.
\end{thm}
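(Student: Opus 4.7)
The plan is to split on whether $\contact X$ is bounded or unbounded and, in each case, combine rank-rigidity (Theorem~\ref{thm:RRcontact}) with Proposition~3.3 of~\cite{KapovichLeeb} to convert a structural conclusion about $\mathbf X$ into a divergence estimate for $G$. The join/product dictionary (Proposition~\ref{prop:productchar}, Theorem~\ref{thm:productsandjoins}) will supply the translation between the $\contact X$ and $\simp\mathbf X$ formulations.

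In the unbounded case, I would extract from Theorem~\ref{thm:RRcontact} a combinatorially rank-one element $g\in G$, no power of which stabilizes a hyperplane. Using combinatorial geodesic completeness (Remark~\ref{rem:geodesicallycomplete}) and the $G$-equivariant quasi-isometry $(\mathbf X,\mathfrak d_{\mathbf X})\simeq(\mathbf X,d_{\mathbf X})$ supplied by~\cite[Lemma~2.2]{CapraceSageev}, the combinatorial axis of $g$ fellow-travels with a CAT(0) axis in $(\mathbf X,\mathfrak d_{\mathbf X})$ that inherits the absence of a Euclidean half-plane factor; thus $g$ is rank-one in the CAT(0) sense. Proposition~3.3 of~\cite{KapovichLeeb} then forces the divergence of $(\mathbf X,\mathfrak d_{\mathbf X})$ to grow at least quadratically, and the Milnor-Svarc quasi-isometry $G\simeq(\mathbf X,\mathfrak d_{\mathbf X})$ passes this lower bound to $G$, giving the ``otherwise'' clause.

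In the bounded case, the complementary conclusion of Theorem~\ref{thm:RRcontact} yields unbounded convex subcomplexes $\mathbf Q_1,\mathbf Q_2$ with $\mathbf X\cong\mathbf Q_1\times\mathbf Q_2$. Proposition~\ref{prop:productchar} then exhibits $\contact X$ as the join of $\contact{Q_1}$ and $\contact{Q_2}$, both infinite by unboundedness of the factors, and Theorem~\ref{thm:productsandjoins} records the corresponding simplicial join decomposition of $\simp\mathbf X$; this gives the right-hand side of the biconditional. For the left-hand side I would execute a coordinate-wise detour: given a triple $a=(a_1,a_2),b=(b_1,b_2),c=(c_1,c_2)$ with $d(c,\{a,b\})=k$, pick auxiliary $p_i\in\mathbf Q_i$ with $d_{\mathbf Q_i}(p_i,c_i)>\rho(k)$ using unboundedness and essentiality of the $G$-action on each factor, and concatenate the coordinate geodesics $a\to(p_1,a_2)\to(p_1,p_2)\to(b_1,p_2)\to b$ to produce a $B_{\rho(k)}(c)$-avoiding path of length bounded linearly by $d(a,b)$. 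Hence $(\mathbf X,\mathfrak d_{\mathbf X})$, and therefore $G$, has linear divergence.

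The main obstacle I foresee is the first case: Theorem~\ref{thm:RRcontact} only yields a \emph{combinatorial} rank-one axis, whereas Proposition~3.3 of~\cite{KapovichLeeb} is a statement about CAT(0) rank-one isometries. Verifying that this upgrade is legitimate under our hypotheses is exactly the content of~\cite[Lemma~7.3.5]{HagenPhD} and relies on combinatorial geodesic completeness to exclude the pathology in which the combinatorial axis is trapped in a diagonal of an unbounded cube at infinity whose CAT(0) shadow fellow-travels a Euclidean half-plane. Once this bridge is in place, the two cases combine into the stated theorem.
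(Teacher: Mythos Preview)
Your proposal is correct and follows essentially the same route the paper sketches: the paper's entire proof is the sentence preceding the theorem, which says the result is ``an immediate consequence of rank-rigidity and~\cite[Proposition~3.3]{KapovichLeeb}, and the fact that nontrivial product decompositions of $\mathbf X$ correspond to nontrivial join decompositions of $\simp\mathbf X$ and $\contact X$''---exactly the ingredients you invoke.

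Two small remarks. First, your anticipated obstacle (upgrading combinatorial rank-one to CAT(0) rank-one) can be sidestepped: Theorem~\ref{thm:RRcontact} is derived directly from the Caprace--Sageev theorem, whose output is already a CAT(0) rank-one isometry, so~\cite[Proposition~3.3]{KapovichLeeb} applies without further work. Second, in the bounded case the biconditional is in fact trivially satisfied: rank-rigidity forces $\mathbf X$ to be a nontrivial product whenever $\contact X$ is bounded, so both sides of the biconditional always hold under the hypotheses, and your ``right-hand side'' and ``left-hand side'' arguments are really establishing a single implication (bounded $\Rightarrow$ product $\Rightarrow$ join \emph{and} linear divergence) rather than two directions of an equivalence.
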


It also appears as though the techniques used to prove Theorem~\ref{thm:lineardivergence} can be brought to bear on the question of when the divergence of $G$ is at most quadratic.

\bibliographystyle{alpha}
\bibliography{SimplicialBoundaryRevisedAGT}
%
%

\end{document}